\newtheorem{theorem}{Theorem}[section]
\newtheorem{cor}[theorem]{Corollary}
\newtheorem{lemma}[theorem]{Lemma}
\newtheorem{proposition}[theorem]{Proposition}
\newtheorem{remark}[theorem]{Remark}
\newtheorem{definition}[theorem]{Definition}
\newtheorem{example}[theorem]{Example}
\newtheorem*{thm*}{Theorem}
\newtheorem*{cor*}{Corollary}
\newtheorem*{lem*}{Lemma}
\newtheorem*{prop*}{Proposition}
\theoremstyle{plain}
\numberwithin{theorem}{section}
\numberwithin{equation}{section}
\numberwithin{figure}{section}
\DeclareMathOperator{\Ext}{Ext}
\DeclareMathOperator{\wt}{wt}
\newcommand{\mC}{{\mathbb C}}
\newcommand{\mF}{{\mathbb F}}
\newcommand{\mM}{{\mathbb M}}
\newcommand{\mR}{{\mathbb R}}
\newcommand{\mS}{{\mathbb S}}
\newcommand{\cA}{{\mathcal A}}
\newcommand{\cE}{{\mathcal E}}
\definecolor{green'}{HTML}{009E73}
\definecolor{pink'}{HTML}{CC79A7}
\definecolor{orange'}{HTML}{D55E00}
\definecolor{darkblue'}{HTML}{0072B2}
\definecolor{yellow'}{HTML}{F0E442}
\definecolor{lightblue'}{HTML}{56B4E9}
\definecolor{gold'}{HTML}{E69F00}
\title{Splittings of truncated motivic Brown--Peterson cooperations algebras}
\author{Jackson Morris, Sarah Petersen, Elizabeth Tatum}
\begin{document}

\begin{abstract}
    We construct spectrum-level splittings of $BPGL \langle 1 \rangle \wedge BPGL \langle 1 \rangle$ at all primes $p$, where $BPGL \langle 1 \rangle$ is the first truncated motivic Brown--Peterson spectrum. Classically, $BP\langle 1 \rangle \wedge BP\langle 1 \rangle$ was first described by Kane and Mahowald in terms of Brown-Gitler spectra. This splitting was subsequently reinterpreted by Lellman and Davis-Gitler-Mahowald in terms of Adams covers. In this paper, we give motivic lifts of these splittings in terms of Adams covers, over the base fields $\mC, \, \mR,$ and $\mF_q$, where $\textup{char}(\mathbb{F}_q) \neq p$. As an application, we compute the $E_1$-page of the $BPGL\langle 1 \rangle$-based Adams spectral sequence as a module over $BPGL\langle 1 \rangle$, both in homotopy and in terms of motivic spectra. We also record analogous splittings for $BPGL \langle 0 \rangle \wedge BPGL \langle 0 \rangle$.
\end{abstract}

\maketitle
\tableofcontents

\section{Introduction} \label{sec:Intro}
%The homotopy groups of spheres are a totem in stable homotopy theory. It is the study of this central object which has motivated many significant developments in the field. 
\subsection{Motivation} 
The computation of the stable homotopy groups of spheres $\pi_* \mS$ is a central problem within stable homotopy theory and has motivated many significant developments in the field \cite{Adams66, DevHopSmi88,IsaWanXu23}. The primary tool for computing $\pi_*\mathbb{S}$ is the Adams spectral sequence, which converts homological algebra related to a homology theory into homotopy groups. Roughly, given a homology theory $E$, one begins by taking a sort of injective resolution of the sphere spectrum $\mathbb{S}$ relative to the homology theory $E$. Then, by applying totalization and homotopy groups, one gets the $E$-based Adams spectral sequence \cite{Rav86}.

Perhaps the most well-known Adams spectral sequence comes from taking ordinary mod $p$-homology, which is represented by the Eilenberg--MacLane spectrum $H\mathbb{F}_p$. This results in the mod-$p$ Adams spectral sequence, which takes the form
\[E_2 = \text{Ext}^{s,f}_{\mathcal{A}_p^\vee}(\mathbb{F}_p, \mathbb{F}_p) \implies \pi_s\mathbb{S}_p.\]
Here $\mathcal{A}_p^\vee = \pi_*(H\mathbb{F}_p \wedge H\mathbb{F}_p)$ is the $p$-primary dual Steenrod algebra, also known as the homotopy ring of cooperations for the homology theory $H\mathbb{F}_p$, and $\Ext$ is taken in the category of comodules over $\mathcal{A}^\vee_p$. This spectral sequence is particularly well suited for studying homotopy groups from a large-scale computation perspective \cite{linWanXu25}. 

Another perspective on understanding $\pi_*\mathbb{S}$ was first observed by Adams \cite{Adams66}. Consider the mod-$p$ Moore spectrum $\mathbb{S}/p$, defined as the cofiber of $\mathbb{S} \xrightarrow{\times p}\mathbb{S}$. This spectrum has a self map of the form $\Sigma^d\mathbb{S}/p \xrightarrow{v_1} \mathbb{S}/p$ which can then be used to construct what is known as the $\alpha$-family, a periodic family of nontrivial elements $\{\alpha_n\} \subseteq \pi_*\mathbb{S}$:
\[\alpha_n: \Sigma^{dn}\mathbb{S} \to\Sigma^{dn}\mathbb{S}/p \xrightarrow{v_1}\Sigma^{d(n-1)}\mathbb{S}/p \to \cdots \to \mathbb{S}/p \to \Sigma\mathbb{S}.\]
Chromatic homotopy organizes all of the elements of $\pi_*\mathbb{S}$ into $v_n$-periodic families inspired by the one above \cite{MilRavWil77}. 
% The $v_n$-periodic portion of $\pi_*\mathbb{S}$ may be obtained as the homotopy groups of a particular localization $L_n\mathbb{S}$. These localizations fit together into what is known as the chromatic tower, which takes the form
% \[\mathbb{S} \to \cdots L_n\mathbb{S} \to L_{n-1}\mathbb{S} \to \cdots\to L_1\mathbb{S} \to L_0\mathbb{S}.\]

One may attempt to access the $v_n$-periodic families by means of an Adams spectral sequence. However, the mod-$p$ Adams spectral sequence is not well suited for this task, so one must resort to other homology theories. For example, $v_1$-periodicity, which is closely related to the $\alpha$-family, has been studied by using the $bo$-Adams spectral sequence \cite{Mahowald81,LelMah87,BBBCX}, where $bo$ is the connective cover of the real topological K-theory spectrum $KO$, and the $BP\langle 1 \rangle$-Adams spectral sequence \cite{Gonzalez00}, where $BP \langle n \rangle$ is the $n^{th}$ truncated Brown-Peterson spectrum. For the purposes of this introduction, we will focus on the $BP\langle n \rangle$-Adams spectral sequence.

The trade-off for more immediate access to $v_n$-periodicity is that these spectral sequences are less immediately computable. The spectra $BP\langle n \rangle$ are not flat in the sense of Adams \cite{Adams74}, and so to study the $BP\langle n \rangle$-Adams spectral sequences, one must begin by analyzing their $E_1$-pages. This takes the form
\[E_1 = \pi_{s+f}(BP\langle n \rangle \wedge \overline{BP \langle n \rangle}^{\wedge f}) \implies \pi_s\mathbb{S},\]
where $\overline{BP \langle n \rangle}$ is the cofiber of the unit map $\mathbb{S} \to BP \langle n \rangle$. Key to working with this spectral sequence is an understanding of the homotopy ring of cooperations  $\pi_{*}(BP\langle n \rangle \wedge BP \langle n \rangle)$. 

For $n=0$, a model for $BP\langle 0 \rangle$ is $H\mathbb{Z}_{(p)}$. The homotopy ring of cooperations \newline $\pi_*(BP \langle 0 \rangle \wedge BP \langle 0 \rangle)$
was first studied by Cartan \cite{Cartan1954-1955}, and it was shown by Kochmann \cite{Kochman82} that there is an isomorphism up to $p$-completion
\[\pi_*(BP \langle 0 \rangle \wedge BP \langle 0 \rangle ) \cong \mathbb{Z}_p \oplus W,\] 
where $W$ is a graded $\mathbb{F}_p$-vector space. It is well-known that this splitting can also be realized on the level of spectra. 

For $n=1$, a model for $BP \langle 1 \rangle$ is $\ell_{(p)}$, the $p$-local connective Adams summand. The cooperations algebra $\pi_*(BP \langle 1 \rangle \wedge BP \langle 1 \rangle )$ was first described in terms of Brown-Gitler spectra, with Kane and Mahowald \cite{kane1981operations, Mahowald81} showing that up to $p$-completion, 
\[BP \langle 1 \rangle \wedge BP \langle 1 \rangle \simeq \bigvee_{k \geq 0} \Sigma^{2(p-1)k}BP \langle 1 \rangle \wedge H\mathbb{Z}_k\]
where the spectra $H\mathbb{Z}_k$ are the integral Brown--Gitler spectra \cite{goerss1986some}. Lellman and Davis-Gitler-Mahowald subsequently reinterpreted this splitting in terms of Adams covers \cite{lellmann1984operations, davis1981stable}, showing that up to $p$-completion,
\begin{equation}\label{intro classical splitting}
BP \langle 1 \rangle \wedge BP \langle 1 \rangle \simeq \bigvee_{k \geq 0}\Sigma^{2(p-1)}BP \langle 1 \rangle^{\langle\nu_p(k!)\rangle} \vee V,\end{equation}
where $BP \langle 1 \rangle ^{\langle n \rangle}$ denotes the $n^{th}$ Adams cover for $BP \langle 1 \rangle$ in a minimal $H\mathbb{F}_p$-Adams resolution and $V$ is a sum of suspensions of mod-$p$-Eilenberg-Maclane spectra. This spectrum-level splitting of the cooperations algebra, combined with the homotopy ring of cooperations, makes computing with the $BP\langle 1\rangle$-Adams spectral sequence much more tractable.

%One can often understand the cooperations algebra at the level of spectra.
% Mahowald proved that there is an equivalence of spectra up to 2-completion \cite{Mahowald81}:
% \[bo \wedge bo \simeq \bigvee_{k \geq 0}\Sigma^{4k}bo \wedge H\mathbb{Z}_k.\]
%Kane showed that there is an equivalence of spectra up to $p$-completion for any prime $p$ \cite{kane1981operations}:
%\[BP \langle 1 \rangle \wedge BP \langle 1 \rangle \simeq \bigvee_{k \geq 0} \Sigma^{2(p-1)k}BP \langle 1 \rangle \wedge H\mathbb{Z}_k.\]
%Here, the spectra $H\mathbb{Z}_k$ are the integral Brown--Gitler spectra \cite{goerss1986some}. 
In the last 15 years, much of the advancement of the computation of $\pi_*\mathbb{S}$ has come through the lens of motivic homotopy theory, which is a homotopy theory for smooth schemes over a base scheme $S$ \cite{VoeA1, Mor12}. Originally created to solve algebro-geometric problems using homotopical techniques, it has also been extremely valuable as a way to study the classical stable homotopy category (see for instance \cite{Pst23}). In particular, much has been gained out of the study of motivic homotopy theory over the affine schemes $S=\text{Spec}(F)$ for $F$ a field \cite{IsaWanXu23,BacBurXu25}. Many of these advancements have come from a large-scale computation perspective by computing with the motivic $H\mathbb{F}_p$-Adams spectral sequence, particularly at the prime $p=2$.

The classical $v_n$-periodic layers of the stable homotopy category lift via the algebraic cobordism spectrum $MGL$, a motivic analogue of the complex cobordism spectrum $MU$ \cite{LM07}. This enables one to study $v_n$-periodicity in the context of motivic homotopy theory. For $n=1$, there has been some work in this direction. In \cite{CQ21}, Culver--Quigley use an analogue of the $bo$-Adams spectral sequence to compute the $\mathbb{C}$-motivic $v_1$-periodic stable stems. In \cite{belmontisaksenkong-v1R} (resp. \cite{kongquigley}), Belmont--Isaksen--Kong (resp. Kong--Quigley) use different techniques to compute data related to the $\mathbb{R}$-motivic (resp. $\mathbb{F}_q$ and $\mathbb{Q}_p$-motivic) $v_1$-periodic stable stems. In \cite{Realkqcoop,finitekqcoop}, the first author computes the homotopy ring of cooperations for a motivic analogue of $bo$ over $\mathbb{R}$ and $\mathbb{F}_q$ where $\text{char}(\mathbb{F}_q) \neq 2$. All of these computations are performed at the prime 2, and none of these results produce spectrum-level splittings. A principal obstruction to producing spectrum-level splittings is that, at the present, there is no construction of motivic Brown--Gitler spectra.

Related to the algebraic cobordism spectrum are the truncated motivic Brown-Peterson spectra $BPGL \langle n\rangle$, which are motivic analogues of the classical spectra $BP \langle n \rangle$. In this paper, we initiate the study of the $BPGL\langle n \rangle$-motivic Adams spectral sequence. We begin by computing the homotopy ring of cooperations in the cases where $n=0$ and $n=1$ at all primes and over a variety of base fields. In fact, we are able to give spectrum-level splittings of the cooperations algebra.

\subsection{Main results}
The main result of this paper is a lift of the splitting of \cref{intro classical splitting} to motivic spectra.
\begin{theorem}[\Cref{thm:BPGL1SplittingAlpha}]
    Let $F \in \{\mathbb{C}, \mathbb{R}, \mathbb{F}_q\}$ and let $p$ be any prime, where $\textup{char}(\mathbb{F}_q) \neq p$. There is a splitting of $p$-complete motivic spectra
    \[
    BPGL \langle 1 \rangle \wedge BPGL \langle 1 \rangle \simeq \bigvee_{k \geq 0} \Sigma^{2k(p-1), k(p-1)} BPGL\langle 1 \rangle^{\langle \nu_p (k!) \rangle} \vee V
    \]
    where $BPGL \langle 1 \rangle ^{\langle \nu_{p}(k!) \rangle}$ is the $\nu_{p}(k!)^{th}$-Adams cover in a minimal $H\mF_{p}$-resolution of $BPGL\langle 1 \rangle$, and $V$ is a wedge of suspensions of $H\mF_{p}$.
\end{theorem}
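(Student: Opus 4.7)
The plan is to follow the strategy of Lellman and Davis--Gitler--Mahowald, but in the motivic category over each of the base fields $\mathbb{C}, \mathbb{R}, \mathbb{F}_q$. First I would compute $H^{*,*}(BPGL\langle 1\rangle; \mathbb{F}_p)$ as a module over the motivic Steenrod algebra $\mathcal{A}^{*,*}$. By the standard description of truncated Brown--Peterson spectra, this should be $\mathcal{A}^{*,*}/\!/E(Q_0, Q_1)$, where $Q_0, Q_1$ are the motivic Milnor primitives (of weights $0$ and $p-1$). Smashing and using the K\"unneth formula then gives $H^{*,*}(BPGL\langle 1\rangle \wedge BPGL\langle 1\rangle)$ as an $\mathcal{A}^{*,*}$-module, and in particular as an $E(Q_0, Q_1)$-module on the right factor.

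Next I would decompose this cohomology. As in the classical case, I expect the $\mathcal{A}^{*,*}$-module to split as a direct sum with two types of summands: (i) modules $\Sigma^{2k(p-1),k(p-1)}\mathcal{A}^{*,*}/\!/E(Q_0, Q_1)$ shifted up in Adams filtration by $\nu_p(k!)$, one for each $k \geq 0$, and (ii) free $\mathcal{A}^{*,*}$-modules corresponding to Eilenberg--MacLane summands in $V$. The appearance of $\nu_p(k!)$ comes from counting, within each internal bidegree, the number of $Q_0, Q_1$-free generators that survive through the minimal Adams resolution, following the combinatorics that Lellman uses for $v_1^k$. The motivic bidegree $(2k(p-1), k(p-1))$ is forced because $v_1$ lives in weight $p-1$ in $\pi_{*,*}(BPGL\langle 1\rangle)$.

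I would then lift this algebraic splitting to a spectrum-level splitting. For each $k$, the $\nu_p(k!)$-th Adams cover $BPGL\langle 1\rangle^{\langle \nu_p(k!)\rangle}$ is exactly designed so that the cohomology class detecting $v_1^k \otimes 1$ in the cooperations admits a lift through the Adams tower. This provides a map
\[
\Sigma^{2k(p-1), k(p-1)} BPGL\langle 1\rangle^{\langle \nu_p(k!)\rangle} \longrightarrow BPGL\langle 1\rangle \wedge BPGL\langle 1\rangle.
\]
Wedging these together with maps from the Eilenberg--MacLane summands realizing the free pieces, I would obtain a map from the right-hand side to the left-hand side. This map induces an isomorphism on mod-$p$ motivic cohomology by construction, so after $p$-completion it is an equivalence by the motivic Whitehead theorem, given that both sides are connective and of finite type in each bidegree.

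The main obstacle is uniform execution across the three base fields, since the motivic Steenrod algebra and the motivic $\mathrm{Ext}$ groups computing the obstructions differ with $F$ (and with $p$, particularly $p=2$ versus odd $p$). In the $\mathbb{C}$-motivic case the situation is closest to classical, with an extra weight grading controlled by $\tau$; over $\mathbb{R}$ and $\mathbb{F}_q$ one must track the additional generators $\rho$ and $u$ (or the analogous Milnor--Witt classes) and verify that they do not obstruct the lifts. The key verification is that the groups in which obstructions to extending along the Adams tower live vanish in the relevant bidegrees, which should follow from the fact that both $Q_0$ and $Q_1$ act freely on the pertinent quotients and that the relevant $\mathrm{Ext}$-charts are concentrated along the vanishing line. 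Constructing the Adams covers themselves in the motivic setting, and checking that a \emph{minimal} $H\mathbb{F}_p$-resolution exists with the expected properties over each of $\mathbb{C}, \mathbb{R}, \mathbb{F}_q$, is the step I would expect to require the most care.
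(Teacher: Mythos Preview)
Your broad strategy---decompose the (co)homology as an $\mathcal{E}(1)_p$-module, realize the free pieces as Eilenberg--MacLane summands, and build maps from Adams covers for the remaining pieces---is exactly the shape of the paper's argument. However, the step you flag as ``lift this algebraic splitting to a spectrum-level splitting'' hides the real work, and your proposed mechanism for it is not quite right.

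You write that ``the cohomology class detecting $v_1^k \otimes 1$ admits a lift through the Adams tower,'' but this is a statement about a single homotopy class, not a construction of a map of spectra out of the Adams cover $BPGL\langle 1\rangle^{\langle \nu_p(k!)\rangle}$. What is actually needed is a map \emph{from} the Adams cover into the cooperations spectrum, and to build such a map one must control an obstruction group of the form $[\Sigma^{?}BPGL\langle 1\rangle^{\langle \nu_p(k!)\rangle}, BPGL\langle 1\rangle \wedge BPGL\langle 1\rangle]$. The paper's key maneuver is to work in the category of $BPGL\langle 1\rangle$-modules and use the \emph{relative} Adams spectral sequence based on $H^{BPGL\langle 1\rangle}_{*,*}(-)$. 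In that setting the relative dual Steenrod algebra collapses to $\mathcal{E}(1)_p^\vee$, the relative homology of the $k$-th Adams cover is identified with a small explicit comodule $L_p(k)$ (a ``lightning flash''), and the obstruction groups become $\mathrm{Ext}_{\mathcal{E}(1)_p^\vee}(L_p(k), L_p(m))$. These are computed inductively, and one checks that the identity class $\theta_k$ in bidegree $(0,0,0)$ is a permanent cycle---this is the step that genuinely depends on the base field, since the differentials are governed by those in the Adams spectral sequence for $BPGL\langle 1\rangle$ itself (trivial over $\mathbb{C}$ and $\mathbb{R}$, but not over $\mathbb{F}_q$).

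Two further ingredients you have not anticipated: first, a motivic version of the Margolis-homology Whitehead theorem is needed to identify the Brown--Gitler comodules $B_0(k)$ with $L_p(\nu_p(k!))$ up to free summands, and this requires care because the motivic Steenrod algebra over $\mathbb{R}$ or $\mathbb{F}_q$ is not simply a base change of the classical one (one must first quotient by $\rho$ or $u$ or $\gamma$); second, splitting off the $H\mathbb{F}_p$-summands $V$ already uses the relative Adams spectral sequence together with self-injectivity of $\mathcal{E}(1)_p^\vee$, since a general motivic analogue of Margolis's splitting theorem is not available.
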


A similar splitting has been shown in the $C_2$-equivariant category at the prime $p=2$ for $ku_\mathbb{R} \wedge ku_\mathbb{R}$ by the second and third authors in \cite[Corollary 6.15]{LiPetTat25}. They use their results to prove the $F=\mathbb{R}$ and $p=2$ case of our theorem \cite[Corollary 6.18]{LPTSplittingkuR}.

Our proof requires the development of several new techniques in motivic homtopy theory. In particular, we introduce the relative Adams spectral sequence in motivic homotopy theory, give a Whitehead theorem for motivic Margolis homology, and compute the $\mathcal{E}(1)^\vee_p$-comodule structure of motivic integral Brown--Gitler comodules.

We deduce these splittings in the absence of motivic Brown--Gitler spectra, for which no construction is currently known. Note that at the prime $2$, $BPGL \langle 1 \rangle$ is the effective (2-local) algebraic $K$-theory spectrum $kgl$, and at odd primes $BPGL \langle 1 \rangle$ is the connective ($p$-local) Adams summand $m\ell$ \cite{NauSpiOst15}. 
%We are able to do this without brown-gitler spectra! huzzah. Maybe worth noting that at the prime 2, BPGL1 is kgl, connective algebraic k theory, and at odd primes, BPGL1 is the connective adams summand.

In addition to this spectrum-level splitting, we compute the homotopy ring of cooperations.

\begin{theorem}[\Cref{thm:BPGL1homotopyCooperations}]
    Let $F \in \{\mathbb{C}, \mathbb{R}, \mathbb{F}_q\}$ and let $p$ be any prime, where $\textup{char}(\mathbb{F}_q) \neq p$. Then there is an isomorphism:
    \begin{align*}
        \pi^F_{*,*}(BPGL \langle 1 \rangle & \wedge BPGL \langle 1 \rangle) \\ 
        & \cong \bigoplus_{k \geq 0} \Big( (\pi^F_{*,*}BPGL \langle 0 \rangle)\{x_0, \dots, x_{\nu_p(k!)-1}\}
    \oplus (\pi^F_{*,*}BPGL \langle 1 \rangle) \{x_{\nu_p(k!)}\} \oplus W_k \Big),
    \end{align*}
    with relations $v_1x_{i-1} = 2x_i$, where $|x_i| = (2i(p-1),  i(p-1))$
    and $W_k$ is a sum of suspensions of $\pi^F_{*,*}H\mathbb{F}_p$.    
\end{theorem}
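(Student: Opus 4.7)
The plan is to apply the splitting of \Cref{thm:BPGL1SplittingAlpha} and take bigraded homotopy. Since $\pi^F_{*,*}$ commutes with wedges of $p$-complete motivic spectra, one obtains
\[
\pi^F_{*,*}(BPGL\langle 1\rangle \wedge BPGL\langle 1\rangle) \;\cong\; \bigoplus_{k\geq 0} \Sigma^{2k(p-1),\,k(p-1)} \pi^F_{*,*}\bigl(BPGL\langle 1\rangle^{\langle \nu_p(k!)\rangle}\bigr) \,\oplus\, \pi^F_{*,*}V,
\]
where $V$ is a wedge of bidegree-shifted copies of $H\mathbb{F}_p$. The summand $\pi^F_{*,*}V$ is a sum of shifts of $\pi^F_{*,*}H\mathbb{F}_p$; repartitioning these shifts across the index $k$ produces the $W_k$ terms in the statement.

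Thus the real content is a description of $\pi^F_{*,*}BPGL\langle 1\rangle^{\langle n\rangle}$ for each $n\geq 0$. I would argue by induction on $n$ that
\[
\pi^F_{*,*}BPGL\langle 1\rangle^{\langle n\rangle} \;\cong\; \bigl(\pi^F_{*,*}BPGL\langle 0\rangle\bigr)\{x_0,\dots,x_{n-1}\} \,\oplus\, \bigl(\pi^F_{*,*}BPGL\langle 1\rangle\bigr)\{x_n\},
\]
with $|x_i|=(2i(p-1), i(p-1))$ and the stated $v_1$-relation between consecutive generators. The base case $n=0$ is just the known homotopy of $BPGL\langle 1\rangle$, freely generated over $\pi^F_{*,*}BPGL\langle 0\rangle$ by the powers of $v_1$ acting on $x_0$. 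For the inductive step, I would use the defining cofiber sequence of the Adams cover from a minimal $H\mathbb{F}_p$-Adams resolution,
\[
BPGL\langle 1\rangle^{\langle n+1\rangle} \longrightarrow BPGL\langle 1\rangle^{\langle n\rangle} \longrightarrow K_n,
\]
where $K_n$ is a wedge of suspensions of $H\mathbb{F}_p$, and run the long exact sequence in $\pi^F_{*,*}$. Multiplication by $v_1$ sends the top generator $x_n$ into the image of the boundary map and lifts, up to the appropriate scalar, to a new generator $x_{n+1}$ in bidegree $(2(n+1)(p-1), (n+1)(p-1))$, extending the module structure by one term.

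The main obstacle is pinning down the precise form of the relation between consecutive generators, i.e.\ showing that the connecting map truly pulls $v_1 x_n$ up to a scalar multiple of a single new generator rather than a more complicated combination. This requires controlling the motivic Adams $E_\infty$-page for $BPGL\langle 1\rangle$ and identifying the motivic Adams cover with its classical counterpart via Betti realization, together with the motivic $\mathcal{E}(1)^\vee_p$-comodule structure of the integral Brown--Gitler comodules developed earlier in the paper. Once this $v_1$-action is identified, combining the per-summand computation with the splitting immediately yields the claimed isomorphism of bigraded modules.
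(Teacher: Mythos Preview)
Your approach is genuinely different from the paper's. The paper computes the homotopy ring directly via the $\textbf{mASS}_p(BPGL\langle 1\rangle \wedge BPGL\langle 1\rangle)$: it decomposes the $E_2$-page using the Brown--Gitler/lightning flash splitting of $H_{*,*}BPGL\langle 1\rangle$ as an $\mathcal{E}(1)^\vee_p$-comodule, computes $\Ext_{\mathcal{E}(1)^\vee_p}(L_p(k))$ inductively (this is where the relation $v_1x_i = v_0x_{i+1}$ is established, via the short exact sequence of lightning flash modules), and then shows that all differentials are imported from the $\textbf{mASS}_p(BPGL\langle 1\rangle)$ by $v_0$- and $v_1$-linearity. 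Only afterwards does the paper prove the spectrum-level splitting, and that proof does not use the homotopy computation, so your inversion of the logical order is not circular.

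Your route---apply the splitting, then compute $\pi^F_{*,*}BPGL\langle 1\rangle^{\langle n\rangle}$ inductively from the Adams tower cofiber sequences---is viable, but the step you flag as the main obstacle is a real gap as written. The long exact sequence in $\pi^F_{*,*}$ for $BPGL\langle 1\rangle^{\langle n+1\rangle} \to BPGL\langle 1\rangle^{\langle n\rangle} \to K_n$ does not by itself pin down the extension $v_1 x_n = v_0 x_{n+1}$; you need control of the Adams filtration. Your proposed fix via Betti realization cannot work uniformly: there is no Betti realization over $\mathbb{F}_q$, so you would have to argue separately there. The clean way to close the gap is to run the $\textbf{mASS}_p$ (or the $BPGL\langle 1\rangle$-relative Adams spectral sequence) for each $BPGL\langle 1\rangle^{\langle n\rangle}$; its $E_2$-page is $\Ext_{\mathcal{E}(1)^\vee_p}(L_p(n))$ by \cref{prop:RelativeHomologyBPGL1AdamsCoverLightning}, and the paper's \cref{lem:lightning flash ext 1} computes exactly this, including the hidden $v_1$-extension. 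Once you invoke that lemma your argument goes through, but at that point you are using the same algebraic core as the paper's direct proof.
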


Another immediate corollary of our spectrum-level splitting is a description of the operations algebra $BPGL\langle 1 \rangle^* BPGL\langle 1 \rangle$, given in \Cref{thm: operations}.

Just as at the spectrum level, a similar computation has been shown in the $C_2$-equivariant category at the prime $p=2$ for $\pi_{*,*}^{C_2}(ku_\mathbb{R} \wedge ku_\mathbb{R})$ by the second and third authors in \cite{LiPetTat25}. They use their results to prove the $F=\mathbb{R}$ and $p=2$ case of our theorem \cite[Corollary 6.17]{LiPetTat25}.

Using our results, we are able to completely identify the $E_1$-page of the $BPGL\langle 1\rangle$-based motivic Adams spectral sequence.

\begin{theorem}[\Cref{cor:n-line}]
    Let $F \in \{\mathbb{C}, \mathbb{R}, \mathbb{F}_q\}$ and let $p$ be any prime where $\textup{char}(\mathbb{F}_q) \neq p$. Then the $n$-line of the $BPGL \langle 1 \rangle$-motivic Adams spectral sequence can be described as
    % \begin{align*}
    % \begin{split}
    \[
    E_1^{*,n,*} \cong \bigoplus_{I \in \mathcal{I}_n} \Big( (\pi_{*,*}^FBPGL \langle 0 )\rangle\{x_0, \dots, x_{m-1}\} \\
    \oplus (\pi_{*,*}^FBPGL \langle 1 \rangle)\{x_m\} \oplus W_I \Big),
    \]
    % \end{split}
    % \end{align*}
    where $\mathcal{I}_n = \{I=(k_1, \dots, k_n): k_j \geq 1 \text{ for all } 1 \leq j \leq n\}$, $m = \nu_p(k_1!) + \cdots \nu_p(k_n!)$ for $ (k_1, \dots, k_n)=I \in \mathcal{I}_n$, and $W_I$ is a sum of suspensions of $\pi_{*,*}^FH\mathbb{F}_p$.
\end{theorem}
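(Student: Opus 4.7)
The plan is to prove this corollary by induction on $n$, combining the main splitting theorem \Cref{thm:BPGL1SplittingAlpha} with the homotopy identification from \Cref{thm:BPGL1homotopyCooperations}. I begin with the base case $n=1$: since the unit $\mathbb{S} \to BPGL\langle 1 \rangle$ is split by multiplication after smashing with $BPGL\langle 1 \rangle$, the $k=0$ summand of the splitting in \Cref{thm:BPGL1SplittingAlpha} is identified with the unit copy of $BPGL\langle 1 \rangle$. Cofibering this off yields
\[
BPGL\langle 1 \rangle \wedge \overline{BPGL\langle 1 \rangle} \;\simeq\; \bigvee_{k \geq 1} \Sigma^{2k(p-1),\,k(p-1)}\, BPGL\langle 1 \rangle^{\langle \nu_p(k!) \rangle} \;\vee\; V_1,
\]
with $V_1$ a wedge of suspensions of $H\mathbb{F}_p$. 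Applying $\pi_{*,*}^F$ and identifying each Adams-cover summand via \Cref{thm:BPGL1homotopyCooperations} gives the $n=1$ case of the claimed formula, indexed by $\mathcal{I}_1 = \{(k) : k \geq 1\}$.

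For the inductive step, I would establish an Adams-cover analogue of the main splitting: for each $m \geq 0$,
\[
BPGL\langle 1 \rangle^{\langle m \rangle} \wedge \overline{BPGL\langle 1 \rangle} \;\simeq\; \bigvee_{k \geq 1} \Sigma^{2k(p-1),\,k(p-1)}\, BPGL\langle 1 \rangle^{\langle m + \nu_p(k!) \rangle} \;\vee\; V_m,
\]
where $V_m$ is again a wedge of suspensions of $H\mathbb{F}_p$. This should follow by re-running the proof of \Cref{thm:BPGL1SplittingAlpha} with $BPGL\langle 1 \rangle^{\langle m \rangle}$ in place of $BPGL\langle 1 \rangle$: their mod-$p$ motivic cohomologies agree up to a shift in Adams filtration, so the motivic Margolis homology, motivic Whitehead theorem, and $\mathcal{E}(1)_p^\vee$-comodule arguments for the integral motivic Brown--Gitler comodules all carry through with Adams-cover indices shifted uniformly by $m$. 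Iterating this analogue along
\[
BPGL\langle 1 \rangle \wedge \overline{BPGL\langle 1 \rangle}^{\wedge n} \;\simeq\; \bigl(BPGL\langle 1 \rangle \wedge \overline{BPGL\langle 1 \rangle}\bigr) \wedge \overline{BPGL\langle 1 \rangle}^{\wedge (n-1)},
\]
and using that $H\mathbb{F}_p \wedge \overline{BPGL\langle 1 \rangle}$ is itself a wedge of suspensions of $H\mathbb{F}_p$ to absorb the $V_m$ pieces at each stage, produces a splitting of $BPGL\langle 1 \rangle \wedge \overline{BPGL\langle 1 \rangle}^{\wedge n}$ indexed by $\mathcal{I}_n$ with Adams-cover exponent $m_I = \nu_p(k_1!) + \cdots + \nu_p(k_n!)$ for $I = (k_1,\ldots,k_n)$. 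Taking $\pi_{*,*}^F$ and invoking \Cref{thm:BPGL1homotopyCooperations} on each Adams-cover summand then yields the stated description of $E_1^{*,n,*}$.

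The main obstacle is establishing the Adams-cover analogue of the splitting cleanly: one must carefully track the shifted Adams filtration through the motivic Margolis homology, Whitehead-theorem, and integral Brown--Gitler comodule arguments used to prove \Cref{thm:BPGL1SplittingAlpha}. Once this lemma is in place, the remainder of the induction is essentially bookkeeping, with the $W_I$ summands aggregating the various wedges of suspensions of $\pi_{*,*}^F H\mathbb{F}_p$ picked up at each stage of the iteration.
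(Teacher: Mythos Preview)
Your approach is workable but takes a different route from the paper's, which is more direct. The paper does not derive this corollary via spectrum-level splittings at all: it computes the $E_2$-page of $\textbf{mASS}_p^F(BPGL\langle 1 \rangle \wedge \overline{BPGL\langle 1 \rangle}^{\wedge n})$ directly, using the K\"unneth isomorphism and the lightning-flash decomposition of $H_{*,*}\overline{BPGL\langle 1 \rangle}$ to write the $E_2$-page as a sum over $\mathcal{I}_n$ of terms $\text{Ext}_{\mathcal{E}(1)^\vee_p}\bigl(\mathbb{M}_p,\, L_p(\nu_p(k_1!)) \otimes \cdots \otimes L_p(\nu_p(k_n!))\bigr)$ plus filtration-zero pieces, and then transfers the differential analysis from the proof of \Cref{thm:BPGL1homotopyCooperations} verbatim. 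The spectrum-level splitting of the $n$-line is established separately and afterwards, via uniqueness of Adams covers rather than by your inductive bootstrap. Your route buys a spectrum-level statement along the way, but at the cost of re-running the full relative-Adams-spectral-sequence machinery at each inductive step; the paper's homological route reaches the homotopy-group statement in one pass.

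One caution on your key lemma: the claim that the mod-$p$ cohomologies of $BPGL\langle 1 \rangle$ and $BPGL\langle 1 \rangle^{\langle m \rangle}$ ``agree up to a shift in Adams filtration'' is not correct as stated. The relative homologies are $\mathbb{M}_p$ and $L_p(m)$ respectively, which are genuinely different $\mathcal{E}(1)^\vee_p$-comodules, so the proof of \Cref{thm:BPGL1SplittingAlpha} does not simply re-run with indices shifted. What actually makes your Adams-cover analogue go through is the stable equivalence $L_p(m) \otimes L_p(k) \simeq L_p(m+k) \oplus (\text{free})$ of $\mathcal{E}(1)^\vee_p$-comodules, which follows from the Margolis-homology Whitehead theorem; with this in hand, either the relative Adams spectral sequence argument with $L_p(m)$ in the first slot (using the $\text{Ext}_{\mathcal{E}(1)^\vee_p}(L_p(k), L_p(m'))$ computations already in the paper) or uniqueness of Adams covers finishes the job.
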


This describes the $n$-line of the spectral sequence as a module over the $0$-line. %We also have a spectrum-level splitting.

We also record an analogous spectrum level splitting in the $BPGL\langle0\rangle$ case. 

\begin{theorem}[\Cref{thm:bpgl0coopalgebra}]
    Let $F \in \{\mathbb{C}, \mathbb{R}, \mathbb{F}_q\}$ and let $p$ be any prime, where $\textup{char}(\mathbb{F}_q) \neq p$. There is a splitting of $p$-complete motivic spectra
    \[BPGL \langle 0 \rangle \wedge BPGL \langle 0 \rangle \simeq BPGL \langle 0 \rangle \vee V\]
    where $V$ is a wedge of suspensions of $H\mathbb{F}_p$.
\end{theorem}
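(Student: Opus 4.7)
The plan follows the strategy used for the harder splitting in \Cref{thm:BPGL1SplittingAlpha}, but is considerably simpler because no Adams cover summands appear and the algebraic decomposition is much cleaner.

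First, I would identify the motivic mod-$p$ homology of $BPGL\langle 0 \rangle$. Since $BPGL\langle 0 \rangle$ is the motivic lift of $H\mathbb{Z}_{(p)}$, the cofiber sequence
\[
BPGL\langle 0 \rangle \xrightarrow{p} BPGL\langle 0 \rangle \to H\mathbb{F}_p
\]
together with the algebra structure should give an isomorphism $H_{*,*}(BPGL\langle 0 \rangle; \mathbb{F}_p) \cong \mathcal{A}^{\vee}_{*,*} \square_{\mathcal{E}(\tau_0)^{\vee}} \mathbb{F}_p$ of $\mathcal{A}^{\vee}_{*,*}$-comodules, where $\mathcal{E}(\tau_0)^{\vee}$ is the quotient Hopf algebra of $\mathcal{A}^{\vee}_{*,*}$ dual to the exterior subalgebra $\mathcal{E}(Q_0) \subset \mathcal{A}$ generated by the motivic Bockstein. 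This is the motivic analogue of the standard identification of $H\mathbb{F}_p{}_* H\mathbb{Z}_{(p)}$.

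Second, compute the $\mathcal{A}^{\vee}_{*,*}$-comodule structure of $H_{*,*}(BPGL\langle 0 \rangle \wedge BPGL\langle 0 \rangle; \mathbb{F}_p)$ by applying change-of-rings and decomposing the resulting comodule. Classically this decomposes as a copy of $H_{*,*}(BPGL\langle 0 \rangle; \mathbb{F}_p)$, corresponding to the $BPGL\langle 0 \rangle$-module summand, together with a co-free $\mathcal{A}^{\vee}_{*,*}$-comodule realized as a direct sum of shifts of $\mathcal{A}^{\vee}_{*,*}$ itself. The motivic version should carry through once one tracks the bidegrees of the generators of this co-free piece, and verifies the Bockstein acts motivically as expected over each of $\mathbb{C}$, $\mathbb{R}$, and $\mathbb{F}_q$.

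Third, lift the algebraic splitting to spectra. I would construct a map
\[
BPGL\langle 0 \rangle \vee V \longrightarrow BPGL\langle 0 \rangle \wedge BPGL\langle 0 \rangle
\]
whose first component is the map induced by the unit on one of the smash factors, and whose second component is a wedge of maps from suspensions of $H\mathbb{F}_p$ realizing a basis of comodule primitives for the complementary co-free summand (using that $[\Sigma^{a,b} H\mathbb{F}_p, BPGL\langle 0 \rangle \wedge BPGL\langle 0 \rangle]$ detects such primitives). Applying the motivic Whitehead theorem for $Q_0$-Margolis homology developed in this paper then upgrades the $H\mathbb{F}_p$-homology isomorphism to a $p$-complete equivalence, since both sides have the same $Q_0$-Margolis homology (a single motivic generator coming from the $BPGL\langle 0 \rangle$ summand, since the $H\mathbb{F}_p$-summands have trivial $Q_0$-Margolis homology).

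The main obstacle will be in controlling the comodule decomposition over $\mathbb{R}$ and $\mathbb{F}_q$, where $\mathcal{A}^{\vee}_{*,*}$ contains extra base-field classes (such as $\rho$ or $\tau$-style contributions) that could in principle obstruct a clean splitting of the tensor-square into one $H\mathbb{Z}$-type summand plus a co-free piece. However, since only the single Milnor primitive $\tau_0$ is involved and the Whitehead theorem requires verification of only one Margolis homology, these complications should not create genuine obstructions, and the splitting should follow by the same reasoning used in the $BPGL\langle 1 \rangle$ case but with the Adams-cover bookkeeping collapsed.
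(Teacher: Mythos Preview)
Your outline differs from the paper's argument in a key structural way, and the proposed final step misapplies the Whitehead theorem developed here.

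The paper does not work with absolute $H\mathbb{F}_p$-homology and $\mathcal{A}^\vee$-comodules as you propose. Instead it passes to the category of $BPGL\langle 0\rangle$-modules and uses $BPGL\langle 0\rangle$-\emph{relative} homology, whose governing Hopf algebroid is the much smaller $\mathcal{E}(0)^\vee_p$ (\cref{prop:relHomology}). The relative homology $H_{*,*}^{BPGL\langle 0\rangle}(BPGL\langle 0\rangle\wedge BPGL\langle 0\rangle)\cong H_{*,*}BPGL\langle 0\rangle$ decomposes via the Brown--Gitler comodules $B_{-1}(k)$: the $k=0$ piece is $\mathbb{M}_p$, and every $B_{-1}(k)$ with $k\ge 1$ is free and injective over $\mathcal{E}(0)^\vee_p$ (\cref{prop:BGfreeinjective}). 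This algebraic isomorphism is then lifted to a spectrum map by showing that the corresponding class in degree $(0,0,0)$ of the relative Adams spectral sequence
\[
\Ext_{\mathcal{E}(0)^\vee_p}^{s,f,w}\bigl(H_{*,*}^{BPGL\langle 0\rangle}(BPGL\langle 0\rangle\vee V),\,H_{*,*}^{BPGL\langle 0\rangle}(BPGL\langle 0\rangle\wedge BPGL\langle 0\rangle)\bigr)\Longrightarrow [BPGL\langle 0\rangle\vee V,\,BPGL\langle 0\rangle\wedge BPGL\langle 0\rangle]^{BPGL\langle 0\rangle}
\]
is a permanent cycle. The $E_2$-page splits as $\Ext_{\mathcal{E}(0)^\vee_p}(\mathbb{M}_p,\mathbb{M}_p)$ plus a summand concentrated in filtration zero, and one argues exactly as in the analysis of $\textbf{mASS}_p(BPGL\langle 0\rangle)$ that nothing in degree $(0,0,0)$ supports a differential.

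Your step 3 is where the actual content lies, and as written it is a gap: you assert that maps $\Sigma^{a,b}H\mathbb{F}_p\to BPGL\langle 0\rangle\wedge BPGL\langle 0\rangle$ exist realizing the comodule primitives, but producing such maps is precisely what requires a spectral-sequence argument. The paper's relative Adams spectral sequence handles the existence of the map and the verification that it realizes the homology isomorphism in one stroke. Your step 4 then compounds the problem: the Whitehead theorem of \cref{prop:MotivicWhitehead} is an \emph{algebraic} statement about $\mathcal{E}(1)_p$-module maps (used in \cref{prop:LightningFlashBGiso} to identify comodules), not a spectrum-level statement. It does not upgrade homology isomorphisms of spectra to equivalences. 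Moreover, if in step 3 you had already produced a map inducing an $H\mathbb{F}_p$-homology isomorphism, no Margolis argument is needed at all---an $H\mathbb{F}_p$-homology isomorphism of bounded-below, finite-type spectra is already a $p$-complete equivalence.
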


As in the $BPGL\langle 1 \rangle$-case, we also compute the homotopy ring of cooperations (\cref{thm:BPGL0homotopyringcoops}). We remark that a model for $BPGL\langle 0 \rangle$ is the ($p$-local) integral motivic cohomology spectrum $H\mathbb{Z}$, and so our results may be interpreted as a motivic lift of Kochman's results \cite{Kochman82}.

Finally, we show that the mod-$p$ homology of all truncated Brown--Peterson spectra splits as an $\mathcal{E}(n)^\vee_p$-comodule into a wedge of sums of Brown--Gitler comodules

\begin{theorem}
    For all $n \geq 0$, There is an $\mathcal{E}(n)^\vee_p$-comodule isomorphism
    \[H_{*,*}BPGL \langle n \rangle \cong \bigoplus_{k \geq 0}\Sigma^{2k(p-1), k(p-1)}B_{n-1}(k).\]
\end{theorem}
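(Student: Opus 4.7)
The plan is to lift the classical Kane--Mahowald decomposition of $H_*BP\langle n \rangle$ as an $\mathcal{E}(n)^\vee$-comodule to the motivic setting. Since this is fundamentally an algebraic splitting of a polynomial-exterior algebra, it should carry over once the relevant motivic coaction formulas are in hand, even without motivic Brown--Gitler spectra.

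The first step is to give an explicit presentation of $H_{*,*}BPGL\langle n \rangle$ as an $\mathcal{A}^\vee$-comodule over the base field $F$. Paralleling the classical picture, this should take the form of a polynomial algebra on generators $\bar{\xi}_i$ tensored with an exterior algebra on $\bar{\tau}_j$ for $j \geq n+1$ (with the usual modifications at $p=2$), carrying motivic bidegrees $|\bar{\xi}_i| = (2(p^i-1),\, p^i - 1)$ and $|\bar{\tau}_j| = (2p^j - 1,\, p^j - 1)$. Restricting the coaction to $\mathcal{E}(n)^\vee_p$, the relevant primitives $\bar{\tau}_0, \dots, \bar{\tau}_n$ act on the $\bar{\xi}_i$ via the motivic analogues of Milnor's formulas, with the only novel feature being the appearance of the motivic element $\tau$ at $p = 2$.

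The second step is to define the motivic Brown--Gitler comodule $B_{n-1}(k)$ as the weight $\leq k$ subspace of $H_{*,*}BPGL\langle n-1 \rangle$ under the length filtration assigning $\bar{\xi}_i$ weight $p^{i-1}$ and $\bar{\tau}_j$ weight $p^{j-1}$. The motivic coaction formulas preserve this filtration, so $B_{n-1}(k)$ inherits an $\mathcal{E}(n-1)^\vee_p$-comodule structure, and by further restriction an $\mathcal{E}(n)^\vee_p$-comodule structure. The splitting itself is then defined by identifying the span of weight-exactly-$k$ monomials in the $\bar{\xi}_i$-factors of $H_{*,*}BPGL\langle n \rangle$ with the shifted summand $\Sigma^{2k(p-1),\, k(p-1)} B_{n-1}(k)$; the bidegree shift is forced because a weight-$k$ monomial in the $\bar{\xi}_i$ contributes exactly $(2k(p-1),\, k(p-1))$ to the total bidegree.

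The main obstacle is verifying that the resulting bigraded isomorphism is genuinely an isomorphism of $\mathcal{E}(n)^\vee_p$-comodules, rather than just of bigraded vector spaces. This reduces to checking, on each weight component, that the $\mathcal{E}(n)^\vee_p$-coaction respects the direct-sum decomposition. I expect this to follow from the same combinatorial analysis of Milnor's coaction formulas as in the classical argument, with additional bookkeeping needed to track how the motivic element $\tau$ interacts with the weight filtration, particularly in the $p=2$ case where the relation $\bar{\tau}_i^2 = \tau \bar{\xi}_{i+1}$ (plus lower terms) has the potential to mix weights. Once this compatibility is established on generators, the splitting on all of $H_{*,*}BPGL\langle n \rangle$ follows by multiplicativity and induction on total weight.
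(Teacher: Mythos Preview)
Your plan coincides with the paper's proof, which simply observes that the classical argument carries over verbatim and cites Culver. Your outline is that classical argument, so in spirit the approaches match; but two of your concrete steps would fail as written. First, with your normalization $\wt(\bar\xi_i)=\wt(\bar\tau_i)=p^{i-1}$ and $B_{n-1}(k)$ the weight-$\le k$ piece, the count is already wrong at $n=1$, $k=1$: your $B_0(1)=\{1,\bar\xi_1,\bar\tau_1\}$ places a class of $\Sigma^{2(p-1),p-1}B_0(1)$ in bidegree $(4p-3,\,2p-2)$, where $H_{*,*}BPGL\langle 1\rangle$ is zero. The convention that makes the stated decomposition true uses $\wt(\bar\xi_i)=p^i$ with $B_{n-1}(k)$ the weight-$\le k$ piece (so that $B_0(k)=\{1\}$ for $0\le k<p$), and the $k$th summand then matches the weight-exactly-$pk$ subcomodule of $H_{*,*}BPGL\langle n\rangle$. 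Second, your bidegree justification ``a weight-$k$ monomial in the $\bar\xi_i$ contributes $(2k(p-1),k(p-1))$'' is false beyond $\bar\xi_1^k$: already $\bar\xi_2$ has your weight $p$ but bidegree $(2(p^2-1),\,p^2-1)$. The actual identification with a shifted Brown--Gitler comodule is not a projection onto the $\bar\xi$-part; it strips the maximal power of $\bar\xi_1$ and shifts the remaining generators down by one index ($\bar\xi_i\mapsto\bar\xi_{i-1}$, $\bar\tau_j\mapsto\bar\tau_{j-1}$), and one then checks this intertwines the $\mathcal{E}(n)^\vee_p$-coactions.

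Your worry about the $p=2$ relations is unnecessary: the coefficients $\tau,\rho\in\mathbb{M}_2^F$ carry weight zero, and in $H_{*,*}BPGL\langle n\rangle$ for $n\ge 0$ (where $\bar\tau_0$ is absent) the surviving relation $\bar\tau_i^2=\tau\bar\xi_{i+1}+\rho\bar\tau_{i+1}$ is homogeneous of weight $2^{i+1}$, so no mixing occurs.
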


Our results in motivic homology allow for one to access heigher height invariants. In particular, combined with the K\"unneth isomorphism for the mod-p homology of truncated motivic Brown--Peterson spectra \cref{prop:BPGLnKunneth} and a change-of-rings isomorphism, this allows one to decompose the $E_2$-page of the motivic Adams spectral sequence computing $\pi_{*,*}^F(BPGL\langle n \rangle \wedge BPGL \langle n \rangle)$ for any $n \geq 0$.

\subsection{New directions}

\subsubsection{$BPGL\langle 1 \rangle$-based motivic Adams spectral sequence}
This paper begins the study of the $BPGL \langle 1 \rangle$-based Adams spectral sequence. We expect this spectral sequence to be useful for a myriad of purposes, some of which we outline below.

First, our thorough description of the $E_1$-page of the $BPGL\langle 1 \rangle$-based motivic Adams spectral sequence gives us much control on its $v_1$-periodic and $v_1$-torsion summands. Using techniques similar to those used in \cite{BBBCX} will allow for large scale computations of the motivic stable stems. The $\mathbb{C}$ and $\mathbb{R}$-motivic stable stems have been well studied at the prime 2 (see \cite{IsaWanXu23,BelIsa22}), but not as much work has been done at odd primes over any field. We expect this spectral sequence will make new contributions to odd primary motivic stable stem computations and extend \O stv\ae r--Wilson's computations of $\pi_{*,*}^{\mathbb{F}_q}\mathbb{S}$ over finite fields \cite{WO-finite} at the prime 2.

Second, as is true in the classical case \cite{Gonzalez00}, we expect that the $BPGL\langle 1 \rangle$-based motivic Adams spectral sequence has a vanishing line which allows one to identify $v_1$-periodic elements in $\pi_{*,*}^F\mathbb{S}$. At the prime 2, one may compare these elements with those detected by the $kq$-resolution. At odd primes, less is known about $v_1$-periodicity in motivic homotopy theory. Similar to the work of Bachmann--Hopkins at the prime 2 \cite{BH21}, one can show that there are motivic Adams operations $\psi^n$ on $BPGL \langle 1 \rangle$ for all $n \in \mathbb{Z}_p^\times$. Let $q$ be a topological generator for $\mathbb{Z}_p^\times$. Motivated by \cite{BruRog22}, define the odd primary image of J spectrum $j$ by the fiber sequence
\[j \to BPGL\langle 1 \rangle \xrightarrow{\psi^q-1}\Sigma^{2(p-1), p-1}BPGL\langle 1 \rangle.\]
The motivic homotopy groups of $j$ are closely related to the $v_1$-periodicicity in $\pi_{*,*}^F\mathbb{S}$. We expect that $\pi_{*,*}^Fj$ appears in the 0- and 1-line of the $E_\infty$-page of the $BPGL\langle 1 \rangle$-based Adams spectral sequence.

Finally, the $BPGL\langle 1 \rangle$-based motivic Adams spectral sequence will be a useful tool in an analysis of motivic analogues of the height 1 telescope conjecture. At each height, the telescope conjecture compares two different localizations of the stable homotopy category. Though the classical telescope conjecture is false at all primes and all heights $n \geq 2$ \cite{BHLS23}, at the height 1 it was shown to be true by comparing $\pi_*(v_1^{-1}\mathbb{S}/p)$ and $\pi_{*}L_{KU/p}\mathbb{S}/p$, where $KU$ is the complex $K$-theory spectrum\cite{Mahowald81, Miller81}. In particular, Mahowald used the $ko$-based Adams spectral sequence to compute $\pi_*(v_1^{-1}\mathbb{S}/p)$ at the prime $p=2$. We expect that the $BPGL\langle 1\rangle$-Adams spectral sequence will play an analogous role at odd primes in the motivic setting, allowing us to compare $\pi_{*,*}^F(\mathbb{S}/p)$ and $\pi_{*,*}^FL_{KGL/p}\mathbb{S}/p$, as $BPGL\langle 1 \rangle$ is a summand of the $p$-local algebraic $K$-theory spectrum.

\subsubsection{Generalization to other base schemes}
The methods developed in this paper may be applied to compute the homotopy ring of cooperations $\pi_{*,*}^F(BPGL\langle 1 \rangle \wedge BPGL \langle 1 \rangle)$ and produce a splitting of the cooperations algebra $BPGL \langle 1 \rangle \wedge BPGL \langle 1 \rangle$ over any base field, so long as one has a computation of $\text{Ext}_{\mathcal{E}(1)^\vee_p}^{s,f,w}(\mathbb{M}_p^F, \mathbb{M}_p^F)$. A natural place to extend these computations, then, is over the $p$-adic rationals $\mathbb{Q}_p$ and the rationals $\mathbb{Q}$, as the aforementioned Ext computations were performed by Ormsby \cite{Ormsby11} and Ormsby--\O stv\ae r \cite{OrmOst13}. 
 
In an orthogonal direction, there is an interesting technique that allows one to compute motivic homotopy groups without any knowledge of homological algebra over the dual Steenrod algebra, i.e. circumventing the Adams spectral sequence. For any prime $\ell \neq p$ such that $\ell \not\equiv 1\, (p^2)$, and any motivic ring spectrum $E \in SH(\mathbb{Z}[1/p])$, work of Bachmann--\O stv\ae r \cite{BO22} produces a pullback of (ordinary, $p$-complete) spectra 
\[\begin{tikzcd}
	{E(\mathbb{Z}[1/p])} & {E(\mathbb{R})} \\
	{E(\mathbb{F}_\ell)} & {E(\mathbb{C})}
	\arrow[from=1-1, to=1-2]
	\arrow[from=1-1, to=2-1]
	\arrow["\lrcorner"{anchor=center, pos=0.125}, draw=none, from=1-1, to=2-2]
	\arrow[from=1-2, to=2-2]
	\arrow[from=2-1, to=2-2]
\end{tikzcd}\]
where $E(F) = \text{map}_{F}(\mathbb{S}, E)$ is the ordinary spectrum of maps from the $F$-motivic sphere to the pullback of $E$ along the canonical map $\mathbb{Z}[1/p] \to F$. Our results allow for one to use this square to  deduce the $E_1$-page of the $\mathbb{Z}[1/p]$-motivic $BPGL\langle 1 \rangle$-based Adams spectral sequence. This enables one to study $v_1$-periodicity over a more arithmetic class of base schemes. This is particularly enticing as there are currently no computations in homological algebra over the $\mathbb{Z}[1/p]$-motivic Steenrod algebra. It would also be interesting to use a variant of this square to generalize the spectrum-level splittings we produce.

We also anticipate that there is a technique to compute the homotopy ring of cooperations $\pi_{*,*}^F(BPGL\langle 1 \rangle \wedge BPGL \langle 1 \rangle )$ over a broad class of base fields of small cohomological dimension using slice theoretic techniques. These techniques would eschew the Adams spectral sequence from our analysis, and would instead use the effective slice spectral sequence in a manner similar to Ormsby-\O stv\ae r \cite{OrmOst14}.

\subsubsection{Higher heights}
One of the implicit takeaways of our results is that the $E_1$-page of the $BPGL\langle 1 \rangle$-based motivic Adams spectral sequence is only slightly more complicated than its classical analogue, with complications determined by the motivic cohomology of the base field with integral coefficients. We expect this thread of ideas to extend to higher chromatic heights. In particular, moving up to height 2, we expect that much of the prior work on the $BP \langle 2 \rangle$-based Adams spectral sequence (for example, see \cite{culver-bp2coop,CulverOddp20}) generalizes quite nicely to the $BPGL \langle 2 \rangle$-based motivic Adams spectral sequence. This spectral sequence would offer a glimpse into $v_2$-periodicity in the motivic stable stems. For $F=\mathbb{R}$ and $p=2$, we expect that these computations would also inform one about $v_2$-periodicity in the $C_2$-equivariant stable stems.

\subsubsection{Exotic Periodicity}
As another implication of our work, we see that generalized motivic Adams spectral sequences are highly computable tools which allow one to access $v_n$-periodicity. We describe now a related approach that captures richer structure in $\pi_{*,*}^\mathbb{C}\mathbb{S}$.

An interesting feature of motivic homotopy theory is the failure of Nishida's nilpotence theorem. Classically, Nishida's theorem informs us that any positive degree element of $\pi_*\mathbb{S}$ is nilpotent. Motivically, we see that this fails drastically. For instance, the Hopf map $\eta \in \pi_{1,1}\mathbb{S}$ is non-nilpotent \cite{MorelKMW}, leading to interesting $\eta=:w_0$-periodicity in the motivic stable stems. Now, let $F=\mathbb{C}$ and $p=2$. It was shown by Andrews \cite{Andrews18} that there is a self map of the cofiber of $\eta$ of the form
\[w_1^4:\Sigma^{20,12}\mathbb{S}/\eta \to \mathbb{S}/\eta.\]
Parallel to how the $v_1^4$-self map of $\mathbb{S}/2$ allows one to study $v_1$-periodicity, this $w_1$-self map allows one to study $w_1$-periodic phenomena in $\pi^\mathbb{C}_{*,*}\mathbb{S}$.

$\mathbb{C}$-motivic homotopy theory also benefits from an interpretation via $BP$-synthetic spectra \cite{Pst23}. There is an element $\tau \in \pi^{\mathbb{C}}_{0, -1}\mathbb{S}$ which acts as a deformation parameter between classical homotopy theory, cellular $\mathbb{C}$-motivic homotopy theory, and the category of derived even $BP_*BP$-comodules. This can be summarized by the following diagram. 
\[\begin{tikzcd}
	Sp & {SH(\mathbb{C})^{cell}} & {D^{even}(BP_*BP).}
	\arrow["{\cdot\tau^{-1}}"', from=1-2, to=1-1]
	\arrow["{-\wedge \mathbb{S}/\tau}", from=1-2, to=1-3]
\end{tikzcd}\]
This recasts the category of $\mathbb{S}/\tau$-modules as an algebraic category related to classical homotopy theory, as we may interpret any $BP_*BP$-comodule as a $\mathbb{C}$-motivic spectrum over $\mathbb{S}/\tau$. This is motivated by the close relationship with the classical Adams--Novikov spectral sequence shown by Isaksen \cite{Isaksen19}, in that there is an isomorphism
\[\pi_{s,w}^{\mathbb{C}}\mathbb{S}/\tau \cong \text{Ext}^{2w-s, 2w}_{BP_*BP}(BP_*, BP_*).\]

In \cite{Gheorghe18}, Gheorghe used this approach to construct $\mathbb{S}/\tau$-modules denoted $wBP\langle n \rangle$ whose homotopy groups are
\[\pi^{\mathbb{C}}_{*,*}wBP\langle n\rangle = \mathbb{F}_2[w_0, w_1, \dots, w_n].\]
We may treat these spectra in a similar way to how we use the spectra $BPGL \langle n \rangle$. There is a $\mathbb{S}/\tau$-linear $wBP\langle n \rangle$-based motivic Adams spectral sequence, completely internal to the category of $\mathbb{S}/\tau$-modules, which takes the form
\[E_1^{s,f,w} = \pi^{\mathbb{C}}_{s+f, w}(wBP\langle n \rangle \wedge_{\mathbb{S}/\tau} wBP\langle n \rangle^{\wedge_{\mathbb{S}/\tau}f}) \implies \pi^{\mathbb{C}}_{s,w}\mathbb{S}/\tau.\]
The analysis of this spectral sequence gives another way to compute the $E_2$-page of the classical Adams--Novikov spectral sequence. In particular, just as the $BPGL\langle n \rangle$-motivic Adams spectral sequence detects $v_1$-periodicity in the stable stems, we expect the 0- and 1-lines of the $wBP\langle n \rangle$-based $\mathbb{S}/\tau$-linear motivic Adams spectral sequence to detect $w_n$-periodicity in the cofiber of $\tau$, and hence in the Adams--Novikov. By pulling back along the quotient map $\mathbb{S} \to \mathbb{S}/\tau$, one can then lift these periodic classes to the $\mathbb{C}$-motivic stable stems. Moreover, we expect this spectral sequence to be simpler to compute with than the $BPGL \langle n \rangle$-based motivic Adams spectral sequence, as is evident from the simpler form that the homotopy groups of $wBP\langle n \rangle$ take.

In the case of $n=1$, the $w_1$-periodicity detected by the $\mathbb{S}/\tau$-linear $wBP\langle 1 \rangle$-based motivic Adams spectral sequence can be compared with the $w_1$-periodic families studied by Isaksen--Kong--Li--Ruan--Zhu \cite{IsaKonLiYuaZhu25}. For $n \geq 2$, this technique offers a first glimpse into higher exotic periodicity.

\subsection{Organization}
In \Cref{sec:motivic prelim}, we recall salient facts in motivic homotopy theory. We review the motivic spectra $BPGL \langle n \rangle$, discuss the dual Steenrod algebra, relative homology, and various Adams spectral sequences which feature in our arguments. In \Cref{sec:motivic homology}, we make several homological computations. First, we compute the $BPGL \langle 0 \rangle$- and $BPGL \langle 1 \rangle$-relative dual Steenrod algebras and deduce a K\"unneth isomorphism for the homology of truncated motivic Brown--Peterson spectra. Then, we interpret the $\mathcal{E}(n)^\vee_p$-comodule structure of $H_{*,*}BPGL \langle n \rangle$ in terms of Brown--Gitler comodules, prove a Whitehead theorem for Margolis homology over $\mathcal{E}(1)_p$, and introduce motivic lightning flash modules. In \Cref{sec:BPGL0}, we compute the homotopy ring of cooperations $\pi_{*,*}(BPGL \langle 0 \rangle \wedge BPGL \langle 0 \rangle)$ and construct a spectrum-level splitting of the cooperations algebra. In \Cref{sec:BPGL1}, we compute the homotopy ring of cooperations $\pi_{*,*}(BPGL \langle 1 \rangle \wedge BPGL \langle 1 \rangle)$ and construct a spectrum-level splitting of the cooperations algebra in terms of motivic Adams covers for $BPGL \langle 1 \rangle$. Then, we apply our results to compute the $E_1$-page of the $BPGL\langle 1 \rangle$-based motivic Adams spectral sequence.

\subsection{Notation}

\begin{enumerate}
    \item Let $H$ be the spectrum representing motivic cohomology with $\mF_p$ coefficients.
    \item Let $\mathbb{M}_p^F:= \pi_{*,*}^FH$. When clear from context, we will omit the superscript $F$.
    \item Let $\cA_p^\vee$ denote the motivic dual Steenrod algebra, and let $\cE(n)_p^\vee$ its associated subalgebras. Generally, the field $F$ will be clear from context.
    \item Let $A^\vee_{p}$ denote the topological dual Steenrod algebra, and let $E(n)^\vee_{p}$ denote its associated subalgebras.
    \item All $\Ext$ groups are graded via stem, Adams filtration, and motivic weight, denoted $(s,f,w)$.
    \item For $B$ any $\mathbb{M}_p$-algebra, the $\Ext$ group $\text{Ext}^{s,f,w}_B(M)$ is shorthand for \\   
    $\text{Ext}_B^{s,f,w}(\mathbb{M}_p, M)$.
    \item All charts are written in $(s,f)$ grading with motivic weight suppressed.
    \item Motivic lightning flash modules (\cref{def: lightning}) are denoted $L_p(k)$. Classical topological lightning flash modules are denoted $L^{cl}_p(k)$.
    \item Let $\doteq$ denote equivalence up to a unit in $\mathbb{M}_p$.
    \item Let $\textbf{mASS}_p^F(X)$ denote the $F$-motivic $H\mathbb{F}_p$-based motivic Adams spectral sequence for a spectrum $X$. When a statement is made agnostic of base field, we will omit the superscript $F$.
\end{enumerate}

\subsection*{Acknowledgments} The authors would like to thank Mark Behrens, Christian Carrick, Mike Hill, Guchuan Li, Kyle Ormsby, John Palmieri, J.D. Quigley, John Rognes, and Vesna Stojanoska for enlightening conversations. The first author would like to thank Lorelei for inspiration \cite{ThisIsLorelei}. The second and third authors would like to thank the Isaac Newton Institute for Mathematical Sciences, Cambridge, for support and hospitality during the programme Equivariant homotopy theory in context, where work on this paper was undertaken. This work was supported by EPSRC grant EP/Z000580/1. 

\section{Motivic preliminaries} 
\label{sec:motivic prelim}
In this section, we review stable motivic homotopy theory. We introduce the motivic spectra which feature in our work and discuss the complexities that arise in their homotopy groups as one varies the base field. This leads to different structure in the motivic homology of a point and the dual Steenrod algebra depending on choice of base field. Following this, we discuss the $BPGL \langle 1 \rangle$-based and $H\mathbb{F}_p$-based motivic Adams spectral sequences. To close this section, we introduce relative homology and the relative Adams spectral sequence in the motivic setting.

% Let $S$ be a Noetherian (separated) scheme of finite (Krull) dimension $d$, essentially smooth over a field or a Dedekind domain, and let $p$ be an odd prime that is invertible on $S$.
\subsection{Motivic spectra and the dual Steenrod algebra}
For $S$ a scheme, we let $SH(S)$ be Voevosdsky's motivic stable homotopy category \cite[Def. 5.7]{Voevosdsky98} \cite{Jardine00}. Recall that this is triangulated, and has a compatible closed symmetric monoidal structure given by the motivic sphere spectrum $\mS = \Sigma^\infty S_+$, the smash product pairing $- \wedge -$, the twist isomorphism $\gamma$ and the function spectrum $F(-,-)$. As we will be primarily be working over affine schemes of the form $S = \text{Spec}(F)$, we will instead use the notation $SH(F):= SH(\text{Spec}(F))$. We will refer to $F$ as the base field. 

Let $S^{p,q}=(S^1)^{\wedge p-q} \wedge (\mathbb{A}^1 - 0 )^{\wedge q}$, where $S^1$ denotes the constant simplicial circle and $(\mathbb{A}^1 - 0)$ is the image of the smooth $F$-scheme under the Yoneda embedding. For $x \in \pi_{p,q} (X)$, where $X$ is a motivic spectrum, we refer to $p$ as the topological or stem degree and $q$ as the weight of $x$. 

Let $H = H \mF_p$ be the motivic Eilenberg--MacLane spectrum representing motivic cohomology with coefficients in $\mF_p$. It is a commutative ring spectrum, with unit map $\eta: \mS \to H$ and product $\mu: H \wedge H \to H$. 

% Moreover, $H$ is cellular, that is, an iterated homotopy colimit of stable motivic spheres \cite[Prop. 8.1]{Hoyois15} \cite[Cor. 10.4]{Spitzweck18}. 

Let $\mathbb{M}_p: = \pi_{*,*} (H) = H^{-*,-*}$ denote the mod-$p$ motivic homology and cohomology groups of the base field $F$. When working over a particular ring $F$, we will denote this by $\mathbb{M}_p^F.$ Note that the cup product induced by $\mu$ gives $\mathbb{M}_p$ the structure of a bigraded commutative $\mF_p$-algebra. The calculation of $\mathbb{M}_p^F$ for $F$ a field is due to Voevosdky and is deeply related to Milnor-Witt $K$-theory \cite{Voemotiviccohomology}. We give a few examples below, citing references for our particular choice of notation.
\begin{example}
\rm For $F = \mathbb{C}$, we have $\mathbb{M}_p^\mathbb{C} = \mathbb{F}_p[\tau]$, where $|\tau|=(0, -1)$ for all primes \cite{Voereduced}.
\end{example}
\begin{example}
\rm For $F=\mathbb{R}$, we have $\mathbb{M}_2^\mathbb{R} = \mathbb{F}_2[\rho, \tau]$ for $|\rho| = (-1, -1)$ and $|\tau|=(0, -1)$ \cite{Voemotiviccohomology}, while for odd primes we have $\mathbb{M}_p^\mathbb{R} = \mathbb{F}_p[\theta]$, where $|\theta|=(0, -2)$ \cite{GreRog-Segal}.
\end{example}
\begin{example}
\rm For $F = \mathbb{F}_q$ a finite field, the structure of $\mathbb{M}_p^{\mathbb{F}_q}$ is more complicated. For $\textup{char}(\mathbb{F}_q) \neq 2$, let $\tau \in \mu_2(\mathbb{F}_q)$ be a second root of unity. Then, we have
\[\mathbb{M}_2^{\mathbb{F}_q} =  \left\{ \begin{array}{cl}
\mathbb{F}_2[u, \tau]/(u^2) &  q \equiv 1 \, (4)\\
\mathbb{F}_2[\rho, \tau]/(\rho^2) & q \equiv 3 \, (4),
\end{array} \right. \] 
where $|\tau| = (0, -1)$ and $|u|=|\rho| = (-1, -1)$. Here $\rho = [-1] \in  H^{1,1}(\mathbb{F}_q; \mathbb{Z}/2) \cong \mathbb{F}_q^\times/\mathbb{F}_q^{\times 2}$. Note that if $q \equiv 1 \, (4)$, then $\rho =0$.

For $\text{char}(\mathbb{F}_q) \neq p$, let $i$ be the smallest integer such that $p\,|\,q^i-1$, and let $\zeta \in \mu_p(\mathbb{F}_{q^i})$ be a primitve $p^{th}$ root of unity. Then, we have
\[\mathbb{M}_p^{\mathbb{F}_q} =  \left\{\begin{array}{cl}
\mathbb{F}_p[u,\zeta]/(u^2)  & q^i \equiv 1 \, (p^2) \\
\mathbb{F}_p[\gamma, \zeta]/(\gamma^2)  & q^i \not\equiv 1 \, (p^2),
\end{array} \right. \]
where $|\zeta| = (0, -i)$ and $|u| = |\gamma| = (-1, -i)$. Here $\gamma \in H^{1, i}(\mathbb{F}_q; \mathbb{Z}/p) \cong \mathbb{F}_{q^i}^\times/\mathbb{F}_{q^i}^{\times p}$ denotes the image of a primitive $p^{th}$ of unity. Note that if $q^i \equiv 1 \, (p^2)$, then $\gamma=0$ \cite{Wilson16}. Although there are abstract isomorphisms between $\mathbb{M}_p^{\mathbb{F}_q}$ for $q^i \equiv 1 \, (p^2)$ and $q^i \not \equiv 1 \, (p^2)$, we use different notation to distinguish their behavior over the dual Steenrod algebra, which we will investigate below.
\end{example}

% An example of when $q ^i \equiv 1 \, (p^2)$ is when $p=3$ and $q=19$. We have that $3\,|\,19-1=18\,$ hence $i=1$ is the smallest positive integer such that $p \, |\,q^i-1$, and clearly $19 \equiv1 \, (9).$

Let $\cA_p = H^{*,*} H = \pi_{-*,-*} F(H,H)$ denote the motivic Steenrod algebra and let $\cA^\vee_p= \pi_{*,*}(H \wedge H)$ denote its dual. The dual Steenrod algebra $\mathcal{A}_p^\vee$ has the structure of a Hopf algebroid over $\mathbb{M}_p^F$ \cite{Voereduced} and takes the following form for any base field $F$: 
\[
\cA^\vee_p = \mathbb{M}_p^F [\overline{\tau}_0, \overline{\tau}_1, \dots, \overline{\xi}_1, \overline{\xi}_2, \dots] /I,
\]
where $|\overline{\tau}_i| = (2p^i-1, p^i-1)$ and $|\overline{\xi}_i| = (2p^i-2, p^i-1).$ The ideal $I$ of relations is dependent on the base field $F$. In the cases we are concerned with, 
\begin{equation} \label{eq:SteenrodRelIdeal}
    I = \left\{\begin{array}{llrr}
    (\overline{\tau}_i^2) & F=\mathbb{C},\mathbb{R},\mathbb{F}_q&\text{char}(\mathbb{F}_q)\neq p & p>2;\\
    (\overline{\tau}_i^2 = \tau\overline{\xi}_{i+1}) & F = \mathbb{C}, \mathbb{F}_q \, & q \equiv 1 \, (4) &p=2;\\
    (\overline{\tau}_i^2 = \tau\overline{\xi}_{i+1} + \rho \overline{\tau}_{i+1} + \rho \overline{\tau}_0\overline{\xi}_{i+1}) & F = \mathbb{R}, \mathbb{F}_q \, & q \equiv 3 \, (4)&p=2.
\end{array}\right.
\end{equation}
The coproduct $\psi:\mathcal{A}^\vee_p \to \mathcal{A}^\vee_p \otimes_{\mathbb{M}_p}\mathcal{A}^\vee_p$ is given by
\[\psi(\overline{\tau}_k) = 1 \otimes \overline{\tau}_k + \sum_{i+j=k}\overline{\tau}_i \otimes \overline{\xi}_j^{p^i}, \quad \psi(\overline{\xi}_k) = \sum_{i+j=k}\overline{\xi}_i \otimes \overline{\xi}_j^{p^i}.\]
Note that in the odd primary case, $\mathcal{A}^\vee_p$ is equivalent to the base change of the classical odd primary dual Steenrod algebra to $\mathbb{M}_p^F$. 
We remark that in the case that $F = \mathbb{R}$ or $F=\mathbb{F}_q$ for $q \equiv 3 \, (4)$ when $p=2$, and for $F=\mathbb{F}_q$ for $q \not\equiv 1 \, (p^2)$ when $p>2$, the ring $\mathbb{M}_2^F$ is not central in the dual Steenrod algebra $\mathcal{A}_p^\vee$. Indeed, there is a nontrivial action of the Bockstein on motivic cohomology in these cases, which translates to the following formulae in the dual Steenrod algebra:
\[\eta_R(\tau) = \tau + \rho\overline{\tau}_0, \quad \eta_R(\zeta) = \zeta + \gamma\overline{\tau}_0,\]
where $\eta_R:\mathbb{M}_p^F \to \mathcal{A}_p^\vee$ is the Hopf algebroid right unit map \cite{HoyKelOst17,Voemotiviccohomology}. This simple observation makes computation over the dual Steenrod algebra much more difficult in these cases. In all cases, the left unit $\eta_L:\mathbb{M}_p^F \to \mathcal{A}^\vee_p$ is the usual inclusion.

For $n \geq 0$ the quotients $\mathcal{E}(n)_p^\vee$ of the dual Steenrod algebra take the form
\[
\cE(n)^\vee_p = 
\mathbb{M}_p^F[\overline{\tau}_0, \cdots, \overline{\tau}_n]/J,
\]
where
\begin{equation}
\label{eq:EnRelations}
    J = \left\{ \begin{array}{llrr}
   (\overline{\tau}_i^2)  & F =\mathbb{C}, \mathbb{R}, \mathbb{F}_q&\text{char}(\mathbb{F}_q)\neq p&p>2;  \\
   (\overline{\tau}_i^2) & F = \mathbb{C}, \mathbb{F}_q &q \equiv 1 \, (4)&p=2;\\
   (\overline{\tau}_i^2=\rho\overline{\tau}_{i+1}, \overline{\tau}_n^2: 0 \leq 1 \leq n-1)  & F = \mathbb{R}, \mathbb{F}_q&q \equiv 3 \, (4)&p=2.
\end{array} \right.
\end{equation}
In particular, $\mathcal{E}(n)^\vee_p$ is the dual of the exterior subalgebra $\mathcal{E}(n)_p$ of the Steenrod algebra generated by the Milnor primitives $Q_0, \dots, Q_n$.

Let $BPGL$ denote the motivic Brown--Peterson spectrum \cite{HuKriz01,Hoyois15}. One may obtain this spectrum by taking the algebraic cobordism spectrum $MGL$, localizing at a prime $p$, and looking at the image of the motivic Quillen idempotent. In particular, this spectrum depends on a choice of prime. For $n \geq 0$, there exist canonical elements $v_n \in \pi_{*,*}BPGL$ of degree $(2(p^n-1), p^n-1)$, where we set $v_0:=p$. We define the motivic truncated Brown--Peterson spectrum $BPGL \langle n \rangle$ as the quotient by the regular sequence $(v_{n+1}, v_{n+2}, \dots)$:
\[BPGL \langle n \rangle := BPGL/(v_{n+1}, v_{n+2}, \dots).\]
By construction, we are supplied with cofiber sequences
\begin{equation} \label{eq:BPGLcofib}
    \Sigma^{2(p^n-1), p^n-1}BPGL \langle n \rangle \xrightarrow{v_n}BPGL \langle n \rangle \to BPGL \langle n-1 \rangle.
\end{equation}

%\subsection{Motivic connective covers}

%In \cite[\textsection 3.2]{IsaksenShkembi11},Isaksen and Shkembi show that given motivic spectrum $X$, there exists a motivic spectrum $CX$ and a map $CX \to X$ such that $\pi_{p,q} CX \to \pi_{p,q} X$ is an isomorphism for $p \geq 0$ and $p-q \geq 0$, and$\pi_{p,q} CX = 0$ if $p < 0$ or $p-q < 0$. 

\subsection{Motivic Adams spectral sequence} 
\label{subsec:adams spectral sequence}
Let $E \in SH(F)$ be a motivic ring spectrum. There is a canonical Adams tower associated to the unit map $\mathbb{S} \to E$ taking the form
\[\begin{tikzcd}
	{\mathbb{S}} & {\Sigma^{-1, 0}\overline{E}} & {\Sigma^{-2, 0}\overline{E} \wedge \overline{E}} & \cdots \\
	{E} & {\Sigma^{-1, 0}\overline{E} \wedge E} & {\Sigma^{-2, 0}\overline{E} \wedge \overline{E} \wedge E}
	\arrow[from=1-1, to=2-1]
	\arrow[from=1-2, to=1-1]
	\arrow[from=1-2, to=2-2]
	\arrow[from=1-3, to=1-2]
	\arrow[from=1-3, to=2-3]
	\arrow[from=1-4, to=1-3]
\end{tikzcd}\]
where $\overline{E}$ denotes the cofiber of the unit map. For any motivic spectrum $X$, we can apply the functor $\pi_{*,*}^F(X \wedge-)$ to this tower. This yields the $E$-\textit{based motivic Adams spectral sequence for $X$}. By construction, the $E_1$-page of this spectral sequence has signature
\[E_1 = \pi^F_{s+f, w}(E \wedge \overline{E}^{\wedge f} \wedge X) \implies \pi_{s, w}^FX_E, \quad d_r:E_r^{s,f,w} \to E_r^{s-1, f+r, w}\]
where $X_E$ denotes the $E$-nilpotent completion of $X$ \cite{Bousfield79}. In general, convergence is not immediate, but we will only be interested in particular cases that are well-understood. 

For $E=BPGL \langle n \rangle$, there is a $BPGL \langle n \rangle$-based motivic Adams sequence. When $X=\mathbb{S}$ this takes the form
\[E_1=\pi^F_{s+f, w}(BPGL \langle n \rangle \wedge \overline{BPGL \langle n \rangle}^{\wedge f}) \implies \pi^F_{s,w}\mathbb{S}_{(p)}.\]
This $BPGL \langle n \rangle$-based spectral sequence is the primary motivation for our work. In order to compute the $E_1$-page, we will investigate both the homotopy and spectrum-level behavior of the cooperations algebra $BPGL \langle n \rangle \wedge BPGL \langle n \rangle$ for $n=0, 1$.

For $E=H$, the resulting motivic Adams spectral sequence will be referred to as the $\textbf{mASS}_p^{F}(X)$. In this case, since $\mathcal{A}_p^\vee = \pi_{*,*}^{F}(H\wedge H)$ is flat as a module over $\mathbb{M}_p^{F}$, we are able to pass directly to the $E_2$-page. The $\textbf{mASS}_p^{F}(X)$ takes the form
\[E_2 = \text{Ext}^{s,f,w}_{\mathcal{A}_p^\vee}(\mathbb{M}_p^F,H_{*,*}(X)) \implies \pi_{s,w}^{F}X.\]
Convergence for this motivic Adams spectral sequence was first studied by \cite{HKO-convergencemASS,DImASS}. For $X=BPGL \langle n \rangle \wedge BPGL \langle n \rangle$, this spectral sequence takes the form
\[E_2 = \text{Ext}^{s,f,w}_{\mathcal{A}^\vee_p}(\mathbb{M}_p^F,H_{*,*}(BPGL \langle n \rangle \wedge BPGL \langle n \rangle )) \implies \pi_{s,w}^F(BPGL \langle n \rangle \wedge BPGL \langle n \rangle).\] 
It is through this spectral sequence that we will compute the homotopy ring of cooperations for $BPGL \langle0 \rangle$ and $BPGL \langle 1 \rangle$. We will often omit the first argument of an Ext group if it is given by $\mathbb{M}_p^F$.

\begin{remark}
    \rm One of the reasons we are interested in the $BPGL \langle 1 \rangle$-based motivic Adams spectral sequence is that it allows one to access $v_1$-periodicity. At the prime $p=2$, another way to access $v_1$-periodicity is by way of the $kq$-based motivic Adams spectral sequence, where $kq$ denotes the very effective Hermitian $K$-theory spectrum. This spectral sequence was studied over $\mathbb{C}$ by Culver--Quigley \cite{CQ21}, and the homotopy ring of cooperations were computed over $\mathbb{R}$ and $\mathbb{F}_q$ by the second author \cite{Realkqcoop, finitekqcoop}. It will be interesting to compare these two approaches to detecting motivic $v_1$-peiodicity.
\end{remark}

\subsection{Relative homology}
\label{subsec:relative}
We will also make use of motivic relative homology and relative Adams spectral sequences in our computations. Analogously to the classical non-motivic setting, if $E$ is an $R$-algebra and $M$ is an $R$-module in spectra, then $R$-relative $E$-homology is $E$-homology in the category of $R$-modules:
\[
E^R_{*,*} (M) : = \pi^F_{*,*} (E \underset{R}{\wedge} M).
\]
Note that
\[
E^R_{*,*} (R \wedge M) \cong \pi^F_{*,*} (E \underset{R}{ \wedge } R {\wedge} M ) \cong E_{*,*} M.
\]

In \cite[Prop 2.1]{BakerLazarev01}, Baker--Lazarev introduce a relative Adams spectral sequence in the category of $R$-modules. The motivic version exists by the same construction.

\begin{proposition}
    Let $R$ be a motivic ring spectrum, $E$ be an $R$-algebra, and $X,\,Y$ be $R$-modules such that $E_{*,*}^RE$ is flat over $E_{*,*}$. If $E^R_{*,*}X$ is projective as an $E_{*,*}$-module, then there exists an $E$-based motivic Adams spectral sequence in the category of $R$-modules
    \[
    E_2^{s,f,w} \cong \Ext_{E^R_{*,*}E}^{s,f,w} (E^R_{*,*} X , \, E^R_{*,*} Y ) \implies [X , Y]_{\hat{E}, (s,w)}^{R}  
    \]
    where $[X, Y]_{\hat{E}}^R$ denotes the $E$-nilpotent completion of $R$-module maps from $X$ to $Y$, $s$ denotes the stem, $f$ denotes the homological degree, and $w$ denotes the weight. 
\end{proposition}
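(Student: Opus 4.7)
The plan is to replicate the construction of Baker--Lazarev \cite{BakerLazarev01} entirely inside the symmetric monoidal stable $\infty$-category of $R$-modules, using the relative smash $\underset{R}{\wedge}$ in place of the usual smash product. Let $\overline{E}$ denote the fiber of the unit map $R \to E$ in $R$-modules, and construct the canonical $E$-Adams tower for $X$ by setting $W_0 = X$ and $W_{f+1} = \overline{E}\underset{R}{\wedge} W_f$, so $W_f \simeq \overline{E}^{\underset{R}{\wedge} f} \underset{R}{\wedge} X$ fits into cofiber sequences $W_{f+1} \to W_f \to E \underset{R}{\wedge} W_f$. Applying $[-, Y]^R$ to this tower yields an exact couple, and hence a trigraded spectral sequence with
\[
E_1^{s,f,w} = [E \underset{R}{\wedge} \overline{E}^{\underset{R}{\wedge} f} \underset{R}{\wedge} X, \, \Sigma^{s+f, w}Y]^R, \qquad d_r : E_r^{s,f,w} \to E_r^{s-1, f+r, w}.
\]
Conditional convergence to $[X, Y]^R_{\hat{E}, (s,w)}$ then follows from Bousfield's general argument \cite{Bousfield79} transported into $R$-modules, together with the observation that $\{W_f\}$ is an $E$-Adams resolution in the $R$-linear sense.

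Next I would identify the $E_2$-page. Flatness of $E^R_{*,*}E$ over $E_{*,*}$ endows $(E_{*,*}, E^R_{*,*}E)$ with the structure of a (motivic, trigraded) Hopf algebroid and equips $E^R_{*,*}X$, $E^R_{*,*}Y$ with $E^R_{*,*}E$-comodule structures. Using that $E \underset{R}{\wedge} \overline{E}^{\underset{R}{\wedge} f} \underset{R}{\wedge} X$ is an $E$-module over $R$, combined with projectivity of $E^R_{*,*}X$ over $E_{*,*}$, a standard universal-coefficient calculation inside $R$-modules identifies
\[
[E \underset{R}{\wedge} \overline{E}^{\underset{R}{\wedge} f} \underset{R}{\wedge} X, \, Y]^R_{*,*} \cong \Hom_{E_{*,*}}^{*,*}\bigl(E^R_{*,*}(\overline{E}^{\underset{R}{\wedge} f} \underset{R}{\wedge} X), \, E^R_{*,*}Y\bigr).
\]
The flatness hypothesis then yields $E^R_{*,*}(\overline{E}^{\underset{R}{\wedge} f} \underset{R}{\wedge} X) \cong \overline{E^R_{*,*}E}^{\otimes_{E_{*,*}}f} \otimes_{E_{*,*}} E^R_{*,*}X$, so the $E_1$-complex is exactly the cobar complex for the Hopf algebroid $(E_{*,*}, E^R_{*,*}E)$ with coefficients in $E^R_{*,*}X$ and $E^R_{*,*}Y$. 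Its $f$-th cohomology is the claimed $\Ext^{s,f,w}_{E^R_{*,*}E}(E^R_{*,*}X, E^R_{*,*}Y)$.

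The main obstacle is bookkeeping rather than conceptual: one must carry along the motivic weight $w$ throughout and confirm that the flat/projective K\"unneth arguments still go through when smash products are taken over $R$ and maps are replaced by the $R$-linear mapping spectrum. Because $SH(F)$ is a presentable stable symmetric monoidal $\infty$-category and $R$-modules inherit these properties, the formal construction of the tower, its exact couple, Bousfield-style convergence, and the universal-coefficient identification all run verbatim as in \cite[Prop.\ 2.1]{BakerLazarev01}, applied levelwise in the extra weight coordinate.
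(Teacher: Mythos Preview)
Your proposal is correct and matches the paper's approach exactly: the paper does not give a detailed proof but simply asserts that ``the motivic version exists by the same construction'' as \cite[Prop.\ 2.1]{BakerLazarev01}, which is precisely what you have sketched in the category of $R$-modules with the relative smash product. Your write-up is in fact more detailed than what appears in the paper.
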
 

We will use the $BPGL\langle 0 \rangle$- and $BPGL \langle 1 \rangle$-relative Adams spectral sequences to produce our spectrum-level splittings of the respective cooperations algebras.

\section{Motivic homology results}
\label{sec:motivic homology}

In this section, we make multiple computations in motivic homology. First, we compute the $BPGL\langle 0 \rangle$- and $BPGL \langle 1 \rangle$-dual Steenrod algebras. Then, we recall the homology of $BPGL \langle n \rangle$ and prove a K\"unneth isomorphism. Following this, we establish a criterion for checking freeness for comodules over $\mathcal{E}(n)_p^\vee$ and a Whitehead Theorem for Margolis homology over $\mathcal{E}(1)^\vee_p$. We next introduce Brown--Gitler comodules and show that they assemble to give an $\mathcal{E}(n)^\vee_p$-comodule splitting of $H_{*,*}BPGL \langle n \rangle$. Finally, we introduce homological lightning flash modules. These modules simplify our computations in the $BPGL \langle 1 \rangle$-even further than the aforementioned Brown--Gitler comodule splitting. All results are independent of base field unless otherwise stated.
\subsection{Relative dual Steenrod algebra}
We will make use of the following relative homology computation.

\begin{proposition} \label{prop:relHomology}
    The $BPGL \langle 0 \rangle$-relative dual Steenrod algebra is
    \[
    H_{*,*}^{BPGL \langle 0 \rangle} H \cong \cE(0)_p^\vee
    \]
    and the $BPGL \langle 1 \rangle$-dual Steenrod algebra is
    \[
    H_{*,*}^{BPGL \langle 1 \rangle} H \cong \cE(1)_p^\vee
    \]
\end{proposition}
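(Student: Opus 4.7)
The strategy is to compute $H \wedge_{BPGL\langle n\rangle} H$ for $n = 0, 1$ by iteratively smashing the defining cofiber sequences \eqref{eq:BPGLcofib} (augmented by $BPGL\langle 0\rangle \xrightarrow{p} BPGL\langle 0\rangle \to H$, available since $BPGL\langle 0\rangle$ is a model for $H\mathbb{Z}_{(p)}$) with $H$ over $BPGL\langle n\rangle$. Each structure map acts trivially on $H$: the $v_i$ for $i \geq 1$ sit in strictly positive topological stem while $\pi_{*,*}H = \mathbb{M}_p^F$ is concentrated in non-positive stems, and $p$ acts trivially because $\pi_{*,*}H$ is an $\mathbb{F}_p$-algebra. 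Hence each resulting cofiber sequence of $H$-modules splits, producing an explicit wedge decomposition.

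In the $n = 0$ case, this yields $H \wedge_{BPGL\langle 0\rangle} H \simeq H \vee \Sigma^{1,0} H$, whose $\mathbb{M}_p^F$-module structure matches $\mathcal{E}(0)^\vee_p = \mathbb{M}_p^F\{1, \overline{\tau}_0\}$; the relation $\overline{\tau}_0^2 = 0$ follows automatically because $\mathbb{M}_p^F$ vanishes in bidegree $(2,0)$ for every base field $F$ in our list. In the $n = 1$ case, the $v_1$-cofiber sequence gives $H \wedge_{BPGL\langle 1\rangle} BPGL\langle 0\rangle \simeq H \vee \Sigma^{2p-1, p-1} H$, and combining with the associativity $H \wedge_{BPGL\langle 1\rangle} H \simeq (H \wedge_{BPGL\langle 1\rangle} BPGL\langle 0\rangle) \wedge_{BPGL\langle 0\rangle} H$ together with the $n = 0$ splitting produces
\[
H \wedge_{BPGL\langle 1\rangle} H \simeq H \vee \Sigma^{1,0}H \vee \Sigma^{2p-1,p-1}H \vee \Sigma^{2p,p-1}H,
\]
matching $\mathcal{E}(1)^\vee_p$ as a free $\mathbb{M}_p^F$-module of rank four on $\{1, \overline{\tau}_0, \overline{\tau}_1, \overline{\tau}_0\overline{\tau}_1\}$.

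To upgrade the $\mathbb{M}_p^F$-module isomorphism to a ring isomorphism, use the canonical ring map $q : \mathcal{A}_p^\vee \to H_{*,*}^{BPGL\langle 1\rangle} H$ induced by $H \wedge H \to H \wedge_{BPGL\langle 1\rangle} H$. The composite $\mathcal{A}_p^\vee \to H_{*,*}^{BPGL\langle 1\rangle} H \to H_{*,*}^{BPGL\langle 0\rangle} H$ sends $\overline{\tau}_0$ to the nonzero generator of $\mathcal{E}(0)^\vee_p$, so $q(\overline{\tau}_0)$ must generate the $(1,0)$-summand of the target; a parallel argument identifies $q(\overline{\tau}_1)$ with a generator of the $(2p-1,p-1)$-summand. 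Meanwhile, $\overline{\xi}_j$ for $j \geq 1$ and $\overline{\tau}_j$ for $j \geq 2$ should lie in $\ker(q)$ since they are Hurewicz images from $\pi_{*,*}BPGL\langle 1\rangle$ in $\mathcal{A}_p^\vee$, and the coequalizer presentation of $H \wedge_{BPGL\langle 1\rangle} H$ forces $\eta_L(r) = \eta_R(r)$ for every $r \in \pi_{*,*}BPGL\langle 1\rangle$. Substituting these vanishings into the motivic Steenrod relations \eqref{eq:SteenrodRelIdeal} recovers exactly \eqref{eq:EnRelations}; over $\mathbb{R}$ at $p = 2$, for instance, $\overline{\tau}_0^2 = \tau\overline{\xi}_1 + \rho\overline{\tau}_1 + \rho\overline{\tau}_0\overline{\xi}_1$ collapses to the hidden extension $\overline{\tau}_0^2 = \rho\overline{\tau}_1$. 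The main obstacle is verifying that the Hurewicz image of $H_{*,*}BPGL\langle 1\rangle$ in $\mathcal{A}_p^\vee$ contains all of $\overline{\xi}_j$ for $j \geq 1$ and $\overline{\tau}_j$ for $j \geq 2$; this is a motivic lift of Milnor's classical identification of $H_*BP$, which should follow from pulling back along the motivic Quillen idempotent $MGL_{(p)} \to BPGL$.
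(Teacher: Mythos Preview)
Your approach is essentially the same as the paper's: both use the cofiber sequences $\Sigma^{2p-2,p-1}BPGL\langle 1\rangle \xrightarrow{v_1} BPGL\langle 1\rangle \to BPGL\langle 0\rangle$ and $BPGL\langle 0\rangle \xrightarrow{p} BPGL\langle 0\rangle \to H$, smash with $H$ over $BPGL\langle 1\rangle$, observe that the resulting sequences split, and then invoke the algebra map $\mathcal{A}_p^\vee \to H_{*,*}^{BPGL\langle 1\rangle}H$ for the multiplicative structure. Your additive argument is correct and in fact more detailed than the paper's.

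There is, however, a slip in your multiplicative argument. You claim that $\overline{\xi}_j$ and $\overline{\tau}_j$ (for $j\geq 2$) lie in $\ker(q)$ because they are ``Hurewicz images from $\pi_{*,*}BPGL\langle 1\rangle$'' and the coequalizer forces $\eta_L(r)=\eta_R(r)$ for $r\in\pi_{*,*}BPGL\langle 1\rangle$. But that coequalizer relation is vacuous here: for $r\in\pi_{*,*}BPGL\langle 1\rangle$ of positive stem, the image of $r$ under $BPGL\langle 1\rangle \to H$ already vanishes in $\mathbb{M}_p^F$, so $\eta_L(r)=\eta_R(r)=0$ gives no information. The elements $\overline{\xi}_j$ and $\overline{\tau}_{j\geq 2}$ are not Hurewicz images of homotopy classes; they lie in the image of $H_{*,*}BPGL\langle 1\rangle \hookrightarrow \mathcal{A}_p^\vee$. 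The clean statement is that $H\wedge_{BPGL\langle 1\rangle}H \simeq H \wedge_{H\wedge BPGL\langle 1\rangle}(H\wedge H)$, and since $H\wedge BPGL\langle 1\rangle$ is a free $H$-module, on homotopy this gives $\mathbb{M}_p^F \otimes_{H_{*,*}BPGL\langle 1\rangle}\mathcal{A}_p^\vee = \mathcal{A}_p^\vee/(\overline{\xi}_j,\overline{\tau}_{k}:j\geq 1,\,k\geq 2) = \mathcal{E}(1)_p^\vee$.

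This is exactly what you call the ``main obstacle'' at the end, but it is not an obstacle: it is precisely \cref{prop:BPGLnHomology}, which the paper quotes from Hoyois. Once you invoke that, your derivation of the relations in \eqref{eq:EnRelations} from those in \eqref{eq:SteenrodRelIdeal} is correct (and is more explicit than the paper's one-line appeal to the algebra map).
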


% To prove this, we need to fix some notation. Recall the definition of $BPGL$ in \cite{HuKriz01} \cite{Vezzosi01}. There are canonical elements $v_1, v_2, \cdots \in BPGL_{*,*}$ in dimensions $| v_i | = (2^{i + 1} - 2, 2^i - 1)$. Let $v_0 = p \in BPGL_{0,0}$. These elements are the images of $v_i \in BP_{2(p^i-1)}$ under the Lazard ring isomorphism $MU_* \to MGL_{2*,*}$. For $n \geq 0$, the {\it truncated Brown--Gitler} spectra are defined to be
% \[
% BPGL \langle n \rangle : = BPGL / (v_{n + 1}, v_{n + 2}, \cdots).
% \]

\begin{proof}
    We give an argument for the second statement and leave the first to the reader. Consider the cofiber sequence
    \[
    \Sigma^{2p-2, p-1}BPGL \langle 1 \rangle \xrightarrow{v_1} BPGL \langle 1 \rangle \xrightarrow{} BPGL \langle 0 \rangle.
    \]
    Smashing $H \underset{BPGL \langle 1 \rangle}{ \wedge } -$ with the cofiber sequence yields a splitting 
    \[
    H \underset{ BPGL \langle 1 \rangle }{\wedge} BPGL \langle 0 \rangle \simeq H\wedge (S^{0,0} \vee {S^{2p-1,p-1}})
    \]
    which suffices to determine the additive structure of $H_{*,*}^{BPGL \langle 1 \rangle} BPGL \langle 0 \rangle$. Now consider the cofiber sequence
   \[
    BPGL \langle 0 \rangle \xrightarrow{p} BPGL \langle 0 \rangle \xrightarrow{} H.
    \]
    Smashing $H\underset{BPGL \langle 1 \rangle}{ \wedge } -$ with this cofiber sequence yields a splitting \[
    H \underset{ BPGL \langle 0 \rangle }{\wedge} H \simeq H \wedge (S^{0,0} \vee S^{1,0} \vee {S^{2p-1,p-1}} \vee {S^{2p,p-1}}) .
    \]
    The multiplicative structure follows from the algebra map 
    \[ 
    H \wedge H \to H \underset{BPGL \langle 1 \rangle}{ \wedge } H .
    \]
\end{proof}

\subsection{The homology of $BPGL \langle n \rangle$}
We record the following results on the mod $p$-homology of truncated motivic Brown--Peterson spectra.
\begin{proposition} \cite[Theorem 6.19]{Hoyois15}
\label{prop:BPGLnHomology}
    The map $H_{*,*} BPGL \langle n \rangle \to H_{*,*} H$ is injective with image the $\mathcal{A}^\vee_p$-subalgebra
    \[
    H_{*,*} BPGL \langle n \rangle \cong \mathbb{M}_p^F [\bar{\xi}_1, \bar{\xi}_2, \bar{\xi}_3, \cdots, \bar{\tau}_{n+1}, \bar{\tau}_{n + 2}, \bar{\tau}_{n + 3}, \cdots]/I,
    \]
    where $I$ is the ideal defined in \Cref{eq:SteenrodRelIdeal}. In particular, for any prime $p$, we have an $\mathcal{A}_p^\vee$-comodule isomorphism 
    \[
    H_{*,*} BPGL \langle n \rangle \cong \cA_p^\vee \underset{\cE(n)^\vee_p}{\Box} \mathbb{M}_p
    .\]
\end{proposition}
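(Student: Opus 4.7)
The plan is to derive the statement from Hoyois's computation of the homology of $MGL$. Applying the motivic Quillen idempotent yields $H_{*,*}BPGL \cong \mathbb{M}_p^F[\bar{\xi}_1, \bar{\xi}_2, \ldots]$, realized as the evident $\mathcal{A}^\vee_p$-subcomodule of $\mathcal{A}^\vee_p$. Next, since $BPGL\langle n\rangle = BPGL/(v_{n+1}, v_{n+2}, \ldots)$, one can iterate the cofiber sequence \cref{eq:BPGLcofib} for $k \geq n+1$ to pass from $BPGL$ to $BPGL\langle n\rangle$. Smashing with $H$ and taking $\pi_{*,*}$, the long exact sequence associated to
$$\Sigma^{2(p^k-1),\,p^k-1}BPGL\langle k\rangle \xrightarrow{v_k} BPGL\langle k\rangle \to BPGL\langle k-1\rangle$$
reduces the problem to showing that each $v_k$ induces the zero map on motivic $H\mathbb{F}_p$-homology, producing short exact sequences that contribute a single new generator in the bidegree $(2p^k-1,\, p^k-1)$ of $\bar{\tau}_k$.

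The main obstacle is the vanishing of the mod-$p$ Hurewicz image of $v_k$. The candidate target bidegree coincides with that of the polynomial generator $\bar{\xi}_k$ in $H_{*,*}BPGL\langle k\rangle$, so the vanishing cannot be dispatched by a simple degree count. The cleanest argument uses that $v_k$ lifts to positive $H\mathbb{F}_p$-Adams filtration in the Adams tower of $BPGL\langle k\rangle$, so its Hurewicz image, which would live in filtration $0$, must be zero. Equivalently, one can check that the composite $\mathbb{S}^{|v_k|} \xrightarrow{v_k} BPGL\langle k\rangle \to BPGL\langle 0\rangle = H\mathbb{Z}_{(p)} \to H\mathbb{F}_p$ is nullhomotopic by tracking $v_k$ through the integral truncation.

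For the multiplicative and comodule structure, the composite $BPGL\langle n\rangle \to H\mathbb{F}_p$ through the tower of truncations induces an algebra and $\mathcal{A}^\vee_p$-comodule map $H_{*,*}BPGL\langle n\rangle \to \mathcal{A}^\vee_p$. Combined with the additive computation from the previous step, this forces the image to be the claimed subalgebra, with the ideal $I$ inherited from the ambient relations of $\mathcal{A}^\vee_p$; note that this is the only place where the base-field dependence (in particular the $\bar{\tau}_i^2$ relations at $p=2$ over $\mathbb{R}$ or $\mathbb{F}_q$ with $q \equiv 3 \pmod 4$) enters the argument. Finally, the cotensor identification $H_{*,*}BPGL\langle n\rangle \cong \mathcal{A}^\vee_p \,\Box_{\mathcal{E}(n)^\vee_p}\,\mathbb{M}_p^F$ is a standard Hopf algebroid splitting: $\mathcal{A}^\vee_p$ is cofree as a left $\mathcal{E}(n)^\vee_p$-comodule, and the cotensor picks out exactly those monomials whose $\mathcal{E}(n)^\vee_p$-coaction is trivial, which recovers the subalgebra generated by $\bar{\xi}_i$ for $i \geq 1$ and $\bar{\tau}_j$ for $j \geq n+1$ subject to $I$.
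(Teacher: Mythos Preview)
The paper does not give its own proof of this proposition; it simply cites \cite[Theorem~6.19]{Hoyois15} and then records the resulting $\mathcal{E}(n)_p^\vee$-coaction. Your proposal is a correct sketch of the standard derivation from Hoyois's computation of $H_{*,*}BPGL$, so there is nothing substantive to compare against.

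One small imprecision worth noting: the cofiber sequence \eqref{eq:BPGLcofib} as written in the paper goes from $BPGL\langle k\rangle$ to $BPGL\langle k-1\rangle$ by killing $v_k$, so iterating it does not literally pass from $BPGL$ to $BPGL\langle n\rangle$. The sequences you actually want are those building the partial quotients $BPGL/(v_{n+1},\ldots,v_m)$ from $BPGL/(v_{n+1},\ldots,v_{m-1})$ by killing $v_m$, followed by a colimit in $m$; since the new generator $\bar\tau_m$ appears in bidegree $(2p^m-1,p^m-1)$, this colimit is degreewise eventually constant and causes no difficulty. Your Adams-filtration argument for the vanishing of the mod-$p$ Hurewicz image of $v_k$ and your identification of the comodule algebra structure via the ring map $BPGL\langle n\rangle \to H$ are both correct and standard.
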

The left $\cE(n)_p^\vee$-coaction on $\mathcal{A}_p^\vee \underset{\cE(n)_p^\vee}{\Box} \mM_p$ is given by
\begin{equation} \label{eq:coaction}
    \alpha: \mathcal{A}_p^\vee \underset{\cE(n)_p^\vee}{\Box} {\mM_p} \xrightarrow{\psi \otimes 1} \mathcal{A}_p^\vee \otimes \left(\mathcal{A}_p^\vee \underset{\cE(n)_p^\vee}{\Box} {\mM_p} \right) \xrightarrow{\pi \otimes 1} \cE(n)_p^\vee \otimes \left(\mathcal{A}_p^\vee \underset{\cE(n)_p^\vee}{\Box} \mM_p \right)
\end{equation}
which on generators $\bar{\xi}_k$ and $\bar{\tau}_k$ is 
\begin{align*}
    \alpha (\bar{\xi}_k) & = 1 \otimes \bar{\xi}_k \\
    \alpha (\bar{\tau}_{n + k}) & = 1 \otimes \bar{\tau}_{n + k} + \sum_{i = 0}^n \bar{\tau}_i \otimes \bar{\xi}_{n + k - i}^{2^i}.
\end{align*}
Let $M$ be a left $\cE(n)_p^\vee$-comodule. Then there is an induced right $\cE(n)$-module action 
\[
\lambda: M \otimes \cE(n)_p \to M
\]
defined by
\[
\lambda (x, \theta) = (\theta \otimes Id_M) \circ \alpha (x),
\]
where $\alpha(x) = \sum_i \theta_i \otimes x_i$ (see \cite[\textsection 6]{Boardman82}).

Since $\bar{\tau}_i$ is the dual of $Q_i$, the right $\cE(1)$-module action on $H_{*,*}BPGL \langle 1 \rangle$ is
\begin{align*}
    \bar{\xi}_j Q_i & = 0 \\
    \bar{\tau}_k Q_0 & = \bar{\xi}_k \\
    \bar{\tau}_k Q_1 & = \bar{\xi}_{k -1}^2.
\end{align*}

\begin{proposition}
\label{prop:BPGLnKunneth}
    For any prime $p$ and any $n, m \geq 0$, there is an isomorphism of $\mathcal{A}_p^\vee$-comodule algebras:
    \[H_{*,*}(BPGL\langle m \rangle \wedge BPGL \langle n \rangle) \cong H_{*,*}(BPGL \langle m \rangle ) \otimes H_{*,*}(BPGL \langle n \rangle ). \]
\end{proposition}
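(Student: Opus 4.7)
The plan is to deduce the K\"unneth isomorphism from the freeness of $H_{*,*}BPGL\langle n \rangle$ as an $\mathbb{M}_p^F$-module, which in turn gives a splitting of $H \wedge BPGL\langle n \rangle$ as a wedge of suspensions of $H$. Concretely, I would first use \Cref{prop:BPGLnHomology} to describe $H_{*,*}BPGL\langle n\rangle$ as a quotient of a polynomial algebra over $\mathbb{M}_p^F$ and observe that the relations in $I$ (from \Cref{eq:SteenrodRelIdeal}) are quadratic in the $\bar{\tau}_i$'s. This allows one to exhibit a monomial basis and conclude that $H_{*,*}BPGL\langle n \rangle$ is a free $\mathbb{M}_p^F$-module.

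Given freeness, choose a basis $\{x_\alpha\}$ of $H_{*,*}BPGL\langle n \rangle$. Each class $x_\alpha$ is represented by a map $\mathbb{S}^{|x_\alpha|} \to H \wedge BPGL\langle n \rangle$, which $H$-linearly extends to $\Sigma^{|x_\alpha|} H \to H \wedge BPGL\langle n \rangle$. Assembling these gives an $H$-linear map
\[
\bigvee_\alpha \Sigma^{|x_\alpha|} H \;\longrightarrow\; H \wedge BPGL\langle n \rangle
\]
which is an isomorphism on $\pi_{*,*}$ by construction, hence an equivalence of $H$-modules. Smashing this equivalence with $BPGL\langle m \rangle$ and taking $\pi_{*,*}$ produces the K\"unneth isomorphism at the level of $\mathbb{M}_p^F$-modules.

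For the algebra structure, the K\"unneth map coincides with the composite
\[
H_{*,*}BPGL\langle m\rangle \otimes_{\mathbb{M}_p^F} H_{*,*}BPGL\langle n\rangle \to H_{*,*}(BPGL\langle m \rangle \wedge BPGL\langle n\rangle)
\]
induced by the external product, which is multiplicative whenever both factors are ring spectra and $H$ is commutative. The $\mathcal{A}_p^\vee$-comodule structure is then automatic: the splitting above is $H$-linear, so the coactions on either side agree after identifying wedge summands with their basis classes, and the tensor product comodule structure on the right matches the one induced by the diagonal coaction on the smash product. The main obstacle will be a careful check of freeness of $H_{*,*}BPGL\langle n \rangle$ over $\mathbb{M}_p^F$ in the cases $F=\mathbb{R}$ at $p=2$ (and $\mathbb{F}_q$ with $q \equiv 3 \,(4)$, or more generally the cases where $\mathbb{M}_p^F$ is not central in $\mathcal{A}_p^\vee$), where the quadratic relations are more intricate; however, as the relations remain quadratic in the $\bar{\tau}_i$ generators with leading coefficient a unit, one still obtains an explicit monomial basis over $\mathbb{M}_p^F$, and the rest of the argument applies uniformly.
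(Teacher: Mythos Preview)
Your proposal is correct and rests on the same key observation as the paper: $H_{*,*}BPGL\langle n\rangle$ is free over $\mathbb{M}_p^F$ by \Cref{prop:BPGLnHomology}, which forces the K\"unneth isomorphism. The paper packages this via the K\"unneth spectral sequence (freeness makes the higher $\mathrm{Tor}$ terms vanish, so it collapses at $E_2$), whereas you unwind the same fact into an explicit $H$-module splitting $H\wedge BPGL\langle n\rangle \simeq \bigvee_\alpha \Sigma^{|x_\alpha|}H$ and then smash with $BPGL\langle m\rangle$; these are standard equivalent formulations of the same argument, and your version has the mild advantage of making the $\mathcal{A}_p^\vee$-comodule and algebra compatibility more visibly automatic.
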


\begin{proof}
    By \cref{prop:BPGLnHomology}, $H_{*,*}BPGL \langle n \rangle$ is free over $\mathbb{M}_p$. Thus the K\"unneth spectral sequence
    \[E_2 = \text{Tor}^{\mathbb{M}_p}(H_{*,*}BPGL \langle m \rangle, H_{*,*}BPGL \langle n \rangle) \implies H_{*,*}(BPGL \langle m \rangle \wedge BPGL \langle n \rangle)\]
    collapses at the $E_2$-page, giving the result.
\end{proof}

Since $\cE(n)^\vee_p$ is free over $\mM_p$, we obtain the following proposition through standard algebra (see for example \cite[Theorem 4.7]{BrzezinskiWisbauer03}).

\begin{proposition} \label{prop:EquivofCats}
    There is an equivalence of categories between left $\cE(n)_p^\vee$-comodules and right $\cE(n)_p$-modules.
\end{proposition}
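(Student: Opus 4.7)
The plan is to apply the standard equivalence of categories between comodules over a coalgebra that is finitely generated projective over its base ring and modules over its $\mathbb{M}_p$-linear dual, which is precisely the content of \cite[Theorem 4.7]{BrzezinskiWisbauer03}. Thus the proof reduces to checking two inputs: that $\cE(n)_p^\vee$ is free of finite rank as a (left) $\mathbb{M}_p$-module via $\eta_L$, and that $\cE(n)_p$ is its $\mathbb{M}_p$-linear dual coalgebra.

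For the finite freeness, I would read off an explicit $\mathbb{M}_p$-basis directly from the presentation in \Cref{eq:EnRelations}. When $p > 2$, and also when $p = 2$ with $F \in \{\mathbb{C}\} \cup \{\mathbb{F}_q : q \equiv 1 \pmod 4\}$, the relations $\bar\tau_i^2 = 0$ exhibit $\cE(n)_p^\vee$ as an exterior algebra on $\bar\tau_0, \ldots, \bar\tau_n$, with $\mathbb{M}_p$-basis the $2^{n+1}$ square-free monomials $\bar\tau_{i_1} \cdots \bar\tau_{i_k}$ for $0 \leq i_1 < \cdots < i_k \leq n$. In the remaining cases at $p = 2$, the twisted relations $\bar\tau_i^2 = \rho \bar\tau_{i+1}$ for $i < n$ together with $\bar\tau_n^2 = 0$ still let one reduce any monomial so that each $\bar\tau_i$ occurs with bounded exponent, producing an analogous finite free basis. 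Identification of $\cE(n)_p$ with $\Hom_{\mathbb{M}_p}(\cE(n)_p^\vee, \mathbb{M}_p)$ is then by definition.

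With these inputs in hand, the equivalence is implemented by the formulas recorded just before the statement: a left $\cE(n)_p^\vee$-comodule $(M, \alpha)$ becomes a right $\cE(n)_p$-module via $\lambda(x, \theta) = (\theta \otimes \mathrm{Id}_M) \circ \alpha(x)$, while conversely, choosing a basis $\{e_i\}$ of $\cE(n)_p^\vee$ with dual basis $\{e_i^*\}$ of $\cE(n)_p$, a right $\cE(n)_p$-module $M$ acquires a coaction by $\alpha(x) = \sum_i e_i \otimes (x \cdot e_i^*)$, which is a finite sum by the rank calculation of Step 1. Verifying that these functors are mutually inverse, that coassociativity of $\alpha$ matches associativity of $\lambda$, and that the counit matches the unit axiom, is routine and is exactly what the cited theorem provides. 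The only mildly delicate point --- the closest thing to a ``hard part'' --- is bookkeeping in the non-central $\mathbb{M}_p$ cases at $p = 2$, where one must consistently use the $\eta_L$-action on $\cE(n)_p^\vee$ when computing basis expansions; however, this is a cosmetic matter and does not require modifying the general argument.
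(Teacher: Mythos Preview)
Your proposal is correct and follows the same approach as the paper: the paper simply records that $\cE(n)_p^\vee$ is free over $\mathbb{M}_p$ and cites \cite[Theorem 4.7]{BrzezinskiWisbauer03}, without spelling out the basis or the functors. Your write-up is a more detailed version of exactly this argument.
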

This equivalence of categories allows us to prove facts about $\mathcal{E}(n)_p^\vee$-comodules by instead working with $\mathcal{E}(n)_p$-modules.
\begin{proposition}
\label{prop:freeImpliesInjective}
    If $M$ is a free $\cE(n)^\vee_p$-comodule, then $M$ is an injective $\cE(n)^\vee_p$-comodule
\end{proposition}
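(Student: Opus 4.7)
My plan is to pass through the equivalence of categories in \Cref{prop:EquivofCats} to reduce the statement to a claim about modules. Under this equivalence, a free left $\cE(n)_p^\vee$-comodule, which has the form $\cE(n)_p^\vee \otimes_{\mathbb{M}_p^F} V$ for $V$ a free $\mathbb{M}_p^F$-module with coaction coming from the coproduct on the first tensorand, corresponds to the free right $\cE(n)_p$-module $\cE(n)_p \otimes_{\mathbb{M}_p^F} V$. It therefore suffices to show that free right $\cE(n)_p$-modules are injective.

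The main input is the Frobenius structure on $\cE(n)_p$ over $\mathbb{M}_p^F$. As a finite free $\mathbb{M}_p^F$-algebra of rank $2^{n+1}$ with a Hopf algebroid structure, there is an isomorphism $\cE(n)_p \cong \mathrm{Hom}_{\mathbb{M}_p^F}(\cE(n)_p, \mathbb{M}_p^F)$ of left $\cE(n)_p$-modules induced by the pairing $x \otimes y \mapsto \lambda(xy)$, where $\lambda$ is the trace-type projection onto the top class $Q_0 Q_1 \cdots Q_n$. Combining this Frobenius isomorphism with the tensor-hom adjunction yields
\[
\mathrm{Hom}_{\cE(n)_p}(-, \cE(n)_p \otimes_{\mathbb{M}_p^F} V) \;\cong\; \mathrm{Hom}_{\mathbb{M}_p^F}(U(-), V),
\]
where $U$ denotes the forgetful functor. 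Since $\cE(n)_p$ is flat over $\mathbb{M}_p^F$, $U$ is exact, and this adjunction exhibits $\cE(n)_p \otimes_{\mathbb{M}_p^F} V$ as an extended (cofree) object; equivalently, the free left $\cE(n)_p^\vee$-comodule $\cE(n)_p^\vee \otimes_{\mathbb{M}_p^F} V$ is cofree. Cofree comodules over a flat Hopf algebroid are injective in the relative sense appropriate for computing $\mathrm{Ext}$ and for the $\cE(n)_p^\vee$-based Adams-type spectral sequences used later in the paper.

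The main obstacle is verifying the Frobenius property uniformly over all base fields, particularly in the twisted cases $F \in \{\mathbb{R}, \mathbb{F}_q \text{ with } q \equiv 3 \pmod 4\}$ at $p=2$, where $\cE(n)_2^\vee$ has nontrivial relations $\overline{\tau}_i^2 = \rho\overline{\tau}_{i+1}$ for $i<n$. However, $\cE(n)_p^\vee$ remains free of rank $2^{n+1}$ over $\mathbb{M}_p^F$ in every case, admitting the monomial basis $\{\overline{\tau}_0^{\epsilon_0} \cdots \overline{\tau}_n^{\epsilon_n}\}_{\epsilon_i \in \{0,1\}}$, and the dual class to $\overline{\tau}_0 \overline{\tau}_1 \cdots \overline{\tau}_n$ spans the socle. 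A direct computation on this basis confirms the nondegeneracy of the Frobenius pairing, so the argument applies uniformly.
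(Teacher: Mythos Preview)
Your approach and the paper's share the first step (passing through \Cref{prop:EquivofCats} to reduce to modules) but then diverge: the paper argues directly that $\cE(n)_p$ is self-injective by adapting May's ideal-theoretic argument for $\mathbb{M}_2^{C_2}$, whereas you use the Frobenius structure on $\cE(n)_p$ over $\mathbb{M}_p^F$ to identify free modules with coinduced ones and then appeal to the general fact that extended comodules are injective.

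There is a genuine gap in your argument as written, which you in fact flag yourself: the Frobenius route only yields \emph{relative} injectivity (injectivity with respect to $\mathbb{M}_p^F$-split monomorphisms), not absolute injectivity. Your adjunction
\[
\mathrm{Hom}_{\cE(n)_p}(-,\ \cE(n)_p \otimes_{\mathbb{M}_p^F} V)\ \cong\ \mathrm{Hom}_{\mathbb{M}_p^F}(U(-),\ V)
\]
shows that $\cE(n)_p \otimes_{\mathbb{M}_p^F} V$ is absolutely injective exactly when $V$ is injective as an $\mathbb{M}_p^F$-module, and free $\mathbb{M}_p^F$-modules need not be: already over $F=\mathbb{C}$ the non-split sequence $0 \to \Sigma^{0,1}\mathbb{F}_p[\tau] \xrightarrow{\tau} \mathbb{F}_p[\tau] \to \mathbb{F}_p \to 0$ shows $\mathbb{M}_p^{\mathbb{C}}=\mathbb{F}_p[\tau]$ is not self-injective. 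You are correct that relative injectivity is precisely what is needed for the Ext computations and spectral-sequence collapse arguments downstream, since every comodule that appears is $\mathbb{M}_p^F$-free and the cobar complex is a relative resolution. So your argument proves what is actually \emph{used}, but not the proposition as literally stated. The paper's route asserts absolute self-injectivity via an analysis of the graded ideals of $\cE(n)_p$; this is a different mechanism, and you might compare how (or whether) that argument handles the $\tau$-divisibility obstruction over a polynomial base.

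A minor remark: once one unwinds definitions, a ``free $\cE(n)_p^\vee$-comodule'' is already an extended comodule on a free $\mathbb{M}_p^F$-module, so relative injectivity follows immediately from standard Hopf algebroid theory without invoking the Frobenius structure on the module side. The Frobenius detour is correct but not load-bearing for the conclusion you actually reach.
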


\begin{proof}
    By the equivalence of categories of \Cref{prop:EquivofCats}, we may instead show that any free $\mathcal{E}(n)_p$-module is injective. Since any free $\mathcal{E}(n)_p$-module is a sum of shifts of $\mathcal{E}(n)_p$, it suffices to show that $\mathcal{E}(n)_p$ is self-injective. This follows from modifying May's proof of self-injectivity for $\mathbb{M}_2^{C_2}$ given in the $C_2$-equivariant setting \cite{Clover20}. In particular, the graded ideals of $\mathcal{E}(n)_p$ are just the graded ideals of $\mathbb{M}_p$ with the addition of various $Q_i$.
\end{proof}
\subsection{Margolis homology and free $\cE(n)$-modules}
Classically, Margolis homology is an invariant of modules over a subalgebra of the Steenrod algebra. One particularly useful consequence of this invariant is that a module over the Steenrod algebra is free if and only if it has trivial Margolis homology with respect to a particular family of elements \cite[Theorem 19.6]{Margolis83}. 

In the motivic setting, Margolis homology is slightly more complicated and subtle (see, for example, \cite[Example 4.6 and Proposition 4.7]{GheorgheIsaksenRicka18}). Nonetheless, with the imposition of additional freeness criteria, motivic analogues of Margolis homology have proven useful (see, for instance, \cite[Section 5]{HeardKrause18} and \cite[Section 2]{BhattacharyaGuillouLi22}). 

The goal of this subsection is to give a Whitehead Theorem for Margolis homology, that is, a criterion for when an $\cE(1)_p$-module map is a stable equivalence, in the setting of motivic homotopy. To this end, we establish some motivic freeness criteria extending those of \cite[Section 5]{HeardKrause18} and \cite[Section 2]{BhattacharyaGuillouLi22} in the $\mC$ and $\mR$-motivic settings, respectively. Let
\[
        x = \begin{cases}
            1 & F = \mC\\
            1 & F = \mR \text{ and } p \neq 2\\
            \rho & F = \mR \text{ and } p = 2 \\
            \rho & F = \mF_q \text{ with } \textup{char}({F_q}) \neq 2 \text{ and } q \equiv 3 \, (4) \\
            u & F = \mF_q \text{ with } \textup{char}({F_q}) \neq 2 \text{ and } q \equiv 1 \, (4) \\
            u & F = \mF_q \text{ with } \textup{char}(\mF_p) \neq p \text{ and } q^i \equiv 1 \, (p^2) \\
            \gamma & F = \mF_q \text{ with } \textup{char}(F_q) \neq p \text{ and } q^i \neq 1 \, (p^2)
            \end{cases}.
        \] 
Our first aim will be to prove the following proposition.
\begin{proposition} \label{prop:marg1}
    A finitely generated $\cE(n)_p$-module $M$ is free if and only if
    \begin{enumerate}
        \item $M$ is free as an $\mF_p[x]$-module, where $x \in \mM_p^F$ as defined above;
        \item $\mF_p \otimes_{\mM_p^F} M$ is free as an $E(n)_p$-module.
    \end{enumerate}
\end{proposition}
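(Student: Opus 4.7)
The plan is to prove both directions by comparing $M$ against an explicit free $\cE(n)_p$-module obtained by lifting a basis of $\mF_p \otimes_{\mM_p^F} M$ through the augmentation ideal of $\mM_p^F$, then using (1) together with a graded Nakayama argument to pin down both surjectivity and injectivity. Before starting, I would verify a preliminary fact case-by-case: $\mM_p^F$ is a free $\mF_p[x]$-module (with $\mF_p[x]$ interpreted as the subring of $\mM_p^F$ generated by $x$). When $x=1$ this is immediate; in the remaining cases the explicit presentations of $\mM_p^F$ given in the preliminaries exhibit monomial bases over $\mF_p[x]$. Since $\cE(n)_p$ is a finite free $\mM_p^F$-module, it is then a free $\mF_p[x]$-module, and moreover $\mF_p \otimes_{\mM_p^F} \cE(n)_p \cong E(n)_p$.

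The forward direction is a direct consequence: if $M$ is free over $\cE(n)_p$, it inherits $\mF_p[x]$-freeness from $\cE(n)_p$, giving (1), and tensoring with $\mF_p$ over $\mM_p^F$ preserves freeness, giving (2).

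For the reverse direction, assume (1) and (2). Choose a finite homogeneous $E(n)_p$-basis $\{\bar{y}_i\}_{i \in I}$ of the finitely generated $E(n)_p$-module $\mF_p \otimes_{\mM_p^F} M$, lift to homogeneous $y_i \in M$, and define
\[
\phi : F := \bigoplus_{i \in I} \Sigma^{|y_i|}\cE(n)_p \longrightarrow M
\]
on canonical generators by $y_i$. The key lemma is a graded Nakayama statement for $\mM_p^F$: every generator of the augmentation ideal $\mathfrak{m} \subseteq \mM_p^F$ has strictly negative total degree $s+w$, while any finitely generated $\cE(n)_p$-module is bounded above in $s+w$. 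Hence for any such $N$, the equation $\mathfrak{m} N = N$ forces $N = 0$. Applied to $\mathrm{coker}(\phi)$, which is finitely generated and satisfies $\mF_p \otimes_{\mM_p^F} \mathrm{coker}(\phi) = 0$ by construction, this yields surjectivity. For injectivity, set $K := \ker(\phi)$. Since $F$ and $M$ are both $\mF_p[x]$-free, $\mathrm{Tor}_1^{\mF_p[x]}(\mF_p, M) = 0$, so the short exact sequence $0 \to K \to F \to M \to 0$ stays exact after applying $\mF_p \otimes_{\mF_p[x]}(-)$. Further tensoring over $\mM_p^F/(x)$ with $\mF_p$ and using that $\mF_p \otimes_{\mM_p^F} \phi$ is an iso yields $\mF_p \otimes_{\mM_p^F} K = 0$. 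Noetherianness of $\mM_p^F$ in each case ensures $K$ is finitely generated as a submodule of $F$, and a second application of Nakayama gives $K = 0$.

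The main obstacle I anticipate is making the graded Nakayama step precise in the bigraded setting, since $\mM_p^F$ is not bounded below in either grading separately (e.g., powers of $\tau$ extend downward in weight indefinitely). The remedy is to filter by the diagonal $s+w$: the augmentation ideal strictly decreases this quantity while finite generation over $\cE(n)_p$ bounds it above, making the inductive argument go through. The other delicate point is the interpretation of $\mF_p[x]$ when $x^2 = 0$ in $\mM_p^F$ (so the subring is really $\mF_p[x]/(x^2)$); one must check that this local Artinian ring behaves well enough for the Tor-vanishing argument used in injectivity, which it does since it is a Frobenius algebra over $\mF_p$.
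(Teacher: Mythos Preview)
Your forward direction and the surjectivity argument are fine, and the basis-lifting/Nakayama strategy is exactly what underlies the paper's \cref{lem:marg1}. The gap is in the injectivity step. After using condition (1) to obtain the exact sequence $0 \to K/xK \to F/xF \to M/xM \to 0$, you write that ``further tensoring over $\mM_p^F/(x)$ with $\mF_p$ \ldots\ yields $\mF_p \otimes_{\mM_p^F} K = 0$.'' But this second tensor product is only right exact: from the long exact $\Tor$-sequence (using that $F/xF$ is $\mM_p^F/(x)$-free and that $\mF_p\otimes\phi$ is an isomorphism) one finds
\[
\mF_p \otimes_{\mM_p^F} K \;\cong\; \Tor_1^{\mM_p^F/(x)}\bigl(\mF_p,\, M/xM\bigr),
\]
so you need $M/xM$ to be $y$-torsion-free. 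Neither condition (1) nor condition (2) gives this directly; for instance, a summand of the form $\cE(n)_p/(y)$ is $\mF_p[x]$-free and has $\mF_p \otimes_{\mM_p^F}(-) \cong E(n)_p$ free, yet contributes $y$-torsion to $M/xM$.

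The paper does not absorb this into a single Nakayama step. It factors the argument through two reductions: first \cref{lem:marg2} (invoking the Heard--Krause argument together with Salch's projectivity result for connective comodules) upgrades condition (2) to the statement that $M/(x)$ is free over $\cE(n)_p/(x)$; only then does \cref{lem:marg1}, whose proof is essentially your Nakayama argument, finish the job. In effect you have reproved \cref{lem:marg1} but bypassed \cref{lem:marg2}, and that lemma is precisely where the missing $\Tor$-vanishing comes from.
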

To prove \cref{prop:marg1}, we will make use of the following lemma. 
\begin{lemma} \label{lem:marg1}
    Let $x \in \mM_p^F$ as defined in \cref{prop:marg1}. A finitely generated $\cE(n)_p$-module $M$ is free if and only if
    \begin{enumerate}
        \item $M$ is a free $\mF_p [x]$-module and
        \item $M/(x)$ is a free $\cE(n)_p / (x)$-module. %$I$ is the ideal given by
        %\[
        %I = \begin{cases}
        %    (1) & F = \mC, \hspace{1.85in} \quad F = \mR \text{ and } p \neq 2 \\
        %    (\rho) & F = \mR \text{ and } p = 2, \hspace{1.35in} F = \mF_q \text{ with } \textup{char}({F_q}) \neq 2 \text{ and } q \equiv 3 \, (4)\\
       %     (u) & F = \mF_q \text{ with } \textup{char}({F_q}) \neq 2 \text{ and } q \equiv 1 \, (4), \, F = \mF_q \text{ with } \textup{char}(\mF_p) \neq p \text{ and } q^i \equiv 1 \, (p^2) \\
        %    (\gamma) & F = \mF_q \text{ with } \textup{char}(F_q) \neq p \text{ and } q^i \neq 1 \, (p^2)
        %\end{cases}.
        %\]
    \end{enumerate}
\end{lemma}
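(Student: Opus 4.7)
The plan is to treat the two directions separately, with the forward direction essentially a structural check and the backward direction a Nakayama-style lifting argument.

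For the forward direction, I would reduce to verifying that $\cE(n)_p$ itself satisfies (1) and (2), since any free module is a direct sum of shifts of $\cE(n)_p$. Condition (2) is tautological. For condition (1), I would inspect the presentation of $\cE(n)_p$ given by \cref{eq:EnRelations} case by case: in every case $\mathbb{M}_p^F$ is free as an $\mathbb{F}_p[x]$-module (with an explicit basis involving the other generators $\tau$, $\zeta$, $\theta$, etc.), and $\cE(n)_p$ is a finitely generated free $\mathbb{M}_p$-module on the monomials $Q_{i_1}\cdots Q_{i_k}$ with $i_1 < \cdots < i_k$. Composing these two facts yields an explicit $\mathbb{F}_p[x]$-basis of $\cE(n)_p$.

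For the backward direction, I would lift a free $\cE(n)_p/(x)$-basis $\{\bar m_i\}$ of $M/(x)$ to elements $\{m_i\}$ of $M$ and let them define a map $\phi: F \to M$ out of a free $\cE(n)_p$-module $F = \bigoplus_i \Sigma^{d_i}\cE(n)_p$, which by construction is an isomorphism after reducing mod $x$. By the forward direction, $F$ is free over $\mathbb{F}_p[x]$; $M$ is free over $\mathbb{F}_p[x]$ by hypothesis. Since $\mathbb{F}_p[x]$ is a graded PID, I can compare the long exact $\Tor^{\mathbb{F}_p[x]}$ sequences associated to $0 \to \mathbb{F}_p[x] \xrightarrow{x} \mathbb{F}_p[x] \to \mathbb{F}_p \to 0$ applied to the short exact sequences involving $\ker\phi$, $\operatorname{im}\phi$, and $\operatorname{coker}\phi$. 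The vanishing of the higher $\Tor$ groups for $F$ and $M$, combined with $\phi$ being an isomorphism mod $x$, will force $\ker\phi$ and $\operatorname{coker}\phi$ to be $x$-torsion free and to satisfy $x \cdot N = N$.

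To conclude that these subquotients vanish, I would invoke a graded Nakayama argument. Since $\ker\phi$ and $\operatorname{coker}\phi$ are subquotients of free bigraded $\mathbb{F}_p[x]$-modules (so torsion-free over the graded PID $\mathbb{F}_p[x]$, hence graded free), divisibility by $x$ together with boundedness of f.g. $\cE(n)_p$-modules in an appropriate half-plane of bidegrees forces triviality: a nonzero graded free $\mathbb{F}_p[x]$-module $\bigoplus_j \Sigma^{(a_j,b_j)}\mathbb{F}_p[x]$ has a minimum-degree generator that cannot be written as $x$ times anything. The main technical obstacle is this last step, since $x$ sits in non-positive bidegree in the cases of interest; I would handle it by choosing a linear combination of the two gradings with respect to which $x$ has strictly negative degree and $M$ is bounded below, which is possible for each of the fields $F \in \{\mathbb{C}, \mathbb{R}, \mathbb{F}_q\}$ given the explicit bidegrees of $x$ and the generators of $\cE(n)_p$.
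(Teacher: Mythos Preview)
Your Nakayama-lifting strategy is correct and matches what the paper invokes: it simply cites \cite[Lemma~2.1]{BhattacharyaGuillouLi22} for $F=\mR$, $p=2$, and remarks that the finite-field cases follow by the same argument. One detail needs adjusting: you assert that $\mF_p[x]$ is a graded PID, but this holds only when $F=\mR$ and $p=2$. Over $\mF_q$ one has $x^2=0$, so $\mF_p[x]\cong\mF_p[x]/(x^2)$ is not a domain and your ``torsion-free $\Rightarrow$ graded free'' step is unavailable. As the paper notes, this case is in fact easier: your $\Tor$ analysis already shows that multiplication by $x$ is injective on $N=\operatorname{coker}\phi$ (and then on $\ker\phi$), and together with $x^2=0$ this forces $N=0$ immediately, with no boundedness argument needed. (A smaller slip: ``$x$ has strictly negative degree and $M$ is bounded below'' is the wrong pairing for graded Nakayama. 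Either take $x$ of positive degree with $M$ bounded below, e.g.\ grade by $-w$ so that $|\rho|=1$ and finitely generated $\cE(n)_2$-modules are bounded below, or take $x$ of negative degree with $M$ bounded above; the rest of your paragraph, with its ``minimum-degree generator,'' is already written for the former.)
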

\begin{proof}
    When $x = 1$, the statement is trivial. When $F = \mR$ and $p = 2$, the statement is exactly that of \cite[Lemma 2.1]{BhattacharyaGuillouLi22}. The remaining cases where $F = \mF_q$ can be proved using the same argument as the proof of \cite[Lemma 2.1]{BhattacharyaGuillouLi22}, and in fact the induction there is made simpler since $x^2 = 0$.
\end{proof}

We also make a further reduction. 
 Let 
    \[
    y = \begin{cases}
        \tau & F = \mC \\
        \tau & F = \mR \text{ and } p = 2 \\
        \theta & F = \mR \text{ and } p \neq 2 \\
        \tau & F = \mF_q \text{ with } \textup{char}(\mF_q) = p \\
        \zeta &  F = \mF_q \text{ with } \textup{char}(\mF_q) \neq p 
    \end{cases}.
    \]
\begin{lemma} \label{lem:marg2}
    A finitely generated $\cE(n)_p / (x)$-module $M$ is free if and only if $M /(y)$ is a free $(\cE(n)_p/(x))/(y)$-module.
\end{lemma}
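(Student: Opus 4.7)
The plan is to imitate the strategy used for \cref{lem:marg1}, now with $y$ playing the role that $x$ played there. The forward implication is trivial: a decomposition $M \cong \bigoplus_i \Sigma^{(s_i,w_i)}\cE(n)_p/(x)$ of $M$ as a free module induces the analogous decomposition of $M/(y)$ after reducing modulo $y$.

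For the converse, I first observe that after quotienting by $x$, the relations in \cref{eq:EnRelations} all reduce to $\bar\tau_i^2=0$, so that $\cE(n)_p/(x)$ is isomorphic to $\mF_p[y]\otimes_{\mF_p}\Lambda(Q_0,\ldots,Q_n)$ as a bigraded algebra, with $y$ a central non-zero-divisor; consequently $(\cE(n)_p/(x))/(y) \cong \Lambda(Q_0,\ldots,Q_n)$. I then choose homogeneous elements $\{m_i\}\subseteq M$ whose images form a free $(\cE(n)_p/(x))/(y)$-basis of $M/(y)$, and let $\phi:F\to M$ be the induced map from the corresponding free $\cE(n)_p/(x)$-module $F$. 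The goal is to show $\phi$ is an isomorphism.

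Surjectivity follows from a bigraded Nakayama argument: the cokernel $C$ of $\phi$ satisfies $yC=C$ because $\phi$ is surjective modulo $y$, and since $y$ has bidegree $(0,-1)$ and finite generation of $M$ bounds $C$ below in weight within each stem, iterating $yC=C$ forces $C=0$. For injectivity, form the short exact sequence $0\to K\to F\xrightarrow{\phi}M\to 0$ and apply $-\otimes_{\cE(n)_p/(x)}(\cE(n)_p/(x))/(y)$. Since $\phi$ is an isomorphism modulo $y$ by construction, the long exact $\Tor$-sequence identifies $K/(y)$ as a quotient of $\Tor_1^{\cE(n)_p/(x)}(M,(\cE(n)_p/(x))/(y))$, which coincides with the $y$-torsion submodule of $M$; as the modules of interest are $\mF_p[y]$-free (inherited from the $\mF_p[x]$-freeness hypothesis in \cref{prop:marg1}), $K/(y)=0$, and another application of the Nakayama argument forces $K=0$.

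The main technical step is the bigraded Nakayama argument, which parallels the classical connected-graded Nakayama but requires care because $y$ acts in the weight-decreasing direction rather than in an increasing total-degree direction. The key input enabling this is finite generation of $M$, which translates in the bigraded setting to the weight components in each stem being bounded below, so that one cannot have an infinitely $y$-divisible nonzero element.
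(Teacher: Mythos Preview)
Your Nakayama-style argument is the right general approach and is close in spirit to what the Heard--Krause reference the paper cites likely does. However, there is a genuine gap in your injectivity step. You correctly observe that $K/(y) \cong \Tor_1^{\cE(n)_p/(x)}\bigl(M, (\cE(n)_p/(x))/(y)\bigr)$, which is the $y$-torsion in $M$ since $y$ is a non-zero-divisor on $\cE(n)_p/(x)$. To conclude $K/(y)=0$ you then assert that $M$ is $\mF_p[y]$-free, claiming this is ``inherited from the $\mF_p[x]$-freeness hypothesis in \cref{prop:marg1}.'' That is not valid: \cref{lem:marg2} is stated for an arbitrary finitely generated $\cE(n)_p/(x)$-module, with no $y$-freeness assumption, and even in the application inside \cref{prop:marg1} the $\mF_p[x]$-freeness concerns an $\cE(n)_p$-module and does not obviously descend to $\mF_p[y]$-freeness of its reduction modulo $x$. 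Without some such hypothesis the converse direction actually fails: take $M=(\cE(n)_p/(x))/(y)\cong\Lambda(Q_0,\dots,Q_n)$ with $y$ acting trivially; then $M$ is finitely generated and $M/(y)=M$ is free of rank one over $(\cE(n)_p/(x))/(y)$, yet $M$ is not free over $\cE(n)_p/(x)\cong\mF_p[y]\otimes\Lambda(Q_0,\dots,Q_n)$. So either the statement is implicitly carrying an additional hypothesis (such as $\mF_p[y]$-freeness of $M$, which would indeed make your argument go through), or a different device is required; in either case your proof as written does not close this gap.

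A minor correction to the surjectivity step: finite generation bounds $M$, and hence $C$, \emph{above} in weight rather than below, since $y$ has negative weight and the $Q_i$ lie in bounded bidegree. The Nakayama conclusion still holds once the direction is corrected: if $yC=C$ and $c\in C$ is nonzero of maximal weight, writing $c=yc'$ forces $c'$ to have strictly larger weight, a contradiction.
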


\begin{proof}
    The proof follows from the argument given in \cite[Lemma 5.2]{HeardKrause18} along with the observation that the category of connective graded comodules over a connective, graded, flat, finite-type Hopf algebroid has enough projectives \cite{Salch23}.
\end{proof}

\begin{proof}[Proof of \cref{prop:marg1}]
If a finitely generated $\cE(n)_p$-module $M$ is free, the two conditions are immediately satisfied. Thus we consider $M$ where the  two conditions are satisfied and show $M$ is a free $\cE(n)_p$-module. First, note that condition (2) of \cref{prop:marg1} is equivalent to saying that $(M / (x))/(y)$ is free over $(\cE(n)_p/(x))/(y)$. \cref{lem:marg2} then implies that $M / (x)$ is a free $\cE(n)/ (x)$-module. So by \cref{lem:marg1}, $M$ is indeed a free $\cE(n)_p$-module.
\end{proof}
Recall the following freeness criteria for modules over $E(n)$.
\begin{proposition}[{\cite[Theorem 18.8i]{Margolis83}}] \label{prop:MargClassical}
    Let $M$ be a bounded-below $E(n)$-module. Then $M$ is free if and only if $M_*(M, Q_i) = 0$ for all $0 \leq i \leq n$. 
\end{proposition}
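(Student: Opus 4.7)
The plan is as follows. For the forward direction, I would reduce to checking that $E(n)$ itself has vanishing Margolis homology with respect to each $Q_i$. This follows from the Milnor--Moore theorem: since $E(n)$ is a finite-dimensional cocommutative Hopf algebra over $\mathbb{F}_p$, it is free as a module over the sub-Hopf-algebra $\Lambda(Q_i)$, so $\ker Q_i = \im Q_i$ in $E(n)$ and hence in any direct sum of shifts of $E(n)$.

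For the converse, I would induct on $n$. In the base case $n = 0$, the algebra $E(0) = \Lambda(Q_0)$ is exterior on one generator, so the hypothesis $\ker Q_0 = \im Q_0$ lets one lift a graded $\mathbb{F}_p$-basis of $M/Q_0 M$ to elements $\{m_\alpha\} \subset M$ and conclude that $\{m_\alpha\} \cup \{Q_0 m_\alpha\}$ is an $\mathbb{F}_p$-basis for $M$. The bounded-below hypothesis is used here for a degree induction showing that every element of $M$ is indeed hit: if the span failed to contain some element $x$ of minimal degree, one would produce a nonzero class in $\ker Q_0 / \im Q_0$, contradicting vanishing Margolis homology.

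For the inductive step, assume the result for $E(n-1)$ and consider the short exact sequence
\[ 0 \to M' \to M \xrightarrow{Q_n} \Sigma^{|Q_n|} M' \to 0 \]
with $M' := \ker Q_n$, obtained from the vanishing $M_*(M, Q_n) = 0$. Since the $Q_i$ anticommute with $Q_n$ in $E(n)$, each $Q_i$ with $i<n$ preserves $M'$, endowing it with an $E(n-1)$-module structure. The main obstacle is to verify that $M_*(M', Q_i) = 0$ for $i < n$; I would handle this via the spectral sequence of the bicomplex $(M, Q_i, Q_n)$, where the bounded-below hypothesis secures convergence and the ambient vanishing $M_*(M, Q_i) = 0$ forces the needed cancellation on the $E_\infty$-page. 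Induction then gives $M'$ free over $E(n-1)$, and lifting a basis of $M'$ to $M$ together with a splitting of the short exact sequence above yields a free $E(n)$-basis for $M$.
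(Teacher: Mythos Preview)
The paper does not give its own proof of this proposition; it is quoted as a classical result from Margolis's book and used as input. So there is nothing in the paper to compare your argument against.

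That said, your sketch is essentially the standard argument and is sound in outline. Two places would benefit from sharper justification. First, the spectral-sequence step for $M_*(M',Q_i)=0$ is vague as written; a cleaner route is the long exact sequence in $Q_i$-Margolis homology attached to your short exact sequence $0\to M'\to M\to \Sigma^{|Q_n|}M'\to 0$: since $M_*(M,Q_i)=0$, the connecting map yields $M_j(M',Q_i)\cong M_{j+|Q_n|-1}(M',Q_i)$, and as $|Q_n|-1>0$ and $M'$ is bounded below this forces vanishing. Second, the final step of producing a free $E(n)$-basis from a free $E(n-1)$-basis of $M'$ plus a splitting works, but you should say explicitly that the splitting exists because $M'$, being free over the Frobenius algebra $E(n-1)$, is injective, and then check that the lifts $x_\alpha$ with $Q_n x_\alpha = m'_\alpha$ freely generate over $E(n)$; the (anti)commutation of $Q_n$ with the $Q_i$ for $i<n$ is exactly what makes this verification go through.
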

Note that a map of $\cE(n)_p$-modules $f:M \to N$ is said to be a stable equivalence if there exist free $\cE(n)_p$-modules $P,Q$ such that the map $M \oplus P \to N \oplus Q$ induced by $f$ is an isomorphism \cite[p. 205]{Margolis83}.

Combining \cref{prop:MargClassical} with \cref{prop:marg1} yields a motivic analogue of the Whitehead Theorem for Margolis homology over the base field $F$. The proof proceeds in exactly the same way as its classical analogue (see \cite[Theorem 18.8ii]{Margolis83} or \cite[Corollary 4.10]{GheorgheIsaksenRicka18} for the details when $F = \mC$).

\begin{proposition}[Whitehead Theorem] \label{prop:MotivicWhitehead}
    Let $x \in \mathbb{M}_p^F$ as defined above, let $M$ and $N$ be finitely generated $\cE(1)_p$-modules that are $\mF_p [ x]$-free, and let $f: M \to N$ be an $\cE(1)_p$-module map. Then $f$ is a stable equivalence if and only if $(f/(x))/(y): (M/(x))/(y) \to (N/(x))/(y)$ induces an isomorphism in Margolis homologies with respect to $Q_0$ and $Q_1$. 
\end{proposition}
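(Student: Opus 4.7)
My plan is to prove both directions by reducing to the classical Whitehead theorem (a direct consequence of \Cref{prop:MargClassical}) via two successive applications of base change, using the motivic freeness criteria of \Cref{prop:marg1}.

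The forward direction is essentially formal. If $f$ is a stable equivalence, then there exist free $\cE(1)_p$-modules $P, Q$ and an isomorphism $M \oplus P \cong N \oplus Q$ compatible with $f$. Base change along $\mM_p^F \twoheadrightarrow \mF_p[x] \twoheadrightarrow \mF_p$ preserves isomorphisms and takes free $\cE(1)_p$-modules to free $E(1)_p$-modules, and free $E(1)_p$-modules have vanishing Margolis homology with respect to $Q_0$ and $Q_1$. Hence $(f/(x))/(y)$ induces an isomorphism on $Q_0$- and $Q_1$-Margolis homologies.

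For the reverse direction, suppose $(f/(x))/(y)$ induces an isomorphism on Margolis homology. By the classical Whitehead theorem (\Cref{prop:MargClassical} applied to the cone), $(f/(x))/(y)$ is a stable equivalence of $E(1)_p$-modules, so there exist free $E(1)_p$-modules $\overline{P}, \overline{Q}$ and an isomorphism $\phi: (M/(x))/(y) \oplus \overline{P} \cong (N/(x))/(y) \oplus \overline{Q}$ extending $(f/(x))/(y)$. I lift $\overline{P}, \overline{Q}$ to free $\cE(1)_p$-modules $\tilde{P}, \tilde{Q}$ with generators in corresponding bidegrees and use the projectivity of free modules, together with the surjectivity of reduction modulo $(x,y)$, to construct a lift $\tilde{f}: M \oplus \tilde{P} \to N \oplus \tilde{Q}$ of $\phi$ extending $f$. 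The strategy is then to apply \Cref{prop:marg1} to the cone of $\tilde{f}$: the cone is $\mF_p[x]$-free (since $M, N, \tilde{P}, \tilde{Q}$ are all $\mF_p[x]$-free and $\cE(1)_p$ is $\mF_p[x]$-flat), and its reduction modulo $(x,y)$ is the cone of $\phi$, which is contractible. Thus the cone of $\tilde{f}$ is free as an $\cE(1)_p$-module, which exhibits $\tilde{f}$ as a stable equivalence and therefore $f$ as a stable equivalence.

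The main obstacle I anticipate is in the final step: carefully formulating what ``the cone of $\tilde{f}$'' means as an $\cE(1)_p$-module (as opposed to a chain complex), and verifying that its $\mF_p[x]$-freeness genuinely follows from the freeness hypotheses on $M$ and $N$ rather than only up to stable equivalence. I expect this to require a more explicit description of $\tilde{f}$, perhaps after splitting off additional free summands on both sides, so that the cone can be identified with a concrete finitely generated $\cE(1)_p$-module to which \Cref{prop:marg1} applies directly. Some bookkeeping of bidegrees will also be needed to ensure compatibility with the graded structure of $\mM_p^F$ throughout the reduction.
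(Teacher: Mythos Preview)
Your approach is essentially the same as the paper's: the paper simply states that combining \Cref{prop:MargClassical} with \Cref{prop:marg1} yields the result, and that the proof proceeds exactly as in the classical case (Margolis \cite[Theorem 18.8ii]{Margolis83} or \cite[Corollary 4.10]{GheorgheIsaksenRicka18}), i.e.\ by applying the freeness criterion to the cofiber of $f$. Your proposal is a correct and more explicit unpacking of that strategy, and the technical concern you flag about $\mF_p[x]$-freeness of the cone is exactly the point where one has to be a little careful (and is handled by the same bookkeeping as in the cited references).
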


\subsection{Brown--Gitler subcomodules}

% Recall that when $p$ is an odd prime, the dual Steenrod algebra takes the following form for any base field $F$: 
% \[
% \cA^\vee_p = \mathbb{M}_p^F [\overline{\tau}_0, \overline{\tau}_1, \dots, \overline{\xi}_1, \overline{\xi}_2, \dots] / (\overline{\tau}_i^2),
% \]
% where $|\overline{\tau}_i| = (2p^i-1, p^i-1)$ and $|\overline{\tau}_i| = (2p^i-2, p^i-1).$ Moreover when $p=2$, the dual Steenrod algebra is slightly more complicated than its classical counterpart and takes the form
% \[\mathcal{A}^\vee_2 = \mathbb{M}_2^F[\overline{\tau}_0, \overline{\tau}_1, \dots, \overline{\xi}_1, \overline{\xi}_2, \dots]/I,\]
% where $|\overline{\tau}_i| = (2^{i+1}-1, 2^i-1),$ $|\overline{\xi}_i| = (2^{i+1}-2, 2^i-1),$ and $I$ is the ideal of (\ref{eq:SteenrodRelIdeal}).

Define a weight filtration on the dual Steenrod algebra $\cA_p^\vee$ by setting
\[
\wt(\bar{\xi}_i) = \wt(\bar{\tau}_i) = p^i
\]
and extend multiplicatively so
\[
\wt(xy) = \wt(x) + \wt(y).
\]
\begin{definition}
The $k^{th}$ Brown--Gitler comodule, denoted $B_n(k)$, is the subspace of $\cA // \cE (n)^\vee $ spanned by monomials of weight less than or equal to $pk$.
\end{definition}

\begin{remark}
    \rm The motivic Brown--Gitler comodules defined in \cite{CQ21,Realkqcoop}, while denoted in the same fashion as above, are a different family of comodules, as they are obtained from the Mahowald weight filtration on $\mathcal{A}//\mathcal{A}(n)^\vee$. However, note that there is no ambiguity in $B_0(k)$ as there is an isomorphism $\mathcal{A}//\mathcal{A}(0)^\vee \cong \mathcal{A}//\mathcal{E}(0)^\vee$.
\end{remark}

%Define $M_i(j)$ to be the $\mM_p$-subcomodule of $A^\vee \underset{\cE(n)_p^\vee}{\mM_p}$ spanned by monomials of weight exactly $2j$. Observe from the coaction on the generators $\bar{\tau}_j$ and $\bar{\xi}_j$ (\ref{eq:coaction}) that the $\cE(n)^\vee$-coaction on $A // \cE(n)$ preserves weight so $M_i (j)$ is an $\cE(i)_p^\vee$-subcomodule of $A // \cE(n)_p^\vee$. The next proposition follows immediately and is analogous to the classical statement

\begin{proposition} \label{prop:BGfreeinjective}
    The Brown--Gitler comodule $B_{-1}(k)$ is free and injective over $\cE(0)_p^\vee$.
\end{proposition}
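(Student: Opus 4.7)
Since by \Cref{prop:freeImpliesInjective} any free $\cE(0)_p^\vee$-comodule is automatically injective, it suffices to prove the freeness statement. Using the equivalence of categories in \Cref{prop:EquivofCats}, this is equivalent to showing $B_{-1}(k)$ is free as a right $\cE(0)_p$-module. My plan is to invoke the motivic freeness criterion of \Cref{prop:marg1}: it is enough to check (i) that $B_{-1}(k)$ is free as an $\mathbb{F}_p[x]$-module, and (ii) that $\mathbb{F}_p \otimes_{\mathbb{M}_p^F} B_{-1}(k)$ is free over $E(0)_p$.

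Condition (i) is immediate from the monomial $\mathbb{M}_p^F$-basis of $\cA_p^\vee$: by definition of the weight filtration, this basis restricts to an $\mathbb{M}_p^F$-basis of $B_{-1}(k)$, which in particular realizes $B_{-1}(k)$ as a graded-free $\mathbb{F}_p[x]$-submodule of $\cA_p^\vee$.

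For condition (ii), I would proceed by exhibiting an explicit $E(0)_p$-free decomposition of the reduction. After base change to $\mathbb{F}_p$, the right $E(0)_p$-action is the Leibniz extension of the rules $\bar{\tau}_i \cdot Q_0 = \bar{\xi}_i$ (with the convention $\bar{\xi}_0 := 1$) and $\bar{\xi}_j \cdot Q_0 = 0$. The key device is the change of generators
\[
\tilde{\tau}_i \;:=\; \bar{\tau}_i - \bar{\tau}_0\bar{\xi}_i, \qquad i \geq 1,
\]
which one checks is $\cE(0)_p^\vee$-coinvariant from the coaction formula $\alpha(\bar{\tau}_i) = 1 \otimes \bar{\tau}_i + \bar{\tau}_0 \otimes \bar{\xi}_i$. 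Combined with the coinvariance of the $\bar{\xi}_j$, this produces a $\cE(0)_p^\vee$-comodule splitting $\cA_p^\vee \cong \cE(0)_p^\vee \otimes_{\mathbb{M}_p^F} C$, where $C$ is the coinvariant subalgebra generated by $\tilde{\tau}_i$ and $\bar{\xi}_j$ for $i, j \geq 1$, witnessing the freeness of the ambient $\cA_p^\vee$.

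To transport this to the subcomodule $B_{-1}(k)$, I would combine the above decomposition with the weight filtration. Since $\tilde{\tau}_i$ contains the weight-$(p^i + 1)$ term $\bar{\tau}_0\bar{\xi}_i$, the change of generators does not strictly preserve the weight filtration, so one cannot naively restrict. Instead, one works in the original monomial basis and explicitly pairs each weight-$\leq pk$ monomial into a free rank-one $E(0)_p$-summand: a monomial containing some $\bar{\tau}_i$ with $i \geq 1$ is paired with the weight-preserving monomial obtained by replacing its smallest $\bar{\tau}_i$ by $\bar{\xi}_i$; a monomial whose only $\bar{\tau}$-factor is $\bar{\tau}_0$ is paired with the weight-decreasing monomial obtained by deleting $\bar{\tau}_0$; and a non-constant $\bar{\xi}$-monomial $\bar{\xi}_{j_0} m'$ (with $j_0$ minimal) is paired with the same-weight monomial $\bar{\tau}_{j_0} m'$. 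A bookkeeping argument using a lexicographic ordering on monomials, compatible with weight, promotes these pairings to a genuine direct sum decomposition.

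I expect the main obstacle to be the Leibniz cross-terms in the $Q_0$-action on products: the naive pairings above give $E(0)_p$-summands only up to lower-order monomials, and converting to an honest direct sum requires either a triangular-matrix argument or, equivalently, matching the above pairing with the $\cE(0)_p^\vee$-splitting implicit in the $\tilde{\tau}_i$-change of basis. Establishing this compatibility inside the weight filtration is the technical heart of the argument, but once carried out it yields both freeness and, by \Cref{prop:freeImpliesInjective}, injectivity.
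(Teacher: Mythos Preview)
Your overall strategy matches the paper's exactly: reduce injectivity to freeness via \Cref{prop:freeImpliesInjective}, then verify freeness using the criterion of \Cref{prop:marg1}. Condition (i) is handled identically.

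Where you diverge is in condition (ii). The paper dispatches this in one line: after base change to $\mathbb{F}_p$, the comodule $B_{-1}(k)\otimes_{\mathbb{M}_p}\mathbb{F}_p$ is nothing but the homology of the classical Brown--Gitler spectrum, and its freeness over $E(0)_p$ is a known classical fact. You instead attempt to reprove this classical freeness by hand, via the change of generators $\tilde{\tau}_i = \bar{\tau}_i - \bar{\tau}_0\bar{\xi}_i$ and an explicit monomial pairing scheme. This is not wrong in spirit, but it is unnecessary, and as you yourself note, the pairing scheme runs into genuine bookkeeping issues (Leibniz cross-terms, compatibility with the weight cutoff) that you leave unresolved. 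In particular, your three pairing rules as stated are not obviously disjoint: a $\bar{\xi}$-monomial $\bar{\xi}_{j_0}m'$ paired with $\bar{\tau}_{j_0}m'$ may collide with the first rule if $m'$ itself contains $\bar{\tau}$-factors. The triangular-matrix argument you allude to would fix this, but at that point you are reproving a standard result.

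The cleaner route is simply to cite the classical freeness and be done; the whole point of \Cref{prop:marg1} is to reduce motivic freeness questions to their classical analogues.
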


\begin{proof}
    We must only show that $B_{-1}(k)$ is free due to \Cref{prop:freeImpliesInjective}. Since $B_{-1}(k)$ is finitely-generated over $\mathcal{E}(0)^\vee_p$ by construction, after base change to $\mathbb{F}_p$, $B_{-1}(k) \otimes_{\mathbb{M}_p}\mathbb{F}_p$ is equivalent to the homology of the classical Brown--Gitler spectrum. This is free over the classical $E(0)_p^\vee$. By \Cref{prop:marg1}, $B_{-1}(k)$ is free over $\mathcal{E}(0)^\vee_p$, finishing the proof. 
\end{proof}

\begin{proposition}\label{prop:BGdecomposition} 
    For all $n \ge 0$, there exists an $\mathcal{E}(n)_p^\vee$-comodule isomorphism 
    \[
    H_{*,*}BPGL\langle n \rangle \cong \bigoplus\limits_{k=0}^{\infty} \Sigma^{2k(p-1),k(p-1)} B_{n-1}(k).
    \]
\end{proposition}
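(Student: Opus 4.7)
By \Cref{prop:BPGLnHomology}, I would identify $H_{*,*}BPGL\langle n \rangle \cong \mathcal{A}_p^\vee \underset{\mathcal{E}(n)_p^\vee}{\Box} \mathbb{M}_p$. Since $\mathcal{E}(n)_p^\vee$ surjects onto $\mathcal{E}(n-1)_p^\vee$, this sits as an $\mathcal{E}(n)_p^\vee$-subcomodule of the larger algebra $\mathcal{A}_p^\vee//\mathcal{E}(n-1)_p^\vee$, which is the natural home of the Brown--Gitler subcomodules $B_{n-1}(k)$. My plan is to construct an explicit $\mathcal{E}(n)_p^\vee$-comodule isomorphism with the direct sum.

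The first step is to verify that each $B_{n-1}(k)$ is actually an $\mathcal{E}(n)_p^\vee$-subcomodule of $\mathcal{A}_p^\vee//\mathcal{E}(n-1)_p^\vee$, not merely an $\mathcal{E}(n-1)_p^\vee$-subcomodule. The key observation is that the $\mathcal{E}(n)_p^\vee$-coaction preserves the weight filtration: by inspection of \eqref{eq:coaction}, each term appearing in $\alpha(\bar{\tau}_j)$ for $j \geq n$ has second tensor factor of weight exactly $p^j = \wt(\bar{\tau}_j)$. Since $\alpha$ is multiplicative and $\wt$ is additive under products, the whole weight filtration on $\mathcal{A}_p^\vee//\mathcal{E}(n-1)_p^\vee$ is preserved by the coaction.

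Next, I would construct an $\mathcal{E}(n)_p^\vee$-comodule map
\[
\phi: \bigoplus_{k \geq 0} \Sigma^{2k(p-1), k(p-1)} B_{n-1}(k) \to H_{*,*}BPGL\langle n \rangle
\]
where the $k$-th summand is sent via multiplication by $\bar{\xi}_1^k$, together with an explicit bookkeeping of monomials involving $\bar{\tau}_n$ via the relations \eqref{eq:SteenrodRelIdeal}. The $\mathcal{E}(n)_p^\vee$-equivariance of $\phi$ is verified directly on generators using the coaction formulas, leveraging the $\mathcal{E}(n)_p^\vee$-coinvariance of $\bar{\xi}_1$ established in the first step. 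To conclude that $\phi$ is an isomorphism, I would compare Hilbert series: both sides are free over $\mathbb{M}_p$, and a generating-function computation using the bidegrees and weights of the generators shows that their bigraded Poincaré series agree.

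The main obstacle is defining $\phi$ precisely on monomials involving $\bar{\tau}_n$, since $\bar{\tau}_n$ lies in $B_{n-1}(k)$ but is absent from $H_{*,*}BPGL\langle n \rangle$; a naive section of the projection $\mathcal{A}_p^\vee//\mathcal{E}(n-1)_p^\vee \twoheadrightarrow \mathcal{A}_p^\vee//\mathcal{E}(n)_p^\vee$ fails to be $\mathcal{E}(n)_p^\vee$-equivariant. An alternative route is to invoke the motivic Whitehead theorem (\Cref{prop:MotivicWhitehead}) together with the freeness/injectivity of Brown--Gitler comodules over $\mathcal{E}(0)_p^\vee$ from \Cref{prop:BGfreeinjective}: reduce modulo $\tau$ (and $\rho$ where appropriate) to recover the classical Brown--Gitler decomposition of $H_*BP\langle n \rangle$, then lift the isomorphism to the motivic setting via Margolis homology matching.
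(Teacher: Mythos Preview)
The paper's proof is essentially a citation: it asserts that the classical argument from \cite{CulverOddp20} carries over verbatim to the motivic setting, and only remarks on a difference in indexing conventions for the Brown--Gitler comodules. No construction is actually carried out in the paper.

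Your proposal attempts an explicit construction, which is more ambitious, but it has genuine gaps. Your first step---that the $\mathcal{E}(n)_p^\vee$-coaction preserves the weight filtration, so each $B_{n-1}(k)$ is an $\mathcal{E}(n)_p^\vee$-subcomodule of $\mathcal{A}_p^\vee//\mathcal{E}(n-1)_p^\vee$---is correct and is the right starting point. But the map $\phi$ you describe, sending the $k$-th summand via multiplication by $\bar{\xi}_1^k$, does not land in $H_{*,*}BPGL\langle n\rangle$: as you yourself note, $\bar{\tau}_n \in B_{n-1}(k)$ for $k \geq p^{n-1}$, yet $\bar{\xi}_1^k \bar{\tau}_n \notin H_{*,*}BPGL\langle n\rangle$. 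The relations \eqref{eq:SteenrodRelIdeal} only rewrite \emph{squares} $\bar{\tau}_i^2$, so they cannot eliminate a lone $\bar{\tau}_n$; the ``explicit bookkeeping'' you allude to is not specified and does not obviously exist as an $\mathcal{E}(n)_p^\vee$-comodule map.

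Your fallback via \Cref{prop:MotivicWhitehead} does not close this gap either. That Whitehead theorem is stated only for $\mathcal{E}(1)_p$-modules, so it says nothing for $n \geq 2$. Even for $n \leq 1$, it requires a map of $\mathcal{E}(1)_p$-modules as input---which is exactly what you are missing---and its output is only a \emph{stable} equivalence (isomorphism up to free summands), not an isomorphism. Likewise, \Cref{prop:BGfreeinjective} treats only $B_{-1}(k)$ over $\mathcal{E}(0)_p^\vee$, not $B_{n-1}(k)$ over $\mathcal{E}(n)_p^\vee$.

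The reason the paper can get away with a citation is that the $\mathcal{E}(n)_p^\vee$-coaction formulas on $H_{*,*}BPGL\langle n\rangle$ are formally identical to the classical ones over $\mathbb{F}_p$; only the ground ring changes to $\mathbb{M}_p^F$, and the relations in $I$ play no role in the coaction. Thus whatever construction Culver uses (which is more subtle than bare multiplication by $\bar{\xi}_1^k$) transports without change. If you want a self-contained argument, you should reproduce that one rather than the sketch above.
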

\begin{proof}
    This is the same as the proof in the classical setting. The classical statement is an immediate consequence of \cite[Cor~4.10]{CulverOddp20} combined with the first sentence after Proposition 4.6 of \cite{CulverOddp20}. Note that \cite{CulverOddp20} uses the indexing convention 
    \[
    N_{i}(k) = \{ x \in A//E(n)_{*}| wt(x) \le pn \},
    \]
    as opposed to our indexing convention 
    \[
    B_{i}(k) = \{ x \in A//E(n)_{*}| wt(x) \le n \}.
    \]
\end{proof}

\subsection{Homological lightning flash modules}

\begin{definition}\label{def: lightning}
    The motivic homological lightning flash module is given by 
    $$L_p(k) = \mathcal{E}(1)_p \{x_1, x_2, \cdots, x_k \, |\,  x_{i + 1} Q_1 =  x_i Q_0,\, 1 \leq i \leq k-1 \}$$
    where $| x_i| = (2p-2, p-1)i+(1,0).$ Further define $L_p(0) = \mM_p.$
\end{definition}

These lightning flash modules can be easily visualized as in \cref{fig:HomLightFlash}, which depicts $L_3(2)$ using the motivic grading convention that the total topological degree is plotted along the horizontal axis and the weight is plotted along the vertical axis.  Here, a point denotes a copy of $\mM_3$, straight arrows indicate the
(non-trivial) operation of $Q_0,$ and curved arrows indicate the (non-trivial) operation of $Q_1.$  

\begin{figure}[ht]\label{fig:lightning flash}
    \centering
    \resizebox{2in}{1in}{
    \begin{tikzpicture}5
        %\draw[<->,color=gray] (-0.5,-0.5) -- (10, -0.) node[scale=0.7, right,yshift=-0.2in] {}; % x axis
		%\foreach \x/\xtext in {-1,0,1,2,3,4,5,6,7,8,9,10}
		%\draw[shift={(\x,-1)},color=gray] (0pt,2pt) -- (0pt,-2pt) node[scale=0.75,below,color=gray] {$\xtext$}; % x axis labels
		
		%\draw[<->,color=gray] (-0.5, -0.5) -- (-0.5, 5) node[scale=0.7, above left, color=gray] {}; % y axis
		%\foreach \y/\ytext in {-1,0,1,2,3,4,5}
		%\draw[shift={(-1,\y)}, color=gray] (2pt,0pt) -- (-2pt,0pt) node[scale=0.75,left, color=gray] {$\ytext$}; % y axis labels

        \draw[step=1.0,gray!50!white,thin] (-0.5,-0.5) grid (9.5,4.5);
    
    \draw[thick] (5,2) -- (4,2); % horizontal line
    \node [scale=1.5] at (4.5,2) {\textup{$<$}}; % arrow
    \draw[thick] (9,4) -- (8,4); % horizontal line
    \node [scale=1.5] at (8.5,4) {\textup{$<$}}; % arrow
    %\draw (7,3) -- (6,3); % horizontal line
    %\node at (6.5,3) {\textup{$<$}}; % arrow
    %\draw (9,4) -- (8,4); % horizontal line
    %\node at (8.5,4) {\textup{$<$}}; % arrow

    \draw[thick] (0,0)  arc (180:90:3.5); % first lightning flash
    \draw[thick] (3.5,3.5) arc (90:0:1.5); % first lightning flash 
    %\draw (3,3) --(4,3);
    \node [scale=1.5] at (3.5,3.5) {\textup{$<$}}; % arrow
    \node [scale = 1.5] at (5.5,1.75) {\textup{$x_1$}}; % label 

    \draw[thick] (4,2) arc (180:270:1.5); % second lightning flash
    \draw[thick] (5.5,0.5) arc (270:360:3.5); % second lightning flash
    \node [scale=1.5] at (5.5,0.5) {\textup{$<$}}; % arrow 
    \node [scale=1.5] at (9.5,3.75) {$x_2$}; %label
    
    %\draw (2,1) arc (180:270:1); % second lightning flash
    %\draw (3 , 0 ) arc (270:360:2); % second lightning flash
    %\node at (3,0) {\textup{$<$}}; % arrow
    %\node at (5.33,2) {\textup{$x_2$}}; % label

    %\draw (4,2)  arc (180:90:2); % third lightning flash
    %\draw (6,4) arc (90:0:1); % third lightning flash 
    %\node at (6,4) {\textup{$<$}}; % arrow
    %\node at (7.24,2.85) {\textup{$x_3$}}; % label
    
    %\draw (6,3) arc (180:270:1); % fourth lightning flash
    %\draw (7 , 2 ) arc (270:360:2); % fourth lightning flash 
    %\node at (7,2) {\textup{$<$}}; % arrow
    %\node at (9.25, 4.1) {\textup{$x_4$}}; %label

    \node at (0,0) {\textup{$\bullet$}}; % point
    \node at (5,2) {\textup{$\bullet$}}; % point
    \node at (4,2) {\textup{$\bullet$}}; % point
    \node at (9,4) {\textup{$\bullet$}}; % point
    \node at (8,4) {\textup{$\bullet$}}; % point
    %\node at (7,3) {\textup{$\bullet$}}; % point
    %\node at (6,3) {\textup{$\bullet$}}; % point
    %\node at (9,4) {\textup{$\bullet$}}; % point
    %\node at (8,4) {\textup{$\bullet$}}; % point

    \end{tikzpicture}}
    \caption{Homological lightning flash module: $L_3(2)$}
    \label{fig:HomLightFlash}
\end{figure}
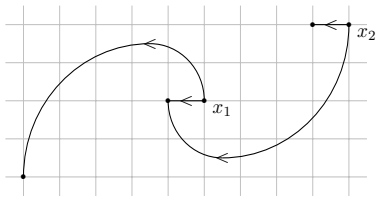

Notice that the inclusion of the top $L_p(k-1)$-submodule of $L_p(k)$ supplies us with a short exact sequence of $\mathcal{E}(1)^\vee_p$-comodules
\begin{equation}
\label{lightning flash ses}
0 \to \Sigma^{2(p-1),p-1} L_p(k-1) \to L_p (k) \to \left( \cE(1)_p // \cE(0)_p \right)^\vee \to 0.
\end{equation}

\begin{proposition}
\label{prop:LightningFlashBGiso} 
    There is an isomorphism of $\mathcal{E}(1)_p^\vee$-comodules:
    \[B_0(k) \cong L_p(\nu_p(k!)) \oplus W_k,\]
    where $W_k$ is a sum of suspensions of $\mathcal{E}(1)_p^\vee$.
\end{proposition}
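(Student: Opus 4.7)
The plan is to construct an $\cE(1)_p^\vee$-subcomodule $L \subseteq B_0(k)$ isomorphic to $L_p(\nu_p(k!))$, verify that the quotient $W_k := B_0(k)/L$ is a free $\cE(1)_p^\vee$-comodule, and then split the short exact sequence
\[
0 \to L \to B_0(k) \to W_k \to 0
\]
using injectivity of free comodules (\cref{prop:freeImpliesInjective}).

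To construct $L$, the strategy is to lift the classical decomposition $B_0^{cl}(k) \cong L_p^{cl}(\nu_p(k!)) \oplus W_k^{cl}$, which is due to Lellmann and Davis--Gitler--Mahowald at $p=2$ and follows at odd primes from an analogous lightning-flash analysis of $B_0^{cl}(k)$. The motivic Brown--Gitler comodule $B_0(k)$ is by construction a free $\mM_p$-module on the same monomial basis that furnishes $B_0^{cl}(k)$ as an $\mF_p$-vector space, so the classical lightning-flash generators lift canonically to elements of $B_0(k)$. The plan is to define $L$ to be the $\cE(1)_p^\vee$-subcomodule generated by these lifts; the identification $L \cong L_p(\nu_p(k!))$ then reduces to checking that the relations $x_{i+1}Q_1 = x_i Q_0$ hold motivically, which follows from the explicit motivic $\cA_p^\vee$-coproduct because the relevant expressions reduce to their classical counterparts after setting the motivic parameters to zero and any potential correction terms are ruled out on bidegree grounds.

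Freeness of $W_k$ over $\cE(1)_p^\vee$ will then be established via the freeness criterion \cref{prop:marg1}, applied to the underlying right $\cE(1)_p$-module via \cref{prop:EquivofCats}. First, $W_k$ is $\mF_p[x]$-free: $B_0(k)$ is free over $\mM_p$, and the generators of $L$ can be extended to a monomial $\mM_p$-basis of $B_0(k)$, so $W_k$ is $\mM_p$-free and hence $\mF_p[x]$-free. Second, the reduction $W_k \otimes_{\mM_p} \mF_p$ agrees with the classical quotient $B_0^{cl}(k)/L_p^{cl}(\nu_p(k!)) \cong W_k^{cl}$, which is free over $E(1)_p$ by the classical decomposition. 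Combining freeness with \cref{prop:freeImpliesInjective} produces the desired splitting.

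The main obstacle is the identification $L \cong L_p(\nu_p(k!))$: one must verify that the chosen lifts satisfy the lightning-flash relations motivically, not merely after reduction modulo $x$ and $y$. This is delicate in cases such as $F = \mR$ at $p=2$ or $F = \mF_q$ with $q \equiv 3 \, (4)$, where the Bockstein acts nontrivially on $\mM_p^F$ and correspondingly twists the $\cA_p^\vee$-coaction formulae. A robust fallback, if direct verification proves awkward, is to apply the motivic Whitehead theorem \cref{prop:MotivicWhitehead} to a candidate map $L_p(\nu_p(k!)) \oplus W_k \to B_0(k)$ built from the classical generators, obtaining a stable equivalence that, together with an $\mM_p$-rank comparison (both sides are finitely generated free $\mM_p$-modules whose ranks match the classical count), upgrades to an isomorphism.
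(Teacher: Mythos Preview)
Your proposal is sound in outline but inverts the paper's priorities: what you call the ``fallback'' via the Whitehead theorem \cref{prop:MotivicWhitehead} is precisely the paper's primary (and only) approach. The paper does not attempt a direct verification of the lightning-flash relations motivically, nor does it argue via injectivity of the free quotient; instead it goes straight to Margolis homology.

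Concretely, the paper first computes the $Q_0$- and $Q_1$-Margolis homologies of both $L_p(\nu_p(k!))$ and $B_0(k)$ after reduction modulo $x$ and $y$. For $B_0(k)$ this is done by computing the Margolis homology of all of $H_{*,*}BPGL\langle 1 \rangle$ (following Adams' classical argument) and then reading off the piece in weight $\leq pk$ via \cref{prop:BGdecomposition}. The $Q_1$-Margolis homology of $B_0(k)$ is one-dimensional, generated by $\bar{\xi}_{i_1}\cdots\bar{\xi}_{i_n}$ where $p^{i_1}+\cdots+p^{i_n}$ is the $p$-adic expansion of $k$. The paper then writes down an explicit $\cE(1)_p$-submodule
\[
S(k) = \cE(1)_p\{\bar{\xi}_{i_1}\bar{\xi}_{i_2}\cdots\bar{\xi}_{i_n}\bar{\tau}_j : j \leq i_1 < i_2 < \cdots < i_n\} \subseteq B_0(k),
\]
observes directly that $S(k) \cong L_p(\nu_p(k!))$ (the relations $x_{i+1}Q_1 = x_iQ_0$ follow from the coaction formulas for $\bar{\tau}_j$ recorded after \cref{eq:coaction}, with no case analysis over base fields needed), and applies \cref{prop:MotivicWhitehead} to the inclusion.

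Your main plan---splitting off $L$ and then proving freeness of the quotient via \cref{prop:marg1}---would also work, but you would still need the explicit generators above to identify $L$ cleanly, and you would need to check that the quotient is $\mF_p[x]$-free (which requires knowing that the inclusion $S(k)\hookrightarrow B_0(k)$ is $\mM_p$-split, again most easily seen from the explicit monomial description). The paper's route via Whitehead sidesteps both of these checks, at the cost of the Margolis homology computation. The missing ingredient in your proposal is not conceptual but concrete: the description of $S(k)$ and the identification of the $Q_1$-Margolis class with the top of the lightning flash.
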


\begin{proof}
    We will use \cref{prop:MotivicWhitehead} to prove this isomorphism. We start by computing the relevant Margolis homologies. First, by considering figures similar to \cref{fig:HomLightFlash} we see that
    \[
    M_* \big([(L_p(\nu_p (k!)))/(x)]/(y), Q_0\big) \cong \mathbb{M}_p \{1\}
    \]
    \[
    M_* \big([(L_p(\nu_p (k!)))/(x)]/(y), Q_0\big) \cong \mathbb{M}_p \{Q_0 x_{\nu_p(k!)} \}.
    \]

    Next we must compute the Margolis homology of the Brown--Gitler comodules $B_0(k)$. It is easier to first compute the Margolis homology for $H_{*,*} BPGL \langle 1 \rangle$, 
    \begin{align*}
        M_*\left(\left[\left(H_{*,*} BPGL \langle 1 \rangle\right)/\left(x\right)\right]/\left(y\right), \, Q_0 \right) & \cong \mathbb{M}_p \{1  \} \\
        M_* \left([(H_{*,*} BPGL \langle 1 \rangle)/(x)](y), \, Q_1 \right) & \cong \mathbb{M}_p \{ \bar{\xi}_1^{\epsilon_1} \bar{\xi}_2^{\epsilon_2} \cdots \bar{\xi}_n^{\epsilon_n} | \, 0 \leq \epsilon_i \leq 1 \}.
    \end{align*}
    (This computation proceeds similarly to Adams' argument in the classical setting when $p = 2$ \cite[ Lemma 16.9]{Adams74}.)
    By \cref{prop:BGdecomposition}, 
    \[
    H_{*,*}BPGL\langle n \rangle \cong \bigoplus\limits_{k=0}^{\infty} \Sigma^{2k(p-1),k(p-1)} B_{n-1}(k).
    \]
    Since $H_{*,*} BPGL \langle 1 \rangle$ is of finite-type, we can apply this decomposition to the corresponding Margolis homology and get
    \begin{align*}
        M_* \big([B_0(k)/(x)]/ (y) , Q_0 \big) & \cong \mF_p \{1\} \\
        M_* \left([B_0(k)/(x)]/ (y) , Q_1 \right) & \cong \mF_p \{\bar{\xi}_{i_1} \bar{\xi}_{i_2} \cdots \bar{\xi}_{i_n} \}, \\ 
    \end{align*}
    where $i_1 < i_2 < \cdots < i_n$ and $p^{i_1} + p^{i_2} + \cdots + p^{i_n}$ is the $p$-adic expansion of $k$. 
    Note that $|x_{\nu_p(k!)}|_{s,f} = | \bar{\xi}_{i_1} \bar{\xi}_{i_2} \cdots \bar{\xi}_{i_n} |_{s,f}$. So if we construct an $\cE(1)_p$-module map
    \[
    L_p(\nu_p(k!)) \to B_0(k)
    \]
    realizing the isomorphism, then we can apply \cref{prop:MotivicWhitehead} to conclude the proof. Let $S(k)$ denote the submodule of $B_0(k)$
    \[
    S(k) = \cE(1)_p \{\bar{\xi}_{i_1} \bar{\xi}_{i_2} \cdots \bar{\xi}_{i_n} \bar{\tau}_j | j \leq i_1 < i_2 < \cdots < i_n \}.
    \]
    Observe that $S(k)$ is naturally isomorphic to $L_p(\nu_p(k!))$ ans so the inclusion $S(k) \to B_0(k)$ is exactly the map we are looking for. 
\end{proof}

\section{Splitting $BPGL \langle 0 \rangle \wedge BPGL \langle 0 \rangle$}
\label{sec:BPGL0}
In this section, we construct spectrum-level splittings of the cooperations algebra $BPGL \langle 0\rangle \wedge BPGL \langle 0 \rangle$. We also compute the homotopy ring of cooperations $\pi_{*,*}(BPGL \langle 0 \rangle \wedge BPGL \langle 0 \rangle)$. Note that at any prime $p$, a model for $BPGL \langle 0 \rangle$ is $H \mathbb{Z}_{(p)}$. 

\subsection{Strategy}
In this section, all results are stated independent of base field unless otherwise stated. We will begin by computing the homotopy groups $\pi_{*,*}(BPGL \langle 0 \rangle)$ by a motivic Adams spectral sequence. The $\textbf{mASS}_p(BPGL \langle 0 \rangle)$ takes the form
\[\text{Ext}^{s,f,w}_{\mathcal{A}_p^\vee}(H_{*,*}(BPGL \langle 0 \rangle)) \implies \pi_{s,w}(BPGL \langle 0 \rangle).\]
Recall from \cref{prop:BPGLnHomology} that there is an isomorphism of $\mathcal{A}^\vee_p$-comodules:
\[H_{*,*}(BPGL \langle 0 \rangle) \cong \mathcal{A}^\vee_p\underset{\mathcal{E}(0)^\vee_p}{\Box}\mathbb{M}_p.\]
Using a change-of-rings isomorphism, the $E_2$-page can then be rewritten as
\[E_2 = \text{Ext}^{s,f,w}_{\mathcal{A}_p^\vee}(\mathcal{A}^\vee_p\underset{\mathcal{E}(0)^\vee_p}{\Box}\mathbb{M}_p) \cong \textup{Ext}^{s,f,w}_{\mathcal{E}(0)_p^\vee}(\mathbb{M}_p).\]

After computing these homotopy groups, we will compute the cooperations algebra by another motivic Adams spectral sequence. The $\textbf{mASS}_p(BPGL \langle 0 \rangle \wedge BPGL \langle 0 \rangle)$ has signature
\[\text{Ext}^{s,f,w}_{\mathcal{A}_p^\vee}(H_{*,*}(BPGL \langle 0 \rangle \wedge BPGL \langle 0 \rangle)) \implies \pi_{s,w}(BPGL \langle 0 \rangle \wedge BPGL \langle 0 \rangle).\]
Using the K\"unneth isomorphism of \cref{prop:BPGLnKunneth}, the change-of-rings isomorphism, and the $\mathcal{E}(0)_p^\vee$-comodule isomorphism of \cref{prop:BGdecomposition}, we can rewrite the $E_2$-page as
\[E_2= \text{Ext}^{s,f,w}_{\mathcal{A}_p^\vee}(H_{*,*}(BPGL \langle 0 \rangle \wedge BPGL \langle 0 \rangle)) \cong \bigoplus_{k \geq 0}\Sigma^{2k(p-1), k(p-1)}\text{Ext}^{s,f,w}_{\mathcal{E}(0)_p^\vee}(B_{-1}(k)).\]
For $k > 0$, we have that $B_{-1}(k)$ is free and injective over $\mathcal{E}(0)_p^\vee$ by \cref{prop:BGfreeinjective}, giving an isomorphism 
\[\text{Ext}^{s,f,w}_{\mathcal{E}(0)_p^\vee}(B_{-1}(k))=W_k,\] 
where $W_k$ is a free $\mathbb{M}_p$-module  concentrated in Adams filtration 0. Since $B_{-1}(0)=\mathbb{M}_p$, we can decompose the $E_2$-page as
\begin{equation}
\label{e2-mass BPGL0 coop}
E_2=\text{Ext}^{s,f,w}_{\mathcal{E}(0)_p^\vee}(\mathbb{M}_p) \oplus \bigoplus_{k \geq 1}W_k,
\end{equation}
with $\bigoplus_{k \geq 1}W_k$ a free $\mathbb{M}_p$-module in Adams filtration 0. The following allows us to determine differentials.
\begin{lemma}
\label{BPGL0 coop mass differentials}
    The differentials in the $\textup{\textbf{mASS}}_p(BPGL \langle 0 \rangle \wedge BPGL \langle 0 \rangle)$ are determined by the differentials in the $\textup{\textbf{mASS}}_p(BPGL \langle 0 \rangle).$
\end{lemma}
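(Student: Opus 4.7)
My plan is to use naturality of the motivic Adams spectral sequence with respect to the unit and multiplication of the ring spectrum $BPGL\langle 0\rangle$. Write $A=\Ext^{s,f,w}_{\cE(0)^\vee_p}(\mM_p)$ and $B=\bigoplus_{k\geq 1}W_k$ for the two summands of the $E_2$-page identified in \eqref{e2-mass BPGL0 coop}. The plan is to show that differentials on the $A$ summand coincide with those of $\textup{\textbf{mASS}}_p(BPGL\langle 0\rangle)$ and that every class in $B$ is a permanent cycle, so that the total $d_r$'s are determined by the differentials of $\textup{\textbf{mASS}}_p(BPGL\langle 0\rangle)$.

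First I would identify the morphism of spectral sequences induced by the right-unit $1\wedge\eta\colon BPGL\langle 0\rangle\to BPGL\langle 0\rangle\wedge BPGL\langle 0\rangle$. On mod-$p$ homology it acts as $x\mapsto x\otimes 1$, and under the $\cE(0)^\vee_p$-comodule decomposition of \Cref{prop:BGdecomposition} the element $1$ sits in the summand $B_{-1}(0)=\mM_p$. After applying the K\"unneth and change-of-rings isomorphisms, the induced map on $E_2$-pages is identified with the canonical inclusion $A\hookrightarrow A\oplus B$. Naturality of $\textup{\textbf{mASS}}_p$-differentials then immediately yields that the differentials on $A$ inside $\textup{\textbf{mASS}}_p(BPGL\langle 0\rangle\wedge BPGL\langle 0\rangle)$ agree with those of $\textup{\textbf{mASS}}_p(BPGL\langle 0\rangle)$.

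Next I would consider the multiplication $\mu\colon BPGL\langle 0\rangle\wedge BPGL\langle 0\rangle\to BPGL\langle 0\rangle$. Since $\mu\circ(1\wedge\eta)=\mathrm{id}$, the induced map $\mu_*$ on $E_2$-pages restricts to the identity on $A$. For the restriction to $B$, a bidegree check suffices: the filtration-zero portion of the target $E_2$-page is $\Hom_{\cE(0)^\vee_p}(\mM_p,\mM_p)\subseteq \mM_p$, which is supported in stems $\leq 0$, whereas each $W_k$ with $k\geq 1$ is supported in stems $\geq 2k(p-1)\geq 2$. Hence $\mu_*|_B=0$ and $\mu_*$ on $E_2$ is the projection $A\oplus B\twoheadrightarrow A$.

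Combining these observations, suppose some class $b\in B$ supported a nontrivial differential $d_r(b)$. Filtration considerations force $d_r(b)\in A$, and naturality with respect to $\mu$ gives $d_r(b)=\mu_*(d_r(b))=d_r(\mu_*(b))=d_r(0)=0$, a contradiction. Since $B$ also cannot receive any differential (it lives entirely in Adams filtration $0$), every class in $B$ is a permanent cycle. Consequently, all nontrivial differentials in $\textup{\textbf{mASS}}_p(BPGL\langle 0\rangle\wedge BPGL\langle 0\rangle)$ live on $A$ and are precisely those of $\textup{\textbf{mASS}}_p(BPGL\langle 0\rangle)$. The main subtlety is carefully tracking the identification of the induced maps on $E_2$-pages through the K\"unneth and change-of-rings isomorphisms; once this is pinned down, the rest reduces to naturality combined with the degree check above.
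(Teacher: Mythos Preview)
Your approach is valid and genuinely different from the paper's. Both arguments use the unit map to pin down the differentials on the summand $A=\Ext_{\cE(0)^\vee_p}(\mM_p)$; they diverge in ruling out differentials originating in $B=\bigoplus_{k\geq 1}W_k$. The paper exploits $v_0$-linearity: since $v_0$ is a permanent cycle one has $d_r(v_0x)=v_0\,d_r(x)$, every class in $B$ is $v_0$-torsion (being concentrated in filtration~$0$), and any potential target lies in positive filtration of $A$, which is $v_0$-torsion free---contradiction. You instead use that the multiplication $\mu$ retracts the unit and invoke naturality. This is a nice alternative that uses only the ring-spectrum structure rather than any specific feature of $\Ext_{\cE(0)^\vee_p}$.

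There is one small gap in your bidegree check for $\mu_*|_B=0$. Over $F=\mR$ at $p=2$ the ground ring $\mM_2^{\mR}=\mF_2[\rho,\tau]$ contains the polynomial class $\rho$ in stem $-1$, so each $W_k$, being a free $\mM_2^{\mR}$-module, has elements $\rho^n\cdot(\text{generator})$ in arbitrarily negative stems; the assertion that $W_k$ is \emph{supported} in stems $\geq 2$ is therefore false in that case. The repair is immediate: $\mu_*$ on $E_2$-pages is $\mM_p$-linear, so it suffices to check vanishing on $\mM_p$-module generators of $W_k$, and these \emph{do} lie in stems $\geq 2k(p-1)>0$ while the filtration-zero part of the target lies in stems $\leq 0$. (Over $\mF_q$ the issue does not even arise, since the negative-stem generator squares to zero and hence $W_k$ is genuinely supported in stems $\geq 2k(p-1)-1\geq 1$.) With this tweak your argument goes through. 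The paper's $v_0$-linearity argument has the minor advantage of being uniform in $F$ without needing to inspect $\mM_p^F$.
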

\begin{proof}
    The map $BPGL \langle 0 \rangle \to BPGL \langle0 \rangle \wedge BPGL \langle 0 \rangle$ induces an inclusion on $E_2$-pages
    \[\text{Ext}^{s,f,w}_{\mathcal{E}(0)_p^\vee}(\mathbb{M}_p) \to \text{Ext}^{s,f,w}_{\mathcal{E}(0)_p^\vee}(\mathbb{M}_p) \oplus \bigoplus_{k \geq 1} W_k.\]
    This determines all of the differentials on the $v_0$-periodic summand $\text{Ext}^{s,f,w}_{\mathcal{E}(0)^\vee_p}(\mathbb{M}_p)$. We will show that there are no differentials involving the summand $\bigoplus_{k \geq 1}W_k$, proving the claim. Since $v_0$ is a permanent cycle in the $\textbf{mASS}_p(BPGL \langle 0 \rangle)$, all differentials in the $\textbf{mASS}_p(BPGL \langle 0 \rangle \wedge BPGL \langle 0 \rangle)$ are $v_0$-linear. Indeed, if $x$ is any class on the $E_r$-page, then
    \[d_r(v_0x) = d_r(v_0)x + v_0d_r(x) = v_0d_r(x).\]
    This implies that there can be no differential from any $v_0$-torsion class to any $v_0$-torsion free class. In particular, there are no differentials from $\bigoplus_{k \geq 1}W_k$ to $\text{Ext}^{s,f,w}_{\mathcal{E}(0)_p^\vee}(\mathbb{M}_p)$. Note that Adams differentials increase filtration. Since $\bigoplus_{k \geq 1}W_k$ is contained in filtration 0 , this rules out any differentials from $\text{Ext}^{s,f,w}_{\mathcal{E}(0)_p^\vee}(\mathbb{M}_p)$ to $\bigoplus_{k \geq 1}W_k$ or from $\bigoplus_{k \geq 1}W_k$ to itself, concluding the proof.
\end{proof}

To produce a spectrum-level splitting, we will use the $BPGL \langle 0 \rangle$-relative Adams spectral sequence. Towards this end, we make the following relative homology computations.
\begin{proposition}\label{prop:homology splitting height zero}
    There exists an isomorphism of $\cE(0)_p^\vee$-comodules \[ H_{*,*}^{BPGL\langle 0 \rangle}\left( BPGL\langle 0 \rangle \wedge BPGL\langle 0 \rangle \right) \cong H_{*,*}^{BPGL\langle 0 \rangle}\left(BPGL\langle 0 \rangle \vee V \right) ,\]
where $V$ is a finite-type sum of suspensions of $H$.
\end{proposition}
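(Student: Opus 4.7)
My plan is to directly compute both sides of the proposed isomorphism and match them as $\cE(0)_p^\vee$-comodules. On the left-hand side, the key simplification is that smashing with $BPGL\langle 0 \rangle$ is trivialized by the relative smash product,
\[
H \underset{BPGL\langle 0 \rangle}{\wedge} BPGL\langle 0 \rangle \wedge BPGL\langle 0 \rangle \simeq H \wedge BPGL\langle 0 \rangle,
\]
so $H_{*,*}^{BPGL\langle 0 \rangle}(BPGL\langle 0 \rangle \wedge BPGL\langle 0 \rangle) \cong H_{*,*}BPGL\langle 0 \rangle$ as $\cE(0)_p^\vee$-comodules, where the comodule structure is induced by the relative dual Steenrod algebra identification $H_{*,*}^{BPGL\langle 0 \rangle} H \cong \cE(0)_p^\vee$ of \Cref{prop:relHomology}.

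The second step is to decompose this module. By \Cref{prop:BGdecomposition} (with $n=0$) there is an $\cE(0)_p^\vee$-comodule isomorphism
\[
H_{*,*}BPGL\langle 0 \rangle \cong \bigoplus_{k \geq 0}\Sigma^{2k(p-1),k(p-1)}B_{-1}(k),
\]
with the $k=0$ summand equal to $\mathbb{M}_p$. For $k \geq 1$, \Cref{prop:BGfreeinjective} says $B_{-1}(k)$ is free over $\cE(0)_p^\vee$, and it is finitely generated since it consists of elements of weight at most $pk$. Hence each such $B_{-1}(k)$ is a finite sum of suspensions of $\cE(0)_p^\vee$, and collecting everything we obtain an isomorphism
\[
H_{*,*}BPGL\langle 0 \rangle \cong \mathbb{M}_p \oplus \bigoplus_{\alpha \in A} \Sigma^{a_\alpha, b_\alpha} \cE(0)_p^\vee
\]
for some indexing set $A$ of bidegrees extracted from the Brown--Gitler decomposition. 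Because each $B_{-1}(k)$ sits in a bounded range of bidegrees concentrated around $(2k(p-1), k(p-1))$, only finitely many $\alpha \in A$ contribute in any fixed bidegree, so the sum is finite-type.

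Finally, on the right-hand side, additivity of relative homology together with $H_{*,*}^{BPGL\langle 0 \rangle}(BPGL\langle 0 \rangle) \cong \mathbb{M}_p$ and $H_{*,*}^{BPGL\langle 0 \rangle}(H) \cong \cE(0)_p^\vee$ from \Cref{prop:relHomology} yields
\[
H_{*,*}^{BPGL\langle 0 \rangle}(BPGL\langle 0 \rangle \vee V) \cong \mathbb{M}_p \oplus \bigoplus_{\alpha \in A} \Sigma^{a_\alpha, b_\alpha} \cE(0)_p^\vee,
\]
once we take $V := \bigvee_{\alpha \in A} \Sigma^{a_\alpha, b_\alpha} H$, which is finite-type by the previous paragraph. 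Matching the two computations as $\cE(0)_p^\vee$-comodules completes the proof. There is no serious obstacle here; the statement is essentially a repackaging of the Brown--Gitler decomposition via the freeness of $B_{-1}(k)$ and the identification of relative homology of $H$ with $\cE(0)_p^\vee$, with the only minor verification being finite-type-ness of $V$.
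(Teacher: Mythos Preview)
Your proof is correct and follows essentially the same approach as the paper: reduce the left-hand side to $H_{*,*}BPGL\langle 0 \rangle$ via the relative smash simplification, apply the Brown--Gitler decomposition of \Cref{prop:BGdecomposition}, and use \Cref{prop:BGfreeinjective} together with \Cref{prop:relHomology} to identify the $k\geq 1$ summands with the relative homology of a finite-type wedge of suspensions of $H$. Your added remark verifying finite-type-ness is a welcome clarification that the paper leaves implicit.
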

\begin{proof}
Recall from \cref{subsec:relative} that $$H_{**}^{BPGL\langle 0 \rangle}BPGL\langle 0 \rangle \wedge X \cong H_{**}X$$ for any spectrum $X$, so we only need to show that $H_{**}BPGL\langle 0 \rangle \cong H_{**}(S^0 \vee V)$. Recall from \cref{prop:BGdecomposition} that 
\[H_{*,*}BPGL\langle 0 \rangle \cong \bigoplus\limits_{k=0}^{\infty}\Sigma^{k(2p-2, p-1)}B_{-1}(k).\]
Note that $H_{*,*}^{BPGL \langle 0 \rangle}BPGL\langle 0 \rangle \cong \mathbb{M}_p^F \cong B_{-1}(0)$. For $k > 0$, observe from  \cref{prop:BGfreeinjective} that $B_{-1}(k)$ is a finite sum of suspensions of $\cE(0)_p^{\vee}$, and then recall from \cref{prop:relHomology} that  $H_{*,*}^{BPGL\langle 0 \rangle}H \cong \cE(0)_p^\vee$.
 \end{proof}

\subsection{$BPGL\langle 0 \rangle$ coefficients}
We begin by computing the homotopy of $BPGL\langle 0 \rangle$. Note that in this section and all subsequent sections, we depict all charts in $(s,f)$-grading, with motivic weight $w$ suppressed.
\begin{proposition}
\label{mass BPGL0 C and R}
    Let $F = \mathbb{C}$ or $\mathbb{R}$. The $\textup{\textbf{mASS}}_p^F(BPGL \langle 0 \rangle)$ collapses on the $E_2$-page, giving isomorphisms
    \[\pi_{*,*}^\mathbb{C}(BPGL \langle 0 \rangle) \cong \mathbb{Z}_p[\tau]\]
    for any prime $p$ and
    \[ \pi_{*,*}^\mathbb{R}(BPGL \langle 0 \rangle) \cong\left\{\begin{array}{ll}
       \mathbb{Z}_2[\tau^2, \rho]/(2\rho)  & p=2  \\
       \mathbb{Z}_p[\theta]  & p>2.
    \end{array} \right.\]
\end{proposition}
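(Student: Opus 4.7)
The plan is to identify the $E_2$-page via change of rings, compute it directly from the cobar complex of $\cE(0)_p^\vee$, argue collapse by a bidegree count, and then resolve the hidden multiplicative extensions.

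First, I would combine the $\cA_p^\vee$-comodule isomorphism
\[ H_{*,*}BPGL\langle 0 \rangle \cong \cA_p^\vee \underset{\cE(0)_p^\vee}{\Box} \mathbb{M}_p^F \]
from \Cref{prop:BPGLnHomology} with the standard change-of-rings isomorphism to rewrite the $E_2$-page as
\[ E_2 \cong \Ext^{s,f,w}_{\cE(0)_p^\vee}(\mathbb{M}_p^F, \mathbb{M}_p^F). \]

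Next, I would compute this Ext group through the cobar complex of the Hopf algebroid $(\mathbb{M}_p^F, \cE(0)_p^\vee)$, where $\cE(0)_p^\vee = \mathbb{M}_p^F[\overline{\tau}_0]/(\overline{\tau}_0^2)$ with $\overline{\tau}_0$ primitive. For $F = \mathbb{C}$ at any prime, and for $F = \mathbb{R}$ at odd primes, the right unit is the inclusion, and the cobar computation immediately yields $\mathbb{M}_p^F[v_0]$, where $v_0 \in \Ext^{0,1,0}$ is the cohomology class of $\overline{\tau}_0$. For $F = \mathbb{R}$ at $p = 2$, the genuine input is the twisted right unit $\eta_R(\tau) = \tau + \rho\overline{\tau}_0$. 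The cobar differential then gives $d(\tau) = \rho\overline{\tau}_0$, while $\rho$ and $\tau^2 = (\tau+\rho\overline{\tau}_0)^2$ (using $\overline{\tau}_0^2 = 0$ in characteristic $2$) are cocycles; so $\rho$ and $\tau^2$ persist to $E_2^{0,0,*}$, and the identity $\rho\overline{\tau}_0 = d(\tau)$ enforces $\rho v_0 = 0$. A finite check on higher cobar terms (using that $\cE(0)_2^\vee$ has only one generator) shows the cohomology is exactly $\mathbb{F}_2[\rho, \tau^2, v_0]/(\rho v_0)$.

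I would then establish collapse by a bidegree count: in every case the $E_2$-page is supported in stems $s \leq 0$, and strictly positive filtration classes all lie in stem $s = 0$. Since Adams differentials $d_r: E_r^{s,f,w} \to E_r^{s-1, f+r, w}$ lower $s$ by one while raising $f$ by $r \geq 2$, no non-trivial source can have a non-trivial target. Finally, I would resolve the extensions using the fact that $v_0 \in \Ext^{0,1,0}$ detects multiplication by $p$; after $p$-completion this promotes $\mathbb{F}_p[v_0]$ to $\mathbb{Z}_p$ and produces the claimed descriptions of $\pi_{*,*}^F(BPGL\langle 0\rangle)$.

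The main obstacle is the $\mathbb{R}$-motivic computation at $p = 2$, whose subtlety stems from the twisted right unit action on $\tau$; once the identity $d(\tau) = \rho\overline{\tau}_0$ is in hand, however, every other step is a short bidegree or cobar verification.
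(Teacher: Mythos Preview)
Your proposal is correct and follows essentially the same approach as the paper: identify the $E_2$-page as $\Ext_{\cE(0)_p^\vee}(\mathbb{M}_p^F,\mathbb{M}_p^F)$ via change of rings, compute it, and observe collapse for degree reasons. The only difference is that the paper cites Hill \cite[Theorem 3.1]{Hill11} for the $\mathbb{R}$-motivic $p=2$ computation of $\Ext_{\cE(0)_2^\vee}(\mathbb{M}_2^\mathbb{R},\mathbb{M}_2^\mathbb{R}) \cong \mathbb{F}_2[\rho,\tau^2,v_0]/(\rho v_0)$, whereas you sketch a direct cobar argument using $\eta_R(\tau)=\tau+\rho\overline{\tau}_0$; the content is the same.
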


\begin{proof}
    Over $\mathbb{C}$, since the motivic $\mathcal{E}(0)^\vee_p$ is the base change of the classical $E(0)^\vee_p$ from $\mathbb{F}_p$ to $\mathbb{F}_p[\tau] = \mathbb{M}_p^\mathbb{C}$, the $E_2$-page is the base change of the classical $E_2$-page of the Adams spectral sequence for $BP\langle 0 \rangle$:
    \[E_2 = \text{Ext}^{s,f,w}_{\mathcal{E}(0)^\vee_p}(\mathbb{M}_p^{\mathbb{C}}) \cong \mathbb{F}_p[\tau, v_0],\]
    where $|v_0| = (0,1,0).$ Over $\mathbb{R}$, the $E_2$-page was computed at the prime $p=2$ by Hill in \cite[Theorem 3.1]{Hill11} and shown to be
    \[\text{Ext}^{s,f,w}_{\mathcal{E}(0)^\vee_2}(\mathbb{M}_2^\mathbb{R}) \cong \mathbb{F}_2[\rho, \tau^2, v_0]/(\rho v_0).\]
    For odd primes, the $E_2$-page is determined in the same way as over $\mathbb{C}$, except that we are base changing from $\mathbb{F}_p$ to $\mathbb{F}_p[\theta] \cong \mathbb{M}_p^\mathbb{R}$:
    \[\text{Ext}^{s,f,w}_{\mathcal{E}(0)^\vee_p}(\mathbb{M}_p^\mathbb{R}) \cong \mathbb{F}_p[\theta, v_0].\]
    In all cases, there can be no differentials for degree reasons, so the spectral sequences must collapse and the result follows.
\end{proof}

The $E_2$-pages described in \Cref{mass BPGL0 C and R} are depicted in \Cref{Ext_e(0)_C_R}.
\begin{figure}[ht]
    \includegraphics[width=0.25\linewidth]{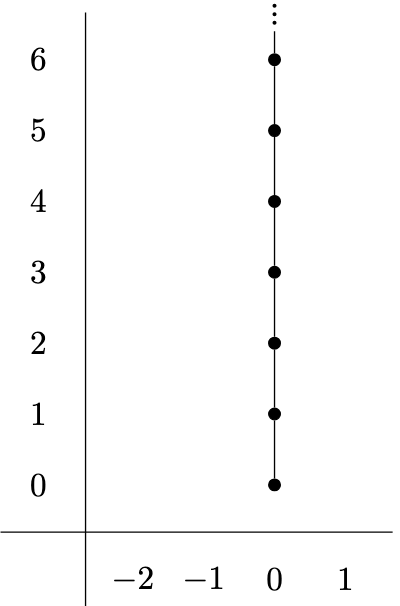}
    \hspace{1.5cm}
    \includegraphics[width=.25\linewidth]{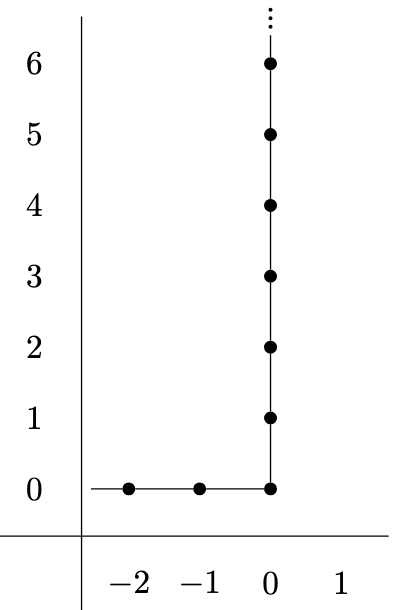}
    \caption{Charts for $\text{Ext}^{s,f,w}_{\mathcal{E}(0)_p^\vee}(\mathbb{M}_p^F)$ for $F=\mathbb{C}$ or $\mathbb{R}$. The left chart represents the case $F=\mathbb{C}$ and $p$ any prime, where a $\bullet$ denotes $\mathbb{F}_p[\tau]$, and also represents the case $F = \mathbb{R}$ and $p>2$, where a $\bullet$ denotes $\mathbb{F}_p[\theta].$ The right chart represents the case $F=\mathbb{R}$ and $p=2$, where a $\bullet$ denotes $\mathbb{F}_2[\tau^2].$ A vertical line represents multiplication by $v_0$, and a horizontal line represents multiplication by $\rho$.}
    \label{Ext_e(0)_C_R}
\end{figure}

\begin{proposition}
\label{BPGL mass finite field p=2}
    Let $F = \mathbb{F}_q$ be a finite field with $\textup{char}(\mathbb{F}_q) \neq 2$. Then the differentials in the $\textup{\textbf{mASS}}_2^{\mathbb{F}_q}(BPGL \langle 0 \rangle)$ at the prime $p=2$ have the following behavior. For $q \equiv 1 \, (4)$, all nontrivial differentials are generated under the Liebniz rule by
    \[d_{\nu_2(q-1)+s}(\tau^{2^s}) = u \tau^{2^{s}-1}v_0^{\nu_2(q-1)+s}, \quad s \geq 0 .\]
    For $q \equiv 3 \, (4)$, all differentials are generated under the Liebniz rule by
    \[d_{\nu_2(q^2-1)+s}(\tau^{2^s}) = \rho\tau^{2^s-1}v_0^{\nu_2(q^2-1)+s}, \quad s \geq 1.\]
\end{proposition}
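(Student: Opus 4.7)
The plan is to identify the target $\pi_{*,*}^{\mathbb{F}_q}BPGL\langle 0 \rangle$ explicitly, then force the differentials by comparison with the $E_2$-page. Since $BPGL\langle 0 \rangle \simeq H\mathbb{Z}_{(2)}$ over $\mathbb{F}_q$, the abutment is the $2$-primary integral motivic cohomology of $\mathbb{F}_q$. By Beilinson--Lichtenbaum (Voevodsky's proof of the Bloch--Kato conjecture) and the standard computation of the Galois cohomology of $\mathbb{F}_q$ with cyclotomic coefficients (Frobenius acting by $q^n$ on $\mu_{2^r}^{\otimes n}$), one obtains
\[H^{1,n}(\mathbb{F}_q;\mathbb{Z}_{(2)}) \cong \mathbb{Z}/2^{\nu_2(q^n-1)} \text{ for } n \geq 1,\]
with $H^{0,0} \cong \mathbb{Z}_{(2)}$ and all other groups vanishing $2$-locally. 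Applying the Lifting the Exponent Lemma specializes this to the weights $n=2^s$ relevant here: $\nu_2(q^{2^s}-1) = \nu_2(q-1)+s$ when $q \equiv 1\,(4)$, and $\nu_2(q^{2^s}-1) = \nu_2(q^2-1)+s-1$ when $q \equiv 3\,(4),\, s\geq 1$.

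Next I would unpack the $E_2$-page $\Ext^{s,f,w}_{\mathcal{E}(0)_2^\vee}(\mathbb{M}_2^{\mathbb{F}_q})$ via the cobar complex. When $q \equiv 1\,(4)$, $\rho = 0$ makes the coaction on $\mathbb{M}_2^{\mathbb{F}_q}$ trivial (using the general formula $\eta_R(\tau) = \tau + \rho \bar{\tau}_0$), so $E_2 \cong \mathbb{F}_2[u,\tau,v_0]/(u^2)$. When $q \equiv 3\,(4)$, the same formula gives $\eta_R(\tau) \neq \tau$, so $\tau \notin \Ext^0$ (explaining why the claim's formulas start at $s \geq 1$); the cobar boundary $d^0(\tau) = \rho \bar{\tau}_0$ simultaneously produces the relation $\rho v_0 = 0$ on $E_2$, in complete analogy with Hill's $\mathbb{R}$-motivic computation, with the surviving algebra generated by $\rho, \tau^2, v_0$ together with the extra primitive classes $\rho\tau^{\mathrm{odd}}$ and their (computable) interactions with $v_0$.

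With both ends of the spectral sequence in hand, the differentials are forced by a counting/bidegree argument. On each $v_0$-tower emanating from $u\tau^{2^s-1}$ (resp.\ $\rho\tau^{2^s-1}$), exactly $\nu_2(q^{2^s}-1)$ classes must survive to $E_\infty$ in order to detect the target cyclic group $\mathbb{Z}/2^{\nu_2(q^{2^s}-1)}$. A bidegree check shows the only class in the relevant degree that can support a truncating differential is $\tau^{2^s}$, and the Adams-filtration bookkeeping forces the $v_0$-exponent in the target to equal the differential length (giving the precise formulas in the claim, after accounting for the extra factor absorbed by $\rho v_0 = 0$ in the $q \equiv 3\,(4)$ case). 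Once these are established on generators, Leibniz propagates them to all other classes.

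The key obstacle is ruling out any earlier differential on $\tau^{2^s}$. I would handle this by induction on $s$: at characteristic $2$, Leibniz gives $d_r((\tau^{2^{s-1}})^2) = 2\,\tau^{2^{s-1}}\,d_r(\tau^{2^{s-1}}) = 0$ on every page where $\tau^{2^{s-1}}$ is still alive, and the inductive hypothesis (with base case $\tau$ for $q \equiv 1\,(4)$, or $\tau^2$ for $q \equiv 3\,(4)$) keeps $\tau^{2^{s-1}}$ alive through page $\nu_2(q-1)+s-1$ (resp.\ $\nu_2(q^2-1)+s-1$). Thus $\tau^{2^s}$ is a permanent cocycle on all earlier pages, and the target computation uniquely pins the differential at the stated page.
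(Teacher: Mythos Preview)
Your approach mirrors the paper's: identify the abutment as the $2$-completed integral motivic cohomology of $\mathbb{F}_q$ (the paper cites Soul\'e, you use Beilinson--Lichtenbaum plus Galois cohomology to reach the same groups), compute $E_2$ (the paper cites Kylling, you sketch the cobar/Bockstein computation), observe that Leibniz reduces everything to differentials on powers of $\tau$, and then read off the length of each differential from the size of the target cyclic group. Your added characteristic-$2$ inductive argument---that $d_r(\tau^{2^s}) = d_r\big((\tau^{2^{s-1}})^2\big) = 0$ on every page where $\tau^{2^{s-1}}$ survives---is a genuine improvement over the paper's terse ``forced by the groups,'' and is a clean way to pin down the exact page.

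One imprecision: in the $q\equiv 3\,(4)$ case, your appeal to the relation $\rho v_0 = 0$ to reconcile your LTE value $\nu_2(q^{2^s}-1) = \nu_2(q^2-1)+s-1$ with the stated exponent $\nu_2(q^2-1)+s$ does not work as written. The relevant $v_0$-tower in weight $-2^s$ sits on the generator $(\rho\tau)(\tau^2)^{2^{s-1}-1}$, not on a $\rho$-multiple of an even $\tau$-power, and Kylling's $E_2$ has no relation $(\rho\tau)v_0 = 0$; the relation $\rho v_0 = 0$ only kills classes $\rho(\tau^2)^a v_0^b$, which live in \emph{odd} weight. The paper's proof is no more careful at this step (it simply asserts the formula and works one example), so this is not a divergence in strategy, but you should redo that piece of filtration bookkeeping rather than attribute the shift to $\rho v_0 = 0$.
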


\begin{proof}
    The $E_2$-page of the $\textbf{mASS}^{\mathbb{F}_q}(BPGL \langle 0 \rangle)$ was determined by Kylling in \cite[Theorem 4.1.2, Theorem 4.1.3]{Kylingkqfinite} and takes the form
    \[\text{Ext}^{s,f,w}_{\mathcal{E}(0)^\vee_2}(\mathbb{M}_2^{\mathbb{F}_q}) \cong \left\{ \begin{array}{lc}
    \mathbb{F}_2[u, \tau, v_0]/(u^2) & q \equiv 1 \, (4) \\
    \mathbb{F}_2[\rho, \tau^2, v_0, \rho\tau]/(\rho^2, \rho v_0, \rho(\rho\tau), (\rho\tau)^2) & q \equiv 3 \, (4)
    \end{array}\right..\]
    For degree reasons, there can be no differential on a class which is not divisible by $\tau$. For a class $\tau^nx$ where $x$ is not divisible by $\tau$, the Liebniz rule gives a formula for any differential:
    \[d_r(\tau^nx) = d_r(\tau^n)x + \tau^nd_r(x) = d_r(\tau^n)x.\]
    Thus all differentials are determined by their values on powers of $\tau$. 
    Recall that the spectrum $H\mathbb{Z}$ represents motivic cohomology with integer coefficients \cite{Spitzweck18}. In particular, there is an isomorphism
    \[\pi_{s,w}^{\mathbb{F}_q}(H\mathbb{Z}) \cong H^{-s,-w}(\mathbb{F}_q; \mathbb{Z})_2^\wedge, \]
    where the right hand side is 2-completed.
    These motivic cohomology groups were calculated by Soul\'e to be \cite{Soule79}:
    \begin{equation}
    \label{eq:FinitefieldMotivicCohomology2Complete}
    H^{-s,-w}(\mathbb{F}_q;\mathbb{Z})_2^\wedge = \left\{\begin{array}{rl}
    \mathbb{Z}_2 & s=w=0; \\
    \mathbb{Z}/(q^{-w}-1)_2 & s=-1, w \leq -1;\\
    0 & \text{else}.
    \end{array}\right.
    \end{equation}    
    This implies that $\pi_{0, -w}^{\mathbb{F}_q}(H\mathbb{Z}) = 0$ for $w \geq 1$, thus all powers of $\tau$ must be killed in the spectral sequence. The particular formulas given are now forced by the groups given in \Cref{eq:FinitefieldMotivicCohomology2Complete}. For example, take $q=3$. We have that $\pi_{-1, -2}^{\mathbb{F}_3}(H\mathbb{Z}) \cong \mathbb{Z}/8$, which implies that the class $\rho \tau v_0^4$ is the class in stem $-1$ and weight $-2$ of lowest Adams filtration which must be the target of a differential. Since $\tau^2$ must die, this forces the differential $d_{4}(\tau^2) = \rho \tau v_0^4$. The Liebniz rule then determines all $d_4$-differentials on $\tau^{2n}$ for $n$ odd. Similarly, we have $\pi_{-1, -3}^{\mathbb{F}_3}(H\mathbb{Z}) \cong \mathbb{Z}/2$, so we must have that $d_{5}(\tau^4) = \rho\tau^3v_0^5$, and the Liebniz rule determines all $d_5$-differentials on $\tau^{4n}$ for $n$ odd.
\end{proof}

The $E_2$-pages described in \Cref{BPGL mass finite field p=2} are depicted in \Cref{Ext_E0_Fq_p2}.
\begin{figure}[ht]
    \centering
    \includegraphics[width=.5\linewidth]{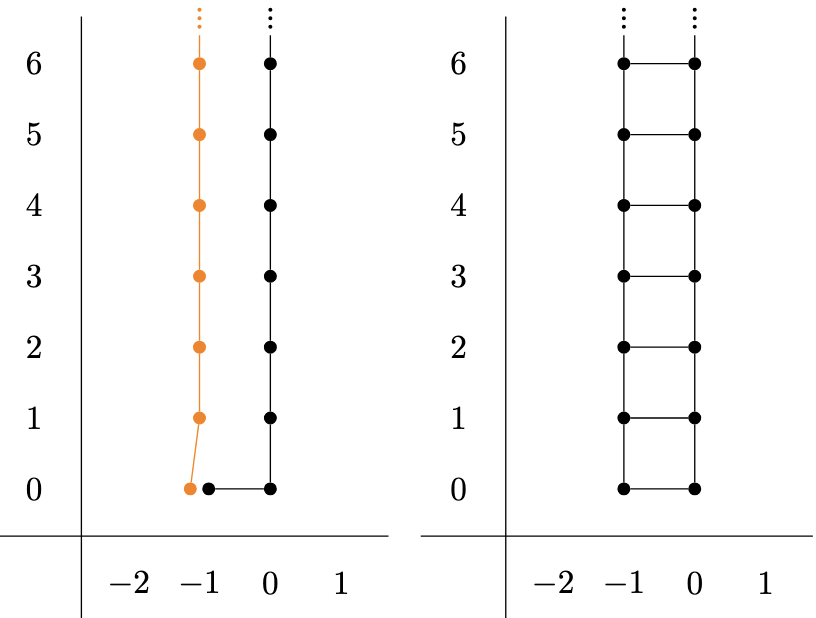} 
    \caption{Charts for $\text{Ext}^{s,f,w}_{\mathcal{E}(0)^\vee_2}(\mathbb{M}_2^{\mathbb{F}_q})$. The left chart represents the case where $q \equiv 3 \, (4)$, where a $\bullet$ denotes $\mathbb{F}_2[\tau^2]$ and an orange class denotes $\rho\tau$-divisibility. The right chart represents the case where $q \equiv 1 \, (4)$, where a $\bullet$ denotes $\mathbb{F}_2[\tau]$. A vertical line represents $v_0$ multiplication, and a horizontal line represents $\rho$-multiplication in the left chart and $u$-multiplication in the right chart.}
    \label{Ext_E0_Fq_p2}
\end{figure}

\begin{proposition}
\label{mass BPGL0 finite field p>2}
    Let $F = \mathbb{F}_q$ be a finite field with $\textup{char}(\mathbb{F}_q) \neq p$. Then the differentials in the $\textup{\textbf{mASS}}_p^{\mathbb{F}_q}(BPGL \langle 0 \rangle)$ at a prime $p >2$ have the following behavior. Let $i$ be the smallest positive integer such that $p \,|\, q^i-1$. For $q^i \equiv 1 \, (p^2)$, all nontrivial differentials are generated under the Liebniz rule by
    \[d_{\nu_p(q^i-1)+s}(\zeta^{p^s}) \doteq u \zeta^{p^s-1}v_0^{\nu_p(q^i-1)+s},\quad  s \geq 0.\]
    For $q^i \not\equiv 1 \, (p^2)$, all nontrivial differentials are generated under the Liebniz rule by
    \[d_{\nu_p(q^{pi}-1)+s}(\zeta^{p^s})\doteq \gamma\zeta^{p^s-1}v_0^{\nu_p(q^{pi}-1)+s},\quad  s \geq 1.\]
\end{proposition}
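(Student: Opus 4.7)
The plan is to follow the strategy of the proof of \cref{BPGL mass finite field p=2}, substituting $\zeta$ for $\tau$ and using the odd-primary elements $u$ or $\gamma$ of $\mathbb{M}_p^{\mathbb{F}_q}$ in place of $u$ or $\rho$. The three key ingredients are: the $E_2$-page computation via change-of-rings, a reduction of the nontrivial differentials to those on powers of $\zeta$ via the Leibniz rule, and Soul\'e's calculation of integral motivic cohomology of $\mathbb{F}_q$ combined with the lifting-the-exponent lemma.

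First I would compute $\text{Ext}^{s,f,w}_{\mathcal{E}(0)^\vee_p}(\mathbb{M}_p^{\mathbb{F}_q})$ directly from the cobar complex, using that $\mathcal{E}(0)^\vee_p = \mathbb{M}_p^{\mathbb{F}_q}[\bar\tau_0]/(\bar\tau_0^2)$ for odd $p$ (by \cref{eq:EnRelations}) and that the right unit is twisted by $\eta_R(\zeta) = \zeta + \gamma\bar\tau_0$. For $q^i \equiv 1\,(p^2)$ the element $\gamma$ vanishes and the cobar is untwisted, yielding $\mathbb{F}_p[u,\zeta,v_0]/(u^2)$. In the case $q^i \not\equiv 1\,(p^2)$, the cobar boundary $d(\zeta) = \gamma\bar\tau_0$ makes $\gamma v_0$ a coboundary and prevents $\zeta$ from surviving to the $E_2$-page; the $E_2$-page is then generated by $\gamma$, $v_0$, $\zeta^p$, and Massey-type classes $\gamma\zeta^j$ for $1 \leq j \leq p-1$, subject to the expected relations (most notably $\gamma v_0 = 0$). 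This extends Kylling's computation at $p=2$, where only $\rho\tau$ appears since $p-1 = 1$.

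Next I would invoke the Leibniz rule. For degree reasons none of $u$, $\gamma$, $v_0$, or the Massey generators $\gamma\zeta^j$ can support a nontrivial Adams differential, so all differentials are determined by their values on powers of $\zeta$: on $\zeta^{p^s}$ for $s \geq 0$ in the square case, or $s \geq 1$ in the non-square case (where $\zeta$ itself is absent from the $E_2$-page). To evaluate these, I would use $BPGL\langle 0 \rangle \simeq H\mathbb{Z}_{(p)}$ together with Soul\'e's calculation
\[
    \pi_{s,w}^{\mathbb{F}_q}(BPGL\langle 0\rangle) \cong \begin{cases} \mathbb{Z}_p & s = w = 0, \\ \mathbb{Z}/(q^{-w}-1)^\wedge_p & s = -1,\ w \leq -1, \\ 0 & \text{else.} \end{cases}
\]
The lifting-the-exponent lemma at an odd prime yields $\nu_p(q^{ip^s}-1) = \nu_p(q^i-1) + s$. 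Since $\pi_{0,-ip^s}^{\mathbb{F}_q}$ vanishes, each $\zeta^{p^s}$ must die, and its only possible targets lie in stem $-1$, weight $-ip^s$. Matching the length of the surviving $v_0$-tower on $u\zeta^{p^s-1}$ (respectively $\gamma\zeta^{p^s-1}$) to the order predicted by Soul\'e's formula forces $d_r(\zeta^{p^s}) \doteq u\zeta^{p^s-1}v_0^r$ with $r = \nu_p(q^i-1)+s$ in the square case, and $d_r(\zeta^{p^s}) \doteq \gamma\zeta^{p^s-1}v_0^r$ with $r = \nu_p(q^{pi}-1)+s$ in the non-square case.

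The main obstacle is pinning down the structure of the $E_2$-page when $q^i \not\equiv 1\,(p^2)$: at odd $p$ the family of Massey generators $\gamma\zeta^j$ for $1 \leq j \leq p-1$ has no analogue in the $p=2$ situation treated by Kylling, and identifying these generators together with all the relations among them (especially that each $\gamma\zeta^j$ is $v_0$-torsion) via the twisted cobar will require some bookkeeping. Once this structural computation is in place, the remainder of the argument runs in close parallel with the proof of \cref{BPGL mass finite field p=2}.
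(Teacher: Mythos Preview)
Your approach is essentially the same as the paper's: reduce to differentials on powers of $\zeta$ via the Leibniz rule, then force them by comparing with Soul\'e's computation of $H^{*,*}(\mathbb{F}_q;\mathbb{Z})_p^\wedge$. The only substantive difference is that the paper cites Wilson \cite[Propositions 6.4 and 7.7]{Wilson16} for the $E_2$-page in both cases, whereas you propose to derive it from the cobar complex directly; your description of that computation is correct in outline, and the paper's later proof of \cref{bockstein BPGL1 Mass E2} runs the analogous $\gamma$-Bockstein spectral sequence exactly as you sketch.

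One correction: in the non-square case the generators $\gamma\zeta^j$ are \emph{not} $v_0$-torsion. The relations are $\gamma^2 = 0$, $\gamma v_0 = 0$, $\gamma\cdot(\gamma\zeta^j) = 0$, and $(\gamma\zeta^j)(\gamma\zeta^k) = 0$, but each $\gamma\zeta^j$ supports an honest $v_0$-tower in stem $-1$. This is essential for the differential $d_r(\zeta^{p^s}) \doteq \gamma\zeta^{p^s-1}v_0^r$ to have a nonzero target, since $\gamma\zeta^{p^s-1} = (\gamma\zeta^{p-1})\cdot(\zeta^p)^{p^{s-1}-1}$ must survive multiplication by $v_0^r$. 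Once you correct that relation, your argument goes through.
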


\begin{proof}
    This follows in the same way as in \cref{BPGL mass finite field p=2}. The $E_2$-page of the $\textbf{mASS}^{\mathbb{F}_q}_p(BPGL \langle 0 \rangle)$ was determined by Wilson in \cite[Proposition 6.4, Proposition 7.7]{Wilson16}. For $q^i \equiv 1 \, (p^2)$, we have
    \[E_2^{s,f,w}=\text{Ext}^{s,f,w}_{\mathcal{E}(0)^\vee_p}(\mathbb{M}_p^{\mathbb{F}_q}) \cong        \mathbb{F}_p[u, \zeta, v_0]/(u^2), \]
    and for $q^i \not\equiv 1 \, (p^2)$, we have
    \[E_2^{s,f,w}=\text{Ext}_{\mathcal{E}(0)^\vee_p}^{s,f,w}(\mathbb{M}_p^{\mathbb{F}_q}) \cong \frac{\mathbb{F}_p[\gamma, \zeta^p, v_0, \gamma\zeta, \gamma\zeta^2, \dots, \gamma\zeta^{p-1}]}{(\gamma^2, \gamma v_0, \gamma(\gamma\zeta^j), (\gamma\zeta^j)(\gamma\zeta^k) : 1 \leq j, k \leq p-1)}.\]
    The multiplicative structure in the $q^i \not\equiv 1 \, (p^2)$ is less opaque than it appears at first glance. In terms of Adams charts, there is a $v_0$-tower in stem 0 which supports a single $\gamma$-multiplication on the generator and there are $p-1$-many $v_0$-towers in stem $-1$ which admit no further multiplication. Recall that the homotopy of $H\mathbb{Z}$ is determined by the motivic cohomology groups, which were determined to be \cite{Soule79}:
    \begin{equation}
    \label{eq:FinitefieldMotivicCohomologypComplete}
    H^{-s,-w}(\mathbb{F}_q;\mathbb{Z})_p^\wedge = \left\{\begin{array}{rl}
    \mathbb{Z}_p & s=w=0; \\
    \mathbb{Z}/(q^{-w}-1)_p & s=-1, w \leq -1;\\
    0 & \text{else}.
    \end{array}\right.
    \end{equation}       
    As in the proof of \Cref{BPGL mass finite field p=2}, the differentials are forced for degree reasons: no power of $\zeta$ may survive to the $E_\infty$-page, and so the particular formulas for the differentials are the only ones possible to get the groups described in \Cref{eq:FinitefieldMotivicCohomologypComplete}.
\end{proof}

The $E_2$-page for the case of $q^i \equiv 1 \, (p^2)$ is depicted in the right side of \Cref{Ext_E0_Fq_p2}, where a $\bullet$ denotes $\mathbb{F}_p[\zeta]$ and a horizontal line denotes $u$-multiplication. The $E_2$-page for the case of $q^i \not \equiv 1 \, (p^2)$ when $p=5$ is depicted in \Cref{Ext_E0_p5_hard}.

\begin{figure}[ht]
    \centering
    \includegraphics[width=0.25\linewidth]{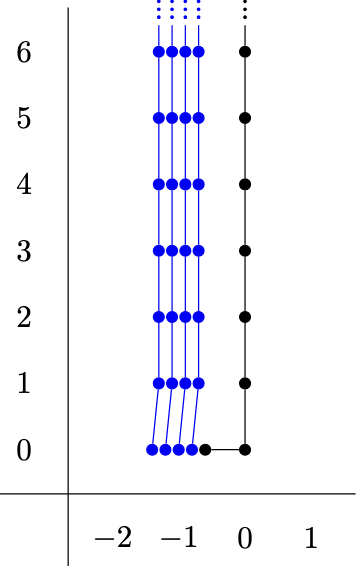}
    \caption{Charts for $\text{Ext}^{s,f,w}_{\mathcal{E}(0)^\vee_p}(\mathbb{M}_p^{\mathbb{F}_q})$ with a nontrivial Bockstein action on $\mathbb{F}_q$ and for $p=5$. A $\bullet$ denotes $\mathbb{F}_5[\zeta^5]$, and a blue class denotes $\mathbb{F}_5\{\gamma\zeta^j\}[\zeta^5]$ for some $1 \leq j \leq 4$. A vertical line represents $v_0$-multiplication, and a horizontal line represents $\gamma$-multiplication. Each blue $v_0$-tower is in stem -1. The case of a trivial Bockstein action at any odd prime is depicted in the left chart of \cref{Ext_E0_Fq_p2}, where a $\bullet$ denotes $\mathbb{F}_p[\zeta].$}
    \label{Ext_E0_p5_hard}
\end{figure}

\subsection{The homotopy ring of $BPGL \langle 0 \rangle$-cooperations}
Next, we compute the homotopy of $BPGL\langle 0 \rangle \wedge BPGL \langle 0 \rangle.$

\begin{theorem}
\label{thm:BPGL0homotopyringcoops}
    Let $F \in \{\mathbb{C}, \mathbb{R}, \mathbb{F}_q\}$, where $\textup{char}(\mathbb{F}_q) \neq p$. Then there is an isomorphism for all primes $p$:
    \[\pi^F_{*,*}(BPGL \langle 0 \rangle \wedge BPGL \langle 0 \rangle) \cong \pi^F_{*,*}(BPGL \langle 0 \rangle) \oplus W,\]
    where $W$ is a sum of shifts of $\mathbb{M}_p$.
\end{theorem}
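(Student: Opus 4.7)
The plan is to run the $\textbf{mASS}_p^F(BPGL\langle 0 \rangle \wedge BPGL \langle 0 \rangle)$ whose $E_2$-page has already been identified at equation (4.1) as
$$E_2 \cong \text{Ext}^{s,f,w}_{\mathcal{E}(0)_p^\vee}(\mathbb{M}_p^F) \oplus \bigoplus_{k \geq 1}W_k,$$
where the second summand lies in Adams filtration zero and is free as an $\mathbb{M}_p^F$-module.

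First I would apply \Cref{BPGL0 coop mass differentials} to conclude that every differential in this spectral sequence is inherited from $\textbf{mASS}_p^F(BPGL \langle 0 \rangle)$ and acts entirely within the first summand. Combining this with \Cref{mass BPGL0 C and R}, \Cref{BPGL mass finite field p=2}, and \Cref{mass BPGL0 finite field p>2}, the first summand converges to $\pi^F_{*,*}(BPGL \langle 0 \rangle)$, while the $v_0$-torsion, filtration-zero second summand survives unchanged to the $E_\infty$-page as a direct sum $W$ of shifts of $\mathbb{M}_p^F$.

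The only remaining issue is whether there are hidden extensions joining the two $E_\infty$-summands. The cleanest way to rule these out is via the $BPGL \langle 0 \rangle$-relative Adams spectral sequence. By \Cref{prop:homology splitting height zero}, the relative homology splits as
$$H^{BPGL \langle 0 \rangle}_{*,*}(BPGL \langle 0 \rangle \wedge BPGL \langle 0 \rangle) \cong H^{BPGL \langle 0 \rangle}_{*,*}(BPGL \langle 0 \rangle \vee V)$$
as $\mathcal{E}(0)_p^\vee$-comodules, and by \Cref{prop:relHomology} the relevant flatness/projectivity hypotheses hold since $\mathcal{E}(0)_p^\vee$ is free over $\mathbb{M}_p^F$. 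The relative Adams spectral sequence then promotes this algebraic splitting to a $BPGL \langle 0 \rangle$-module equivalence $BPGL \langle 0 \rangle \wedge BPGL \langle 0 \rangle \simeq BPGL \langle 0 \rangle \vee V$, and taking bigraded homotopy groups yields the desired algebra isomorphism.

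I expect the main obstacle to be the convergence verification for the $BPGL \langle 0 \rangle$-relative Adams spectral sequence and the subsequent promotion of the algebraic splitting to a spectrum-level splitting. This should reduce to standard connectivity and finite-type arguments, since $BPGL \langle 0 \rangle$ is connective and the summands in $V$ are bounded-below suspensions of $H$ of finite type in each motivic bidegree, but it is the one piece of the argument that goes beyond bookkeeping on $E_2$-pages.
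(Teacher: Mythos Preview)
Your argument is correct, and its first half matches the paper's proof exactly: run the $\textbf{mASS}_p^F(BPGL\langle 0\rangle \wedge BPGL\langle 0\rangle)$, invoke equation~(4.1) for the $E_2$-page, apply \Cref{BPGL0 coop mass differentials} to confine all differentials to the first summand, and use \Cref{mass BPGL0 C and R}, \Cref{BPGL mass finite field p=2}, \Cref{mass BPGL0 finite field p>2} to identify that summand with $\pi^F_{*,*}BPGL\langle 0\rangle$.

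Where you diverge is in your treatment of hidden extensions. The paper simply stops after reading off the $E_\infty$-page, while you route through the relative Adams spectral sequence and the spectrum-level splitting (which is the content of the \emph{next} theorem, \Cref{thm:bpgl0coopalgebra}). That works, but it is more than is needed here, and it inverts the logical order the paper uses. There is a much cheaper way to dispense with hidden extensions: the unit map $BPGL\langle 0\rangle \to BPGL\langle 0\rangle \wedge BPGL\langle 0\rangle$ is split by the multiplication $\mu$, so $\pi^F_{*,*}BPGL\langle 0\rangle$ is a retract of $\pi^F_{*,*}(BPGL\langle 0\rangle \wedge BPGL\langle 0\rangle)$ on the nose. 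The complementary summand has $E_\infty$-page $W$, which lies entirely in Adams filtration $0$, so there is no room for hidden extensions within it. This recovers the paper's conclusion without touching the relative Adams spectral sequence at all.
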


\begin{proof}
    We will show this by analyzing the $\textbf{mASS}_p(BPGL \langle 0 \rangle \wedge BPGL \langle 0 \rangle)$. We saw in \cref{e2-mass BPGL0 coop} that this spectral sequence has signature
    \[E_2=\text{Ext}^{s,f,w}_{\mathcal{E}(0)_p^\vee}(\mathbb{M}_p) \oplus \bigoplus_{k \geq 1}W_k \implies \pi_{s,w}(BPGL \langle 0 \rangle \wedge BPGL \langle0 \rangle).\]
    By \cref{BPGL0 coop mass differentials}, the only differentials in this spectral sequence are those contained in the summand $\text{Ext}^{s,f,w}_{\mathcal{E}(0)^\vee_p}(\mathbb{M}_p)$. These were analyzed in the previous section in \cref{mass BPGL0 C and R}, \cref{BPGL mass finite field p=2}, and \cref{mass BPGL0 finite field p>2}. Since each component of $\bigoplus_{k \geq 1}W_k$ contributes a free summand of $\mathbb{M}_p$, the result follows.
\end{proof}

\subsection{Constructing the spectrum-level splitting} \label{subsec:BPGL0relASS}

Now, we use the relative Adams spectral sequence to lift this algebraic splitting to a spectrum-level splitting.

\begin{theorem}
\label{thm:bpgl0coopalgebra}
    For any prime $p$ and for any $F \in \{\mathbb{C}, \mathbb{R}, \mathbb{F}_q\}$, there is an equivalence of spectra (up to p-completion):
    \[BPGL \langle 0 \rangle \wedge BPGL \langle 0 \rangle \simeq BPGL \langle 0 \rangle \vee V,\]
    where $V$ is a wedge of suspensions of $H\mathbb{F}_p$.
\end{theorem}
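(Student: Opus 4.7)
The plan is to apply the $BPGL\langle 0\rangle$-relative motivic Adams spectral sequence of \Cref{subsec:relative} to lift the $\cE(0)_p^\vee$-comodule splitting of \Cref{prop:homology splitting height zero} to a spectrum-level splitting. The flatness hypotheses are satisfied: $H_{*,*}^{BPGL\langle 0\rangle} H \cong \cE(0)_p^\vee$ is $\mM_p$-free by \Cref{prop:relHomology}, and the relative homology of $BPGL\langle 0\rangle \vee V$ is $\mM_p$-free as a sum of $\mM_p$ and shifts of $\cE(0)_p^\vee$. The resulting spectral sequence
\[E_2^{s,f,w} = \Ext^{s,f,w}_{\cE(0)_p^\vee}\!\bigl(H_{*,*}^{BPGL\langle 0\rangle}(BPGL\langle 0\rangle \vee V),\, H_{*,*}^{BPGL\langle 0\rangle}(BPGL\langle 0\rangle \wedge BPGL\langle 0\rangle)\bigr)\]
converges to $BPGL\langle 0\rangle$-module maps from $BPGL\langle 0\rangle \vee V$ into the $H$-nilpotent (equivalently, $p$-adic) completion of $BPGL\langle 0\rangle \wedge BPGL\langle 0\rangle$.

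The comodule isomorphism of \Cref{prop:homology splitting height zero} determines a class $\iota \in E_2^{0,0,0}$; my next task is to show $\iota$ is a permanent cycle. Decomposing both source and target comodules as $\mM_p \oplus (\text{free})$ via \Cref{prop:homology splitting height zero}, the Ext splits into four pieces. Any piece involving a free $\cE(0)_p^\vee$-comodule summand is concentrated in Adams filtration zero, since free comodules are both injective (\Cref{prop:BGfreeinjective}) and, via the equivalence of \Cref{prop:EquivofCats} together with the self-injectivity of $\cE(0)_p$, projective. The only remaining contribution $\Ext^{*,*,*}_{\cE(0)_p^\vee}(\mM_p,\mM_p) \cong \mM_p[v_0]$ is concentrated in stem zero. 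Hence all possible $d_r$-targets $E_r^{-1,r,0}$ vanish, and boundaries into $(0,0,0)$ from negative filtration also vanish trivially.

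By convergence, $\iota$ lifts to a $BPGL\langle 0\rangle$-module map $f\colon BPGL\langle 0\rangle \vee V \to BPGL\langle 0\rangle \wedge BPGL\langle 0\rangle$ (after $p$-completion) realizing the isomorphism on relative homology. Let $C$ denote the $BPGL\langle 0\rangle$-module cofiber of $f$. Then $H \wedge_{BPGL\langle 0\rangle} C \simeq 0$, and since $H \simeq BPGL\langle 0\rangle/p$ as a $BPGL\langle 0\rangle$-module, this gives $C/p \simeq 0$, forcing $C \simeq 0$ after $p$-completion. Therefore $f$ is an equivalence of $p$-complete motivic spectra. The main technical obstacle is the survival analysis for $\iota$, powered by the freeness/injectivity of the Brown--Gitler comodules from \Cref{prop:BGfreeinjective}; this same mechanism will recur in the more delicate $BPGL\langle 1\rangle$ splitting.
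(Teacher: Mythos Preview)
Your approach is essentially the paper's: run the $BPGL\langle 0\rangle$-relative Adams spectral sequence and show that the class representing the homology isomorphism of \Cref{prop:homology splitting height zero} is a permanent cycle by splitting the $E_2$-page into a filtration-zero part (from the free summands) and $\Ext_{\cE(0)_p^\vee}(\mM_p,\mM_p)$.

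There is one inaccuracy in your survival argument. You assert $\Ext_{\cE(0)_p^\vee}(\mM_p,\mM_p)\cong \mM_p[v_0]$ and that it is concentrated in stem zero. This fails over $\mR$ at $p=2$ and over $\mF_q$: for instance when $q\equiv 1\,(4)$ one has $\Ext_{\cE(0)_2^\vee}(\mM_2^{\mF_q})\cong \mF_2[u,\tau,v_0]/(u^2)$, which contains an entire $v_0$-tower on $u$ in stem $-1$ (cf.\ \cref{BPGL mass finite field p=2}, \cref{mass BPGL0 finite field p>2}). So ``concentrated in stem zero'' is false and cannot by itself rule out targets in $E_r^{-1,r,0}$. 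The conclusion is nevertheless correct: every class of $\Ext_{\cE(0)_p^\vee}(\mM_p,\mM_p)$ in stem $-1$ has strictly negative motivic weight, so $E_r^{-1,r,0}=0$ for all $r\ge 1$ once you track weight. Alternatively, you can argue as the paper does: the filtration-zero summand is $v_0$-torsion while $\Ext_{\cE(0)_p^\vee}(\mM_p,\mM_p)$ is $v_0$-torsion-free above $f=0$, so there are no differentials between the two summands, and within $\Ext_{\cE(0)_p^\vee}(\mM_p,\mM_p)$ the unit at $(0,0,0)$ is a permanent cycle by the same degree/weight analysis used for the $\textbf{mASS}_p(BPGL\langle 0\rangle)$ in \cref{mass BPGL0 C and R}--\cref{mass BPGL0 finite field p>2}.
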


\begin{proof}
       Consider the relative Adams spectral sequence
\begin{align*}
     E_2^{s,f,w} \cong \Ext_{\cE(0)^\vee_p}^{s,f,w}& \left( H_{*,*}^{BPGL\langle 0 \rangle} \left(BPGL\langle 0 \rangle \vee V \right) , \, H_{*,*}^{BPGL\langle 0 \rangle}\left(BPGL\langle 0 \rangle \wedge BPGL\langle 0 \rangle \right) \right) \\\implies & [BPGL\langle 0 \rangle \vee V , BPGL\langle 0 \rangle \wedge BPGL\langle 0 \rangle]^{BPGL\langle 0 \rangle}_{(s,w)}. 
\end{align*}
%\[
%    E_2^{s,f,w} \cong \Ext_{\cE(1)_{*,*}}^{*,*,*} \left(H_{*,*}^{BPGL\langle 0 \rangle}\left(BPGL\langle 0 \rangle \vee V \right) , \, H_{*,*}^{BPGL\langle 0 \rangle}\left(BPGL\langle 0 \rangle \wedge BPGL\langle 0 \rangle \right) \right) \implies [BPGL\langle 0 \rangle \vee V , BPGL\langle 0 \rangle \wedge BPGL\langle 0 \rangle]^{BPGL\langle 0 \rangle}.   
%\]

The isomorphism of \cref{prop:homology splitting height zero} is represented by a class in degree $\theta \in E_{2}^{0,0,0}$. If $\theta$ survives the spectral sequence, then it lifts to a homotopy equivalence (up to $p$-completion) $\widetilde{\theta}: BPGL\langle 0 \rangle \vee V \to BPGL\langle 0 \rangle \wedge BPGL\langle 0 \rangle$. Recall from the proof of \cref{prop:homology splitting height zero} that 
\[
H_{*,*}^{BPGL\langle 0 \rangle}\left(BPGL\langle 0 \rangle \vee V \right) \cong \mM_{p}^{F} \oplus\bigoplus\limits_{i \in I}\Sigma^{\alpha(i)}\mathcal{E}(0)^\vee_p,
\]
where $\bigoplus\limits_{i \in I}\Sigma^{\alpha(i)}\mathcal{E}(0)^\vee_p$ is a finite-type sum of suspensions of $\cE(0)_p^{\vee}$. So we can rewrite the $E_2$-page as  \[E_2^{s,f,w} \cong \Ext^{s,f,w}_{\cE(0)_p^{\vee}}\left(\mM_{p}^{F} \oplus \bigoplus\limits_{i \in I}\Sigma^{\alpha(i)}\mathcal{E}(0)_p^\vee, \mM_{p}^{F} \oplus \bigoplus\limits_{i \in I}\Sigma^{\alpha(i)}\mathcal{E}(0)_p^\vee\right).\]

Since $\mathcal{E}(0)^\vee_p$ is free and injective over itself,
\[E_2^{s,f,w} \cong \Ext^{s,f,w}_{\cE(0)_p^{\vee}}\left(\mM_{p}^{F}, \mM_{p}^{F}\right) \bigoplus\limits_{j \in J} \Sigma^{\beta(j)}\mM_{p}^{F}.\]

Note that the second summand, 
\[
\bigoplus\limits_{j \in J} \Sigma^{\beta(j)}\mM_{p}^{F},
\] 
is concentrated in filtration $f=0$, so no differentials are possible within that summand. Furthermore, this summand is $v_0$-torsion, while the  first summand 
\[
\Ext^{s,f,w}_{\cE(0)_p^{\vee}}\left(\mM_{p}^{F}, \mM_{p}^{F}\right)
\]
is $v_0$-torsion free above the line $f=0$, so no differentials can go from the second summand to the first. Therefore the only potential differentials must occur within the first summand. By the same arguments as in the analysis of the Adams spectral sequence for computing $\pi_{*,*}BPGL\langle 0 \rangle$, no differentials can originate from degree $(s,f,w) = (0,0,0)$ in the first summand, so indeed $\theta$ survives the spectral sequence.
\end{proof}

\begin{remark}
    \rm One may use the computations in this section to compute the $E_1$-page of the $BPGL \langle 0 \rangle$-motivic Adams spectral sequence. In recent work, Burklund--Pstragowski \cite{BurPst25} study the relationship between the classical $BP \langle 0\rangle$ and $H\mathbb{F}_p$-Adams spectral sequences. For example, they show that the quotient map $BP \langle 0 \rangle \to H\mathbb{F}_p$ induces a map of spectral sequences that allows one to identify the $E_2$-page of the $BP \langle 0 \rangle$-Adams spectral sequence for the sphere:
    \[
    ^{BP \langle 0 \rangle}E_2^{s,f} = \left\{\begin{array}{rl}
        \mathbb{Z}_{(p)} &  s=0,f=0\\
        0 & s=0, f \neq 0\\
        \text{Ext}_{\mathcal{A}^\vee_p}^{s,f}(\mathbb{F}_p, \mathbb{F}_p) & s \neq 0.
    \end{array} \right.
    \]
    Our splitting indicates that an analogous statement should hold for the $E_2$-page of the $BPGL \langle 0 \rangle$-motivic Adams spectral sequence. 
    
\end{remark}

\section{Splitting $BPGL\langle 1 \rangle\wedge BPGL \langle 1 \rangle$}
\label{sec:BPGL1}
In this section, we produce spectrum-level splittings of the cooperations algebra $BPGL \langle 1 \rangle \wedge BPGL \langle 1 \rangle$. We also compute the homotopy ring of cooperations $\pi_{*,*}(BPGL \langle 1 \rangle \wedge BPGL \langle 1 \rangle)$. Interestingly, we show that the differentials in the motivic Adams spectral sequence computing the homotopy ring of cooperations are completely determined by $v_1$-linearity and the integral motivic cohomology of the base field, which was discussed in the previous section. As an application, we deduce the $E_1$-page of the $BPGL\langle 1 \rangle$-motivic Adams spectral sequence. Specifically, we describe the $n$-line in terms of a splitting of spectra, and in terms of homotopy groups, as a module over the 0-line. Recall that at the prime $2,$ a model for $BPGL \langle 1 \rangle$ is the (2-local) effective algebraic $K$-theory spectrum $kgl$, while at odd primes, a model for $BPGL \langle 1 \rangle$ is the ($p$-local) connective motivic Adams summand $m\ell$ \cite{NauSpiOst15}.

\subsection{Strategy}
In this section, we outline our strategy for computing the homotopy ring of cooperations. The spectrum-level splitting of the cooperations algebra is more involved, and we defer our overview for the relative Adams spectral sequence arguments until \Cref{subsec:BPGL1relASS}. All results here are stated independent of base field unless otherwise indicated. 

We will begin by computing the homotopy groups $\pi_{*,*}(BPGL \langle 1 \rangle)$ by a motivic Adams spectral sequence. The $\textbf{mASS}_p(BPGL \langle 1 \rangle)$ takes the form
\[\text{Ext}^{s,f,w}_{\mathcal{A}_p^\vee}(H_{*,*}(BPGL \langle 1 \rangle)) \implies \pi_{s,w}(BPGL \langle 1 \rangle).\]
Recall from \cref{prop:BPGLnHomology} that there is an isomorphism of $\mathcal{A}^\vee_p$-comodules:
\[H_{*,*}(BPGL \langle 1 \rangle) \cong \mathcal{A}^\vee_p\underset{\mathcal{E}(1)^\vee_p}{\Box}\mathbb{M}_p.\]
Using a change-of-rings isomorphism, the $E_2$-page can then be rewritten as 
\[E_2 = \text{Ext}^{s,f,w}_{\mathcal{A}^\vee_p}(\mathcal{A}^\vee_p\underset{\mathcal{E}(1)^\vee_p}{\Box}\mathbb{M}_p) \cong \text{Ext}^{s,f,w}_{\mathcal{E}(1)^\vee_p}(\mathbb{M}_p).\]

After computing these homotopy groups, we will compute the cooperations algebra by another motivic Adams spectral sequence. The $\textbf{mASS}_p(BPGL \langle 1 \rangle \wedge BPGL \langle 1 \rangle)$ has signature
\[E_2 = \text{Ext}^{s,f,w}_{\mathcal{A}_p^\vee}(H_{*,*}(BPGL \langle 1 \rangle \wedge BPGL \langle 1 \rangle)) \implies \pi_{s,w}(BPGL \langle 1 \rangle \wedge BPGL \langle 1 \rangle).\]
Using the Kunneth isomorphism of \cref{prop:BPGLnKunneth}, the change-of-rings isomorphism, the Brown-Gitler decomposition of \cref{prop:BGdecomposition}, and the $\mathcal{E}(1)_p^\vee$-comodule isomorphism of \cref{prop:LightningFlashBGiso}, we can rewrite the $E_2$-page as
\begin{equation}
\label{bpgl1 coop e2}
\text{Ext}^{s,f,w}_{\mathcal{A}_p^\vee}(H_{*,*}(BPGL \langle 1 \rangle \wedge BPGL \langle 1 \rangle)) \cong \bigoplus_{k \geq 0}\Sigma^{2k(p-1), k(p-1)}\text{Ext}^{s,f,w}_{\mathcal{E}(1)^\vee_p}(L_p(\nu_p(k!)) \oplus W_k),
\end{equation}
where $W_k$ is a wedge of suspensions of $\mathbb{M}_p$.

Determining the differentials in the $\textbf{mASS}_p(BPGL \langle 1 \rangle \wedge BPGL \langle 1 \rangle)$ is slightly more involved. We will first show that the differentials in the $\textbf{mASS}_p(BPGL \langle 1 \rangle)$ are determined by $v_1$-linearity combined with the differentials in the $\textbf{mASS}_p(BPGL \langle 0 \rangle)$. Then, we will use the description of the $E_2$-page in \Cref{bpgl1 coop e2} to lift differentials and compute the homotopy ring of cooperations.

\subsection{$BPGL \langle 1 \rangle$ coefficients}
We begin by computing the homotopy of $BPGL \langle 1 \rangle.$

\begin{proposition} 
\label{C and R BPGL1}
    Let $F = \mathbb{C}$ or $\mathbb{R}$. Then the motivic Adams spectral sequence for $BPGL \langle 1 \rangle$ collapses on the $E_2$-page, giving isomorphisms
    \[\pi_{*,*}^\mathbb{C}(BPGL \langle 1 \rangle) \cong \mathbb{Z}_p[\tau, v_1]\]
    for any prime $p$ and
    \[ \pi_{*,*}^\mathbb{R}(BPGL \langle 1 \rangle) \cong\left\{\begin{array}{ll}
       \mathbb{Z}_2[\rho, \tau^4, 2\tau^2, v_1]/(2\rho, v_1\rho^3)  & p=2  \\
       \mathbb{Z}_p[\theta, v_1]  & p>2.
    \end{array} \right.\]
\end{proposition}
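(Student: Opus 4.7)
The plan is to run the $\textbf{mASS}_p^F(BPGL\langle 1\rangle)$ and argue that it collapses at $E_2$ for bidegree reasons, after which we resolve the multiplicative/extension structure. By \Cref{prop:BPGLnHomology} and a change of rings, the $E_2$-page is
\[
E_2^{s,f,w} \cong \Ext^{s,f,w}_{\cE(1)^\vee_p}(\mM_p^F).
\]
I would compute this $E_2$-page case by case. For $F=\mathbb{C}$ (any $p$), and for $F=\mathbb{R}$ with $p>2$, the Hopf algebroid $\cE(1)^\vee_p$ is a flat base change of the classical $E(1)^\vee_p$ along $\mathbb{F}_p \to \mM_p^F$; the right unit is the inclusion and $\bar\tau_0,\bar\tau_1$ have the classical coproduct. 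Consequently
\[
\Ext^{s,f,w}_{\cE(1)^\vee_p}(\mM_p^F) \cong \mM_p^F \otimes_{\mF_p} \Ext^{s,f}_{E(1)^\vee_p}(\mF_p) \cong \mM_p^F[v_0,v_1],
\]
giving $\mF_p[\tau,v_0,v_1]$ over $\mathbb{C}$ and $\mF_p[\theta,v_0,v_1]$ over $\mathbb{R}$ at odd primes. For $F=\mathbb{R}$ and $p=2$, I would cite (or reproduce in the dual formulation) Hill's calculation \cite{Hill11}, which identifies $\Ext_{\cE(1)^\vee_2}(\mM_2^\mathbb{R})$ with the ring
\[
\mF_2[\rho,\tau^4,2\tau^2,v_0,v_1]/(\rho v_0,\, \rho^3 v_1).
\]

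Next I would argue $E_\infty = E_2$. Every Adams differential $d_r$ raises filtration by $r\ge 2$ and lowers stem by $1$. In all three cases above, the $E_2$-page is a polynomial (or quotient polynomial) ring generated by classes in stems $-1, 0, 1, 2p-1$. The generator $v_0$ lives in stem $0$ and Adams filtration $1$, $v_1$ in stem $2p-2$ and filtration $1$, and $\tau$ (or $\theta$, or $\tau^4, 2\tau^2$) in stem $0$. A differential $d_r$ on any of these classes would land in stem $-1$ at Adams filtration $\ge r+1\ge 3$, which is trivial in each of the $E_2$-pages listed (in the $\mathbb{R}$, $p=2$ case, the nontrivial classes in stem $-1$ are $\rho$-multiples, but $\rho$ is $v_0$-torsion, hence lies in filtration $0$; there are no classes of higher filtration in stem $-1$ that can receive a differential from stem $0$ generators). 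By the Leibniz rule this kills all possible differentials, so the spectral sequence collapses.

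To read off the homotopy, I would resolve extensions. The $v_0$-towers detect $p$-multiplication, converting the $v_0$ polynomial generator into multiplication by $p$ and giving $\mZ_p$-summands in stems where a full $v_0$-tower survives. Over $\mathbb{C}$ this yields $\pi^\mathbb{C}_{*,*}BPGL\langle 1\rangle\cong \mZ_p[\tau,v_1]$, and over $\mathbb{R}$ at odd primes $\mZ_p[\theta,v_1]$; the absence of hidden extensions follows because the $\tau$- (resp.\ $\theta$-) action is manifest on $E_\infty$. The delicate case is $\mathbb{R}$ at $p=2$. Here the relations $\rho v_0 = 0$ and $\rho^3 v_1 = 0$ on $E_\infty$ lift to $2\rho = 0$ and $\rho^3 v_1 = 0$ in $\pi_{*,*}$; the class $2\tau^2$ appears because $v_0\tau^2$ is not $v_0$-divisible on the $E_\infty$-page even though $\tau^2$ itself is not detected in filtration $0$. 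I would justify the presentation $\mZ_2[\rho,\tau^4, 2\tau^2, v_1]/(2\rho, \rho^3 v_1)$ by identifying these generators with their detecting classes in $E_\infty$ and checking multiplicative relations against the $\mathbb{C}$-realization map, whose target is known.

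The principal obstacle is the $\mathbb{R}$, $p=2$ extension problem: packaging Hill's Ext computation into the stated closed form and verifying the hidden multiplicative relations involving $\tau^2$ and $\rho$. Once that normal form is in hand, the collapse and identification of $\pi_{*,*}^F BPGL\langle 1\rangle$ are bookkeeping. The other cases follow by flat base change and the bidegree argument, mirroring the treatment of $BPGL\langle 0\rangle$ in \Cref{mass BPGL0 C and R}.
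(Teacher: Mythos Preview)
Your approach is essentially the same as the paper's: reduce to $\Ext_{\cE(1)^\vee_p}(\mM_p^F)$ by change of rings, compute this by base change from the classical case over $\mathbb{C}$ and over $\mathbb{R}$ at odd primes, cite Hill \cite{Hill11} for $\mathbb{R}$ at $p=2$, and then observe that the spectral sequence collapses for degree reasons. The paper's proof is in fact terser than yours on both the collapse and the extension problem.

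One small correction: on the $E_2$-page over $\mathbb{R}$ at $p=2$ you should not write ``$2\tau^2$'' as a generator, since the $E_2$-page is an $\mathbb{F}_2$-algebra. The paper (following Hill) records this class as $\tau^2 v_0$, with the additional relation $(\tau^2 v_0)^2 = \tau^4 v_0^2$ that you omitted; the name $2\tau^2$ only makes sense after passing to homotopy and resolving the $v_0$-extension. Also, your collapse argument as written only treats differentials on the stem-$0$ generators; you should note that $d_r(v_1)$ lands in stem $2p-3$ and $d_r(\rho)$ in stem $-2$, and check those targets vanish as well (over $\mathbb{C}$ and $\mathbb{R}$ with $p$ odd this is immediate since every class lies in stem divisible by $2(p-1)$; over $\mathbb{R}$ at $p=2$ a short weight-and-filtration check using $\rho v_0=0$ and $\rho^3 v_1=0$ suffices). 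These are bookkeeping issues; the strategy is correct and matches the paper.
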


\begin{proof}
    Over $\mathbb{C}$ for all primes and over $\mathbb{R}$ at odd primes, similar to the case of $BPGL \langle 0 \rangle$, the $E_2$-page is the base change of the classical Adams spectral sequence for $BP\langle 1 \rangle$ from $\mathbb{F}_p$ to $\mathbb{M}^{F}_p$. Thus, the $E_2$-page over $\mathbb{C}$ at all primes is given by
    \[\text{Ext}^{s,f,w}_{\cE(1)_p^\vee}(\mathbb{M}_p^\mathbb{C}) \cong \mathbb{F}_p[\tau, v_0, v_1],\]
    and the $E_2$-page over $\mathbb{R}$ for odd primes is given by
    \[\text{Ext}^{s,f,w}_{\cE(1)_p^\vee}(\mathbb{M}_p^\mathbb{R}) \cong \mathbb{F}_p[\theta, v_0, v_1],\]  
    where $|v_1| = (2(p-1), 1, p-1)$. Over $\mathbb{R}$ and at the prime $p=2$, the $E_2$-page was computed in \cite[Theorem 3.1]{Hill11} and shown to be
    \[\text{Ext}^{s,f,w}_{\cE(1)_2^\vee}(\mathbb{M}_2^\mathbb{R}) \cong \mathbb{F}_2[\rho, \tau^4, v_0, \tau^2v_0, v_1]/\left(\rho v_0, \rho^3 v_1, (\tau^2v_0)^2\right) = \tau^4v_0^2.\]
    There can be no differentials in any of these spectral sequences for degree reasons, so the spectral sequences must all collapse. The result follows.
\end{proof}

The $E_2$-pages described in \Cref{C and R BPGL1} are depicted in \Cref{Ext_E1_C} and \Cref{Ext_e1_R}.

\begin{figure}[ht]
    \centering
    \includegraphics[width=0.75\linewidth]{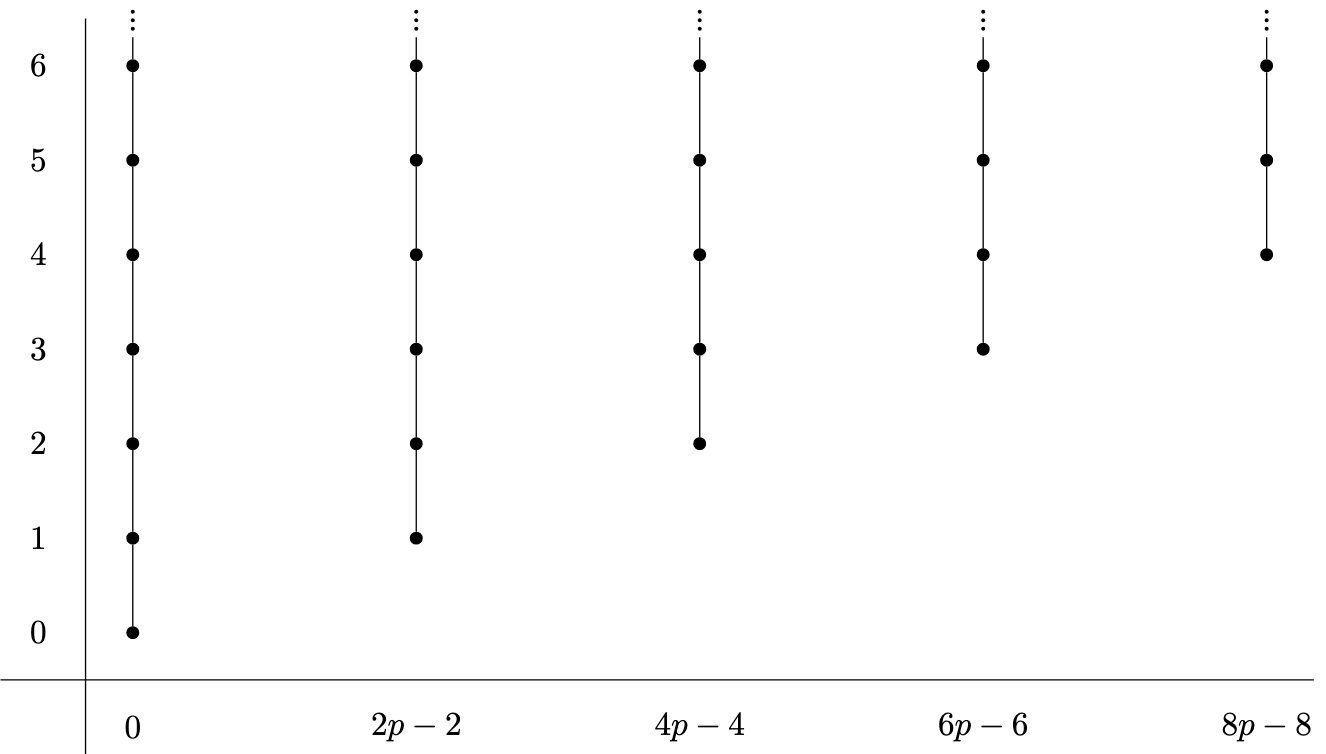}
    \caption{Charts for $\text{Ext}^{s,f,w}_{\mathcal{E}(1)^\vee_p}(\mathbb{M}_p^F,\mathbb{M}_p^F)$ for $F = \mathbb{C}$ at all primes or $\mathbb{R}$ at odd primes. For $F=\mathbb{C}$, a $\bullet$ denotes $\mathbb{F}_p[\tau]$, and for $F=\mathbb{R}$, a $\bullet$ denotes $\mathbb{F}_p[\theta]$. A vertical line indicates $v_0$-multiplication, while $v_1$-multiplication is suppressed.}
    \label{Ext_E1_C}
\end{figure}

\begin{figure}[ht]
    \centering
    \includegraphics[width=0.75\linewidth]{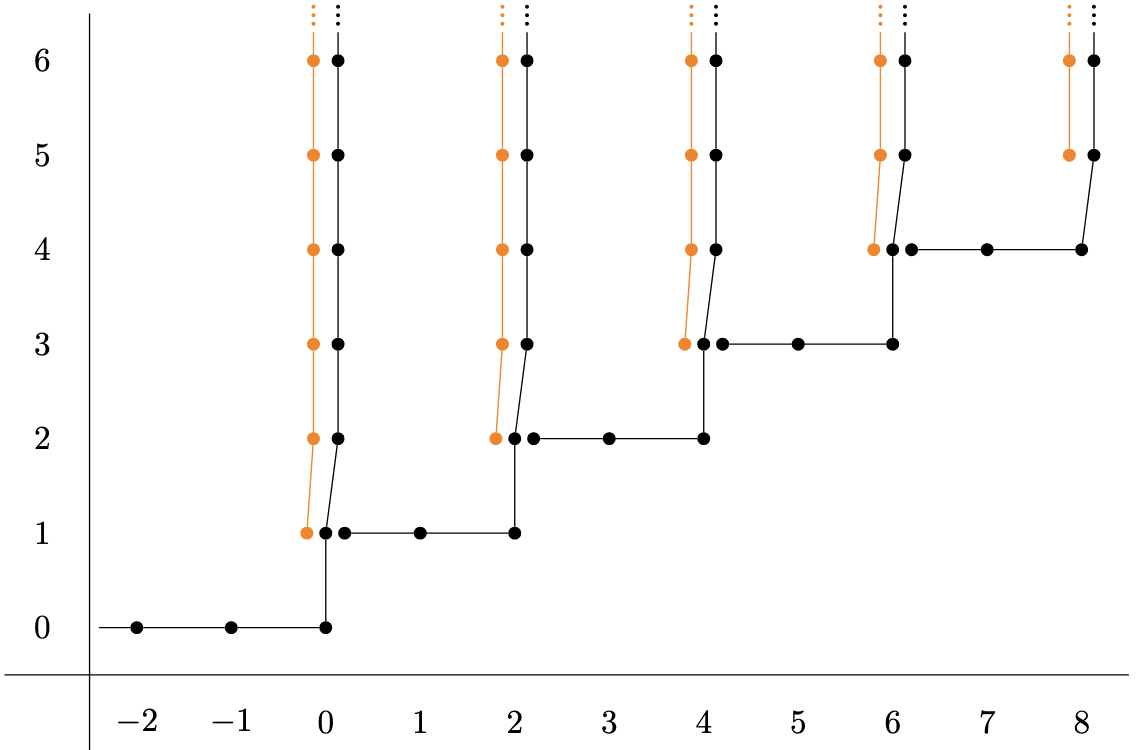}
    \caption{Charts for $\text{Ext}^{s,f,w}_{\mathcal{E}(1)^\vee_2}(\mathbb{M}_2^\mathbb{R},\mathbb{M}_2^{\mathbb{R}}).$ A $\bullet$ denotes $\mathbb{F}_2[\tau^4]$ and an orange class denotes $\tau^2v_0$-divisibility. A vertical line represents $v_0$-multiplication and a horizontal line represents $\rho$-multiplication, while $v_1$-multiplication is suppressed.}
    \label{Ext_e1_R}
\end{figure}

Before determining the coefficients of $BPGL\langle 1 \rangle$ over finite fields, we make a few preliminary calculations.

\begin{lemma}
\label{no bockstein BPGL1 Mass E2}
    Let $p$ be an odd prime and let $\mathbb{F}_q$ be a finite field such that $\textup{char}(\mathbb{F}_q) \neq p$ and there is a trivial Bockstein action. Then there is an isomorphism
    \[\textup{Ext}^{s,f,w}_{\mathcal{E}(1)^\vee_p}(\mathbb{M}_p^{\mathbb{F}_q}) \cong \mathbb{F}_p[u, \zeta, v_0, v_1]/(u^2).\]
\end{lemma}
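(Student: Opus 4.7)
The plan is to reduce the computation to the classical Ext calculation over the topological $E(1)_p^\vee$ by a base change argument. The key observation is that when the Bockstein acts trivially on $\mathbb{M}_p^{\mathbb{F}_q}$, meaning $q^i \equiv 1 \, (p^2)$, the right unit $\eta_R:\mathbb{M}_p^{\mathbb{F}_q} \to \mathcal{E}(1)_p^\vee$ coincides with the left unit (since $\eta_R(\zeta) = \zeta + \gamma \bar\tau_0 = \zeta$ because $\gamma = 0$). Consequently $\mathbb{M}_p^{\mathbb{F}_q}$ sits centrally inside $\mathcal{E}(1)_p^\vee$, and using the defining relations from \cref{eq:EnRelations} in the $q^i \equiv 1\,(p^2)$ case, we get a Hopf algebra isomorphism
\[
\mathcal{E}(1)_p^\vee \;\cong\; E(1)_p^\vee \otimes_{\mathbb{F}_p} \mathbb{M}_p^{\mathbb{F}_q}.
\]

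Next I would invoke flat base change for Ext over Hopf algebroids. Since $\mathbb{M}_p^{\mathbb{F}_q} = \mathbb{F}_p[u,\zeta]/(u^2)$ is a free (hence flat) module over $\mathbb{F}_p$, and $\mathcal{E}(1)_p^\vee$ is obtained from $E(1)_p^\vee$ by extending scalars along the flat map $\mathbb{F}_p \to \mathbb{M}_p^{\mathbb{F}_q}$, a standard change-of-rings/base-change argument (applied to a minimal resolution of $\mathbb{F}_p$ by free $E(1)_p^\vee$-comodules, which is preserved under tensoring with $\mathbb{M}_p^{\mathbb{F}_q}$) yields
\[
\Ext^{s,f,w}_{\mathcal{E}(1)_p^\vee}(\mathbb{M}_p^{\mathbb{F}_q}) \;\cong\; \Ext^{s,f}_{E(1)_p^\vee}(\mathbb{F}_p,\mathbb{F}_p) \otimes_{\mathbb{F}_p} \mathbb{M}_p^{\mathbb{F}_q}.
\]

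Finally, I would appeal to the classical computation $\Ext^{s,f}_{E(1)_p^\vee}(\mathbb{F}_p,\mathbb{F}_p) \cong \mathbb{F}_p[v_0,v_1]$ (with $|v_0|=(0,1)$ and $|v_1|=(2(p-1),1)$), which is standard and can be obtained for instance from a minimal resolution since $E(1)_p^\vee$ is an exterior algebra on $\bar\tau_0, \bar\tau_1$. Substituting this into the base-changed formula and combining with $\mathbb{M}_p^{\mathbb{F}_q} = \mathbb{F}_p[u,\zeta]/(u^2)$ gives the claimed isomorphism
\[
\Ext^{s,f,w}_{\mathcal{E}(1)_p^\vee}(\mathbb{M}_p^{\mathbb{F}_q}) \cong \mathbb{F}_p[u,\zeta,v_0,v_1]/(u^2),
\]
with the weights inherited from $|u|=(-1,-i)$, $|\zeta|=(0,-i)$, $|v_0|=(0,0)$, and $|v_1|=(0,p-1)$. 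The only subtle point, and the step I would write out most carefully, is confirming the triviality of the Bockstein genuinely implies centrality of $\mathbb{M}_p^{\mathbb{F}_q}$ in $\mathcal{E}(1)_p^\vee$ and hence the tensor product decomposition of Hopf algebras; the remainder of the argument is then formal.
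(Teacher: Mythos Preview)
Your proposal is correct and follows essentially the same approach as the paper: both identify $\mathcal{E}(1)_p^\vee$ as the base change of the classical $E(1)_p^\vee$ along $\mathbb{F}_p \to \mathbb{M}_p^{\mathbb{F}_q}$ (using the trivial Bockstein to ensure this), then apply a change-of-rings/base-change isomorphism for Hopf algebroid Ext and plug in the classical computation $\Ext_{E(1)_p^\vee}(\mathbb{F}_p,\mathbb{F}_p)\cong\mathbb{F}_p[v_0,v_1]$. Your explanation of why the trivial Bockstein forces $\eta_L=\eta_R$ is slightly more explicit than the paper's, which simply cites \cite[A1.3.12]{Rav86} for the change-of-rings step.
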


\begin{proof}
    The dual $\mathbb{F}_q$-motivic Steenrod algebra is the base change of the classical dual Steenrod algebra $A^\vee_p$ to $\mathbb{M}_p^{\mathbb{F}_q}$:
    \[\mathcal{A}^\vee_p = A^\vee_{p} \otimes \mathbb{F}_p[u, \zeta]/(u^2).\]
    Note that this isomorphism also holds true for $\mathcal{E}(1)^\vee_p$. This observation gives rise to a natural map of Hopf algebroids
    \[(\mathbb{F}_p, E(1)^\vee_{p}) \to (\mathbb{M}_p^{\mathbb{F}_q}, \mathcal{E}(1)_p^\vee)\]
    which is just base change to $\mathbb{M}_p^{\mathbb{F}_q}$. By the change-of-rings theorem for Hopf algebroid extensions \cite[A1.3.12]{Rav86}, we have an isomorphism
    \[\text{Ext}^{s,f,w}_{\mathcal{E}(1)^\vee_p}(\mathbb{M}_p^{\mathbb{F}_q}) \cong \text{Ext}^{s,f,w}_{E(1)^\vee_{p}}(\mathbb{F}_p) \otimes \mathbb{M}_p^{\mathbb{F}_q} \cong \mathbb{F}_p[u, \zeta, v_0, v_1]/(u^2)\]
    The particular degrees of the generators can be obtained by an analysis of the cobar complex $\mathcal{C}_{\mathcal{E}(1)^\vee_p}(\mathbb{M}_p^{\mathbb{F}_q})$. 
    % For example, since a representative for $v_1$ is $[\overline{\tau}_0]$, which has motivic weight 1, we learn that $v_1$ must also have this property.
\end{proof}

\begin{lemma}
\label{bockstein BPGL1 Mass E2}
    Let $p$ be an odd prime and let $\mathbb{F}_q$ be a finite field such that $\textup{char}(\mathbb{F}_q) \neq p$ and there is a nontrivial Bockstein action. Then there is an isomorphism
    \[
    \textup{Ext}^{s,f,w}_{\mathcal{E}(1)^\vee_p}(\mathbb{M}_p^{\mathbb{F}_q}) \cong \frac{\mathbb{F}_p[\gamma, \zeta^p, v_0, v_1, \gamma\zeta, \gamma \zeta^2, \dots, \gamma \zeta^{p-1}]}{(\gamma^2, \gamma v_0, \gamma(\gamma \zeta^j), (\gamma\zeta^j)(\gamma\zeta^k) : 1 \leq j, k \leq p-1)}.
    \]
\end{lemma}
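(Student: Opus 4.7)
The plan is to reduce to the previous Ext computation for $\mathcal{E}(0)^\vee_p$ via a Cartan--Eilenberg spectral sequence for an extension of Hopf algebroids. At odd primes the motivic quotient $\mathcal{E}(1)^\vee_p = \mathbb{M}_p^{\mathbb{F}_q}[\overline{\tau}_0, \overline{\tau}_1]/(\overline{\tau}_0^2, \overline{\tau}_1^2)$ sits in a Hopf algebroid extension
\[
    \mathcal{E}(0)^\vee_p \to \mathcal{E}(1)^\vee_p \to \Lambda_{\mathbb{M}_p^{\mathbb{F}_q}}(\overline{\tau}_1),
\]
with both $\overline{\tau}_0$ and $\overline{\tau}_1$ primitive. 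The key observation is that the twisted right unit $\eta_R(\zeta) = \zeta + \gamma\overline{\tau}_0$ involves only $\overline{\tau}_0$, so the quotient Hopf algebroid $\Lambda_{\mathbb{M}_p^{\mathbb{F}_q}}(\overline{\tau}_1)$ has trivial right unit on $\mathbb{M}_p^{\mathbb{F}_q}$ and is in fact the base change to $\mathbb{M}_p^{\mathbb{F}_q}$ of the classical exterior Hopf algebra $\Lambda_{\mathbb{F}_p}(\overline{\tau}_1)$.

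Next I would apply the Cartan--Eilenberg spectral sequence \cite[Thm.~A1.3.14]{Rav86} to this extension, obtaining
\[
    E_2^{*,*,*} \cong \Ext^{*,*,*}_{\Lambda(\overline{\tau}_1)}\bigl(\Ext^{*,*,*}_{\mathcal{E}(0)^\vee_p}(\mathbb{M}_p^{\mathbb{F}_q})\bigr) \Longrightarrow \Ext^{*,*,*}_{\mathcal{E}(1)^\vee_p}(\mathbb{M}_p^{\mathbb{F}_q}).
\]
Because $\overline{\tau}_1$ is primitive in $\mathcal{E}(1)^\vee_p$ and graded-commutes with $\overline{\tau}_0$, the induced $\Lambda(\overline{\tau}_1)$-coaction on $\Ext_{\mathcal{E}(0)^\vee_p}(\mathbb{M}_p^{\mathbb{F}_q})$ is trivial. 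Combined with the primitivity and untwisted right unit of $\Lambda(\overline{\tau}_1)$, this yields a K\"unneth-type splitting
\[
    E_2^{*,*,*} \cong \Ext^{*,*,*}_{\mathcal{E}(0)^\vee_p}(\mathbb{M}_p^{\mathbb{F}_q}) \otimes_{\mathbb{F}_p} \mathbb{F}_p[v_1],
\]
where $v_1 = [\overline{\tau}_1]$ and the first factor is Wilson's computation recorded in the proof of \Cref{mass BPGL0 finite field p>2}.

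Then I would argue collapse and extract the multiplicative structure. The $E_2$-page is multiplicatively generated over $\mathbb{F}_p$ by the Wilson generators $\gamma$, $\zeta^p$, $v_0$, $\gamma\zeta, \dots, \gamma\zeta^{p-1}$, all in Cartan--Eilenberg filtration zero, together with $v_1$, which is represented by the primitive cobar class $[\overline{\tau}_1]$. Every multiplicative generator is thus a permanent cycle, so $E_2 = E_\infty$. No extension problems arise because all groups in sight are $\mathbb{F}_p$-vector spaces, and reading off the multiplicative structure directly produces the stated presentation.

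The main obstacle will be the careful bookkeeping of the Hopf algebroid extension: verifying that the above sequence genuinely satisfies the hypotheses of Ravenel's spectral sequence (in particular, that the twisted right unit does not interfere with the decomposition) and that the induced coaction on the inner Ext is indeed trivial. Once these structural points are established, the collapse and the identification of the multiplicative structure follow immediately from the description of the generators.
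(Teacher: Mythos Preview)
Your proposal is correct, but it takes a genuinely different route from the paper. The paper filters the cobar complex $\mathcal{C}_{\mathcal{E}(1)^\vee_p}(\mathbb{M}_p^{\mathbb{F}_q})$ by powers of $\gamma$, obtaining a $\gamma$-Bockstein spectral sequence with $E_1 \cong \mathbb{F}_p[\gamma,\zeta,v_0,v_1]/(\gamma^2)$; the only differential is $d_1(\zeta)\doteq\gamma v_0$, forced by the right unit $\eta_R(\zeta)=\zeta+\gamma\overline{\tau}_0$, and the Leibniz rule finishes the computation. You instead run the Cartan--Eilenberg spectral sequence for the extension $\mathcal{E}(0)^\vee_p \to \mathcal{E}(1)^\vee_p \to \Lambda(\overline{\tau}_1)$, which is essentially the $v_1$-Bockstein spectral sequence the paper later mentions as an alternative in its remark following \Cref{bpgl1 difs finite fields}. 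Your approach is more modular, since it imports Wilson's $\mathcal{E}(0)^\vee_p$ computation wholesale and adjoins $v_1$; the collapse then follows from the structural fact that the quotient $\mathcal{E}(1)^\vee_p \to \mathcal{E}(0)^\vee_p$ splits the edge homomorphism, so every class in $E_2^{0,*}$ lifts to $\Ext_{\mathcal{E}(1)^\vee_p}$ and is a permanent cycle, while $v_1=[\overline{\tau}_1]$ is a cobar cocycle. The paper's approach is more self-contained and parallels Wilson's argument directly, at the cost of redoing the $\gamma$-differential analysis with $v_1$ present as a spectator. Both arrive at the same presentation with no extension issues.
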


\begin{proof}
    This follows in a similar fashion to \cite[Proposition 7.7]{Wilson16}. We may calculate this $\Ext$ group by a $\gamma$-Bockstein spectral sequence. To be precise, consider the cobar complex $\mathcal{C}_{\mathcal{E}(1)^\vee_p}(\mathbb{M}_p^{\mathbb{F}_q})$ whose cohomology computes $\text{Ext}^{s,f,w}_{\mathcal{E}(1)^\vee_p}(\mathbb{M}_p^{\mathbb{F}_q})$. We may filter this by powers of the class $\gamma \in \mathbb{M}_p^{\mathbb{F}_q}$. Since $\gamma^2=0$, this is a short filtration, and has an associated short exact sequence of the form
    \[0 \to \gamma \cdot\mathcal{C}_{\mathcal{E}(1)^\vee_p}(\mathbb{M}_p^{\mathbb{F}_q}) \to \mathcal{C}_{\mathcal{E}(1)^\vee_p}(\mathbb{M}_p^{\mathbb{F}_q}) \to \mathcal{C}_{\mathcal{E}(1)^\vee_p}(\mathbb{M}_p^{\mathbb{F}_q})/\gamma \to 0.\]
    Notice that this filtration supplies us with isomorphisms of Hopf algebroids:
    \[\gamma \cdot\mathcal{C}_{\mathcal{E}(1)^\vee_p}(\mathbb{M}_p^{\mathbb{F}_q}) \cong \mathcal{C}_{\mathcal{E}(1)^\vee_p} ( \mathbb{M}_p^{\mathbb{F}_q} ) /\gamma \cong \mathbb{F}_p[\zeta] \otimes \mathcal{C}_{E(1)^\vee_{p,}}(\mathbb{F}_p).\] 
    This implies that $E_1$-page of the $\gamma$-\textbf{BSS} takes the form
    \[E_1 = \Big( \mathbb{F}_p[\zeta] \otimes\text{Ext}^{s,f}_{E(1)^\vee_{p}} ( \mathbb{F}_p) \Big) [\gamma]/(\gamma^2) \cong \mathbb{F}_p[\gamma, \zeta, v_0,v_1]/(\zeta^2).\]
    Since $\gamma^2=0$, the $d_1$-differential is the only possible nontrivial differential. For degree reasons, the differential must be trivial on classes which are not divisible by $\zeta$. The value on $\zeta$ is given by $d_1(\zeta) \doteq \gamma v_0$. The Liebniz rule forces all other differentials. Notice that as we are working over a field of characteristic $p$, we have that $d_1(\zeta^k)=0$ for any $k \geq 0$ such that $p \,|\, k$. The classes $\gamma \zeta, \gamma\zeta^2, \dots, \gamma \zeta^{p-1}$ all survive to the $E_\infty$-page with multiplicative structure leftover from the $E_1$-page, giving the result.
\end{proof}

\begin{proposition}
\label{bpgl1 difs finite fields}
    Let $\textup{{char}}(\mathbb{F}_q) \neq p$. Then the differentials in the $\textup{\textbf{mASS}}_p^{\mathbb{F}_q}(BPGL \langle 1 \rangle)$ are determined by the differentials in the $\textup{\textbf{mASS}}^{\mathbb{F}_q}_p(BPGL \langle 0 \rangle)$. 
\end{proposition}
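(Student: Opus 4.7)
The plan is to use naturality of the motivic Adams spectral sequence applied to the cofiber sequence
\[
\Sigma^{2(p-1),p-1}BPGL\langle 1\rangle \xrightarrow{v_1} BPGL\langle 1\rangle \xrightarrow{q} BPGL\langle 0\rangle
\]
of \cref{eq:BPGLcofib}, which induces a map of spectral sequences
\[
q_\ast:\textup{\textbf{mASS}}^{\mathbb{F}_q}_p(BPGL\langle 1\rangle) \longrightarrow \textup{\textbf{mASS}}^{\mathbb{F}_q}_p(BPGL\langle 0\rangle).
\]
First, I would identify $q_\ast$ on $E_2$-pages. Using \cref{prop:BPGLnHomology} and the change-of-rings theorem, together with the explicit presentations in \cref{no bockstein BPGL1 Mass E2}, \cref{bockstein BPGL1 Mass E2}, \cref{BPGL mass finite field p=2}, and \cref{mass BPGL0 finite field p>2}, the map $q_\ast$ is the surjection
\[
\Ext^{s,f,w}_{\mathcal{E}(1)^\vee_p}(\mathbb{M}_p^{\mathbb{F}_q}) \twoheadrightarrow \Ext^{s,f,w}_{\mathcal{E}(0)^\vee_p}(\mathbb{M}_p^{\mathbb{F}_q})
\]
realized concretely by setting $v_1 = 0$, with the source being a free polynomial extension of the target by $v_1$.

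Next, since $v_1 \in \pi^{\mathbb{F}_q}_{2(p-1),p-1}BPGL\langle 1\rangle$ is detected by a genuine homotopy class, it is a permanent cycle in the $BPGL\langle 1\rangle$-Adams spectral sequence, so every differential $d_r$ is $v_1$-linear. Combining this with the polynomial structure, any class in $E_r^{BPGL\langle 1\rangle}$ can be written (after fixing lifts) as $\sum_k v_1^k y_k$ with each $y_k$ a lift of a class from $E_r^{BPGL\langle 0\rangle}$, and
\[
d_r\bigl(\textstyle\sum_k v_1^k y_k\bigr) = \sum_k v_1^k d_r(y_k).
\]
Thus it suffices to compute $d_r(y_k)$ for such lifts. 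By naturality, $q_\ast(d_r(y_k)) = d_r^{BPGL\langle 0\rangle}(y_k)$, so $d_r(y_k) = \widetilde{d_r^{BPGL\langle 0\rangle}(y_k)} + v_1 \cdot w$ for some class $w$ in the appropriate tridegree.

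The main obstacle is ruling out any nonzero $v_1$-correction $w$, which should follow from a direct bidegree count. By \cref{BPGL mass finite field p=2} and \cref{mass BPGL0 finite field p>2}, the classes $y_k$ supporting nontrivial differentials in $\textup{\textbf{mASS}}^{\mathbb{F}_q}_p(BPGL\langle 0\rangle)$ are the powers $\tau^{2^s}$ (at $p = 2$) and $\zeta^{p^s}$ (at odd primes), all sitting in stem $0$, so $d_r^{BPGL\langle 0\rangle}(y_k)$ has stem $-1$. Since $|v_1| = (2(p-1), 1, p-1)$ has positive stem $2(p-1)$, any such correction $v_1 \cdot w$ would force $w$ to lie in stem $1-2p$, which is negative. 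As the only generators of $E_r$ with negative stem are $u$, $\rho$, and $\gamma$ (each of stem $-1$), no combination of these together with $\zeta$, $\tau$, $v_0$, $v_1$, or the exceptional classes $\gamma\zeta^j$ and $\rho\tau$ can realize stem $1-2p$ without invoking a $v_1^{-1}$. Hence $w = 0$, and every differential in $\textup{\textbf{mASS}}^{\mathbb{F}_q}_p(BPGL\langle 1\rangle)$ is the $v_1$-linear extension of the corresponding differential in $\textup{\textbf{mASS}}^{\mathbb{F}_q}_p(BPGL\langle 0\rangle)$, as claimed.
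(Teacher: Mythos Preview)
Your approach is essentially the same as the paper's: both use naturality along $q: BPGL\langle 1\rangle \to BPGL\langle 0\rangle$, both need $v_1$ to be a permanent cycle so that differentials are $v_1$-linear, and both reduce to showing that the differential on a lift $y$ of a class from the $BPGL\langle 0\rangle$-spectral sequence has no $v_1$-correction (the paper phrases this as ``classes in the domain get sent to their namesakes in the target for degree reasons,'' which is your stem-count argument in disguise).

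The one point you should strengthen is the justification that $v_1$ is a permanent cycle. You assert that the Ext class $v_1$ ``is detected by a genuine homotopy class,'' but this is precisely what needs arguing: you must show the homotopy element $v_1 \in \pi_{2(p-1),p-1}BPGL\langle 1\rangle$ has Adams filtration exactly~$1$, not higher. The paper handles this by invoking Quillen's computation of $K_*(\mathbb{F}_q)$ at $p=2$ (via the model $BPGL\langle 1\rangle \simeq kgl$) and the identification with the connective Adams summand $m\ell$ at odd primes, in each case obtaining $\pi_{2(p-1),p-1}^{\mathbb{F}_q}BPGL\langle 1\rangle \cong \mathbb{Z}_p$, which forces the Ext class $v_1$ to survive. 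Alternatively, a direct tridegree check on $E_2^{2(p-1)-1,\,1+r,\,p-1}$ shows there is simply nothing for $d_r(v_1)$ to hit, which would also suffice and is in the spirit of your later stem argument.
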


\begin{proof}
    We start with the case where $p=2$. Then the $E_2$-page of the $\textbf{mASS}^{\mathbb{F}_q}_2(BPGL \langle 1 \rangle)$ was determined by Kylling in \cite[Theorem 4.1.2, Theorem 4.1.4]{Kylingkqfinite} and takes the form 
    \[\text{Ext}^{s,f,w}_{\mathcal{E}(1)^\vee_2}(\mathbb{M}_2^{\mathbb{F}_q}) \cong \left\{ \begin{array}{ll}
    \mathbb{F}_2[u, \tau, v_0, v_1]/(u^2) & q \equiv 1 \, (4) \\
    \mathbb{F}_2[\rho, \tau^2, v_0, v_1, \rho\tau]/(\rho^2, \rho v_0, \rho(\rho\tau), (\rho\tau)^2) & q \equiv 3 \, (4).
    \end{array}\right.\]
    We now follow the argument of \cite[Section 4.2]{Kylingkqfinite}. As in the case of $BPGL \langle 0 \rangle$, for degree reasons there can be no differential on a class which is not divisible by $\tau$, and so the Liebniz rule implies that all differentials are determined by their value on powers of $\tau$. Recall from \Cref{sec:motivic prelim} (\cref{eq:BPGLcofib}) that there is a cofiber sequence
    \[\Sigma^{2,1}BPGL \langle 1 \rangle \to BPGL \langle 1 \rangle \to BPGL \langle 0 \rangle.\]
    The reduction map $BPGL \langle 1 \rangle \to BPGL \langle 0 \rangle$ induces a map of spectral sequences, which is realized on $E_2$-pages as
    \[\text{Ext}^{s,f,w}_{\mathcal{E}(1)_2^\vee}(\mathbb{M}_2^{\mathbb{F}_q}) \to \text{Ext}^{s,f,w}_{\mathcal{E}(0)_2^\vee}(\mathbb{M}^{\mathbb{F}_q}_2). \]
    Classes in the domain get sent to their namesakes in the target for degree reasons. Since the map of spectral sequences commutes with differentials, the values of the differentials on powers of $\tau$ in the $\textbf{mASS}_2^{\mathbb{F}_q}(BPGL \langle 1 \rangle)$ are determined by the values of the differentials in the $\textbf{mASS}_2^{\mathbb{F}_q}(BPGL \langle 0 \rangle)$. Recall that a model for the spectrum $BPGL \langle 1 \rangle$ is the effective algebraic $K$-theory spectrum $kgl$. This spectrum represents algebraic $K$-theory in that for $s \geq 1$ there is an isomorphism
    \[\pi_{s,w}^{\mathbb{F}_q}(kgl) \cong K_{s-2w}(\mathbb{F}_q)_2.\]
    The algebraic $K$-theory of finite fields was determined by Quillen to be \cite{Quillen73}
    \[K_i(\mathbb{F}_q)_2 \cong \left\{\begin{array}{rl}
        \mathbb{Z}_2 & i=0; \\
        \mathbb{Z}/(q^n-1)_2 & i=2n-1, n \geq 1;\\
        0 & \text{else.}
    \end{array} \right.\]
    In particular, since $\pi_{2n, n}^{\mathbb{F}_q}(kgl) \cong \mathbb{Z}_2,$ we see that all powers of $v_1$ are permanent cycles. This implies that the spectral sequence is both $v_0$ and $v_1$-linear, and thus all differentials are completely determined by their values on powers $\tau$, giving the result.
    
    Now, let $p >2$. the $E_2$-page of the $\textbf{mASS}_p^{\mathbb{F}_q}(BPGL \langle 1 \rangle)$ was calculated above in \cref{no bockstein BPGL1 Mass E2} and \cref{bockstein BPGL1 Mass E2}. Recall that a model for the spectrum $BPGL \langle 1 \rangle$ is the connective motivic Adams summand $m\ell$. Since $\pi_{2(p-1), p-1}^{\mathbb{F}_q}(m\ell) \cong \mathbb{Z}_p$, all powers of $v_1$ must be permanent cycles \cite{NauSpiOst15}. Thus, the differentials in the spectral sequence are completely determined by their values on $\zeta$. The argument now follows exactly as above: the differentials on $\zeta$ determined in \cref{BPGL mass finite field p=2} and \cref{mass BPGL0 finite field p>2} lift to our case, finishing the proof.
\end{proof}

The $E_2$-pages described in \Cref{bpgl1 difs finite fields} are depicted in \Cref{Ext_E1_f3_p2}, \Cref{Ext_E1_fq_easy_podd}, and \Cref{Ext_E1_fq_hard_p5}.

\begin{figure}[ht]
    \centering
    \includegraphics[width=0.75\linewidth]{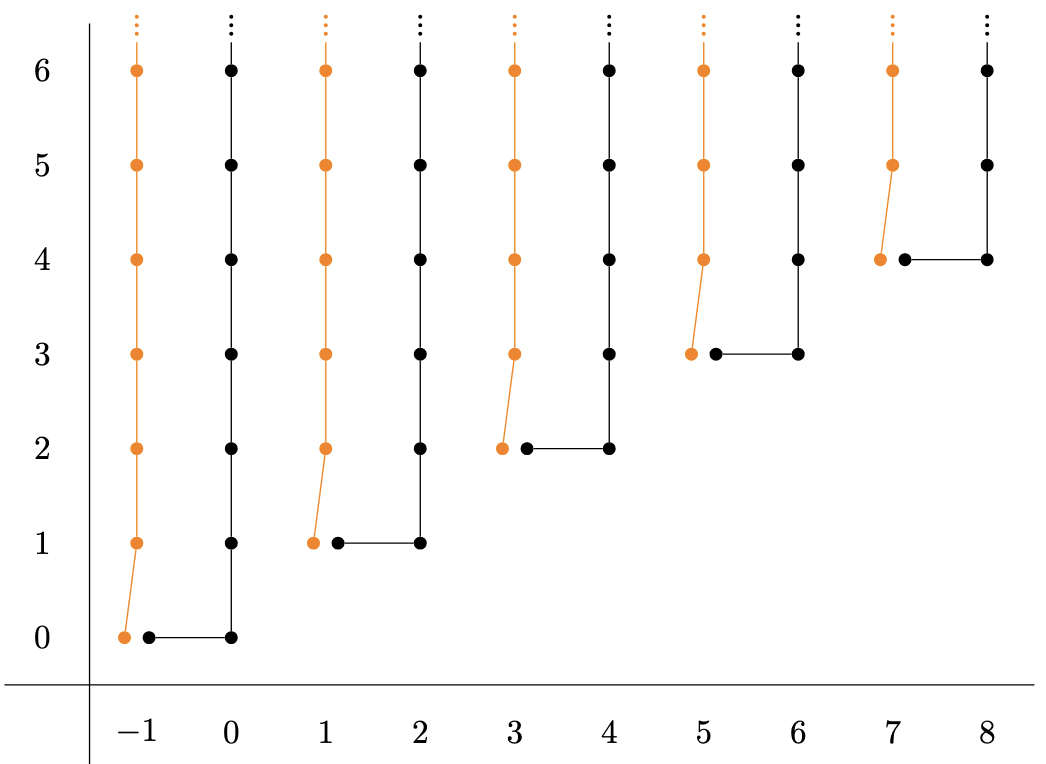}
    \caption{Charts for $\text{Ext}^{s,f,w}_{\mathcal{E}(1)^\vee_2}(\mathbb{M}_2^{\mathbb{F}_q},\mathbb{M}_2^{\mathbb{F}_q})$ for $q\equiv 3 \, (4)$. A $\bullet$ denotes $\mathbb{F}_2[\tau^2]$, and an orange class denotes $\mF_2 \{\rho\tau \} [\tau^2]$. A vertical line represents $v_0$-multiplication and a horizontal line represents $\rho$-multiplication, while $v_1$-multiplication is suppressed.}
    \label{Ext_E1_f3_p2}
\end{figure}

\begin{figure}[ht]
    \centering
    \includegraphics[width=.8\linewidth]{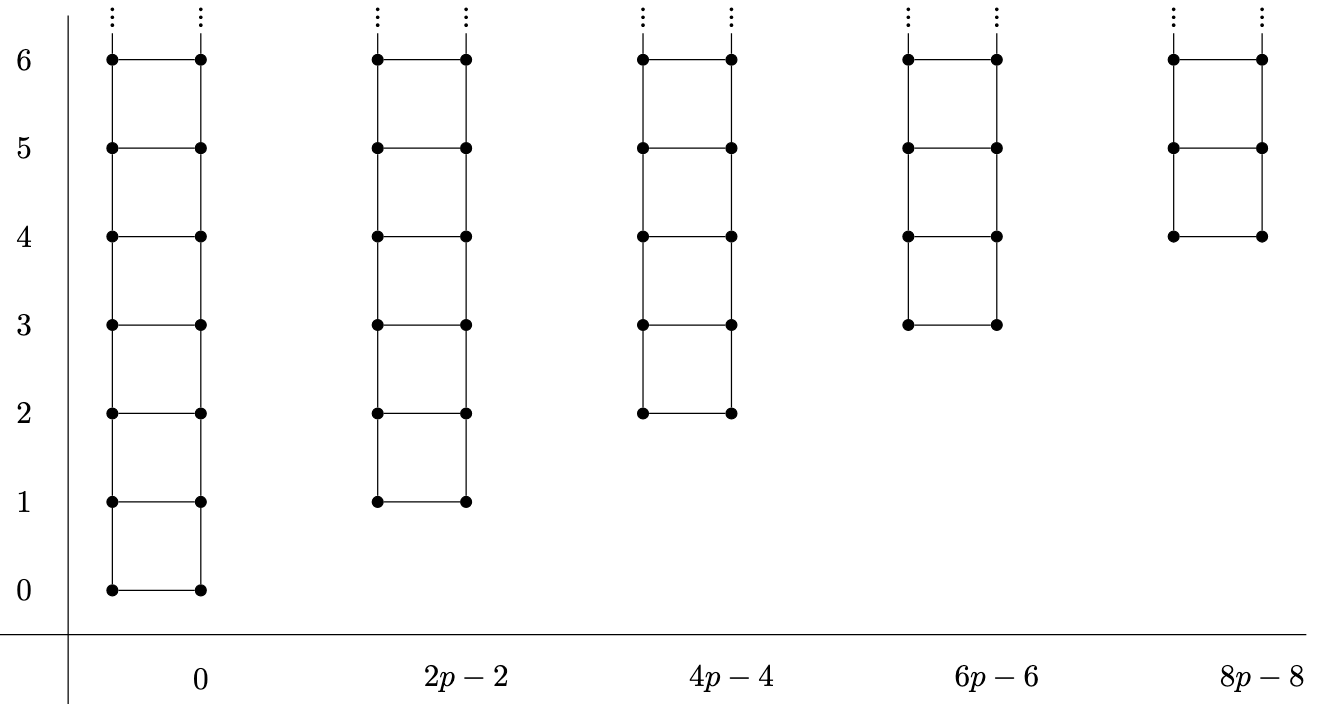}
    \caption{Charts for $\text{Ext}_{\mathcal{E}(1)^\vee_p}^{s,f,w}(\mathbb{M}_p^{\mathbb{F}_q},\mathbb{M}_p^{\mathbb{F}_q})$ where $p$ is any prime with a trivial Bockstein action on $\mathbb{F}_q$. A $\bullet$ denotes $\mathbb{F}_2[\tau]$ in the case of $p=2$ and $\mathbb{F}_p[\zeta]$ for $p$ odd. A vertical line represents $v_0$-multiplication and a horizontal line represents $u$-multiplication, and $v_1$-multiplication is suppressed.}
    \label{Ext_E1_fq_easy_podd}
\end{figure}

\begin{figure}[ht]
    \centering
    \includegraphics[width=.8\linewidth]{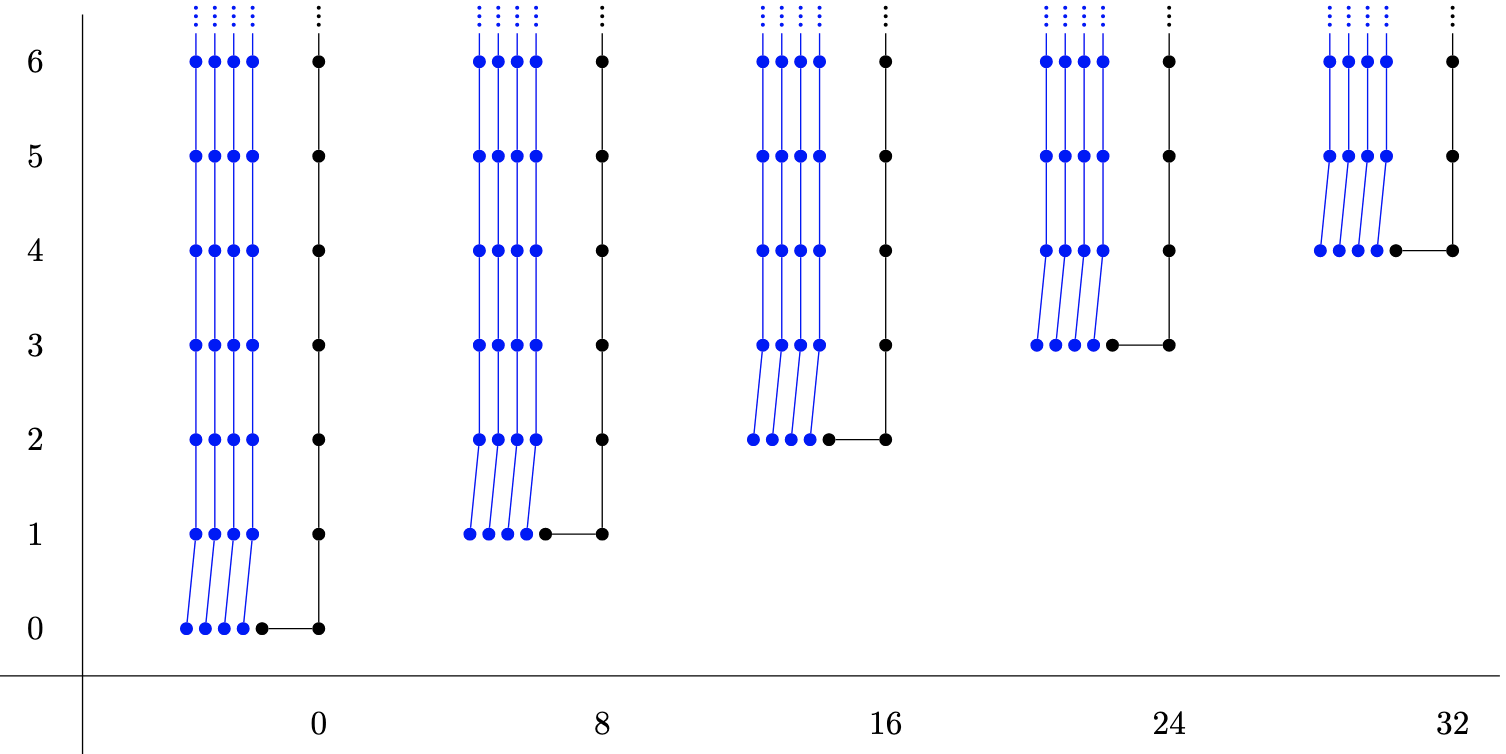}
    \caption{Charts for $\text{Ext}^{s,f,w}_{\mathcal{E}(1)^\vee_p}(\mathbb{M}_p^{\mathbb{F}_q},\mathbb{M}_p^{\mathbb{F}_q})$ for $p=5$ with a nontrivial Bockstein action on $\mathbb{F}_q$. A $\bullet$ denotes $\mathbb{F}_5[\zeta^5]$, and a blue class denotes $\mathbb{F}_5\{\gamma\zeta^j\}[\zeta^5]$ for some $1 \leq j \leq 4$. A vertical line represents $v_0$-multiplication, and a horizontal line represents $\gamma$-multiplication, and $v_1$-multiplication is suppressed.}
    \label{Ext_E1_fq_hard_p5}
\end{figure}

% \begin{remark}
%     \rm In the case of $p=2$, the abutment of the $\textup{\textbf{mASS}}_2^{\mathbb{F}_q}(BPGL \langle 1 \rangle)$ is well understood. A model for $BPGL \langle 1 \rangle$ is the effective algebraic $K$-theory spectrum $kgl$, and so for $s \geq 0$ there is an isomorphism $\pi_{s,w}(kgl) \cong K_{s-2w}(\mathbb{F}_q)$, where the latter is given by Quillen's calculation of the algebraic $K$-theory of finite fields.
% \end{remark}

% \newpage

\begin{remark}
    \rm While we are primarily concerned with the spectra $BPGL \langle 0 \rangle$ and $BPGL \langle 1 \rangle$, one may theoretically also deduce results in greater generality, which we outline now. For $F=\mathbb{C}$, the $E_2$-page of the $\textup{\textbf{mASS}}_p^{\mathbb{C}}(BPGL \langle n \rangle)$ for all $n \geq 0$ follows from base change of the classical case to $\mathbb{M}_p^\mathbb{C}$:
    \[E^{s,f,w}_2 = \mathbb{M}_p^\mathbb{C}[v_0, \dots, v_n] \implies \pi_{*,*}^\mathbb{C}(BPGL\langle n \rangle),\]
    where $|v_i| = (2(p^i-1), 1, p^i-1)$.
    For $F=\mathbb{R}$, the same argument describes the $E_2$-page at odd primes, by base-changing to $\mathbb{M}_p^\mathbb{R}.$ At the prime $p=2$, the $E_2$-page of the $\textup{\textbf{mASS}}_2^{\mathbb{R}}(BPGL\langle n \rangle)$ was computed for all $n \geq 0$ by the $\rho$-Bockstein spectral sequence of Hill \cite[Theorem 3.1]{Hill11}. This is the predecessor to the $\gamma\textup{-\textbf{BSS}}$ of \cref{bockstein BPGL1 Mass E2} at the prime 2: by filtering the cobar complex $C_{\mathcal{E}(n)_2^\vee}(\mathbb{M}_2^\mathbb{R})$ by powers of $\rho$, one obtains a spectral sequence
    \[E^{s,f,w}_1 = \textup{Ext}^{s,f,w}_{\mathcal{E}(n)_2^\vee}(\mathbb{M}_2^\mathbb{C})[\rho] \implies\textup{Ext}^{s,f,w}_{\mathcal{E}(n)_2^\vee}(\mathbb{M}_2^\mathbb{R}).\]
    Alternatively, one may also compute this $E_2$-page by a $v_n$-Bockstein spectral sequence
    \[E^{s,f,w}_1 = \textup{Ext}^{s,f,w}_{\mathcal{E}(n-1)_2^\vee}(\mathbb{M}_2^\mathbb{R})[v_n] \implies\textup{Ext}^{s,f,w}_{\mathcal{E}(n)_2^\vee}(\mathbb{M}_2^\mathbb{R}).\]
    These two spectral sequences fit together into the following square, reminiscent of the ones given in \cite{CulKonQui21}.
    \begin{equation}
    \begin{tikzcd}
    & \textup{Ext}^{s,f,w}_{\mathcal{E}(n-1)_2^\vee}(\mathbb{M}_2^\mathbb{C}) \arrow[dl,Rightarrow,"v_n\text{-}\textup{\textbf{BSS}}"'] \arrow[dr,Rightarrow,"\rho\text{-}\textup{\textbf{BSS}}"] & \\
    \textup{Ext}^{s,f,w}_{\mathcal{E}(n)_2^\vee}(\mathbb{M}_2^\mathbb{C}) \arrow[dr,Rightarrow,"\rho\text{-}\textup{\textbf{BSS}}"'] & & \textup{Ext}^{s,f,w}_{\mathcal{E}(n-1)_2^\vee}(\mathbb{M}_2^\mathbb{R}) \arrow[dl,Rightarrow,"v_n\text{-}\textup{\textbf{BSS}}"] \\
    & \textup{Ext}^{s,f,w}_{\mathcal{E}(n)_2^\vee}(\mathbb{M}_2^\mathbb{R}). 
    \end{tikzcd}
    \label{ss bockstein square C and R}
    \end{equation}
    Regardless of the method, upon computing the $E_2$-page at any prime one easily deduces that the motivic Adams spectral sequence collapses over $\mathbb{C}$ and $\mathbb{R}$ for degree reasons. A similar story holds over finite fields, replacing the $\rho$-\textbf{BSS} with the $\gamma$-\textbf{BSS} in the odd primary case.
    
    % Let $p=2$. In the case of $\mathbb{F}_q$ for $q \equiv 1 \, (4)$, one can use the techniques of \cite[Theorem 4.2.1]{Kylingkqfinite} to deduce the $E_2$-page of the $\textup{\textbf{mASS}}_2^{\mathbb{F}_q}(BPGL \langle n \rangle)$ from the algebraic closure:
    % \[E_2 = \mathbb{M}_2^{\mathbb{F}_q}[v_0, v_1, \dots, v_n].\]
    % In the case of $F=\mathbb{F}_q$ for $q \equiv 3 \, (4)$, there is a square of spectral sequences similar to \ref{ss bockstein square C and R}, allowing one to lift the simpler computation of $\textup{Ext}^{s,f,w}_{\mathcal{E}(n)_2^\vee}(\mathbb{M}_2^{\overline{\mathbb{F}}_q})$:
    % \begin{equation*}
    % \begin{tikzcd}
    % & \textup{Ext}^{s,f,w}_{\mathcal{E}(n-1)_2^\vee}(\mathbb{M}_2^{\overline{\mathbb{F}}_q}) \arrow[dl,Rightarrow,"v_n\text{-}\textup{\textbf{BSS}}"'] \arrow[dr,Rightarrow,"\rho\text{-}\textup{\textbf{BSS}}"] & \\
    % \textup{Ext}^{s,f,w}_{\mathcal{E}(n)_2^\vee}(\mathbb{M}_2^{\overline{\mathbb{F}}_q}) \arrow[dr,Rightarrow,"\rho\text{-}\textup{\textbf{BSS}}"'] & & \textup{Ext}^{s,f,w}_{\mathcal{E}(n-1)_2^\vee}(\mathbb{M}_2^{\mathbb{F}_q}) \arrow[dl,Rightarrow,"v_n\text{-}\textup{\textbf{BSS}}"] \\
    % & \textup{Ext}^{s,f,w}_{\mathcal{E}(n)_2^\vee}(\mathbb{M}_2^{\mathbb{F}_q}). 
    % \end{tikzcd}
    % \end{equation*}
    % This gives a computation of the $E_2$-page of the $\textup{\textbf{mASS}}_2^{\mathbb{F}_q}(BPGL\langle n \rangle)$ for all $n \geq 0$. Analogous arguments also determine the $E_2$-page and differentials in the odd primary case, where $\textup{char}(\mathbb{F}_q) \neq p$, by means of a $\gamma$-\textup{\textbf{BSS}}.
\end{remark}

\begin{remark}
\label{remark:ExtE(1)fromClassical}
    \rm Our computations show that in the cases where $F=\mathbb{C}$ and $p$ is any prime, $F=\mathbb{R}$ and $p>2$, and $F=\mathbb{F}_q$ with a trivial Bockstein action and $p$ is any prime, the $E_2$-page of the $\textbf{mASS}^F_p(BPGL \langle 1 \rangle)$ can be expressed as
    \[\text{Ext}_{\mathcal{E}(1)^\vee_p}^{s,f,w}(\mathbb{M}^F_p, \mathbb{M}_p^F) \cong \text{Ext}_{E(1)^\vee_p}^{s,f}(\mathbb{F}_p, \mathbb{F}_p) \otimes \mathbb{M}_p^F \cong \mathbb{M}_p^F[v_0, v_1].\]
    To be more direct, this shows that the motivic $E_2$-page is determined by the classical $E_2$-page of the $\textbf{ASS}_p(BP \langle 1 \rangle)$ and the integral motivic homology of a point. A similar observation was made in \cite{finitekqcoop}, where in the case of a trivial Bockstein action, the $E_2$-page of the $\textbf{mASS}^{\mathbb{F}_q}_2(kq)$ was shown to be determined by the classical $E_2$-page of the $\textbf{ASS}_2(bo)$ and the motivic homology of a point.  
    
\end{remark}

\subsection{The homotopy ring of $BPGL \langle 1 \rangle$-cooperations}
Next, we compute the homotopy of $BPGL \langle 1 \rangle \wedge BPGL \langle 1 \rangle.$ We begin by computing the $\Ext$ groups $\text{Ext}^{s,f,w}_{\mathcal{E}(1)^\vee_p}(L_p(k))$.
\begin{lemma}
\label{lem:lightning flash ext 1}
    Let $L_p(k)$ denote the $k^{th}$ lightning flash module. Then there is an isomorphism of $\mathbb{M}_p$-modules and $\mathcal{E}(1)_p^\vee$-comodules:
    \[\textup{Ext}^{s,f,w}_{\mathcal{E}(1)_p^\vee}(L_p(k)) \cong \bigoplus_{i=0}^{k-1}\textup{Ext}^{s,f,w}_{\mathcal{E}(0)_p^\vee}(\mathbb{M}_p)\{x_i\} \oplus \textup{Ext}^{s,f,w}_{\mathcal{E}(1)_p^\vee}(\mathbb{M}_p) \{x_k\},\]
    where $|x_i|=(2i(p-1), 0, i(p-1))$ and where \[v_1x_i = v_0x_{i+1}\]
    for all $0 \leq i \leq k$.
\end{lemma}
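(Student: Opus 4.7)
The plan is to proceed by induction on $k$, using the short exact sequence of $\cE(1)_p^\vee$-comodules
\[
0 \to \Sigma^{2(p-1), p-1} L_p(k-1) \to L_p(k) \to \left(\cE(1)_p // \cE(0)_p\right)^\vee \to 0
\]
recorded in \cref{lightning flash ses}. The base case $k = 0$ is immediate: since $L_p(0) = \mM_p$, the claim reduces to the tautology $\Ext^{s,f,w}_{\cE(1)_p^\vee}(\mM_p) \cong \Ext^{s,f,w}_{\cE(1)_p^\vee}(\mM_p)\{x_0\}$ with $x_0$ the unit class.

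For the inductive step, I would apply $\Ext^{*,*,*}_{\cE(1)_p^\vee}(\mM_p, -)$ to the short exact sequence. Change of rings identifies
\[
\Ext^{*,*,*}_{\cE(1)_p^\vee}\!\left(\left(\cE(1)_p // \cE(0)_p\right)^\vee\right) \cong \Ext^{*,*,*}_{\cE(0)_p^\vee}(\mM_p),
\]
supplying a new $\Ext^{*,*,*}_{\cE(0)_p^\vee}(\mM_p)\{x_0\}$ summand in tridegree $(0,0,0)$. The inductive hypothesis applied to the suspended term yields
\[
\Sigma^{2(p-1), p-1} \left( \bigoplus_{i=0}^{k-2} \Ext^{*,*,*}_{\cE(0)_p^\vee}(\mM_p)\{x_i\} \oplus \Ext^{*,*,*}_{\cE(1)_p^\vee}(\mM_p)\{x_{k-1}\} \right),
\]
and after the reindexing $x_j^{\textup{old}} \mapsto x_{j+1}^{\textup{new}}$, the suspension shift $(2(p-1), 0, p-1)$ moves each class into precisely the bidegree $(2(j+1)(p-1), 0, (j+1)(p-1))$ asserted in the statement.

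Next, I would verify that the connecting map $\delta$ in the long exact sequence vanishes for degree reasons. The generator of $\Ext^{0,0,0}$ of the quotient would map under $\delta$ into $\Ext^{-2(p-1), 1, -(p-1)}_{\cE(1)_p^\vee}(L_p(k-1))$, which is zero since the inductive description concentrates in non-negative stems. As the target Ext is freely generated over $\mM_p[v_0]$ by this single class, $\delta$ vanishes entirely; the long exact sequence therefore collapses to a short exact sequence that splits additively as $\mM_p$-modules, producing the claimed direct sum decomposition.

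The principal obstacle is establishing the multiplicative relation $v_1 x_i = v_0 x_{i+1}$. For $i \geq 1$, these are inherited from the inductive hypothesis after the reindexing above. The essential new relation $v_1 x_0 = v_0 x_1$ must be extracted from the non-triviality of the extension class of the short exact sequence, viewed as an element of $\Ext^1_{\cE(1)_p^\vee}\!\left(\left(\cE(1)_p // \cE(0)_p\right)^\vee,\, \Sigma^{2(p-1), p-1} L_p(k-1)\right)$. Under the Yoneda product, multiplication by $v_1 \in \Ext^{2(p-1), 1, p-1}_{\cE(1)_p^\vee}(\mM_p)$ on $x_0$ corresponds to composition with this extension; unravelling the correspondence via an explicit cobar-complex representative and using the defining relation $x_{i+1}Q_1 = x_i Q_0$ in $L_p(k)$ should identify the resulting class with $v_0 x_1$ up to a coboundary. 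This last step is where the bookkeeping is most delicate, but otherwise amounts to a routine manipulation in the cobar complex.
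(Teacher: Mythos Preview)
Your proposal is correct and follows essentially the same approach as the paper: induction on $k$ via the short exact sequence \eqref{lightning flash ses}, change of rings on the quotient to obtain the $\Ext_{\cE(0)_p^\vee}$ summand, a degree argument for the vanishing of the connecting homomorphism, and the hidden extension $v_1 x_0 = v_0 x_1$ deferred to the classical computation. Your write-up is in fact somewhat more explicit than the paper's (which simply says the extension ``follows from the same methods as in the classical case''); the only minor slip is the stem degree of the target of $\delta$ (it should be $-2(p-1)-1$ rather than $-2(p-1)$), but this does not affect the conclusion.
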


\begin{proof}
    For $k=0$ we have that $L_p(0) = \mathbb{M}_p$, and the $\Ext$ group in question was calculated in \cref{C and R BPGL1}, \cref{no bockstein BPGL1 Mass E2}, and \cref{bockstein BPGL1 Mass E2}.

    For $k=1$, we use the short exact sequence of $\mathcal{E}(1)^\vee_p$-comodules from \cref{lightning flash ses}, which takes the form
    \[0 \to \Sigma^{2(p-1), p-1}\mathbb{M}_p \to L_p(1) \to  (\mathcal{E}(1)_p // \mathcal{E}(0)_p)^\vee \to 0.\]
    Applying the functor $\text{Ext}^{s,f,w}_{\mathcal{E}(1)^\vee_p}(-)$ gives the following long exact sequence in $\Ext$:
    \[\cdots \to \text{Ext}^{s,f,w}_{\mathcal{E}(1)_p^\vee}(\Sigma^{2(p-1), p-1}\mathbb{M}_p)  \to{\text{Ext}^{s,f,w}_{\mathcal{E}(1)_p^\vee}(L_p(1))} \to {\text{Ext}^{s,f,w}_{\mathcal{E}(1)_p^\vee}((\mathcal{E}(1)_p//\mathcal{E}(0)_p)^\vee)} \to \cdots	\]
    Observe that $(\mathcal{E}(1)_p//\mathcal{E}(0)_p)^\vee \cong \mathcal{E}(1)^\vee_p\underset{\mathcal{E}(0)^\vee_p}{\Box}\mathbb{M}_p$, and so we can rewrite the right hand term using a change-of-rings isomorphism:
    \[\text{Ext}^{s,f,w}_{\mathcal{E}(1)_p^\vee}(\mathcal{E}(1)^\vee_p\underset{\mathcal{E}(0)^\vee_p}{\Box}\mathbb{M}_p) \cong \text{Ext}^{s,f,w}_{\mathcal{E}(0)_p^\vee}(\mathbb{M}_p).\]
    Our prior calculations of $\text{Ext}^{s,f,w}_{\mathcal{E}(1)_p^\vee}(\mathbb{M}_p)$ and $\text{Ext}^{s,f,w}_{\mathcal{E}(0)^\vee_p}(\mathbb{M}_p)$ show that the connecting homomorphism
    \[\text{Ext}^{s,f,w}_{\mathcal{E}(0)_p^\vee}(\mathbb{M}_p) \xrightarrow{\delta} \text{Ext}^{s+1,f-1,w}_{\mathcal{E}(1)_p^\vee}(\mathbb{M}_p)\]
    must be trivial for degree reasons. This implies that the long exact sequence decomposes into short exact sequences which allow us to construct $\text{Ext}^{s,f,w}_{\mathcal{E}(1)_p^\vee}(L_p(1))$ from the surrounding $\Ext$ groups. In particular, we have an isomorphism
    \[\text{Ext}^{s,f,w}_{\mathcal{E}(1)^\vee_p}(L_p(1)) \cong \text{Ext}^{s,f,w}_{\mathcal{E}(0)_p^\vee}(\mathbb{M}_p)\{x_0\} \oplus \text{Ext}^{s,f,w}_{\mathcal{E}(1)_p^\vee}(\mathbb{M}_p)\{x_{1}\},\]
    where $|x_0| = (0,0,0)$ and $|x_i| = \left(2i(p-1), 0, i(p-1)\right)$
    The extension $v_1x_0 = v_0x_1$ follows from the same methods as in the classical case. 

    Now, suppose the statement is true for all $k \leq n$. By the inductive hypothesis on the structure of $\text{Ext}^{s,f,w}_{\mathcal{E}(1)_p^\vee}(L_p(n-1))$, the long exact sequence in $\Ext$ induced by 
    \[0 \to \Sigma^{2(p-1), p-1}L_p(n-1) \to L_p(n) \to (\mathcal{E}(1)_p//\mathcal{E}(0)_p)^\vee \to 0\]
    has a trivial connecting homomorphism. Using the change-of-rings isomorphism again supplies us with an isomorphism
    \[\text{Ext}^{s,f,w}_{\mathcal{E}(1)_p^\vee}(L_p(n)) \cong \text{Ext}^{s,f,w}_{\mathcal{E}(1)_p^\vee}(\Sigma^{2(p-1), p-1}L_p(n-1)) \oplus \text{Ext}^{s,f,w}_{\mathcal{E}(0)_p^\vee}(\mathbb{M}_p).\]
    Relabel the generators $x_i \in \text{Ext}_{\mathcal{E}(1)^\vee_p}^{s,f,w}(L_p(n-1))$ by $x_{i+1}$, and let $x_0$ denote the generator of $\text{Ext}^{s,f,w}_{\mathcal{E}(0)_p^\vee}(\mathbb{M}_p)$. The hidden extension $v_1x_n = v_0x_{n+1}$ follows from the same methods as in the classical case. By our inductive hypothesis, we have extensions $v_1x_i = v_0x_{i+1}$ for all $0 \leq i \leq n$, concluding the proof.
\end{proof}

For the benefit of the reader, we depict the group $\text{Ext}_{\mathcal{E}(1)^\vee_p}^{s,f,w}(L_p(2))$ for all fields in question in \Cref{lightning3_C}, \Cref{lightning3_R}, \Cref{lightning3_fq_easy}, \Cref{lightning3_f3_hard_p2}, and \Cref{lightning3_fq_hard_p5}.

\begin{figure}[ht]
    \centering
    \includegraphics[width=0.8\linewidth]{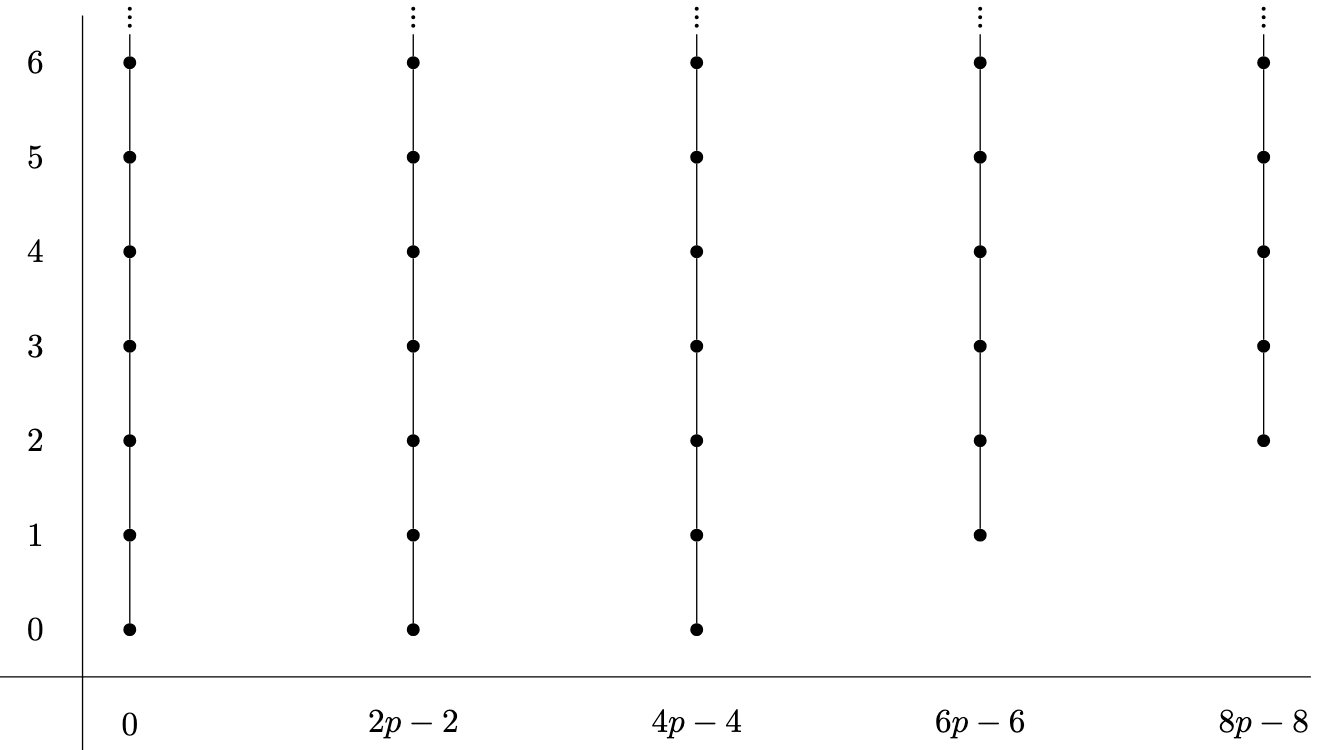}
    \caption{Charts for $\text{Ext}_{\mathcal{E}(1)^\vee_p}^{s,f,w}(\mathbb{M}_p^F,L_p^F(2))$ for $F = \mathbb{C}$ at all primes or $\mathbb{R}$ at odd primes. For $F=\mathbb{C}$, a $\bullet$ denotes $\mathbb{F}_p[\tau]$, and for $F=\mathbb{R}$, a $\bullet$ denotes $\mathbb{F}_p[\theta]$. A vertical line indicates $v_0$-multiplication, while $v_1$-multiplication is suppressed.}
    \label{lightning3_C}
\end{figure}

\begin{figure}[ht]
    \centering
    \includegraphics[width=0.8\linewidth]{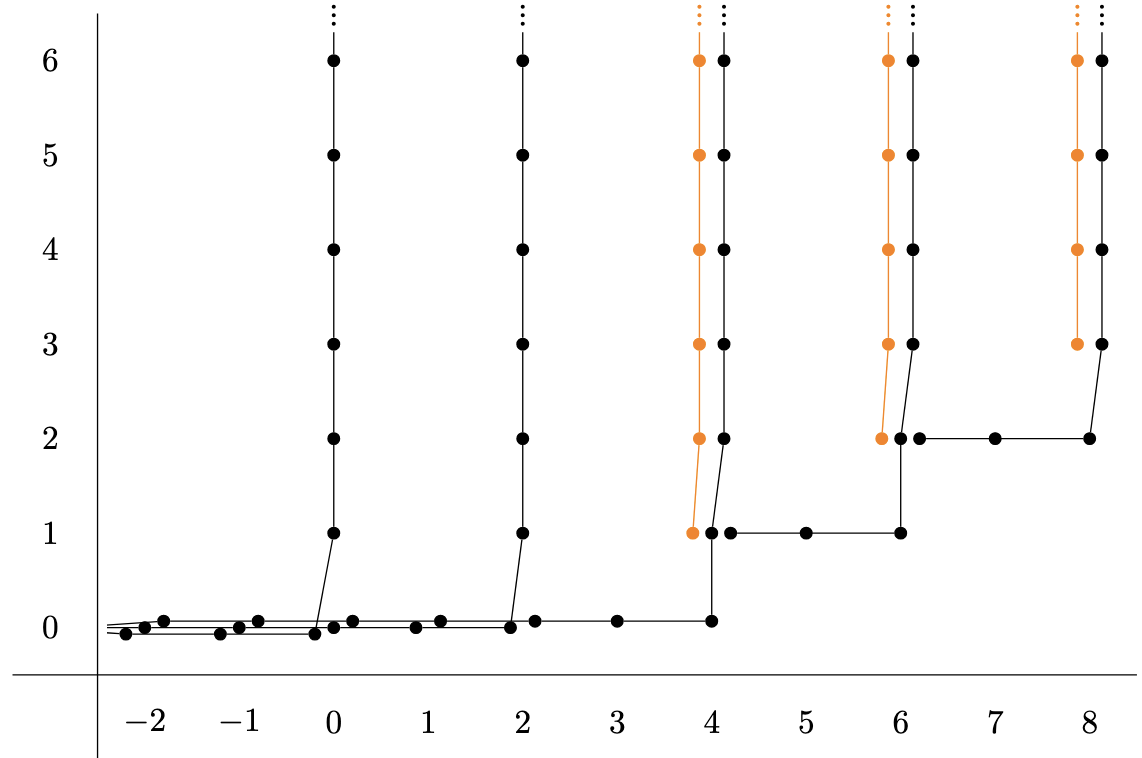}
    \caption{Charts for $\text{Ext}_{\mathcal{E}(1)^\vee_2}^{s,f,w}(\mathbb{M}_2^{\mathbb{R}},L_2^\mathbb{R}(2))$. A $\bullet$ denotes $\mathbb{F}_2[\tau^4]$, and an orange class denotes $\tau^2v_0$-divisibility. A vertical line represents $v_0$-multiplication and a horizontal line represents $\rho$-multiplication, while $v_1$-multiplication is suppressed.}
    \label{lightning3_R}
\end{figure}

\begin{figure}[ht]
    \centering
    \includegraphics[width=0.8\linewidth]{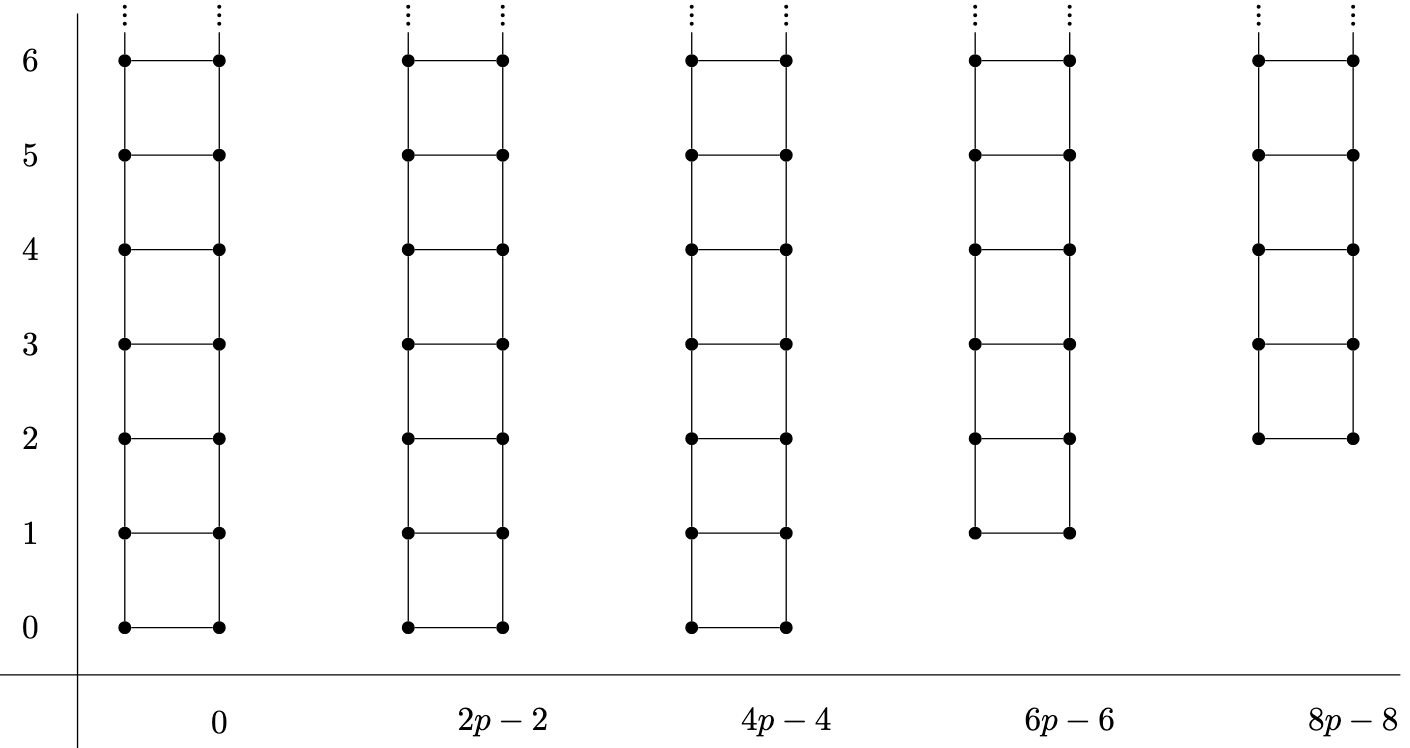}
    \caption{Charts for $\text{Ext}_{\mathcal{E}(1)^\vee_p}^{s,f,w}(\mathbb{M}_p^{\mathbb{F}_q},L^{\mathbb{F}_q}_p(2))$ for $p$ any prime with a trivial Bockstein action on $\mathbb{F}_q$. A $\bullet$ denotes $\mathbb{F}_2[\tau]$ in the case of $p=2$ and $\mathbb{F}_p[\zeta]$ for $p$ odd. A vertical line represents $v_0$-multiplication and a horizontal line represents $u$-multiplication, while $v_1$-multiplication is suppressed.}
    \label{lightning3_fq_easy}
\end{figure}

\begin{figure}[ht]
    \centering
    \includegraphics[width=0.8\linewidth]{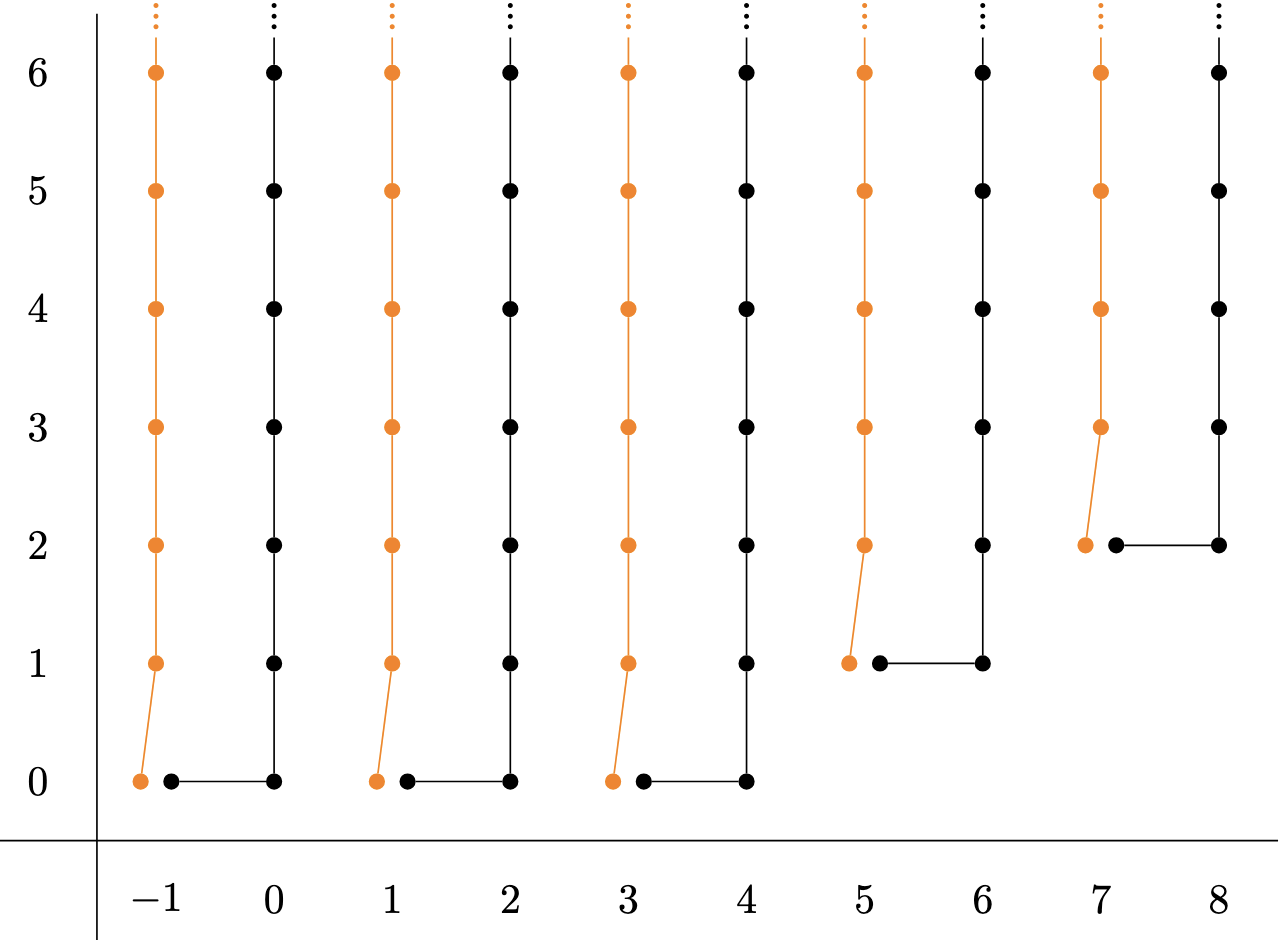}
    \caption{Charts for $\text{Ext}_{\mathcal{E}(1)^\vee_2}^{s,f,w}(\mathbb{M}_2^{\mathbb{F}_q},L^{\mathbb{F}_q}_2(2))$  with a nontrivial Bockstein action on $\mathbb{F}_q$. A $\bullet$ denotes $\mathbb{F}_2[\tau^2]$, and an orange class denotes $\rho\tau$-divisibility. A vertical line represents $v_0$-multiplication and a horizontal line represents $\rho$-multiplication, while $v_1$-multiplication is suppressed.}
    \label{lightning3_f3_hard_p2}
\end{figure}

\begin{figure}[ht]
    \centering
    \includegraphics[width=0.8\linewidth]{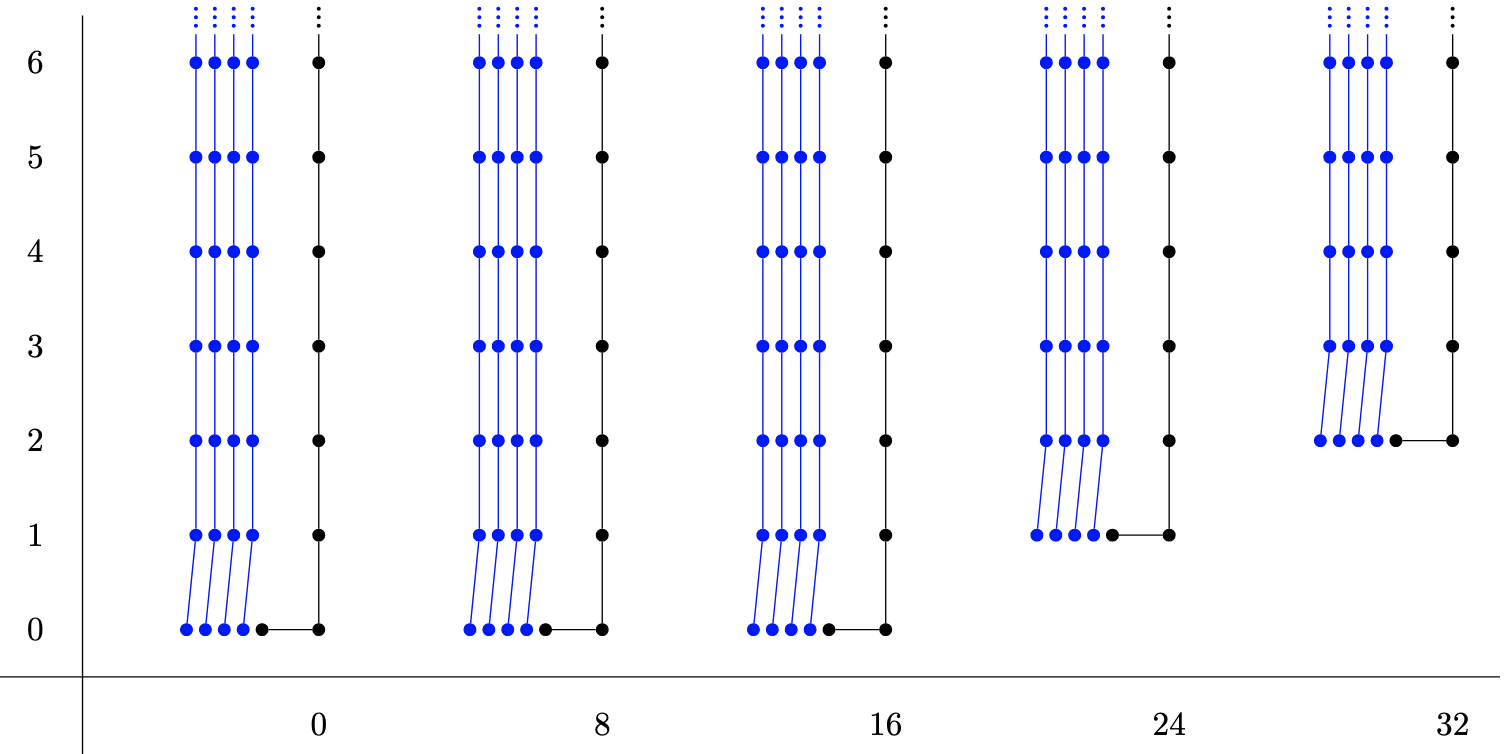}
    \caption{Charts for $\text{Ext}_{\mathcal{E}(1)^\vee_p}^{s,f,w}(\mathbb{M}_p^{\mathbb{F}_q},L^{\mathbb{F}_q}_p(2))$ for $p=5$ with a nontrivial Bockstein action on $\mathbb{F}_q$. A $\bullet$ denotes $\mathbb{F}_5[\zeta^5]$, and a blue class denotes $\mathbb{F}_5\{\gamma\zeta^j\}[\zeta^5]$ for some $1 \leq j \leq 4$. A vertical line represents $v_0$-multiplication, and a horizontal line represents $\gamma$-multiplication, while $v_1$-multiplication is suppressed.}
    \label{lightning3_fq_hard_p5}
\end{figure}

Building off of the observation we made in \Cref{remark:ExtE(1)fromClassical}, we have the following.
\begin{cor}\label{cor: Ext L_0 L_m}
    If $F = \mathbb{C}$ at any prime, $F = \mathbb{R}$ and $p$ is odd, or $F = \mF_q$ and there is a trivial Bockstein action, then 
    \[ \Ext_{\mathcal{E}(1)^\vee_p}^{s,f,w}(\mathbb{M}_p^F,L_p(m)) \cong \Ext_{{E}(1)^\vee_p}^{s,f}(\mathbb{F}_p,L_p^{cl}(m)) \otimes \mM_{p}^{F} ,\]
    where $L_p^{cl}(m)$ denotes the classical lightning flash module.
\end{cor}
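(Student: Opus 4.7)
The plan is to reduce the motivic Ext computation to the classical one via base change, mirroring the approach used in \Cref{no bockstein BPGL1 Mass E2}. Under each of the three hypotheses on $F$, the motivic dual Steenrod algebra splits as a tensor product $\mathcal{A}_p^\vee \cong A_p^\vee \otimes_{\mathbb{F}_p} \mathbb{M}_p^F$, and this splitting restricts to an isomorphism of Hopf algebroids $\mathcal{E}(1)_p^\vee \cong E(1)_p^\vee \otimes_{\mathbb{F}_p} \mathbb{M}_p^F$. In particular, $\mathbb{M}_p^F$ is central in $\mathcal{E}(1)_p^\vee$ and $\eta_R = \eta_L$ on $\mathbb{M}_p^F$, so we have a flat extension of Hopf algebroids $(\mathbb{F}_p, E(1)_p^\vee) \to (\mathbb{M}_p^F, \mathcal{E}(1)_p^\vee)$ in the sense of \cite[A1.1.22]{Rav86}.

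Next, I would observe that the motivic lightning flash module $L_p(m)$ as defined in \Cref{def: lightning} is presented by generators and relations that involve only the operations $Q_0$ and $Q_1$ together with scalars in $\mathbb{M}_p^F$. Under the hypotheses above, the weights of the generators $x_i$ and the $\mathcal{E}(1)_p$-module relations match those of the classical lightning flash module $L_p^{cl}(m)$ after extension of scalars. Hence there is an isomorphism of left $\mathcal{E}(1)_p^\vee$-comodules
\[
L_p(m) \cong L_p^{cl}(m) \otimes_{\mathbb{F}_p} \mathbb{M}_p^F,
\]
where the comodule structure on the right-hand side is induced from the classical $E(1)_p^\vee$-comodule structure on $L_p^{cl}(m)$ together with the trivial coaction on $\mathbb{M}_p^F$.

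Finally, I would apply the change-of-rings theorem for Hopf algebroid extensions \cite[A1.3.12]{Rav86} to the flat extension above. Since $\mathbb{M}_p^F$ is $\mathbb{F}_p$-flat and $L_p(m)$ is the extension of scalars of $L_p^{cl}(m)$, one obtains
\[
\Ext^{s,f,w}_{\mathcal{E}(1)_p^\vee}(\mathbb{M}_p^F, L_p(m)) \cong \Ext^{s,f}_{E(1)_p^\vee}(\mathbb{F}_p, L_p^{cl}(m)) \otimes_{\mathbb{F}_p} \mathbb{M}_p^F,
\]
with the extra weight grading recorded on the left supplied entirely by $\mathbb{M}_p^F$. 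The main obstacle is purely bookkeeping: one must verify that the comodule structure on $L_p(m)$ really does come from base change (so that the $Q_0$ and $Q_1$ actions agree with the classical ones after tensoring), which requires unpacking \Cref{def: lightning} and comparing with the coaction formulas below \Cref{eq:coaction}. Once this identification is in place, the change-of-rings isomorphism is formal and the corollary follows.
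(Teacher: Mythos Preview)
Your argument is correct and in fact more direct than the paper's. The paper deduces the corollary from \Cref{lem:lightning flash ext 1}, which already decomposes $\Ext_{\mathcal{E}(1)^\vee_p}(\mathbb{M}_p^F, L_p(m))$ as a sum of copies of $\Ext_{\mathcal{E}(0)^\vee_p}(\mathbb{M}_p^F, \mathbb{M}_p^F)$ and $\Ext_{\mathcal{E}(1)^\vee_p}(\mathbb{M}_p^F, \mathbb{M}_p^F)$; it then observes that under the stated hypotheses each of these pieces is already known (from \Cref{C and R BPGL1}, \Cref{no bockstein BPGL1 Mass E2}, and the $\mathcal{E}(0)$ analogues) to be the base change of its classical counterpart, so the same holds for their sum. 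Your approach bypasses that inductive decomposition entirely: you note that under the hypotheses the Hopf algebroid $(\mathbb{M}_p^F, \mathcal{E}(1)_p^\vee)$ is the flat base change of $(\mathbb{F}_p, E(1)_p^\vee)$ and that $L_p(m)$ is visibly the base change of $L_p^{cl}(m)$, and then invoke change-of-rings once. This is cleaner and more conceptual; the paper's route has the virtue of being a genuine corollary of computations already on the page, whereas yours reproves them in passing. Both are valid.
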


\begin{proof}
The descriptions of $\text{Ext}_{\mathcal{E}(1)^\vee_p}^{s,f,w}(\mM_p^F, \mM_p^F)  $ given in the proof of \cref{C and R BPGL1} and \cref{no bockstein BPGL1 Mass E2} tell us that $\text{Ext}_{\mathcal{E}(1)^\vee_p}^{s,f,w}(\mM_p^F, \mM_p^F)  \cong \text{Ext}^{s,f}_{E(1)^\vee_p}(\mF_p, \mF_p) \otimes \mM_p^F$. Likewise, the descriptions of $\text{Ext}_{\mathcal{E}(0)^\vee_p}^{s,f,w}(\mM_p^F, \mM_p^F)  $ given in \cref{mass BPGL0 C and R} and the proofs of \cref{BPGL mass finite field p=2} and \cref{mass BPGL0 finite field p>2} tell us that $\text{Ext}_{\mathcal{E}(0)^\vee_p}^{s,f,w}(\mM_p^F, \mM_p^F)  \cong \text{Ext}^{s,f}_{E(0)^\vee_p}(\mF_p, \mF_p) \otimes \mM_p^F$. The claim follows immediately from combining these observations with \cref{lem:lightning flash ext 1}. 
\end{proof}

The following analogue of \cref{BPGL0 coop mass differentials} will be useful.
\begin{lemma}
\label{BPGL1 coop mass differentials}
    The differentials in the $\textup{\textbf{mASS}}_p(BPGL \langle 1 \rangle \wedge BPGL \langle 1 \rangle)$ are determined by the differentials in the $\textup{\textbf{mASS}}_p(BPGL \langle 1 \rangle).$
\end{lemma}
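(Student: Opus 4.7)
The plan is to mirror the proof of \cref{BPGL0 coop mass differentials}, with the lightning-flash decomposition playing the role that the Brown--Gitler summands did there. First, I would combine \cref{bpgl1 coop e2} with \cref{lem:lightning flash ext 1} to express the $E_2$-page of $\textup{\textbf{mASS}}_p(BPGL\langle 1\rangle \wedge BPGL\langle 1\rangle)$ as
\[
\bigoplus_{k \ge 0}\Sigma^{2k(p-1),\,k(p-1)}\!\left(\bigoplus_{i=0}^{\nu_p(k!)-1}\!\text{Ext}^{s,f,w}_{\mathcal{E}(0)^\vee_p}(\mathbb{M}_p)\{x_i^k\}\,\oplus\,\text{Ext}^{s,f,w}_{\mathcal{E}(1)^\vee_p}(\mathbb{M}_p)\{x_{\nu_p(k!)}^k\}\,\oplus\, W_k\right)
\]
with the hidden extensions $v_1 x_i^k = v_0 x_{i+1}^k$ supplied by \cref{lem:lightning flash ext 1}, and with $W_k$ a wedge of suspensions of $\mathbb{M}_p$ concentrated in Adams filtration zero.

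Next, I would use the unit map $BPGL\langle 1\rangle \to BPGL\langle 1\rangle \wedge BPGL\langle 1\rangle$ to identify the $(k=0)$-summand $\text{Ext}^{s,f,w}_{\mathcal{E}(1)^\vee_p}(\mathbb{M}_p)\{x_0^0\}$ with the $E_2$-page of $\textup{\textbf{mASS}}_p(BPGL\langle 1\rangle)$. By naturality, this pins down the differentials on that summand as exactly those computed in \cref{C and R BPGL1} and \cref{bpgl1 difs finite fields}. Because $v_0, v_1 \in \pi_{*,*}BPGL\langle 1\rangle$ are permanent cycles in the base spectral sequence, they are permanent cycles in the cooperations spectral sequence as well, so every differential is $v_0$- and $v_1$-linear. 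The free summands $W_k$ are then eliminated exactly as in the proof of \cref{BPGL0 coop mass differentials}: they sit in Adams filtration zero and are $v_0$-torsion, while the lightning-flash summands are $v_0$-torsion-free above filtration zero, so $W_k$ can neither support nor receive any differential.

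The hard part is controlling the differentials on the new generators $x_i^k$ for $k \ge 1$. The plan is first to apply the Leibniz rule to the extension $v_1 x_i^k = v_0 x_{i+1}^k$, yielding $v_1 d_r(x_i^k) = v_0 d_r(x_{i+1}^k)$; by the $v_0$- and $v_1$-regularity of the lightning-flash summands above filtration zero, this inductively reduces the problem to controlling a single representative per lightning flash, namely the top generator $x_{\nu_p(k!)}^k$. The main obstacle is then showing that $x_{\nu_p(k!)}^k$ is a permanent cycle, and I would approach this through a bidegree and weight analysis within our explicit decomposition of the $E_2$-page: any potential target of $d_r(x_{\nu_p(k!)}^k)$ is forced by $v_0, v_1$-linearity to be a $\tau$-multiple (or $\zeta$-, $\rho$-, $u$-, or $\gamma$-multiple, as dictated by the base field) of a shifted base generator, and the weight constraint combined with the shift $\Sigma^{2k(p-1), k(p-1)}$ excludes all such candidates. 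Once every $x_i^k$ has been shown to be a permanent cycle, applying the Leibniz rule to the base differentials on $\tau$ (or its analogue) determines every remaining differential in $\textup{\textbf{mASS}}_p(BPGL\langle 1 \rangle \wedge BPGL \langle 1 \rangle)$, completing the reduction to $\textup{\textbf{mASS}}_p(BPGL\langle 1\rangle)$.
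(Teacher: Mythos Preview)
Your proposal is correct and follows the same overall strategy as the paper: use the unit map to import the base differentials onto the $k=0$ summand, use the explicit module description from \cref{lem:lightning flash ext 1} for the remaining summands, and handle the free summands $W_k$ exactly as in \cref{BPGL0 coop mass differentials}. The paper's proof is in fact much terser than yours --- it simply asserts that ``the module structure lifts the differentials on $\text{Ext}_{\mathcal{E}(1)^\vee_p}^{s,f,w}(\mathbb{M}_p)$ to these summands in the expected way'' and stops there, leaving the verification that the generators $x_i^k$ are permanent cycles implicit.

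One simplification worth noting: your Leibniz descent from the top generator $x_{\nu_p(k!)}^k$ down to the lower $x_i^k$ is unnecessary. The bidegree and weight analysis you propose for the top generator applies verbatim to every $x_i^k$, since each sits in a degree of the form $\bigl(2n(p-1),\,0,\,n(p-1)\bigr)$ and any putative target $d_r(x_i^k)$ would need odd stem but matching weight --- a combination that the stem/weight relations on the elements $\rho, u, \gamma, \tau, \zeta$ rule out uniformly. Doing it this way avoids having to track $v_1$-regularity on higher $E_r$-pages (which is the delicate point in your inductive descent, since earlier differentials over $\mathbb{F}_q$ do truncate $v_0$-towers).
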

\begin{proof}
    The map $BPGL \langle 1 \rangle \to BPGL \langle1 \rangle \wedge BPGL \langle 1 \rangle$ induces an inclusion on $E_2$-pages
    \[\text{Ext}^{s,f,w}_{\mathcal{E}(1)_p^\vee}(\mathbb{M}_p) \to \bigoplus_{k \geq 0}\Sigma^{2k(p-1), k(p-1)}\text{Ext}^{s,f,w}_{\mathcal{E}(1)^\vee_p}(L(\nu_p(k!)) \oplus W_k).\]
    This is the inclusion into the $k=0$ summand of the right side, recalling that $L_p(0) \cong \mathbb{M}_p.$ This determines all of the differentials on this summand. By \cref{lem:lightning flash ext 1}, we understand the $\text{Ext}_{\mathcal{E}(1)^\vee_p}^{s,f,w}(\mathbb{M}_p)$-module structure of the remaining summands $\text{Ext}^{s,f,w}_{\mathcal{E}(1)^\vee_p}(L_p(\nu_p(k!)) \oplus W_k)$, which we may rewrite as
    \[ \bigoplus_{i =0}^{\nu_p(k!)-1}(\text{Ext}^{s,f,w}_{\mathcal{E}(0)_p^\vee}(\mathbb{M}_p)\{x_{i}\}) \oplus \text{Ext}^{s,f,w}_{\mathcal{E}(1)_p^\vee}(\mathbb{M}_p)\{x_{\nu_p(k!)}\} \oplus V_k,\]
    where $V_k$ is a sum of suspensions of $\mathbb{M}_p$. The module structure lifts the differentials on $\text{Ext}_{\mathcal{E}(1)^\vee_p}^{s,f,w}(\mathbb{M}_p)$ to these summands in the expected way.
\end{proof}

We can now compute the homotopy ring of cooperations.
\begin{theorem}
\label{thm:BPGL1homotopyCooperations}
    Let $F \in \{\mathbb{C}, \mathbb{R}, \mathbb{F}_q\}$, where $\textup{char}(\mathbb{F}_q) \neq p$. Then there is an isomorphism for all primes $p$:
    \begin{align*}
        \pi_{*,*}(BPGL & \langle 1 \rangle \wedge  BPGL \langle 1 \rangle) \\
        & \cong \underset{k \geq 0}{\bigoplus} \left((\pi_{*,*}BPGL \langle 0 \rangle)\{x_0, \dots, x_{\nu_p(k!)-1}\}
    \oplus \pi_{*,*}(BPGL \langle 1 \rangle) \{x_{\nu_p(k!)}\} \oplus W_k \right),
    \end{align*}
    where $W_k$ is a sum of suspensions of $\mathbb{M}_p$.
\end{theorem}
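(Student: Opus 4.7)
The plan is to run the $\textbf{mASS}_p^F(BPGL\langle 1 \rangle \wedge BPGL \langle 1 \rangle)$, identify its $E_2$-page using the lightning flash splitting, transport differentials from the simpler $\textbf{mASS}_p^F(BPGL\langle 1 \rangle)$, and finally check that the $v_0$-extensions already visible on $E_2$ are the only relations to worry about.

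First, I would assemble the $E_2$-page. Combining the K\"unneth isomorphism of \cref{prop:BPGLnKunneth}, a change-of-rings isomorphism, the Brown--Gitler decomposition of \cref{prop:BGdecomposition}, and the lightning flash splitting of \cref{prop:LightningFlashBGiso} gives
\[
E_2 \cong \bigoplus_{k \geq 0} \Sigma^{2k(p-1),\, k(p-1)} \Ext^{s,f,w}_{\mathcal{E}(1)^\vee_p}\bigl(L_p(\nu_p(k!)) \oplus W_k\bigr),
\]
where each $W_k$ is a sum of suspensions of $\mathcal{E}(1)^\vee_p$ and so (by \cref{prop:freeImpliesInjective}) contributes only free $\mathbb{M}_p$-summands in Adams filtration zero. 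Applying \cref{lem:lightning flash ext 1} rewrites the lightning-flash contribution as
\[
\bigoplus_{i=0}^{\nu_p(k!)-1} \Ext^{s,f,w}_{\mathcal{E}(0)^\vee_p}(\mathbb{M}_p)\{x_i\} \;\oplus\; \Ext^{s,f,w}_{\mathcal{E}(1)^\vee_p}(\mathbb{M}_p)\{x_{\nu_p(k!)}\},
\]
with the $v_0$-extensions $v_1 x_i = v_0 x_{i+1}$ already recorded on $E_2$.

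Next, I would determine the differentials. By \cref{BPGL1 coop mass differentials}, every differential is inherited from the $\textbf{mASS}_p^F(BPGL\langle 1 \rangle)$ via the map of spectral sequences induced by the unit $BPGL\langle 1 \rangle \to BPGL\langle 1 \rangle \wedge BPGL\langle 1 \rangle$, together with $v_0,v_1$-linearity. Those single-factor differentials are themselves controlled: the spectral sequence collapses for $F=\mathbb{C}$ at all primes and for $F=\mathbb{R}$ by \cref{C and R BPGL1}, and for $F=\mathbb{F}_q$ they are generated by the differentials on powers of $\tau$ or $\zeta$ pulled back from $\textbf{mASS}_p^F(BPGL\langle 0 \rangle)$ by \cref{bpgl1 difs finite fields}. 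Inside each summand $\Ext^{s,f,w}_{\mathcal{E}(0)^\vee_p}(\mathbb{M}_p)\{x_i\}$ these differentials run exactly as in the computation of $\pi_{*,*}^F BPGL\langle 0 \rangle$ and leave $\pi_{*,*}^F BPGL\langle 0 \rangle\{x_i\}$ at $E_\infty$; inside $\Ext^{s,f,w}_{\mathcal{E}(1)^\vee_p}(\mathbb{M}_p)\{x_{\nu_p(k!)}\}$ they leave $\pi_{*,*}^F BPGL\langle 1 \rangle\{x_{\nu_p(k!)}\}$; and the filtration-zero free summands coming from $W_k$ support no differentials, contributing the free $\mathbb{M}_p$-summands in the statement.

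Finally, I would check there are no hidden additive extensions between distinct summands. The main obstacle is that the $v_0$-extensions $v_1 x_i = v_0 x_{i+1}$ cross between the $BPGL\langle 0 \rangle$-summands and the top $BPGL\langle 1 \rangle$-summand inside a single value of $k$, but these are already present on $E_2$ by \cref{lem:lightning flash ext 1} and so persist to $E_\infty$. To rule out extensions across different values of $k$ or into the $W_k$-pieces, one argues by tracking the suspension shift $\Sigma^{2k(p-1), k(p-1)}$ separating different $k$-summands together with the fact that the $W_k$ classes lie in Adams filtration zero and are therefore $v_0$-torsion free representatives incompatible with any $v_0$-tower coming from the other summands. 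Combining the identified $E_\infty$-page with these extension relations yields the claimed isomorphism.
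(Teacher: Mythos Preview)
Your proposal is correct and follows essentially the same route as the paper: assemble the $E_2$-page via \cref{prop:BPGLnKunneth}, \cref{prop:BGdecomposition}, \cref{prop:LightningFlashBGiso}, and \cref{lem:lightning flash ext 1}; transport differentials using \cref{BPGL1 coop mass differentials} together with \cref{C and R BPGL1} and \cref{bpgl1 difs finite fields}; and read off the summands. One small wording slip: in your extension paragraph the $W_k$ classes are $v_0$-\emph{torsion} (not $v_0$-torsion free), which is precisely why they cannot participate in the $v_0$-towers of the other summands.
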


\begin{proof}
    By \cref{BPGL1 coop mass differentials}, the differentials in the $\textbf{mASS}_p(BPGL \langle 1 \rangle \wedge BPGL \langle 1 \rangle)$ are determined by the differentials in the $\textbf{mASS}_p(BPGL \langle 1 \rangle)$ by using the $\text{Ext}_{\mathcal{E}(1)^\vee_p}^{s,f,w}(\mathbb{M}_p)$-module structure on $\text{Ext}_{\mathcal{E}(1)^\vee_p}^{s,f,w}(L_p(\nu_p(k!)) \oplus W_k)$ obtained from \cref{lem:lightning flash ext 1}. Note that these differentials are in turn determined by the differentials in the $\textbf{mASS}_p(BPGL \langle 0 \rangle)$ by $v_1$-linearity, following the argument given in the proof of \cref{bpgl1 difs finite fields}.
    
    To be precise, let $d_r^{coop}$ denote the differential in the $\textbf{mASS}_p(BPGL \langle 1 \rangle \wedge BPGL \langle 1 \rangle )$, and let $d_r$ denote the differential in the $\textbf{mASS}_p(BPGL \langle 0 \rangle)$. Each summand of the $E_2$-page of the $\textbf{mASS}_p(BPGL \langle 1 \rangle \wedge BPGL \langle 1 \rangle)$ takes the form
    \[ \bigoplus_{i =0}^{\nu_p(k!)-1}(\text{Ext}^{s,f,w}_{\mathcal{E}(0)_p^\vee}(\mathbb{M}_p)\{x_{i}\}) \oplus \text{Ext}^{s,f,w}_{\mathcal{E}(1)_p^\vee}(\mathbb{M}_p)\{x_{\nu_p(k!)}\} \oplus V_k.\]   
    For any class $\alpha \in \text{Ext}_{\mathcal{E}(0)^\vee_p}^{s,f,w}(\mathbb{M}_p)$, we have that
    \[d_r^{coop}(\alpha x_{i}) = d_r(\alpha)x_{i}\]
    for all $0 \leq i \leq \nu_p(k!)-1$. For any class $\beta \in \text{Ext}_{\mathcal{E}(1)^\vee_p}^{s,f,w}(\mathbb{M}_p)$, we can rewrite $\beta$ by factoring out the maximal power of $v_1$ which divides it, so that $\beta = v_1^n\beta'$, where $\beta' \in \text{Ext}^{s,f,w}_{\mathcal{E}(0)^\vee_p}(\mathbb{M}_p)$. This implies that
    \[d_r^{coop}(\beta x_{\nu_p(k!)}) = d_r^{coop}(v_1^n\beta' x_{ \nu_p(k!)}) = v_1^nd_r(\beta')x_{ \nu_p(k!)}.\]
    For the same reason as given in \cref{BPGL0 coop mass differentials}, there can be no differentials involving the $V_k$-term.

    Thus, in each summand of the $\textbf{mASS}_p(BPGL \langle 1 \rangle \wedge BPGL \langle 1 \rangle)$, the differentials are contained within their individual sub-summands. Using our knowledge of the values of these differentials, we have shown that each summand abuts to 
    \[(\pi_{*,*}BPGL \langle 0 \rangle)\{x_1, \dots x_{\nu_p(k!)-1}\} \oplus (\pi_{*,*}BPGL \langle 1 \rangle)\{x_{{\nu_p(k!)}}\} \oplus W_k,\]
    where $W_k$ is a sum of suspensions of $\mathbb{M}_p$. Ranging over $k \geq 0$ proves the claim.
\end{proof}

\begin{remark}
    \rm For $p=2$, this gives a computation of the homotopy ring of cooperations for effective algebraic $K$-theory, recalling that a model for $BPGL\langle 0 \rangle$ is $H\mathbb{Z}$:
    \[\pi^F_{*,*}(kgl \wedge kgl) \cong \bigoplus_{k \geq 0}\left((\pi_{*,*}^FH\mathbb{Z})\{x_0, \dots, x_{\nu_p(k!)-1}\} \oplus (\pi^F_{*,*}kgl)\{x_{\nu_p(k!)}\} \oplus W_k\right).\]
    This extends the computation for $F=\mathbb{R}$ shown in \cite{LiPetTat25}.
\end{remark}

We depict the $E_2$-page of the $\textbf{mASS}^{\mathbb{C}}_2(BPGL \langle 1 \rangle\wedge BPGL \langle 1 \rangle)$ in \Cref{fig:BPGL1CoopmASSE2}. Note that since the spectral sequence collapses over $\mathbb{C}$, one may also interpret this figure as the actual homotopy ring of cooperations $\pi_{*,*}^{\mathbb{C}}(BPGL\langle 1 \rangle \wedge BPGL \langle 1 \rangle)$ as a module over $\pi_{*,*}^{\mathbb{C}}(BPGL \langle 1 \rangle)$. A $\blacksquare$ denotes $\mathbb{F}_2[\tau, v_0]$ and a $\bullet$ denotes $\mathbb{F}_2[\tau]$, with an upwards arrow indicating an infinite tower of $v_0$-multiplication. As usual, we supress $v_1$-multiplication. Different colors correspond to different summands. For example, the color black refers to the $k=0$-summand, which is isomorphic to $\text{Ext}_{\mathcal{E}(1)^\vee_2}^{*,*,*}(\mathbb{M}_2^{\mathbb{C}})$, and the color blue refers to the $k=1$-summand, which is isomorphic to 
\[\Sigma^{2,1}\text{Ext}_{\mathcal{E}(1)_2^\vee}^{*,*,*}(L_2^\mathbb{C}(\nu_2(1!)) \cong \Sigma^{2,1}\text{Ext}_{\mathcal{E}(1)^\vee_2}^{*,*,*}(\mathbb{M}_2^{\mathbb{C}}),\]
and the color red refers to the $k=2$-summand, which is isomorphic to
\[\Sigma^{4,2}\text{Ext}_{\mathcal{E}(1)^\vee_2}^{*,*,*}(L_2^\mathbb{C}(\nu_2(2!))) \cong \Sigma^{4,2}\text{Ext}_{\mathcal{E}(0)^\vee_2}^{*,*,*}(\mathbb{M}_2^{\mathbb{C}}) \oplus \Sigma^{6,3}\text{Ext}_{\mathcal{E}(1)_2^\vee}^{*,*,*}(\mathbb{M}_2^{\mathbb{C}}).\]
Note that one may alter this image to give the $E_2$-page over all base fields $F$ of interest in the following way. First, replace each $\blacksquare$ with a copy of $\text{Ext}_{\mathcal{E}(0)^\vee_2}^{*,*,*}(\mathbb{M}_2^F)$. Then, replace each diagonal $v_1$-tower with a copy of $\text{Ext}_{\mathcal{E}(1)^\vee_2}^{*,*,*}(\mathbb{M}_2^F)$. Similar changes produce charts for the case of an odd prime $p$, where the stem axis is relabeled with $2(p-2)$ instead of 2, and the proliferation of $\blacksquare$ changes depending on the values of $\nu_p(k!)$.

\begin{figure}[ht]
    \centering
    \includegraphics[width=1\linewidth]{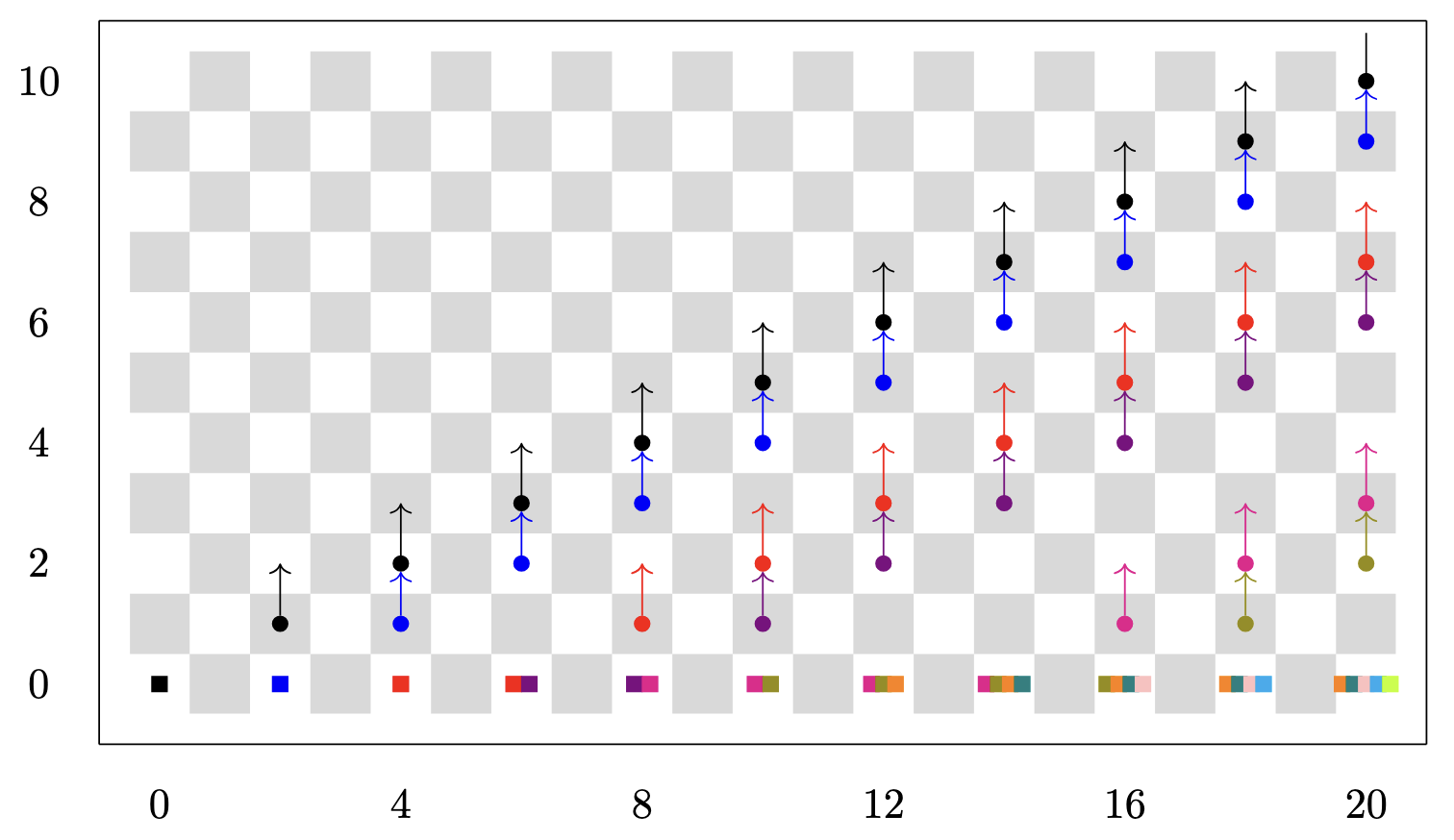}
    \caption{The homotopy ring of cooperations $\pi_{*,*}^{\mathbb{C}}(BPGL \langle 1 \rangle \wedge BPGL \langle 1 \rangle)$ for $p=2$.}
    \label{fig:BPGL1CoopmASSE2}
\end{figure}

\begin{remark}
    \rm While it is useful to be able to analyze the $\textup{\textbf{mASS}}_p(BPGL \langle 1 \rangle \wedge BPGL \langle 1 \rangle)$ in a way that is independent of base field, it also makes the cases of $F=\mathbb{C}$ or $\mathbb{R}$ sound more complex than they actually are. Indeed, although we are able to lift the differentials all the way from the $\textup{\textbf{mASS}}_p(BPGL \langle 0 \rangle)$ to this spectral sequence, there is a bit of a triviality in these cases: there are no differentials to lift, and everything in sight collapses at $E_2$.
\end{remark}

\begin{remark}
    \rm In the cases of $BPGL\langle 0 \rangle$ and $BPGL \langle 1 \rangle$, we see a pattern in the differentials in the motivic Adams spectral sequences for the coefficients and the cooperations: the only classes which could possible support differentials are those which are divisible $\tau$ or $\zeta$. Over $\mathbb{C}$ and $\mathbb{R}$ there are no differentials, with every spectral sequence collapsing at the $E_2$-page. This behavior is a genuine lift of the classical story, as the corresponding spectral sequences for $BP\langle 0 \rangle$ and $BP\langle 1 \rangle$ also collapse at the $E_2$-page. In fact, as the Adams spectral sequences for $BP\langle 2\rangle$ and $BP\langle 2 \rangle \wedge BP \langle 2 \rangle$ also collapse at $E_2$ \cite{culver-bp2coop}, one is inclined to believe that our motivic findings at lower heights also carry over to the $BPGL\langle 2 \rangle$ cooperations algebra.
\end{remark}

\subsection{Constructing the spectrum-level splitting of $BPGL \langle 1 \rangle \wedge BPGL \langle 1 \rangle$} \label{subsec:BPGL1relASS}

Now, we construct splittings at the level of spectra. Note that many of the proofs given here bear similarity to, and are indeed inspired by, those given in \cite[Section 6]{LiPetTat25}. We include our arguments in full for the reader's convenience, starting with an overview of the strategy of the argument.

\subsubsection{Overview}
Similar to our splitting of $BPGL \langle 0 \rangle \wedge BPGL \langle 0 \rangle$, we will produce a splitting of $BPGL \langle 1 \rangle \wedge BPGL \langle 1 \rangle$ by means of a relative Adams spectral sequence. First, we will give a much simpler $BPGL\langle 1 \rangle$-splitting,
\begin{equation}\label{eqn: simple splitting}
    BPGL \langle 1 \rangle \wedge BPGL \langle 1 \rangle \simeq C \vee V,
\end{equation}
    where $V$ is a wedge of suspensions of $H$ and $C$ contains no $H$ summands. Then we will show that there is an isomorphism of $\mathcal{E}(1)^\vee_p$-comodules
\[\bigoplus_{k \geq 0} \theta_k:\Sigma^{2k(p-1), k(p-1)}H_{*,*}^{BPGL\langle 1\rangle}BPGL \langle 1 \rangle^{\langle \nu _p(k!) \rangle}  \xrightarrow{\cong}H_{*,*}^{BPGL\langle 1\rangle}C.\]
We can realize the maps $\theta_k$ on the individual summands as classes in degree $(0,0,0)$ of the following relative Adams spectral sequence.
\begin{align*}
    E_2^{s,f,w} = \text{Ext}^{s,f,w}_{\mathcal{E}(1)^\vee_p}\left(H_{*,*}^{BPGL \langle 1 \rangle}\Sigma^{2k(p-1), k(p-1)}BPGL \langle 1 \rangle^{\langle \nu_p(k!) \rangle}, H_{*,*}C\right) \\
    \implies [\Sigma^{2k(p-1), k(p-1)}BPGL \langle 1 \rangle^{\langle \nu_p(k!)\rangle}, C]^{BPGL \langle 1 \rangle}_{(s,w)}.
\end{align*}  
Showing that each $\theta_k$ survives the spectral sequence produces maps
\[\tilde{\theta}_k:\Sigma^{2k(p-1), k(p-1)}BPGL \langle 1 \rangle^{\nu_p(k!)} \to C \]
which, since they are induced from the prescribed homology isomorphisms, must assemble to give an equivalence of spectra (up to $p$-completion). Combined with \cref{eqn: simple splitting}, this indeed yields a $BPGL\langle 1 \rangle$-module equivalence
\[BPGL\langle 1 \rangle \wedge BPGL\langle 1 \rangle \simeq \bigvee\limits_{k=0}^{\infty}\Sigma^{2k(p-1), k(p-1)}BPGL \langle 1 \rangle^{\nu_p(k!)} \vee V  \]

\subsubsection{Adams covers and relative homology}
We begin by establishing a simpler topological splitting.
\begin{proposition}
\label{prop:BPGL1SplittingBeta}
    There is a $BPGL\langle 1 \rangle$-module splitting
    \[BPGL \langle 1 \rangle \wedge BPGL \langle 1 \rangle \simeq C \vee V,\]
    where $V$ is a wedge of suspensions of $H$ and $C$ contains no $H$ summands.
\end{proposition}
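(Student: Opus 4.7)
The plan is to identify $V$ and $C$ at the level of $BPGL\langle 1\rangle$-relative homology and then lift the resulting splitting to the spectrum level using the relative Adams spectral sequence introduced in \cref{subsec:relative}. Combining the K\"unneth isomorphism of \cref{prop:BPGLnKunneth}, the Brown--Gitler decomposition of \cref{prop:BGdecomposition}, and the lightning flash identification of \cref{prop:LightningFlashBGiso}, one finds
\[
H_{*,*}^{BPGL\langle 1\rangle}(BPGL\langle 1\rangle \wedge BPGL\langle 1\rangle) \cong H_{*,*}BPGL\langle 1\rangle \cong \cL \oplus \cW,
\]
where $\cL = \bigoplus_{k \geq 0}\Sigma^{2k(p-1),k(p-1)}L_p(\nu_p(k!))$ contains no free $\cE(1)_p^\vee$-summand by inspection of \cref{def: lightning}, and $\cW$ is a sum of suspensions of $\cE(1)_p^\vee$. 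Since $H_{*,*}^{BPGL\langle 1\rangle}H \cong \cE(1)_p^\vee$ by \cref{prop:relHomology}, the $\cW$-summand is what will lift to the wedge of suspensions of $H$ constituting $V$, while $\cL$ will be realized by $C$.

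Next, for each $\Sigma^{n_i,m_i}\cE(1)_p^\vee$-summand of $\cW$, I would construct $BPGL\langle 1\rangle$-module maps $\phi_i: \Sigma^{n_i,m_i}H \to BPGL\langle 1\rangle \wedge BPGL\langle 1\rangle$ and $\psi_i: BPGL\langle 1\rangle \wedge BPGL\langle 1\rangle \to \Sigma^{n_i,m_i}H$ realizing the inclusion and projection of the summand on relative homology. The relevant relative Adams spectral sequences have $E_2$-pages
\[
\Ext^{s,f,w}_{\cE(1)_p^\vee}(\Sigma^{n_i,m_i}\cE(1)_p^\vee, \cL \oplus \cW) \quad \text{and} \quad \Ext^{s,f,w}_{\cE(1)_p^\vee}(\cL \oplus \cW, \Sigma^{n_i,m_i}\cE(1)_p^\vee),
\]
both of which are concentrated in filtration $f = 0$ since $\cE(1)_p^\vee$ is self-injective by \cref{prop:freeImpliesInjective}. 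Hence both spectral sequences collapse and every $\cE(1)_p^\vee$-comodule map in the appropriate bidegree is realized, producing the desired $\phi_i$ and $\psi_i$. Assembling these over all free generators of $\cW$ yields $\phi: V \to BPGL\langle 1\rangle \wedge BPGL\langle 1\rangle$ and $\psi: BPGL\langle 1\rangle \wedge BPGL\langle 1\rangle \to V$ with $V = \bigvee_i \Sigma^{n_i,m_i}H$. The composite $\psi \circ \phi$ is a $BPGL\langle 1\rangle$-self-map of $V$ inducing the identity on relative homology; a third application of the same injectivity argument shows that $BPGL\langle 1\rangle$-self-maps of $V$ (after $H$-nilpotent completion) are detected in filtration zero by their effect on relative homology, so $\psi \circ \phi \simeq \mathrm{id}_V$. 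Thus $\phi$ is a split monomorphism, giving the desired splitting $BPGL\langle 1\rangle \wedge BPGL\langle 1\rangle \simeq V \vee C$ with $C = \mathrm{cofib}(\phi)$.

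Finally, the long exact sequence in relative homology identifies $H_{*,*}^{BPGL\langle 1\rangle}C \cong \cL$, which contains no free $\cE(1)_p^\vee$-summand. Any $H$-summand of $C$ would contribute such a summand via \cref{prop:relHomology}, so $C$ contains no $H$-summands. The main obstacle is the simultaneous collapse and filtration-zero realization in the three relative Adams spectral sequences above; all of these reductions hinge on the self-injectivity of $\cE(1)_p^\vee$, and once they are in hand the rest of the argument is a routine bookkeeping assembly of free summands.
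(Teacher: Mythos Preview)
Your proposal is correct and follows essentially the same approach as the paper: identify the free $\cE(1)_p^\vee$-summands in $H_{*,*}^{BPGL\langle 1\rangle}(BPGL\langle 1\rangle \wedge BPGL\langle 1\rangle)$, then use the relative Adams spectral sequence together with self-injectivity of $\cE(1)_p^\vee$ to lift the inclusion and projection to spectrum-level maps which split off $V$. Your version is slightly more detailed in checking that $\psi \circ \phi \simeq \mathrm{id}_V$ and that $C$ has no $H$-summands, whereas the paper constructs the inclusion and projection for all of $V$ at once rather than summand by summand, but these are minor organizational differences.
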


\begin{proof}
    Recall from \Cref{prop:BPGLnHomology} that there is an isomorphism of $\mathcal{E}(1)_p^\vee$-comodules:
    \[H_{*,*}BPGL \langle 1 \rangle \cong \bigoplus_{k \geq 0}\Sigma^{2k(p-1), k(p-1)}L_p(\nu_p(k!)) \oplus W_k,\]
    where each $W_k$ is a finite sum of suspensions of $\mathcal{E}(1)^\vee_p$. In \Cref{prop:relHomology}, we showed that $H_{*,*}^{BPGL \langle 1 \rangle}H \cong \mathcal{E}(1)^\vee_p$. Now, take $V_k$ to be a wedge of suspensions of $H$ such that $H^{BPGL \langle 1 \rangle}_{*,*}V_k \cong W_k$. Letting $V = \bigvee_{k \geq 0}V_k$, we have that 
    \[H^{BPGL \langle 1 \rangle}_{*,*}V \cong \bigoplus_{k \geq 0}W_k.\]
    Consider the $BPGL \langle 1 \rangle$-relative Adams spectral sequences
    \[E_2^{s,f,w} = \text{Ext}^{s,f,w}_{\mathcal{E}(1)^\vee_p}(H_{*,*}^{BPGL \langle 1 \rangle}V, H_{*,*}BPGL \langle 1 \rangle) \implies [V, BPGL \langle1 \rangle \wedge BPGL \langle 1 \rangle]^{BPGL \langle 1 \rangle}_{(s,w)}\]
    \[E_2^{s,f,w} = \text{Ext}^{s,f,w}_{\mathcal{E}(1)^\vee_p}(H_{*,*}BPGL \langle 1 \rangle, H^{BPGL \langle 1 \rangle}_{*,*}V) \implies [BPGL \langle 1 \rangle \wedge BPGL \langle 1 \rangle, V]^{BPGL \langle 1 \rangle}_{(s,w)}\]
    We can view the inclusion
    \[i:H_{*,*}^{BPGL \langle 1 \rangle}V \cong W \hookrightarrow H_{*,*}BPGL \langle 1 \rangle\]
    as a class in filtration 0 of the first spectral sequence, and we may view the projection
    \[j:H_{*,*}BPGL \langle 1 \rangle \to W \cong H^{BPGL \langle 1 \rangle}_{*,*}V\]
    as a class in filtration 0 of the second spectral sequence. Since  $H_{*,*}^{BPGL \langle 1 \rangle}V$ is free over $\mathcal{E}(1)^\vee_p$, both spectral sequences are entirely contained in filtration $f=0$. Adams differentials increase filtration, hence both spectral sequences collapse. This implies that the inclusion and projection maps lift to give maps of $BPGL \langle 1 \rangle$-modules:
    \[V \to BPGL \langle 1 \rangle \wedge BPGL \langle 1 \rangle \to V.\]
    This gives the desired splitting $BPGL \langle 1 \rangle \wedge BPGL \langle 1 \rangle \simeq C \vee V$, where $C$ contains no $H$-summands.
\end{proof}

\begin{remark}
    \rm Classically, there is a simplified version of this proof. It is a theorem of Margolis \cite{Margolis74} that \textit{any} bounded below, locally finite spectrum $X$ admits a decomposition into $C \vee V$, where $C$ is a wedge of suspensions of $H$ and $C$ contains no $H$-summands. Margolis's proof is exactly the same as the one given here, except that he uses the classical Adams spectral sequence, rather than a $BP\langle 1 \rangle$-relative Adams spectral sequence. If one were to know that the motivic Steenrod algebra was injective as a module over itself, then one could give such a splitting for any bounded-below, locally finite motivic spectrum. Such a splitting would be useful in studying the cooperations algebra for Hermitian $K$-theory $kq \wedge kq$. For that case, $\cA(1)$ plays the role that $\cE(n)$ plays for $BPGL\langle n \rangle$, so knowing that $\cA(1)$ is self-injective would suffice. 
\end{remark}

\begin{proposition}
\label{prop:RelativeHomologyBPGL1AdamsCoverLightning}
Let $BPGL\langle 1 \rangle ^{\langle n \rangle}$ denote the $n^{th}$ Adams cover of $BPGL \langle 1 \rangle.$ Then there is an isomorphism
\[H^{BPGL \langle 1 \rangle}_{*,*}(BPGL\langle 1 \rangle^{\langle k\rangle}) \cong L_p(k).\]
\end{proposition}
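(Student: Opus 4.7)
The plan is to proceed by induction on $k$. The base case $k = 0$ is immediate: $BPGL\langle 1 \rangle^{\langle 0 \rangle} = BPGL\langle 1 \rangle$, so $H^{BPGL\langle 1 \rangle}_{*,*}BPGL\langle 1 \rangle \cong \mathbb{M}_p = L_p(0)$.

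For the inductive step, I will use the cofiber sequence defining the next Adams cover in the minimal $H$-Adams resolution of $BPGL\langle 1 \rangle$ inside the category of $BPGL\langle 1 \rangle$-modules:
\[BPGL\langle 1 \rangle^{\langle k+1 \rangle} \to BPGL\langle 1 \rangle^{\langle k \rangle} \to K_k,\]
where $K_k$ is a wedge of suspensions of $H$. By \cref{prop:relHomology}, $H^{BPGL\langle 1 \rangle}_{*,*}K_k$ is a direct sum of shifts of $\mathcal{E}(1)_p^\vee$; the inductive hypothesis $H^{BPGL\langle 1 \rangle}_{*,*}BPGL\langle 1 \rangle^{\langle k \rangle} \cong L_p(k)$ combined with minimality of the Adams resolution identifies $H^{BPGL\langle 1 \rangle}_{*,*}K_k$ with the injective hull of $L_p(k)$, namely $\bigoplus_{i=0}^{k}\Sigma^{2i(p-1), i(p-1)}\mathcal{E}(1)_p^\vee$, with one summand for each of the $k+1$ $\mathcal{E}(1)_p^\vee$-primitives of $L_p(k)$, which by the zig-zag description of \cref{def: lightning} sit in bidegrees $(2i(p-1), i(p-1))$ for $0 \leq i \leq k$.

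Applying $H^{BPGL\langle 1 \rangle}_{*,*}(-)$ to the cofiber sequence and using the injectivity of the middle term (\cref{prop:freeImpliesInjective}), minimality forces the long exact sequence to collapse to a short exact sequence of $\mathcal{E}(1)_p^\vee$-comodules
\[0 \to L_p(k) \to \bigoplus_{i=0}^{k}\Sigma^{2i(p-1), i(p-1)}\mathcal{E}(1)_p^\vee \to \Sigma^{1, 0}H^{BPGL\langle 1 \rangle}_{*,*}BPGL\langle 1 \rangle^{\langle k+1 \rangle} \to 0.\]
To complete the induction, I would identify the cokernel with $\Sigma^{1, 0}L_p(k+1)$ by comparing against the short exact sequence \cref{lightning flash ses} and matching dimension counts, primitive placements, and the induced right $\mathcal{E}(1)_p$-action on both sides.

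The hard part will be this last identification: while the $\mathbb{M}_p$-ranks and the primitives in degrees $(2i(p-1), i(p-1))$ match across both short exact sequences, verifying that the Milnor primitive $Q_0$- and $Q_1$-actions on the cokernel reproduce the zig-zag structure defining $L_p(k+1)$ requires careful bookkeeping of the coaction through the connecting homomorphism, in analogy with the classical argument of Davis--Gitler--Mahowald \cite{davis1981stable}.
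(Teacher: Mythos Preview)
Your approach is essentially the same as the paper's: both proceed by induction on $k$ using the cofiber sequences in a minimal Adams resolution, apply $H_{*,*}^{BPGL\langle 1\rangle}(-)$, and identify the resulting short exact sequence term by term with the lightning flash modules. Your identification of $H_{*,*}^{BPGL\langle 1\rangle}K_k$ with the injective hull $\bigoplus_{i=0}^{k}\Sigma^{2i(p-1),\,i(p-1)}\mathcal{E}(1)_p^\vee$ via the $k{+}1$ comodule primitives of $L_p(k)$ is exactly the right picture.

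The one place where the paper is more explicit is in \emph{constructing} the map $i_k\colon BPGL\langle 1\rangle^{\langle k\rangle}\to K_k$ realizing the inclusion into the injective hull: the paper writes down the comodule map on generators and then invokes the $BPGL\langle 1\rangle$-relative Adams spectral sequence (which collapses at $E_2$ because $H_{*,*}^{BPGL\langle 1\rangle}K_k$ is free, hence injective, over $\mathcal{E}(1)_p^\vee$) to lift it to a map of spectra. You instead appeal to ``minimality of the Adams resolution,'' which in the motivic relative setting needs precisely this argument to be justified. As for the ``hard part'' you flag---checking that the cokernel carries the zigzag $Q_0$/$Q_1$ structure of $L_p(k{+}1)$---the paper is in fact no more detailed than you are: it simply asserts the identification follows from the long exact sequence and induction. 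So your sketch is not missing anything the paper supplies; both treatments leave that verification to the reader.
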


\begin{proof}
    Recall that the Adams covers $BPGL \langle 1 \rangle^{\langle n \rangle}$ are defined by a minimal Adams resolution of $BPGL \langle 1 \rangle$:
    \[\begin{tikzcd}
	{BPGL \langle 1 \rangle^{\langle 0 \rangle}} & {BPGL \langle 1 \rangle^{\langle 1 \rangle}} & {BPGL \langle 1 \rangle^{\langle 2 \rangle}} & \cdots \\
	{K_0} & {K_1} & {K_2}
	\arrow["{i_0}", from=1-1, to=2-1]
	\arrow[from=1-2, to=1-1]
	\arrow["{i_1}", from=1-2, to=2-2]
	\arrow[from=1-3, to=1-2]
	\arrow["{i_2}", from=1-3, to=2-3]
	\arrow[from=1-4, to=1-3]
	\arrow[dashed, from=2-1, to=1-2]
	\arrow[dashed, from=2-2, to=1-3]
	\arrow[dashed, from=2-3, to=1-4]
    \end{tikzcd}\]   
    We let $BPGL \langle 1 \rangle ^{\langle 0 \rangle} := BPGL \langle 1 \rangle,$ and we inductively define the Adams covers so that $BPGL \langle 1 \rangle^{\langle n \rangle}$ is the fiber of $BPGL \langle 1 \rangle^{\langle n-1 \rangle} \xrightarrow{i_{n-1}}K_{n-1}$ for all $n \geq 1.$ Now, since $BPGL \langle 1 \rangle$ and $H$ are both $BPGL \langle 1 \rangle$-modules, we can apply relative homology $H_{*,*}^{BPGL \langle 1 \rangle}(-)$ to get a minimal resolution of $\mathbb{M}_p^F$ by $\mathcal{E}(1)^\vee_p$-comodules.

    We proceed by induction. Choose $K_0=H$ and the map $i_0:BPGL \langle 1 \rangle \to H$ to be the unit map in $H^{*,*}BPGL \langle 1 \rangle = [BPGL\langle 1\rangle, H]_{(*,*)}.$ Applying relative homology to the cofiber sequence
    \[BPGL \langle 1 \rangle^{\langle 1 \rangle} \to BPGL \langle 1 \rangle \xrightarrow{i_0}H\]
    gives a long exact sequence
    \[\cdots \to H_{*,*}^{BPGL \langle 1 \rangle}BPGL \langle 1 \rangle^{\langle 1 \rangle} \to \mathbb{M}_p^F \to \mathcal{E}(1)^\vee_p \to \cdots.\]
    By inspection, we have that $H_{*,*}^{BPGL \langle 1 \rangle}BPGL \langle 1 \rangle^{\langle 1 \rangle} \cong L_p(1).$

    Now, suppose that $H_{*,*}^{BPGL \langle 1 \rangle}BPGL \langle 1\rangle^{\langle k \rangle} \cong L_p(k)$ for all $k \leq n$. Define    
    \[
    K_n := \bigoplus_{i=0}^n\Sigma^{2i(p - 1)+1,i(p-1)}H.
    \]
     Recall that
    \[L_p(n) = \mathcal{E}_p(1) \{x_1, x_2, \cdots, x_n \, |\,  x_{i + 1} Q_1 =  x_i Q_0,\, 1 \leq i \leq n-1 \}.\]
    Define a map
    \[{i_{n}}:L_p(n) \to H_{*,*}^{BPGL \langle 1 \rangle}K_n\]
    by sending $x_i \mapsto \Sigma^{2i(p - 1)+1,i(p-1)}1 \in \Sigma^{2i(p - 1)+1,i(p-1)}\mathcal{E}(1)^\vee_p$. This map can be realized by a map of spectra $i_n:BPGL \langle 1 \rangle^{\langle n \rangle} \to K_n$. To see this, consider the relative Adams spectral sequence
    \[ \Ext^{s,f,w}_{\cE(1)_p^\vee}\left(L_p(n), H_{*,*}^{BPGL\langle 1 \rangle}K_n \right)\Longrightarrow [BPGL\langle 1 \rangle^{\langle n \rangle}, K_n]_{(s,w)}^{BPGL\langle 1\rangle}.\]
    The homological map $i_n:L_p(n) \to H_{*,*}^{BPGL \langle 1 \rangle}K_n$ is represented by a a class in filtration $f=0$. Since $H_{*,*}^{BPGL\langle 1 \rangle}K$ is a finite type sum of suspensions of $\cE(1)_p^\vee$ and we showed in \cref{prop:freeImpliesInjective} that $\cE(1)_p^\vee$ is self-injective, the spectral sequence collapses at the $E_2$-page, and so we get the indicated map of spectra. By induction and the long exact sequence in relative homology associated to
    \[BPGL \langle 1 \rangle ^{ \langle n+1 \rangle} \to BPGL \langle 1 \rangle ^{\langle n\rangle} \xrightarrow{i_{n}} K_n,\]
    we obtain the desired isomorphism.
\end{proof}

Combining \cref{prop:LightningFlashBGiso} and \cref{prop:BGdecomposition} with \cref{prop:RelativeHomologyBPGL1AdamsCoverLightning} tells us that we have $\cE(1)_p^\vee$-comodule inclusions
\[\theta_k: \Sigma^{2(p-1)k, (p-1)k}H_{*,*}^{BPGL\langle 1\rangle} BPGL\langle 1\rangle^{\langle \nu_p(k!) \rangle} \hookrightarrow H_{*,*}^{BPGL\langle 1 \rangle}C , \]
such that $\bigoplus\limits_{k=0}^{\infty}\theta_k$ is an isomorphism. We can view each $\theta_k$ as a class in degree $(0,0,0)$ in the relative Adams spectral sequence 
\begin{align}
\label{ss:RelativeAssBPGL1Splittings}
    E_2^{s,f,w} = \text{Ext}^{s,f,w}_{\mathcal{E}(1)^\vee_p}\left(H_{*,*}^{BPGL \langle 1 \rangle}\left(\Sigma^{2k(p-1), k(p-1)}BPGL \langle 1 \rangle^{\langle \nu_p(k!) \rangle}\right), H_{*,*}^{BPGL \langle 1 \rangle}C\right) \\
    \implies [\Sigma^{2k(p-1), k(p-1)}BPGL \langle 1 \rangle^{\langle \nu_p(k!) \rangle}, C]^{BPGL \langle 1 \rangle}_{(s,w)}.
\end{align}
Combining \cref{prop:LightningFlashBGiso} and \cref{prop:BGdecomposition} allows us to rewrite the $E_2$-page as  \[\bigoplus_{m \geq 0}\text{Ext}^{s,f,w}_{\mathcal{E}(1)^\vee_p}\left(\Sigma^{2k(p-1), k(p-1)}L_p(\nu_p(k!)), L_p(\nu_p(m!))\right),\]
and \[\theta_k:\Sigma^{2k(p-1), k(p-1)}L_p(\nu_p(k!)) \hookrightarrow \bigoplus_{m \geq 0}\Sigma^{2m(p-1), m(p-1)}L_p(\nu_p(m!)) .\]
In particular, note that $\theta_k$ is the identity map into the summand $m=k$, and maps trivially into all other summands. 

If we can show that each map $\theta_k$ survives the spectral sequence, then we will have maps \[\widetilde{\theta_k}:\Sigma^{2(p-1)k, (p-1)k}BPGL\langle 1 \rangle^{\langle \nu_p(k!) \rangle} \to C\] such that their sum is a $BPGL\langle 1 \rangle$-module equivalence. First, we must compute the groups $\text{Ext}^{s,f,w}_{\mathcal{E}(1)^\vee_p}\left(L_p(k), L_p(m)\right)$ so that we can analyze the $E_2$-page of the spectral sequence in \Cref{ss:RelativeAssBPGL1Splittings}.

\subsubsection{Computations with lightning flash modules} Our final argument relies on the computation of the groups $\text{Ext}_{\mathcal{E}(1)_p^\vee}^{s,f,w}(L_p(k), L_p(m))$ for all values of $k, m \geq 0$. We begin with a useful lemma.

\begin{lemma}
\label{lemma:WrongSide}
    There is a `wrong-side' change-of-rings isomorphism
    \[\textup{Ext}_{\mathcal{E}(1)_p^\vee}^{s,f,w}((\mathcal{E}(1)_p//\mathcal{E}(0)_p)^\vee, -) \cong \Sigma^{-(2p-2)-1, -(p-1)}\textup{Ext}^{s,f,w}_{\mathcal{E}(0)_p^\vee}(\mathbb{M}_p, -).\]
\end{lemma}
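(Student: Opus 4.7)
The strategy is to reduce the claimed wrong-side change-of-rings isomorphism to the classical Frobenius reciprocity for modules via the equivalence of categories in \Cref{prop:EquivofCats}, which identifies left $\mathcal{E}(n)_p^\vee$-comodules with right $\mathcal{E}(n)_p$-modules. The first task is to pin down the right $\mathcal{E}(1)_p$-module that corresponds to the left comodule $(\mathcal{E}(1)_p//\mathcal{E}(0)_p)^\vee$. A direct computation with the dual basis $\{1, \bar\tau_0, \bar\tau_1, \bar\tau_0\bar\tau_1\}$ and the coaction $\bar\tau_1 \mapsto 1\otimes\bar\tau_1 + \bar\tau_1\otimes 1$ shows that the corresponding module is cyclic, generated by the class $\bar\tau_1$ in bidegree $(2p-1, p-1)$, with the single nontrivial relation $\bar\tau_1\cdot Q_1 = 1$ and annihilator $(Q_0)$.

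Next, I would invoke the classical Frobenius reciprocity for the flat ring extension $\mathcal{E}(0)_p \subseteq \mathcal{E}(1)_p$ (indeed, $\mathcal{E}(1)_p$ is free of rank $2$ over $\mathcal{E}(0)_p$ with basis $\{1, Q_1\}$):
\[
\textup{Ext}^{*,*,*}_{\mathcal{E}(1)_p}\bigl(\mathbb{M}_p\otimes_{\mathcal{E}(0)_p}\mathcal{E}(1)_p,\; N\bigr) \cong \textup{Ext}^{*,*,*}_{\mathcal{E}(0)_p}\bigl(\mathbb{M}_p,\; N|_{\mathcal{E}(0)_p}\bigr).
\]
The cyclic module identified in Step~1 is a bidegree shift of the induced module $\mathbb{M}_p\otimes_{\mathcal{E}(0)_p}\mathcal{E}(1)_p$ by $-(2p-1, p-1)$: in both modules the cyclic generator freely generates a rank-$2$ module over $\mathcal{E}(1)_p$ with annihilator $(Q_0)$, but in the former the generator lives in bidegree $(2p-1,p-1)$, whereas in the latter it lives in bidegree $(0,0)$. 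The discrepancy is an artifact of the fact that the equivalence of \Cref{prop:EquivofCats}, which is implemented by the pairing $\mathcal{E}(1)_p\otimes_{\mathbb{M}_p}\mathcal{E}(1)_p^\vee \to \mathbb{M}_p$, reverses the internal degree of the cyclic generator of a cyclic module. Translating the module-level Frobenius reciprocity back through \Cref{prop:EquivofCats} and absorbing this shift yields the claimed isomorphism
\[
\textup{Ext}_{\mathcal{E}(1)_p^\vee}^{s,f,w}((\mathcal{E}(1)_p//\mathcal{E}(0)_p)^\vee,\; N) \cong \Sigma^{-(2p-1),\,-(p-1)}\textup{Ext}^{s,f,w}_{\mathcal{E}(0)_p^\vee}(\mathbb{M}_p,\; N).
\]

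The main obstacle is the careful bookkeeping of the bidegree shift. Philosophically, the shift $\Sigma^{-(2p-1,\,p-1)}$ encodes a Poincar\'e-duality-type phenomenon for the graded Frobenius algebra $\mathcal{E}(1)_p$: the two candidate notions of induction from $\mathcal{E}(0)_p$ to $\mathcal{E}(1)_p$ — the tensor induction $\mathbb{M}_p\otimes_{\mathcal{E}(0)_p}\mathcal{E}(1)_p$ relevant for module-level Frobenius reciprocity, and the cotensor induction $\mathcal{E}(1)_p^\vee\Box_{\mathcal{E}(0)_p^\vee}\mathbb{M}_p = (\mathcal{E}(1)_p//\mathcal{E}(0)_p)^\vee$ appearing in the statement — differ precisely by the degree of the top class $Q_1$ of the quotient $\mathcal{E}(1)_p//\mathcal{E}(0)_p$, and tracking this shift cleanly through the equivalence of \Cref{prop:EquivofCats} is the technical heart of the proof.
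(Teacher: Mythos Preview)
Your proposal is correct and follows essentially the same route as the paper: pass through the equivalence of \Cref{prop:EquivofCats} to right $\mathcal{E}(1)_p$-modules, identify $(\mathcal{E}(1)_p//\mathcal{E}(0)_p)^\vee$ with a suspension of the induced module $\mathcal{E}(1)_p//\mathcal{E}(0)_p \cong \mathbb{M}_p\otimes_{\mathcal{E}(0)_p}\mathcal{E}(1)_p$, apply the ordinary change-of-rings (your ``Frobenius reciprocity''), and translate back. The paper states the key module identification as the single line $(\mathcal{E}(1)_p//\mathcal{E}(0)_p)^\vee \cong \Sigma^{(2p-2)+1,\,p-1}\,\mathcal{E}(1)_p//\mathcal{E}(0)_p$, whereas you unpack it by exhibiting the cyclic generator $\bar\tau_1$ and its annihilator $(Q_0)$; these are two phrasings of the same observation.
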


\begin{proof}
    By the equivalence of categories between left $\mathcal{E}(n)^\vee_p$-comodules and right $\mathcal{E}(n)_p$-modules of \Cref{prop:EquivofCats}, we have an isomorphism
    \[\text{Ext}^{s,f,w}_{\mathcal{E}(1)^\vee_p}((\mathcal{E}(1)_p//\mathcal{E}(0)_p)^\vee, -) \cong \text{Ext}^{s,f,w}_{\mathcal{E}(1)_p}((\mathcal{E}(1)_p//\mathcal{E}(0)_p)^\vee, -).\]
    Note that as an $\mathcal{E}(1)_p$-module, we have that
    \[(\mathcal{E}(1)_p//\mathcal{E}(0)_p)^\vee \cong \Sigma^{(2p-2)+1, p-1}\mathcal{E}(1)_p//\mathcal{E}(0)_p.\]
    The ordinary change-of-rings isomorphism yields
    \[\text{Ext}^{s,f,w}_{\mathcal{E}(1)_p}(\Sigma^{(2p-2)+1, p-1}\mathcal{E}(1)_p//\mathcal{E}(0)_p, -) \cong \Sigma^{-(2p-2)-1, -(p-1)}\text{Ext}^{s,f,w}_{\mathcal{E}(0)_p}(\mathbb{M}_p, -).\]
    Passing back through the equivalence of categories of \Cref{prop:EquivofCats} gives the result.
\end{proof}

The following is straightforward and is easily checked by drawing the relevant lightning flash modules. 
\begin{lemma}
\label{lemma:HomLightning}
    Suppose $m > k$. Then when $s<0$, we have 
    \[\textup{Hom}^{(2p-2)s, w}_{\mathcal{E}(1)^\vee_p}(L_p(k), L_p(m))=0\]
    for all $w$.
\end{lemma}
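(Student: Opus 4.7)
The plan is to argue directly from the stem structure of the lightning flash modules. By \cref{prop:EquivofCats}, an $\mathcal{E}(1)_p^\vee$-comodule map $f \colon L_p(k) \to L_p(m)$ corresponds to a right $\mathcal{E}(1)_p$-module map, which is determined by its values on the $\mathcal{E}(1)_p$-module generators $x_1, \dots, x_k$ of $L_p(k)$, subject to the relations $x_{i+1}Q_1 = x_i Q_0$. A map of degree $((2p-2)s, w)$ therefore sends $x_i$, which sits in stem $i(2p-2) + 1$, to an element of $L_p(m)$ in stem $(i+s)(2p-2)+1$.

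The first observation I would make is that $L_p(m)$ is supported in stems $\{0\} \cup \{j(2p-2),\ j(2p-2)+1 : 1 \leq j \leq m\}$, none of which equals $1$ or is negative (for $p \geq 2$). Hence for $s < 0$ the target stem of $f(x_1)$, namely $(1+s)(2p-2)+1 \leq 1$, lies outside the support of $L_p(m)$. This forces $f(x_1) = 0$.

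The main step is then an induction on $i$: assuming $f(x_{i-1}) = 0$, the relation gives $f(x_i) Q_1 = f(x_{i-1})Q_0 = 0$. Either the stem $(i+s)(2p-2)+1$ contains no nonzero element of $L_p(m)$ (when $i+s < 1$ or $i+s > m$), in which case $f(x_i) = 0$ automatically, or $f(x_i)$ is an $\mathbb{M}_p$-multiple of the unique generator of $L_p(m)$ in that stem. In the latter case, the image of this generator under $Q_1$ is itself a nonzero basis element of $L_p(m)$ (the bottom class at stem $0$ when $i+s=1$, and the shared zigzag class identifying $x_{i+s-1}Q_0$ with $x_{i+s}Q_1$ otherwise), so the coefficient must vanish, giving $f(x_i) = 0$. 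After $k$ steps we conclude $f = 0$.

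The only subtlety to flag is that in the Bockstein-nontrivial base fields the $\mathbb{M}_p$-action on $L_p(m)$ may be twisted; however, the final implication ``annihilated by right-multiplication by $Q_1$ forces the coefficient to vanish'' only uses that each basis element generates a rank-one free $\mathbb{M}_p$-submodule, which holds uniformly across the base fields considered in the paper. I do not anticipate this being a real obstacle, which is consistent with the author's hint that the statement is easily checked by drawing the relevant charts.
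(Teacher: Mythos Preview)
Your argument is correct and is precisely the careful write-up of what the paper leaves as ``straightforward and easily checked by drawing the relevant lightning flash modules''; the stem bookkeeping and the induction using $f(x_i)Q_1 = f(x_{i-1})Q_0$ is exactly what the picture encodes. Note incidentally that your proof never invokes the hypothesis $m>k$ (the case $i+s>m$ is simply another out-of-support case), so you have in fact established the vanishing for all $m$ and all $s<0$, which is consistent with how the lemma is applied later in the paper for all $k\le m$.
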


We are now ready to compute $\text{Ext}_{\mathcal{E}(1)^\vee_p}^{s,f,w}(L_p(k), L_p(m))$ for $k \leq m$. In the following proof, we say that an element $x$ generates a ``$v_0$-tower'' to mean that $x$ generates $\text{Ext}_{\mathcal{E}(0)^\vee_p}^{s,f,w}(\mathbb{M}_p, \mathbb{M}_p)$. We depict the spectral sequence described in the proof below in the case of $F=\mathbb{C}$ and $p=2$ in \Cref{fig:Ext_L1_L2_SS_C_p2} and \Cref{fig:Ext_L1_L2_C_p2}. Similar charts for $F=\mathbb{R}$ and $p=2$ are depicted in \cite[Section 6]{LiPetTat25}. We encourage the reader to refer to these charts while reading the proof.
\begin{proposition}
\label{prop:ExtLightningflashKleqM}
    Suppose that $k \leq m$. Then 
    \[\textup{Ext}^{s,f,w}_{\mathcal{E}(1)^\vee_p}(L_p(k), L_p(m)) \cong \textup{Ext}_{\mathcal{E}(1)^\vee_p}^{s,f,w}(\mathbb{M}_p, L_p(m-k)) \oplus B,\]
    where $B$ consists of $v_0$ and $v_1$-torsion concentrated in filtration $f=0$ and negative stems.
    To be precise, $B$ consists of:
    \begin{enumerate}
        \item A direct sum of $\mathbb{M}_p^{\mathbb{C}}$ in filtration 0 and negative stems congruent to 1 modulo $2(p-1)$, when $F=\mathbb{C}$ and $p$ is any prime;
        \item A direct sum of $\mathbb{M}_p^\mathbb{R}$ in filtration 0 and negative stems congruent to 1 modulo $2(p-1)$, whenever $F=\mathbb{R}$ and $p>2$;
        \item A direct sum of $\mathbb{F}_2[\rho, \tau^4]$ in filtration 0 with generator in negative stems congruent to 1 modulo $2$, whenever $F=\mathbb{R}$ and $p=2$;
        \item A direct sum of $\mathbb{M}_p^{\mathbb{F}_q}[u]/(u^2)$ in filtration 0 with generator in negative stems congruent to 1 modulo $2(p-1)$, whenever $F=\mathbb{F}_q$ has a trivial Bockstein action and $p$ is any prime;
        \item A direct sum of $\mathbb{F}_2[\tau^2]$,  $\mathbb{F}_2[\tau^2, \rho]/(\rho^2)$, and $\mathbb{F}_2[\tau^2, \rho\tau]/((\rho\tau)^2)$ in filtration 0 with generator in negative stems congruent to 1 modulo 2, whenever $F=\mathbb{F}_q$ has a nontrivial Bockstein action and $p=2$;
        \item A direct sum of $\mathbb{F}_p[\zeta^p]$, $\mathbb{F}_p[\zeta^p, \gamma]/(\gamma^2)$, and $\mathbb{F}_p[\zeta^p, \gamma\zeta^j]/((\gamma\zeta^j)^2)$ for all $1 \leq j \leq p-1$ in filtration 0 with generator in negative stems congruent to 1 modulo $2(p-1)$, whenever $F=\mathbb{F}_q$ has a nontrivial Bockstein action and $p>2$.
    \end{enumerate}
\end{proposition}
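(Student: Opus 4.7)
The plan is to proceed by induction on $k$. The base case $k = 0$ is immediate since $L_p(0) = \mathbb{M}_p$ makes both sides coincide with $\Ext_{\mathcal{E}(1)^\vee_p}(\mathbb{M}_p, L_p(m))$ and $B = 0$. For the inductive step, I would apply the contravariant functor $\Ext_{\mathcal{E}(1)^\vee_p}(-, L_p(m))$ to the short exact sequence \cref{lightning flash ses},
\[
0 \to \Sigma^{2(p-1), p-1}L_p(k-1) \to L_p(k) \to (\mathcal{E}(1)_p // \mathcal{E}(0)_p)^\vee \to 0,
\]
obtaining a long exact sequence. The leftmost term is computed by the inductive hypothesis applied to $L_p(k-1)$, while the rightmost term is handled by \cref{lemma:WrongSide}, which reduces it to a shifted copy of $\Ext_{\mathcal{E}(0)^\vee_p}(\mathbb{M}_p, L_p(m))$.

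The key input is the structure of $L_p(m)$ as an $\mathcal{E}(0)^\vee_p$-comodule. Inspecting the explicit $\mathbb{M}_p$-basis of $L_p(m)$, it splits as an $\mathcal{E}(0)^\vee_p$-comodule into $m$ free rank-one summands (one for each lightning-flash generator $x_i$) together with a single socle element $\mathbb{M}_p\{x_1 Q_1\}$, yielding
\[
\Ext_{\mathcal{E}(0)^\vee_p}(\mathbb{M}_p, L_p(m)) \cong \bigoplus_{i=1}^{m}\mathbb{M}_p\{\sigma_i\} \oplus \mathbb{M}_p[v_0]\{\eta\},
\]
where the $\sigma_i$ are concentrated in filtration $0$ and $\eta$ generates a single $v_0$-tower at tridegree $(0,0,0)$. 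After the wrong-side shift from \cref{lemma:WrongSide}, the filtration-$0$ generators $\sigma_i$ land in negative stems of the form prescribed by the statement, and will form the torsion summand $B$.

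The heart of the argument is to show that the connecting homomorphism $\delta$ in the LES identifies the bottom $v_0$-tower of the inductive expression for $\Ext(L_p(k-1), L_p(m))$---namely the $\Ext_{\mathcal{E}(0)^\vee_p}(\mathbb{M}_p, \mathbb{M}_p)$-summand indexed by the generator $x_0$ of \cref{lem:lightning flash ext 1}---with the shifted $v_0$-tower generated by $\eta$. A naturality argument exhibits both towers as arising geometrically from the primitive $x_1 Q_1 \in L_p(m)$, so the LES forces them to be identified. The resulting cancellation replaces $\Ext(\mathbb{M}_p, L_p(m-k+1))$ by $\Ext(\mathbb{M}_p, L_p(m-k))$ in the main summand, while the surviving filtration-$0$ classes assemble with the inherited $B_{k-1}$ to form $B$. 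The six case descriptions in (1)--(6) then follow by reading off the $\mathbb{M}_p^F$-module structure on these filtration-$0$ generators from the explicit form of $\mathbb{M}_p^F$ in \cref{sec:motivic prelim}; the more elaborate structures in (3), (5), and (6) reflect the nontrivial Bockstein action $\eta_R(\tau) = \tau + \rho\overline{\tau}_0$ or $\eta_R(\zeta) = \zeta + \gamma\overline{\tau}_0$.

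The main obstacle is verifying the claimed behavior of $\delta$. For $m > k$, \cref{lemma:HomLightning} vanishes the relevant $\Hom$ groups in stems of the form $2(p-1)s$ with $s < 0$, which prevents $\delta$ from sending the filtration-$0$ $\sigma_i$ to unintended targets and forces them to split off as direct summands of $B$. In the edge case $m = k$ this argument must be supplemented by a direct analysis using the explicit $\mathbb{M}_p$-basis of $L_p(k)$. A secondary challenge is the careful tracking of shifts and of $\mathbb{M}_p^F$-module structure across all six base-field cases, in particular the interaction of the wrong-side shift $(-(2p-1), -(p-1))$ with the generators of $\mathbb{M}_p^F$ involved in the Bockstein action.
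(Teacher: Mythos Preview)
Your proposal follows essentially the same inductive strategy as the paper: induct on $k$, apply $\Ext_{\mathcal{E}(1)^\vee_p}(-,L_p(m))$ to the short exact sequence \eqref{lightning flash ses}, invoke the wrong-side change-of-rings \cref{lemma:WrongSide}, and decompose $L_p(m)$ as an $\mathcal{E}(0)^\vee_p$-comodule into one $v_0$-tower plus filtration-$0$ pieces. One sharpening worth making: the role of \cref{lemma:HomLightning} in the paper is not to protect the $\sigma_i$ (they lie in the \emph{target} of the connecting map and in filtration $0$, so they automatically survive), but rather to force the bottom generator $x$ in the \emph{source} to support the differential $d(x)=v_0 y$, since otherwise $x$ would survive to a nonzero class in $\Hom^{-2(p-1),*}(L_p(k),L_p(m))=0$; with that adjustment your argument and the paper's coincide.
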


\begin{proof}
    We proceed by induction on $k$. The case of $k=0$ was computed in \Cref{lem:lightning flash ext 1}. Suppose the result holds for all $n <k$. The short exact sequence of lightning flash modules from \Cref{lightning flash ses}:
    \[0 \to \Sigma^{2(p-1), p-1}L_p(k-1) \to L_p(k) \to (\mathcal{E}(1)_p // \mathcal{E}(0)_p)^\vee \to 0.\]    
    induces a long exact sequence of the form
    \begin{align}
    \label{eq:LESLightningFlashExtKleqM}
    \begin{split}
        \cdots \to \text{Ext}^{s,f,w}_{\mathcal{E}(1)^\vee_p}(L_p(k), L_p(m)) \to \text{Ext}^{s,f,w}_{\mathcal{E}(1)^\vee_p}(\Sigma^{2(p-1), p-1}L_p(k-1), L_p(m)) \\
        \xrightarrow{d}\text{Ext}^{s-1,f+1,w}_{\mathcal{E}(1)^\vee_p}((\mathcal{E}(1)_p//\mathcal{E}(0)_p)^\vee, L_p(m)) \to \cdots
    \end{split}
    \end{align}
    We will show our result by studying the spectral sequence associated to this long exact sequence. By the inductive hypothesis, we have that:
    \[\text{Ext}_{\mathcal{E}(1)^\vee_p}^{s,f,w}(L_p(k-1), L_p(m)) \cong \text{Ext}_{\mathcal{E}(1)^\vee_p}^{s,f,w}(\mathbb{M}_p, L_p(m-k+1)) \oplus B.\]
    By the change-of-rings isomorphism of \Cref{lemma:WrongSide}, we also have that:
    \[\text{Ext}_{\mathcal{E}(1)^\vee_p}^{s,f,w}(\mathcal{E}(1)//\mathcal{E}(0))^\vee_p, L_p(m)) \cong \Sigma^{-2(p-1)-1, -(p-1)}\text{Ext}_{\mathcal{E}(0)^\vee_p}^{s,f,w}(\mathbb{M}_p, L_p(m)).\]
    Thus, we may write the spectral sequence associated to \Cref{eq:LESLightningFlashExtKleqM} as
    \begin{align}
    \label{eq:SSLightningFlashExtKleqM}
    \begin{split}
    \Sigma^{-2(p-1)-1, -(p-1)}\text{Ext}_{\mathcal{E}(0)^\vee_p}^{s,f,w}(\mathbb{M}_p, L_p(m)) \oplus \Sigma^{-2(p-1), -(p-1)}\text{Ext}^{s,f,w}_{\mathcal{E}(1)^\vee_p}(\mathbb{M}_p, L_p(m-k+1)) \oplus B \\
    \implies \text{Ext}_{\mathcal{E}(1)^\vee_p}^{s,f,w}(L_p(k), L_p(m)).
    \end{split}
    \end{align}
    Notice that the $B$ summand cannot support any differentials as it is concentrated in Adams filtration $f=0$.
    
    As an $\mathcal{E}(0)^\vee_p$-comodule, we have that
    \[L_p(m) \cong  \mathbb{M}_p \oplus \Sigma^{2(p-1)+1, (p-1)}\mathcal{E}(1)_p^\vee \oplus \dots \oplus \Sigma^{2m(p-1)+1, m(p-1)}\mathcal{E}(1)^\vee_p.\]
    This implies that the left-hand Ext group on the $E_1$-page can be expressed as
    \[E_1^{s,f,w} \cong\text{Ext}_{\mathcal{E}(0)^\vee_p}^{s,f,w}(\mathbb{M}_p, L_p(m)) \cong \text{Ext}_{\mathcal{E}(0)^\vee_p}^{s,f,w}(\mathbb{M}_p, \mathbb{M}_p) \oplus W,\]
    where $W$ is a sum of suspensions of $\mathbb{M}_p$ in stems congruent to 1 modulo $2(p-1)$. 
    
    Let the generator for the $v_0$-tower in $\Sigma^{-2(p-1), -(p-1)}\text{Ext}_{\mathcal{E}(1)^\vee_p}^{s,f,w}(\mathbb{M}_p, L_p(m-k+1))$ of lowest stem degree be denoted $x$,
    and let the generator for the \textit{only} $v_0$-tower in 
    
    $\Sigma^{-2(p-1)-1, -(p-1)}\text{Ext}_{\mathcal{E}(0)^\vee_p}^{s,f,w}(\mathbb{M}_p, L_p(m))$ be denoted $y$. Note that \
    \[|x| = (-2(p-1), 0, -(p-1)) \quad\text{ and } \quad|y| = (-2(p-1)-1, 0, -(p-1)).\]
    By inspection, we see that the only possible differential is from the $v_0$-tower on $x$ to the $v_0$-tower on $y$. By \Cref{lemma:HomLightning}, we know that $\text{Ext}^{2(p-1)s, 0, *}_{\mathcal{E}(1)^\vee_p}(L_p(k), L_p(m))=0$. Thus $x$ must support a differential, and for degree reasons it is of the form
    \[d(x) = v_0y.\]
    By the same argument, the expression $\tau^nx$, or $\theta^nx$, or $\zeta^n x$, depending on the base field and prime must support a differential for every $\tau^n$, $\theta^n$, or $\zeta^n$ appearing in $\text{Ext}_{\mathcal{E}(0)^\vee_p}^{s,f,w}(\mathbb{M}_p, \mathbb{M}_p)$. Thus, there are differentials
    \[d(\tau^nx) = \tau^nv_0y, \quad d(\theta^nx) = \theta^nv_0y, \quad d(\zeta^nx) = \zeta^nv_0y.\]
    Since the spectral sequence is linear over $\text{Ext}_{\mathcal{E}(1)^\vee_p}^{s,f,w}(\mathbb{M}_p, \mathbb{M}_p)$, we also have differentials
    \[d(v_0^ix) = v_0^{i+1}y,\]
    and, for base fields $F$ where the expressions make sense, we have differentials
    \[d(uv_0^ix)=uv_0^{i+1}y, \quad d(\rho\tau\cdot v_0^ix) = \rho\tau \cdot v_0^{i+1}y, \quad d(\gamma\zeta^j \cdot v_0^i x) = \gamma \zeta^j \cdot v_0^{i+1}y.\]
    \begin{figure}
    \centering
    \includegraphics[width=0.8\linewidth]{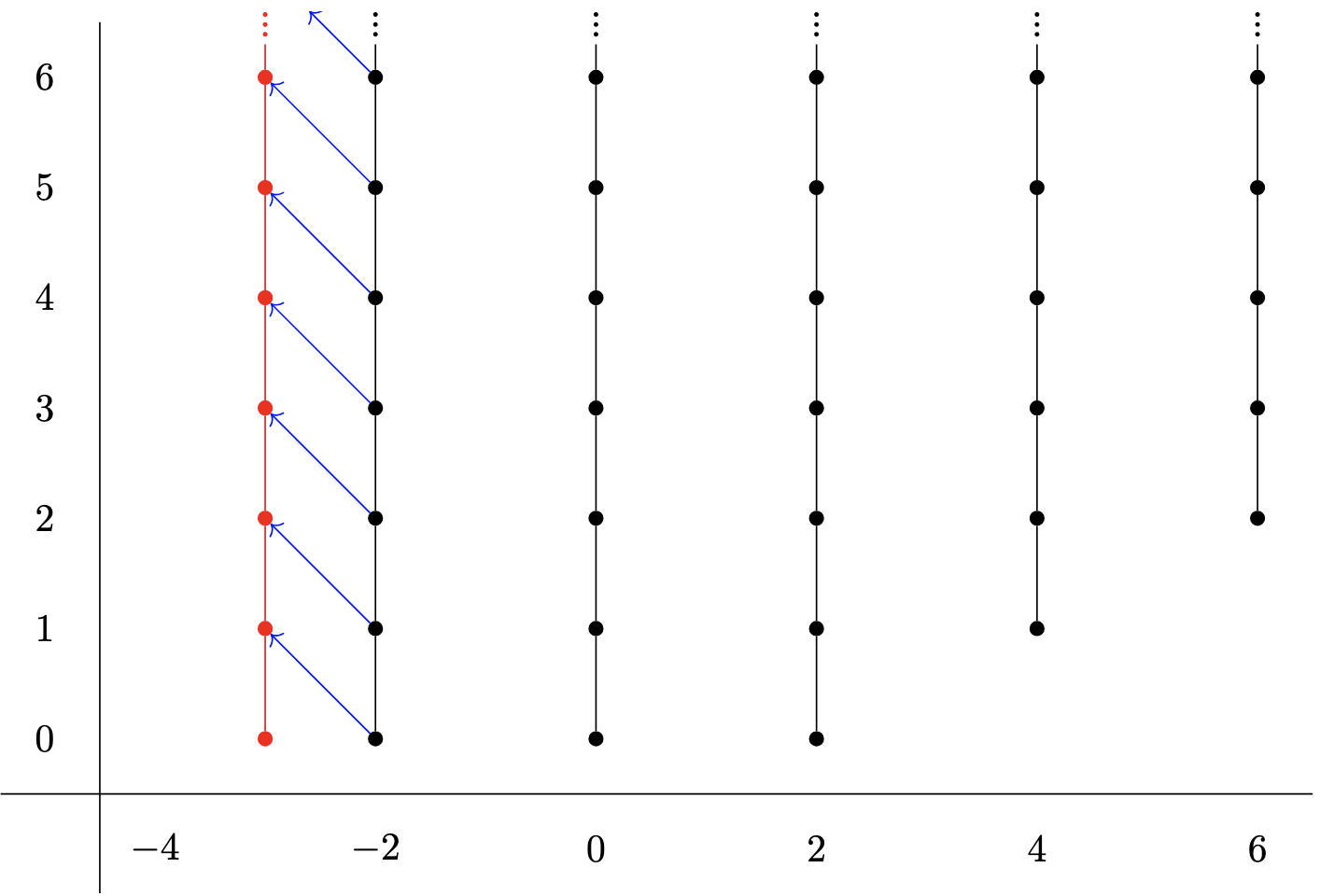}
    \caption{The $E_1$-page of the spectral sequence computing $\text{Ext}_{\mathcal{E}(1)^\vee_2}^{s,f,w}(L_2(1), L_2(2))$ in the case of $F=\mathbb{C}$. The black portion denotes $\text{Ext}_{\mathcal{E}(1)^\vee_2}^{s,f,w}(\mathbb{M}_2^{\mathbb{C}}, L_2(2))$ and the red portion denotes $\text{Ext}_{\mathcal{E}(0)_2^\vee}^{s,f,w}(\mathbb{M}_2^{\mathbb{C}}, \mathbb{M}_2^{\mathbb{C}})$, and differentials are blue}
    \label{fig:Ext_L1_L2_SS_C_p2}
    \end{figure}
    This determines all the differentials in the spectral sequence, and there are no extensions for degree reasons. The particular values of $B$ are determined by any class in $\text{Ext}_{\mathcal{E}(1)^\vee_p}^{s,f,w}(\mathbb{M}_p. \mathbb{M}_p)$ in negative stem which does not support $v_0$-multiplication, any class in $\text{Ext}_{\mathcal{E}(0)^\vee_p}^{s,f,w}(\mathbb{M}_p, \mathbb{M}_p)$ in filtration 0, as it cannot be in the target of a differential, and induction. This concludes the proof.
\end{proof}

\begin{figure}[ht]
    \centering
    \includegraphics[width=0.8\linewidth]{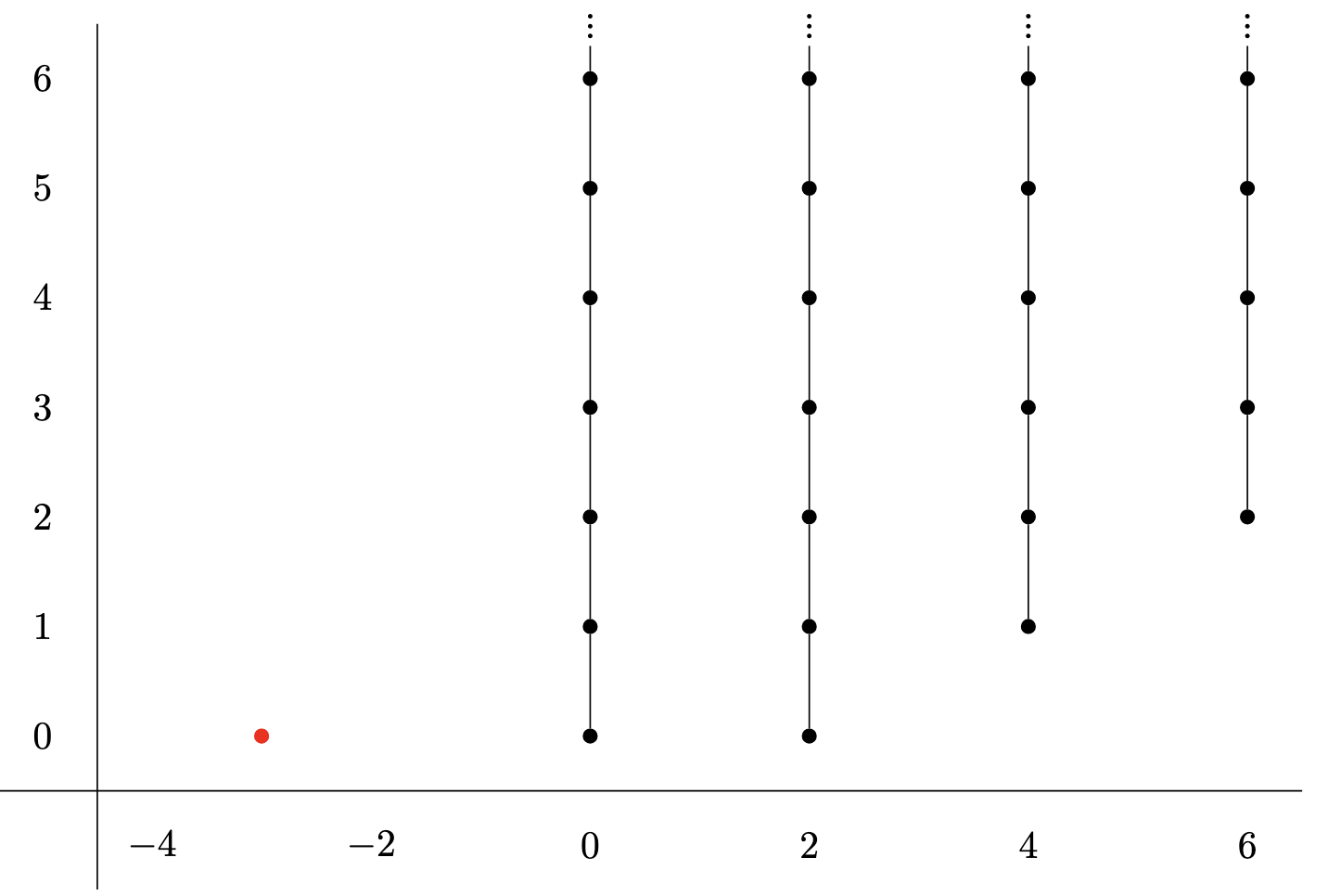}
    \caption{Charts for $\text{Ext}_{\mathcal{E}(1)^\vee_2}^{s,f,w}(L^{\mathbb{C}}_2(1), L^{\mathbb{C}}_2(2))$. The red class in bidegree $(-3,0)$ constitutes the torsion $B$ summand. }
    \label{fig:Ext_L1_L2_C_p2}
\end{figure}

% \begin{cor} \label{cor:ExtTensorUp}
%     Suppose that $k \le m$. If $F = \mathbb{C}$ at any prime, $F = \mathbb{R}$ and $p$ is odd, or $F = \mF_q$ and $p \equiv 1 (p^2)$, then 
%     \[ \textup{Ext}_{\mathcal{E}(1)^\vee_p}^{s,f,w}(L_p^F(k), L_p^F(m)) \cong \textup{Ext}_{{E}(1)^\vee_p}^{s,f}(L^{cl}_p(k), L^{cl}_p(m)) \otimes \mM_{p}^{F}, \]
%     where $L_p^{cl}(k)$ denotes the classical lightning flash module.
% \end{cor}

% \begin{proof}
%     This is an immediate consequence of \cref{prop:ExtLightningflashKleqM} combined with the description of  $\text{Ext}_{\mathcal{E}(1)^\vee_p}^{s,f,w}(\mM_p^F, L_p^F(m))$ for $k \le m$ found in \cref{cor: Ext L_0 L_m}.
% \end{proof}
    
To aid in our proofs, we recall the classical groups $\text{Ext}_{E(1)^\vee_p}^{s,f}(L_p^{cl}(k), L_p^{cl}(m)).$ When $k \le m$, 
\[
    \Ext_{E(1)^\vee_p}^{s,f}(L_p^{cl} (k), L_p^{cl} (m)) \cong \mF_{2}[v_{0}, v_{1}]\{x_{0}, x_{1}, \ldots x_{m-k}|\ v_{1}x_{i} = v_{0}x_{i+1} \},
\] 
where $|x_{i}| = (2i, 0)$. When $k \ge m$, 
\[
    \Ext_{E(1)^\vee_p}^{s,f}(L_p^{cl} (k), L_p^{cl} (m)) \cong \mF_{p}[v_{0}, v_{1}]\{x\} \oplus \mF_{p}[v_{0}, v_{1}]\left\{ y_{0}, y_{1}, \ldots, y_{k-m-1} \Bigg\vert \begin{array}{l}
         v_{1}y_{i} = v_{0}y_{i+1},\\  v_{0}y_{0} = 0,\\
         v_{1}y_{k-m-1} = 0 
    \end{array}\right\},
\]
where $|x| = (0, k-m)$ and $|y_{i}| = (-1 - 2(k-m-i), 0)$. These $\Ext$-terms can be visualized as in the charts given (which depict the $p=2$ case) in \cref{fig:clExtFpL3} (where $k \leq m$) and \cref{fig:clExtL3Fp} (where $k > m$). %See \cite[Section 2]{LiPetTat25} for a visual representation of these Ext groups.

\begin{figure}
\centering
\begin{minipage}{.5\textwidth}
\centering
\includegraphics[width=2in, height=2in]{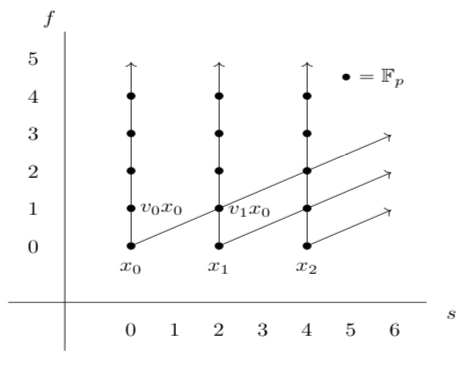}
 \captionof{figure}{\\$\Ext^{s,f}_{E(1)_*} (\mF_2, L_2^{cl}(3))$}
  \label{fig:clExtFpL3}
\end{minipage}%
\begin{minipage}{.5\textwidth}
\begin{center}
\includegraphics[width=2.75in,height=2in]{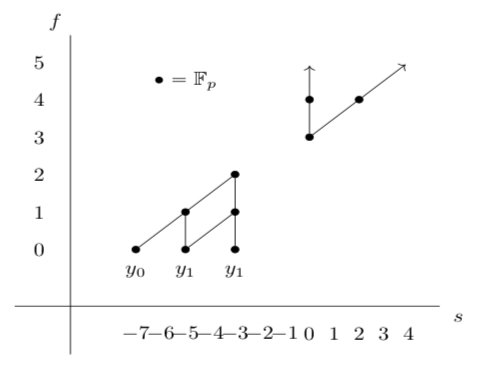}
 \end{center}
  \captionof{figure}{\\ $\Ext^{s,f}_{E(1)^\vee_2} (L_2^{cl}(3), \mF_2)$}
  \label{fig:clExtL3Fp}
\end{minipage}
\end{figure}

\begin{proposition}\label{prop:lightningExtCase1}
    Suppose $F = \mC$ or $F = \mF_q$ and $p$ is any prime where $\textup{char}(\mathbb{F}_q) \equiv 1 \,(p^2)$, or $F = \mR$ and $p$ is odd. Then \[\Ext_{\cE(1)_p^\vee}^{s,f,w}(L_p(k), L_p(m)) \cong \Ext_{E(1)_p^\vee}^{s,f}(L^{cl}_p(k), L^{cl}_p(m)) \otimes \mM_{p}^{F} .\]
\end{proposition}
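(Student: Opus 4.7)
The plan is to observe that, in each case listed, the computation reduces to a flat base change from the classical setting. Specifically, the hypotheses on $F$ and $p$ are precisely those under which the Bockstein acts trivially on $\mathbb{M}_p^F$, so that the right unit $\eta_R\colon \mathbb{M}_p^F \to \mathcal{E}(1)_p^\vee$ agrees with the left unit. As recorded in the proof of \Cref{no bockstein BPGL1 Mass E2}, this yields an isomorphism of Hopf algebroids
\[
(\mathbb{M}_p^F, \mathcal{E}(1)_p^\vee) \;\cong\; (\mathbb{F}_p, E(1)_p^\vee) \otimes_{\mathbb{F}_p} \mathbb{M}_p^F,
\]
arising from the natural map of Hopf algebroids $(\mathbb{F}_p, E(1)_p^\vee) \to (\mathbb{M}_p^F, \mathcal{E}(1)_p^\vee)$.

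Next, I would show that under this identification the motivic lightning flash modules are base-changed from the classical ones, namely that there is an isomorphism of $\mathcal{E}(1)_p^\vee$-comodules
\[
L_p(k) \;\cong\; L_p^{cl}(k) \otimes_{\mathbb{F}_p} \mathbb{M}_p^F.
\]
This is immediate from \Cref{def: lightning}: the generators $x_i$ and the relations $x_{i+1}Q_1 = x_i Q_0$ are defined in exactly the same way in the motivic and classical settings, and the $\mathcal{E}(1)_p$-action on the generators is determined by the $E(1)_p$-action (since in these cases $\mathcal{E}(1)_p$ is itself obtained by base change). Bigradings are matched by placing the motivic generators $x_i$ in weight $i(p-1)$, consistent with the classical topological degrees.

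Finally, I would apply the change-of-rings theorem for Hopf algebroid extensions \cite[A1.3.12]{Rav86} (equivalently, flat base change on the cobar complex $\mathcal{C}_{\mathcal{E}(1)_p^\vee}(L_p(k), L_p(m))$, which is obtained from $\mathcal{C}_{E(1)_p^\vee}(L_p^{cl}(k), L_p^{cl}(m))$ by tensoring with the flat $\mathbb{F}_p$-module $\mathbb{M}_p^F$). This yields
\[
\Ext_{\mathcal{E}(1)_p^\vee}^{s,f,w}(L_p(k), L_p(m)) \;\cong\; \Ext_{E(1)_p^\vee}^{s,f}(L_p^{cl}(k), L_p^{cl}(m)) \otimes_{\mathbb{F}_p} \mathbb{M}_p^F,
\]
with the motivic weight grading recovered from the weights of the generators of $L_p(k)$, $L_p(m)$, and $\mathbb{M}_p^F$.

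The argument has no substantive obstacle in these ``easy'' cases; the only point requiring care is confirming that the base-change identification of $L_p(k)$ is an isomorphism of comodules (not merely of $\mathbb{M}_p^F$-modules), which is handled by the equivalence of categories in \Cref{prop:EquivofCats} together with the fact that the $Q_i$-operations are defined identically in both settings. The analogous statement fails in the remaining cases (nontrivial Bockstein), precisely because there $\mathcal{E}(1)_p^\vee$ is a genuine Hopf algebroid and one must instead run a $\gamma$-Bockstein spectral sequence as in \Cref{bockstein BPGL1 Mass E2}.
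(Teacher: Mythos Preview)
Your argument is correct and considerably more direct than the paper's. In the cases listed, the right unit of $(\mathbb{M}_p^F,\mathcal{E}(1)_p^\vee)$ agrees with the left unit, so the Hopf algebroid is the flat base change of $(\mathbb{F}_p,E(1)_p^\vee)$ along $\mathbb{F}_p\to\mathbb{M}_p^F$; since $L_p(k)$ is likewise the base change of $L_p^{cl}(k)$ (a finite $\mathbb{F}_p$-vector space), the standard flat base-change argument on an extended-comodule resolution gives the isomorphism. One small caveat: the citation to \cite[A1.3.12]{Rav86} is not quite on the nose for two-variable $\Ext$, but your parenthetical cobar/base-change formulation is exactly right and suffices.

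The paper instead proceeds computationally: the case $k\le m$ is read off from \Cref{prop:ExtLightningflashKleqM} and \Cref{cor: Ext L_0 L_m}, and the case $k>m$ is handled by induction on $k$ via the long exact sequence coming from \eqref{lightning flash ses}, explicitly identifying the connecting differential $d(\Sigma x)=v_0^{\,k-m}y$ and tracking generators. Your route is cleaner for this proposition in isolation. The paper's route has the advantage that the same inductive machinery, with essentially the same differential analysis, is reused verbatim for the cases where base change is unavailable (\Cref{prop:RmotExtk>m} and \Cref{ExtKLeM:p=2Different}); it also makes the explicit module structure over $\Ext_{\mathcal{E}(1)_p^\vee}(\mathbb{M}_p^F,\mathbb{M}_p^F)$ transparent, which is what is actually consumed in \Cref{prop:AdamsDiffernentialsAllCases}. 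Either way, note that the paper's displayed ``classical'' formulas just before this proposition omit the filtration-zero torsion summand $B^{cl}$; your base-change statement is still correct because the motivic $B$ in \Cref{prop:ExtLightningflashKleqM} is precisely $B^{cl}\otimes\mathbb{M}_p^F$.
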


\begin{proof}
    First, note that the case of $k \leq m$ is an immediate consequence of \cref{prop:ExtLightningflashKleqM} combined with the description of  $\text{Ext}_{\mathcal{E}(1)^\vee_p}^{s,f,w}(\mM_p^F, L_p(m))$ for $k \le m$ found in \cref{cor: Ext L_0 L_m}. Thus, we must only prove the case of $k>m$. 
    % Recall that $\Ext_{E(1)_*}^{s,f}(L_p^{cl}(k), L_p^{cl}(m)) $ can be described as follows. When $k \le m$, 
    % \[
    % \Ext_{E(1)_*}^{s,f}(L_p^{cl} (k), L_p^{cl} (m)) \cong \mF_{2}[v_{0}, v_{1}]\{x_{0}, x_{1}, \ldots x_{m-k}|\ v_{1}x_{i} = v_{0}x_{i+1} \},
    % \] 
    % where $|x_{i}| = (2i, 0)$. When $k \ge m$, 
    % \[
    % \Ext_{E(1)_{*}}^{s,f}(L_p^{cl} (k), L_p^{cl} (m)) \cong \mF_{p}[v_{0}, v_{1}]\{x_0\} \oplus \mF_{p}[v_{0}, v_{1}]\left\{ y_{0}, y_{1}, \ldots, y_{k-m-1} \Bigg\vert \begin{array}{l}
    %      v_{1}y_{i} = v_{0}y_{i+1},\\  v_{0}y_{0} = 0,\\
    %      v_{1}y_{k-m} = 0 
    % \end{array}\right\},
    % \]
    % where $|x| = (0, k-m)$ and $|y_{i}| = (-1 - 2(k-m-i), 0)$.

Fix an $m \ge 0$, and start with the base case $k=m$. By \cref{prop:lightningExtCase1},
\[
\textup{Ext}_{\mathcal{E}(1)^\vee_p}^{s,f,w}(L_p(m), L_p(m)) \cong \textup{Ext}_{{E}(1)^\vee_p}^{s,f}(L^{cl}_p(m), L^{cl}_p(m)) \otimes \mM_{p}^{F}.
\]

Suppose that for all $k' < k$, $\Ext_{\cE(1)_p^\vee}^{s,f,w}(L_p^F (k'), L_p^F (m)) \cong \Ext_{E(1)_p^\vee}^{s,f}(L_p^{cl}(k'), L_p^{cl} (m)) \otimes \mM_{p}^{F}$. 
As in the previous proposition, the short exact sequence of lightning flash modules from \Cref{lightning flash ses} induces a long exact sequence of the form
\begin{align}
    \label{eq:LESLightningFlashExtKgeqM}
    \begin{split}
        \cdots \to \text{Ext}^{s,f,w}_{\mathcal{E}(1)^\vee_p}(L_p (k), L_p (m)) \to \text{Ext}^{s,f,w}_{\mathcal{E}(1)^\vee_p}(\Sigma^{2(p-1), p-1}L_p (k-1), L_p (m)) \\
        \xrightarrow{d}\text{Ext}^{s-1,f+1,w}_{\mathcal{E}(1)^\vee_p}((\mathcal{E}(1)_p//\mathcal{E}(0)_p)^\vee, L_p(m)) \to \cdots
    \end{split}
\end{align}

Recall that 
\begin{align*}
    {\Ext}^{s,f,w}_{\mathcal{E}(1)^\vee_p}((\mathcal{E}(1)//\mathcal{E}(0))_p^\vee, L_p(m)) & \cong {\Ext}^{s,f,w}_{\mathcal{E}(0)^\vee_p}(\Sigma^{-2(p-1), -(p-1)}\mM_p^F, L_p(m)) \\
    & \cong \mM_p^F[v_0]\{y\},
\end{align*} where $|y| = (-2(p-1),0, -(p-1))$. Consider the generator 
\[
\Sigma^{-2(p-1), -(p-1)}x \in \text{Ext}^{-2(p-1),0,0}_{\mathcal{E}(1)^\vee_p}(\Sigma^{2(p-1), p-1}L_p(k-1), L_p(m))
\]
in degree $(-2(p-1),0,0)$. Note that the differential $d$ preserves motivic weight, so either $d(x) = v_0^{m-k }y$ or $d(x) = 0$. Comparison with  $\Ext^{s,f,w}_{\cE(1)_p^\vee} \left(L_p (m+1), L_p (m+1)\right)$, which we computed in \cref{prop:ExtLightningflashKleqM}, implies that $d(x)$ must be nonzero. Specifically, suppose towards a contradiction that $d(\Sigma^{-2(p-1), -(p-1)}x) = 0$. Then we will have an infinite $v_{0}$-tower in stem $s=-(2p-1)$ of $\Ext^{s,f,w}_{\mathcal{E}(1)^\vee_p}\left(L_p^F(k), L_p^F(m)\right)$ for $k=m+1$ and $k=m$. Combining this with the long exact sequence used in the inductive computation of \cref{lem:lightning flash ext 1} would imply $\Ext^{s,f,w}_{\mathcal{E}(1)^\vee_p}\left(L_p(m+1), L_p(m+1)\right)$ must also have an infinite $v_{0}$-tower in odd stem. But we already have already proven in \cref{lem:lightning flash ext 1} that no such tower exists. Thus $d(\Sigma^{-2(p-1), -(p-1)}x) = v_0^{m-k}y$. Since  the map $d$ is $\Ext^{s,f,w}_{\mathcal{E}(1)^\vee_p}\left(\mM_p^F, \mM_p^F\right) \cong \mM_{p}^{F}[v_{0},v_{1}]$-linear, we get 
\[
d(cv_{0}^{n}\Sigma^{-2(p-1), -(p-1)}x_0 ) = cv_{0}^{m - k + n}y
\]
for all $c \in \mM_{p}^{F}$ and $n \ge 0$. It is clear from the stem degrees of the generators that no other differentials can occur. 

To finish the proof, relabel $\Sigma^{-2(p-1), -(p-1)}v_1x$ as $x$ and $\Sigma^{-2(p-1), -(p-1)}y$ as $y_{k-m+1}$. Finally, note that the extensions $v_{1}y_{i} = v_{0}y_{i+1}$ and $v_{1}\tau^{2}y_{i} = v_{0}\tau^{2}y_{i+1}$ must occur for exactly the same reasons as in the classical topological case (that is, by looking at the representatives of $v_1y_i$ and $v_0y_{i+1}$ in the chain complex computing $\Ext_{\cE(1)_p^\vee}^{*,1,*}\left(L_p(k),L_p(m)\right)$).
\end{proof}

\begin{comment}
    
\begin{proposition}
    Suppose $F = \mC$ and $p$ is any prime, or $F = \mR$ and $p$ is an odd prime. Then when $k > m$, 
    \[
    \Ext_{\cE(1)_p^\vee}^{s,f,w}(L(k), L(m)) \cong \mM^F_{p}[v_{0}, v_{1}]\{x\} \oplus \mM^F_{p}[v_{0}, v_{1}]\left\{ y_{0}, y_{1}, \ldots, y_{k-m} \Bigg\vert \begin{array}{l}
         v_{1}y_{i} = v_{0}y_{i+1},\\  v_{0}y_{0} = 0,\\
         v_{1}y_{k-m} = 0 
    \end{array}\right\} \oplus B,
    \]
    where $|x| = (0, k-m, 0)$ and $|y_{i}| = (-1 - 2(p-1)(k-m-i), 0, 0)$.
\end{proposition}

\begin{proposition}
   Suppose $F = \mF_q$ where $q \equiv 1 \, (p^2)$ and $char(\mathbb{F}_{q}) \neq p$. Then when $k > m$, 
   \begin{align*}
           \Ext_{\cE(1)_p^\vee}^{s,f,w}(L(k), L(m)) \cong \Ext_{\cE(1)_p^\vee}^{s,f,w} & (\mM^F_{p}, \mM^F_{p})\{x \} \\
           & \oplus \mM^F_{p}\frac{[v_{0}, v_{1}, u, \tau]}{u^2}\left\{ y_{0}, y_{1}, \ldots, y_{k-m} \Bigg\vert \begin{array}{l}
         v_{1}y_{i} = v_{0}y_{i+1},\\  v_{0}y_{0} = 0,\\
         v_{1}y_{k-m} = 0,\\  
    \end{array}\right\} \oplus B,
   \end{align*}
%    \[
%    \Ext_{\cE(1)_p^\vee}^{s,f,w}(L(k), L(m)) \cong \Ext_{\cE(1)_p^\vee}^{s,f,w}(\mM^F_{p}, \mM^F_{p})\{x \} \oplus \mM^F_{p}\frac{[v_{0}, v_{1}, u, \tau]}{u^2}\left\{ y_{0}, y_{1}, \ldots, y_{k-m} \Bigg\vert \begin{array}{l}
%         v_{1}y_{i} = v_{0}y_{i+1},\\  v_{0}y_{0} = 0,\\
%         v_{1}y_{k-m} = 0,\\  
%    \end{array}\right\},
%    \]
    where $|x| = (0, k-m, 0)$ and $|y_{i}| = (-1 - 2(p-1)(k-m-i), 0, 0)$.
\end{proposition}
\end{comment}
The case $F = \mR$ and $p=2$ is addressed in \cite[Section 6]{LiPetTat25}, where charts are also given. We record the result here for the reader's convenience. 
\begin{proposition}[{\cite[Lemma 6.8]{LiPetTat25}}] \label{prop:RmotExtk>m}
    Suppose $F = \mR$ and $p = 2$. Then the group $\Ext_{\cE(1)_2^\vee}^{*,*,*}(L_2^\mathbb{R} (k), L_2^\mathbb{R} (m))$ when $k > m$ consists of: 
\begin{enumerate}
\item a triangle formation consisting of 
\[
\Ext^{*,*,*}_{\cE(1)_2^\vee}\left(\mM^{\mR}_{2}, \mM_2^{\mR} \right) \left\{y_{0}, \ldots, y_{m-k-1} \right\}
\]
with relations $v_{1}y_{i} = v_{0}y_{i+1},$ $v_{0}y_{0} = 0,$ and $v_{1}y_{m-k-1} = 0$ and 
\[
\Ext^{*,*,*}_{\cE(1)_2^\vee}\left(\mM_2^{\mR}, \mM_2^{\mR} \right) \left\{ \tau^{2}y_{0}, \ldots, \tau^{2}y_{m-k-1} \right\}
\]
with relations $v_{1} \tau^{2}y_{i} = v_{0}\tau^{2}y_{i+1},$ $v_{0}^{2}\tau^{2}y_{0} = 0,$ and $v_{1}\tau^2 y_{m-k-1} = 0$ where \newline $|y_{i}| = \big(-2(k-m - i)-1, 0, -(k-m - i)\big),$ 

     \item infinite $\rho$-towers generated in odd stem,

      \item a copy of $v_{1}\cdot\Ext^{*,*,*}_{\cE(1)_2^\vee}(\mM_2^\mR, \mM_2^{\mR})$, with generator denoted $x$ and $|x| = (0, k-m, 0),$

    \item $\rho$-pairs: \[\mF_2 [\rho, v_{1}] \left\{ v_1 b \, | \, v_{1}^{m-k-1}b = \rho x, \, \rho^{2}b = 0\right\}, \] where $|b| = (2(m-k)-1, \, 0, \, m-k-1)$. 
    \end{enumerate}
\end{proposition}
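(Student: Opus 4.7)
The plan is to mirror the inductive strategy of \Cref{prop:ExtLightningflashKleqM}, but now running the induction on $k-m$ starting from the base case $k = m+1$ (noting that $k = m$ is already covered by the $k \le m$ case proved in \Cref{prop:ExtLightningflashKleqM}). The main engine is again the short exact sequence of $\cE(1)_2^\vee$-comodules
\[
0 \to \Sigma^{2,1}L_2(k-1) \to L_2(k) \to (\cE(1)_2 // \cE(0)_2)^\vee \to 0,
\]
to which I apply $\Ext_{\cE(1)_2^\vee}^{s,f,w}(-, L_2(m))$ to obtain a long exact sequence. The wrong-side change of rings (\Cref{lemma:WrongSide}) identifies the contribution of the quotient as a shift of $\Ext_{\cE(0)_2^\vee}^{s,f,w}(\mathbb{M}_2^{\mathbb{R}}, L_2(m))$, and splitting $L_2(m)$ as an $\cE(0)_2$-comodule into $\mathbb{M}_2^{\mathbb{R}} \oplus \bigoplus_{j=1}^m \Sigma^{2j+1,j}\cE(1)_2^\vee$ computes this explicitly.

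The first step is to identify the connecting differential. The $v_0$-tower generator $x$ in $\Sigma^{2,1}\Ext_{\cE(1)_2^\vee}^{*,*,*}(L_2(k-1), L_2(m))$ that sits in bidegree $(-2,0,0)$ after desuspension cannot survive: a comparison argument against the already-known group $\Ext_{\cE(1)_2^\vee}^{*,*,*}(L_2(m+1), L_2(m+1))$ (which has no infinite $v_0$-tower in the relevant odd stem) forces a nontrivial differential $d(x) = v_0^{k-m}y$ onto the $v_0$-tower coming from the quotient, exactly as in the proof of \Cref{prop:ExtLightningflashKleqM}. By $\mathbb{M}_2^{\mathbb{R}}[v_0,v_1]$-linearity of $d$, all further differentials on $\rho^a \tau^{2b} v_0^c x$ are determined, and by induction the triangle formation in item (1) and the persistent $v_1$-torsion element $x$ in item (3) emerge in exactly the shape claimed, with the two $\tau^2$-shifted copies of the triangle arising from the $\tau^2 v_0$-divisibility already visible in $\Ext_{\cE(1)_2^\vee}^{*,*,*}(\mathbb{M}_2^{\mathbb{R}}, \mathbb{M}_2^{\mathbb{R}})$ (see \Cref{Ext_e1_R}).

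The infinite $\rho$-towers of item (2) arise in filtration $f=0$, where no differential can hit or originate, hence they persist directly from the $f=0$ part of the input Ext groups; tracking which stems they live in is a bookkeeping exercise in the $\rho$-multiplication on $\Ext_{\cE(0)_2^\vee}^{*,*,*}(\mathbb{M}_2^{\mathbb{R}}, L_2(m))$. Finally, the $\rho$-pairs of item (4) are the subtlest part and I expect them to be the main obstacle: these encode a hidden extension between the surviving $x$-generator and a class $b$ coming from the torsion tail of the triangle, mediated by $\rho$. To verify $v_1^{k-m-1}b = \rho x$ and $\rho^2 b = 0$ I would work in the cobar complex $\mathcal{C}_{\cE(1)_2^\vee}(L_2(k), L_2(m))$, exhibit explicit cocycle representatives for $b$ and for $\rho x$, and show their difference is a coboundary; here the non-central right unit $\eta_R(\tau) = \tau + \rho \bar{\tau}_0$ of the $\mathbb{R}$-motivic dual Steenrod algebra (which encodes the $\rho$-Bockstein) is what produces the extension, exactly as in the $\rho$-Bockstein arguments of Hill. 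With these ingredients the induction closes, yielding the four-part description in the statement.
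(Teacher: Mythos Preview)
The paper does not prove this proposition; it simply records the statement and cites \cite[Lemma 6.8]{LiPetTat25} for the proof, noting that charts are given there. So there is no ``paper's own proof'' to compare against directly.

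That said, your proposed argument is exactly the method the paper uses for the neighboring cases \Cref{prop:lightningExtCase1} and \Cref{ExtKLeM:p=2Different}: induct on $k$ (with $m$ fixed) via the short exact sequence \Cref{lightning flash ses}, apply $\Ext_{\cE(1)_2^\vee}(-,L_2(m))$, use the wrong-side change of rings \Cref{lemma:WrongSide}, and pin down the differential on the bottom $v_0$-tower generator by comparison with the already-computed diagonal group $\Ext_{\cE(1)_2^\vee}(L_2(m+1),L_2(m+1))$. The paper in fact says explicitly that \Cref{ExtKLeM:p=2Different} ``is proven in exactly the same way as \cref{prop:lightningExtCase1},'' so your plan is squarely in line with the intended method and is almost certainly what \cite{LiPetTat25} does as well.

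One remark on the $\rho$-pairs in item (4): your cobar-complex approach will work, but a cleaner route (and the one suggested by the paper's repeated use of the $\rho$-Bockstein) is to run the $\rho$-Bockstein spectral sequence with input $\Ext_{\cE(1)_2^\vee}^{\mathbb{C}}(L_2(k),L_2(m))[\rho]$, where the $\mathbb{C}$-motivic answer is already the classical one tensored with $\mathbb{F}_2[\tau]$. The relation $v_1^{k-m-1}b = \rho x$ then appears as a $\rho$-Bockstein differential rather than a hidden extension to be verified by hand; this packages the $\eta_R(\tau) = \tau + \rho\bar\tau_0$ bookkeeping more efficiently than working cocycle-by-cocycle.
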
 

The following proposition is proven in exactly the same way as \cref{prop:lightningExtCase1}.
\begin{proposition}\label{ExtKLeM:p=2Different}
    Suppose $F = \mF_q$ where $q \not\equiv 1 \, (p^2)$ and $char(\mathbb{F}_{q}) \neq p$. If $k > m$, then
     \begin{align*}
           \Ext_{\cE(1)_p^\vee}^{s,f,w}(L_p(k), L_p(m)) \cong \Ext_{\cE(1)_p^\vee}^{s,f,w} & (\mM^F_{p}, \mM^F_{p})\{x \} \\
           & \oplus \mM^F_{p}\left\{ y_{0}, y_{1}, \ldots, y_{k-m} \Bigg\vert \begin{array}{l}
         v_{1}y_{i} = v_{0}y_{i+1},\\  v_{0}y_{0} = 0,\\
         v_{1}y_{k-m} = 0,\\  z v_0 = 0
    \end{array}\right\} \oplus B,
   \end{align*}
    where $|x| = (0,k-m,0)$ $z=\rho$ if $p=2$, $z= \gamma$ if $p>2$, and where $B$ is as defined in \cref{prop:ExtLightningflashKleqM}
\end{proposition}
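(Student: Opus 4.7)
The plan is to mimic the induction from \cref{prop:lightningExtCase1}. The base case $k = m$ is handled directly by \cref{prop:ExtLightningflashKleqM} (the $y_i$-family is empty there). For the inductive step, I will apply $\Ext_{\cE(1)^\vee_p}(-, L_p(m))$ to the short exact sequence
\[
0 \to \Sigma^{2(p-1), p-1} L_p(k-1) \to L_p(k) \to (\cE(1)_p // \cE(0)_p)^\vee \to 0
\]
from \cref{lightning flash ses} and rewrite the quotient term via the wrong-side change-of-rings of \cref{lemma:WrongSide} as $\Sigma^{-2(p-1)-1, -(p-1)}\Ext^{s,f,w}_{\cE(0)^\vee_p}(\mathbb{M}_p^F, L_p(m))$. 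Organizing the resulting long exact sequence as a spectral sequence, with $E_1$-page built from the inductive hypothesis together with the $\cE(0)^\vee_p$-computation, reduces the problem to pinning down one $\Ext^{*,*,*}_{\cE(1)^\vee_p}(\mathbb{M}_p^F, \mathbb{M}_p^F)$-linear differential and then chasing extensions.

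To force the differential I will inspect the generator $\Sigma^{-2(p-1), -(p-1)}x_0$ of the top $v_0$-tower of $\Ext^{*,*,*}_{\cE(1)^\vee_p}(\Sigma^{2(p-1), p-1}L_p(k-1), L_p(m))$, sitting in tridegree $(-2(p-1), 0, 0)$. Weight considerations immediately restrict any nonzero target to a unit multiple of $v_0^{k-m-1}y$. Nonvanishing of $d$ will then follow by comparison: if $d(x_0) = 0$, running the inductive setup one more step to compute $\Ext^{*,*,*}_{\cE(1)^\vee_p}(L_p(m+1), L_p(m+1))$ would produce an unwanted infinite $v_0$-tower in odd stem, contradicting \cref{prop:ExtLightningflashKleqM}. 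Linearity over $\Ext^{*,*,*}_{\cE(1)^\vee_p}(\mathbb{M}_p^F, \mathbb{M}_p^F)$ then propagates $d$, and stem/filtration bookkeeping rules out any further differentials.

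After relabeling $\Sigma^{-2(p-1), -(p-1)}v_1 x_0$ as the new polynomial generator $x$ and $\Sigma^{-2(p-1), -(p-1)}y$ as $y_{k-m}$, it remains to extract the extensions $v_1 y_i = v_0 y_{i+1}$, the truncations $v_0 y_0 = 0$ and $v_1 y_{k-m} = 0$, and the $B$-summand. I plan to read off $v_1 y_i = v_0 y_{i+1}$ from explicit cobar representatives as in the classical case, while the truncations are forced by filtration considerations and the $B$-summand is extracted by the same bookkeeping of negative-stem classes without $v_0$-multiplication as in \cref{prop:ExtLightningflashKleqM}. The step I expect to be the main obstacle, and the genuinely new feature of this case compared to \cref{prop:lightningExtCase1}, is tracking the relation $z v_0 = 0$ on the $y_i$: it descends from the nontrivial Bockstein relations $\rho v_0 = 0$ (for $p = 2$) or $\gamma v_0 = 0$ (for $p > 2$) in $\mathbb{M}_p^F$, and I will need to verify at each inductive step that this relation is faithfully transported through the connecting map rather than masking hidden extensions against $x$ or within $B$.
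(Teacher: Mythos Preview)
Your proposal is correct and follows essentially the same approach as the paper, which simply records that the proof is identical to that of \cref{prop:lightningExtCase1}. The relation $z v_0 = 0$ you flag as the main obstacle is in fact automatic rather than something to track inductively: it already holds in the coefficient ring $\Ext^{*,*,*}_{\cE(1)^\vee_p}(\mM_p^F, \mM_p^F)$ (see the $E_2$-pages recorded in the proof of \cref{bpgl1 difs finite fields} and in \cref{bockstein BPGL1 Mass E2}), and since the $y_i$-summand is a module over this ring the relation is inherited with no possibility of a hidden extension.
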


% Likewise we have the odd-primary analogue.
% \begin{proposition}\label{ExtKLeM:pOddDifferent}
%     Suppose $p\neq2$ and $F = \mF_q$ where $q \not\equiv 1 \, (p^2)$ and $\textup{char}(\mathbb{F}_{q}) \neq p$. If $k > m$, then
%      \begin{align*}
%            \Ext_{\cE(1)_p^\vee}^{s,f,w}(L(k), L(m)) \cong \Ext_{\cE(1)_p^\vee}^{s,f,w} & (\mM^F_{p}, \mM^F_{p})\{x \} \\
%            & \oplus \mM^F_{p}\left\{ y_{0}, y_{1}, \ldots, y_{k-m} \Bigg\vert \begin{array}{l}
%          v_{1}y_{i} = v_{0}y_{i+1},\\  v_{0}y_{0} = 0,\\
%          v_{1}y_{k-m} = 0,\\  \gamma v_0 = 0
%     \end{array}\right\} \oplus B,
%    \end{align*}
% where $B$ is as defined in \cref{prop:ExtLightningflashKleqM}.
% \end{proposition}
\begin{remark}
    \rm
    While this case does not allow a description as a tensor product of the classical with $\mM_{p}^{F}$, it is not so different: we just require either the relation $\rho v_0 = 0$ or $\gamma v_0 = 0$.
\end{remark}

\begin{comment}
Previous version of proposition
\begin{enumerate}
\item a triangle formation consisting of

and 
\[
 \mF_{2}[\rho, \tau^2] \left\{\rho\tau y_{0}, \ldots, \rho\tau y_{m-k-1} \right\}
\]
with relations $\rho v_0 = 0$, $\rho^2 = 0$, $v_{1}y_{i} = v_{0}y_{i+1},$ $v_{0}y_{0} = 0,$ and $v_{1}y_{m-k-1} = 0$ 

where \newline $|y_{i}| = \big(-2(k-m - i)-1, 0, -(k-m - i)\big),$ 

      \item a copy of $\Ext_{\cE(1)_2^\vee}({\mM}_{2}^{\mF_q}, {\mM}_{2}^{\mF_q})$, with generator denoted $x$ and $|x| = (0, k-m, 0),$

    \item and classes \[\mF_2 [ v_{1}, \tau^2] \left\{b \, | \, v_{1}^{m-k-1}b = \rho x, \, \rho b = 0\right\}, \] where $|b| = (2(m-k)-1, \, 0, \, m-k-1)$. 
    \end{enumerate}
\end{comment}
We now have all the pieces we need to analyze the relative Adams spectral sequence. To expedite our computations, we note that at any prime and for $F= \mathbb{C}$, $\mathbb{R}$, or $\mathbb{F}_q$, the $\Ext$-group
\[
\text{Ext}^{s,f,w}_{\cE(1)^\vee_p}(L_p (k), L_p (m)),
\]
excluding the summand $B$, is generated by monomials of the form $cv_0^iv_1^jx_\ell$ when $k \le m$ with $c \in \mM_p^F$. Likewise, when $k \ge m$, the same $\Ext$-group excluding the summand $B$
is generated by monomials of the form $cv_0^iv_1^jx$ and $cv_0^iv_1^jy_\ell$. Thus, we can describe any monomial in 
\[
E_2^{s,f,w} = \textup{Ext}^{s,f,w}_{\mathcal{E}(1)^\vee_p}\left(H_{*,*}^{BPGL \langle 1 \rangle}\left(\Sigma^{2k(p-1), k(p-1)}BPGL \langle 1 \rangle^{\langle \nu_p(k!) \rangle}\right), H_{*,*}^{BPGL \langle 1 \rangle}C\right)
\]
as either $b \in B$, or $cv_0^iv_1^jz$, where $z$ is either $\Sigma^{2(m-k)(p-1), (m-k)(p-1)}x$, $\Sigma^{2(m-k)(p-1), (m-k)(p-1)}x_\ell$, or $\Sigma^{2(m-k)(p-1), (m-k)(p-1)}y_\ell$.

%Let $\alpha$ denote $(2(p-1),p-1)$. 

\begin{proposition}\label{prop:AdamsDiffernentialsAllCases}
    All differentials in the Adams spectral sequence \begin{align*}
    E_2^{s,f,w} = \textup{Ext}^{s,f,w}_{\mathcal{E}(1)^\vee_p}\left(H_{*,*}^{BPGL \langle 1 \rangle}\left(\Sigma^{2k(p-1), k(p-1)}BPGL \langle 1 \rangle^{\langle \nu_p(k!) \rangle}\right), H_{*,*}^{BPGL \langle 1 \rangle}C\right) \\
    \implies [\Sigma^{2k(p-1), k(p-1)}BPGL \langle 1 \rangle^{\langle \nu_p(k!)\rangle}, C]^{BPGL \langle 1 \rangle}_{s,w}.
    \end{align*}  are determined by those in the Adams spectral sequence for $BPGL\langle 1 \rangle$. Specifically, if $z$ is any of the generators described immediately above, then \begin{align*}
    d_r\left(cv_0^iv_1^jz\right) = d_r(c)v_0^iv_1^jz. 
    \end{align*} for all $c\in \mM_p^F$.
    
    \begin{comment}\begin{align*}d_r\left(cv_0^iv_1^j\Sigma^{2(m-k)(p-1), (m-k)(p-1)}x\right) = d_r(c)v_0^iv_1^j\Sigma^{2(m-k)(p-1), (m-k)(p-1)}x\\d_r\left(cv_0^iv_1^j\Sigma^{2(m-k)(p-1), (m-k)(p-1)}x_\ell\right) = d_r(c)v_0^iv_1^j\Sigma^{2(m-k)(p-1), (m-k)(p-1)}x_\ell\\ d_r\left(cv_0^iv_1^j\Sigma^{2(m-k)(p-1), (m-k)(p-1)}y_\ell\right) = d_r(c)v_0^iv_1^j\Sigma^{2(m-k)(p-1), (m-k)(p-1)}y_\ell.\end{align*}
    \end{comment}
\end{proposition}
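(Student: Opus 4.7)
The plan is to establish three facts and then invoke the Leibniz rule. First, the relative Adams spectral sequence is a module over the absolute motivic Adams spectral sequence $\textup{\textbf{mASS}}_p(BPGL\langle 1 \rangle)$, so in particular $v_0$ and $v_1$ are permanent cycles. Second, each of the ``generators'' $z \in \{x, x_\ell, y_\ell\}$ appearing in the descriptions of Ext from \cref{prop:ExtLightningflashKleqM}, \cref{prop:lightningExtCase1}, \cref{prop:RmotExtk>m}, and \cref{ExtKLeM:p=2Different} is a permanent cycle. Granted these, $\mathbb{M}_p^F[v_0, v_1]$-linearity of the differentials forces
\[
d_r(cv_0^iv_1^jz) = d_r(c)v_0^iv_1^jz
\]
for every $c \in \mathbb{M}_p^F$, and the differential $d_r(c)$ is exactly its differential in $\textup{\textbf{mASS}}_p(BPGL\langle 1 \rangle)$ (as determined in \cref{bpgl1 difs finite fields}), which is the desired reduction.

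For the first fact, the module structure is immediate from the unit map $\mathbb{S} \to BPGL\langle 1 \rangle$ composed with the $BPGL\langle 1\rangle$-module structure on $C$. The classes $v_0$ and $v_1$ detect, respectively, multiplication by $p$ and the $v_1$-self-map on $BPGL\langle 1\rangle$, so they are permanent cycles in the absolute spectral sequence and hence also in the relative one.

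For the second fact, I would treat the generators case by case. The classes in filtration $0$ generating the ``positive'' parts of Ext, namely the $x_\ell$ when $k \le m$ and the $x$ when $k>m$, are realized as genuine $BPGL\langle 1\rangle$-module maps by the same collapse-in-filtration-zero argument used in the proof of \cref{prop:RelativeHomologyBPGL1AdamsCoverLightning}, and are thus permanent cycles. The ``torsion'' generators $y_\ell$, arising through the wrong-side change-of-rings of \cref{lemma:WrongSide} applied to the short exact sequence \eqref{lightning flash ses}, sit in filtration $0$ and negative stems; I would combine a tridegree-count (no target of appropriate tridegree exists in the relevant module summand) with a comparison, via \cref{cor: Ext L_0 L_m}, to the classical relative Adams spectral sequence where the corresponding classes are known to be permanent cycles.

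The main obstacle I foresee is verifying (ii) uniformly across all listed base fields, particularly in the $\mathbb{R}$-motivic case at $p=2$, where the $\rho$-pair and triangle formations of \cref{prop:RmotExtk>m} introduce relations that interact nontrivially with differentials on $\mathbb{M}_2^{\mathbb{R}}$. For this case, I would follow the analogous argument carried out by the second and third authors in \cite[Section 6]{LiPetTat25}, which handles precisely these structures. Once all the generators are confirmed to be permanent cycles, the formula in the statement follows from the Leibniz rule together with the $\mathbb{M}_p^F[v_0, v_1]$-module structure, completing the reduction to the absolute spectral sequence.
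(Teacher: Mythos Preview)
Your overall strategy---use the module pairing with the absolute $\textup{\textbf{mASS}}_p(BPGL\langle 1\rangle)$ and then check that each generator $z$ is a permanent cycle---is exactly the skeleton of the paper's proof. However, there is a concrete error in your treatment of the generator $x$ in the case $k>m$: this class is \emph{not} in filtration $0$. From \cref{prop:lightningExtCase1}, \cref{prop:RmotExtk>m}, and \cref{ExtKLeM:p=2Different} one has $|x|=(0,\,k-m,\,0)$, so $x$ lives in Adams filtration $k-m>0$. Consequently the ``collapse-in-filtration-zero / realize as a genuine $BPGL\langle 1\rangle$-module map'' argument you borrow from \cref{prop:RelativeHomologyBPGL1AdamsCoverLightning} does not apply to it (and in any case that argument requires the target to have injective relative homology, which $C$ does not).

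The paper handles $d_r(x)=0$ (and likewise $d_r(y_\ell)=0$) by pure tridegree and filtration bookkeeping, not by realization. First, a weight-versus-stem count shows that no monomial $c\in\mathbb{M}_p^F$ has degree of the form $(2a-1,0,a)$, which rules out differentials from any generator into the $v_1$-torsion-free part. The subtle remaining case---a differential from $\Sigma^{-2n,-n}x$ into some $v_1$-torsion class $cv_0^iv_1^jy_\ell$ in stem $-2n-1$---is excluded by comparing Adams filtrations: the source sits in filtration $\nu_p(k!)-\nu_p((k-n)!)$, whereas every $v_1$-torsion class in the adjacent stem has filtration at most $\nu_p(k!)-\nu_p((k-n+1)!)-1$, which is strictly smaller. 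This arithmetic inequality is the step your proposal is missing. You also do not address the torsion summand $B$ of \cref{prop:ExtLightningflashKleqM}; the paper disposes of it first, again by a short stem-parity argument showing $d_r(b)=0$ for each generator $b\in B$. Finally, the ``compare to the classical spectral sequence via \cref{cor: Ext L_0 L_m}'' route you suggest for the $y_\ell$ only covers the trivial-Bockstein cases and does not by itself control differentials, so it would not close the argument over all base fields.
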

\begin{proof}

We will first show that the summand $B$ is neither the source nor the target of any differential. Note that the $BPGL\langle 1 \rangle$-module structures of $BPGL\langle 1 \rangle^{\langle \nu_p(k!) \rangle}$ and $C$ induce a pairing of relative Adams spectral sequences
\[ E_r\left(BPGL\langle 1 \rangle, BPGL\langle 1 \rangle \right) \otimes E_r\left(BPGL\langle 1 \rangle^{\langle \nu_p(k!) \rangle}, C\right) \to E_r\left(BPGL\langle 1 \rangle^{\langle \nu_p(k!) \rangle}, C\right) .\]

Recall from \cref{prop:ExtLightningflashKleqM} that the summand $B$ is concentrated in filtration $f=0$, and is generated over $\Ext^{s,f,w}_{\cE(1)^\vee_p}(\mM_p^F, \mM_p^F)$ by generators in odd stem $s$. Thus $B$ is generated over $\mathbb{M}_p$ by monomials of the form $cb$, where $c \in \mathbb{M}_p^F$. Note that 
\[
c \in E_2(BPGL\langle 1 \rangle, BPGL\langle 1 \rangle)\cong \Ext^{s,f,w}_{\cE(1)^\vee_p}(\mM_p^F, \mM_p^F),
\]
so we can use the pairing of spectral sequences to say \[d_r\left(cb\right) = d_r'\left(c\right)b \pm cd_r\left(b\right) ,\] 
where $d_r'$ is the differential in $E_r(BPGL\langle 1 \rangle, BPGL \langle 1 \rangle)$. Note that $d_r'$ is always either trivial or $v_0$ or $v_1$-divisible, so $d_r'(c)b = 0$. This leaves us with $d_r(bc) = cd_r(b)$. We will now show $d_r(b) = 0$ for all generators $b$ of $B$. 

Recall that the Adams differential $d_{r}$ has degree $(s,f,w) = (-1,r,0)$. Since the summand $B$ is concentrated in filtration $f=0$, no class in $B$ can be the target of any differential. Furthermore, the summand $B$ is entirely $v_1$-torsion, so there can be no differential from $B$ to a $v_1$-torsion class in $E_r^{s,f,w}$. Finally, note that all $v_1$-torsion classes in filtration $f>0$ are in odd stem, and the Adams differential decreases stem by $-1$. Thus any differential exiting a generator of $B$ must indeed be zero, and so $B$ must consist solely of permanent cycles.

Now we are ready to use the pairing of Adams spectral sequence to analyze the rest of the spectral sequence. Note that $cv_0^iv_1^j \in E_2(BPGL\langle 1 \rangle, BPGL\langle 1 \rangle)$. The pairing of the Adams spectral sequences implies 
\begin{align*}
d_r\left(cv_{0}^{i}v_{1}^{j}\Sigma^{2(m-k)(p-1), (m-k)(p-1)}x\right) \\
= d_r'\left(cv_0^{i}v_{1}^{j}\right)\Sigma^{2(m-k)(p-1), (m-k)(p-1)}x \pm cv_0^{i}v_{1}^{j}d_r\left(\Sigma^{2(m-k)(p-1), (m-k)(p-1)}x\right),
\end{align*} 
where $d_r'$ is the differential in $E_r(BPGL\langle 1 \rangle, BPGL \langle 1 \rangle)$. Furthermore, recall that 
\[
d_r'\left(cv_0^iv_1^j\right)  = d_r'\left(c\right)v_0^iv_1^j.
\]
So if $d_r\left(\Sigma^{2(m-k)(p-1), (m-k)(p-1)}x\right) = 0$, then
\[d_r\left(cv_{0}^{i}v_{1}^{j}\Sigma^{2(m-k)(p-1), (m-k)(p-1)}x\right) = d_r'\left(c\right)v_0^{i}v_{1}^{j}\Sigma^{2(m-k)(p-1), (m-k)(p-1)}x,\]
and the same claim will hold if we replace $x$ by $x_\ell$ or $y_\ell$.
\begin{comment}
Likewise, if $d_r\left(\Sigma^{2(m-k)(p-1), (m-k)(p-1)}y_\ell\right) = 0$ for all $\ell$, then
\[d_r\left(cv_{0}^{i}v_{1}^{j}\Sigma^{2(m-k)(p-1), (m-k)(p-1)}y_\ell\right) = d_r'\left(c\right)v_0^{i}v_{1}^{j}\Sigma^{2(m-k)(p-1), (m-k)(p-1)}y_\ell.\]
\end{comment}
We will now finish the proof by showing that for any generator $\Sigma^{2(m-k)(p-1), (m-k)(p-1)}x$, $\Sigma^{2(m-k)(p-1), (m-k)(p-1)}x_\ell$, or $\Sigma^{2(m-k)(p-1), (m-k)(p-1)}y_\ell$, the differential $d_r$ indeed acts trivially. This is almost exactly the same as the proof that the classical analogue of this spectral sequence collapses, for example \cite[Section~2]{LiPetTat25} presents this computation at $p=2$. 

Recall that the elements $\Sigma^{2(m-k)(p-1), (m-k)(p-1)}x$ and $\Sigma^{2(m-k)(p-1), (m-k)(p-1)}x_\ell$ generate the $v_1$-torsion free component over $\mM_p^F[v_0, v_1]$, while the elements $\Sigma^{2(m-k)(p-1), (m-k)(p-1)}y_\ell$ generate the $v_1$-torsion component over $\mM_p^F[v_0, v_1]$. We will start by showing that $d_r(\Sigma^{2(m-k)(p-1), (m-k)(p-1)}y_\ell) = 0$. First, recall that no nontrivial differentials can go from $v_{1}$-torsion classes to $v_{1}$-torsion free classes. So any potential target will be a sum of $v_1$-torsion classes, of the form $\Sigma^{2(m'-k)(p-1), (m'-k)(p-1)}cv_0^iv_1^jy_\ell'$, where $c \in \mM_p^F$ and $\ell' \in \mathbb{Z}$. Next, observe that all $v_{1}$-torsion generators $\Sigma^{2(m'-k)(p-1), (m'-k)(p-1)}y_\ell$ have degree 
\[|\Sigma^{2(m'-k)(p-1), (m'-k)(p-1)}y_\ell| = (2(p-1)n-1,0,n)\] 
for some $n \in \mathbb{Z}.$ Recall that the Adams differential $d_{r}$ has degree $(s,f,w) = (-1,r,0)$, so any potential target must have degree $(2(p-1)n-2, r, (p-1)n)$. Consider a potential target $\Sigma^{2(m'-k)(p-1), (m'-k)(p-1)}cv_0^iv_1^jy_l'$. Observe that this potential target will have degree 
\[|\Sigma^{2(m'-k)(p-1), (m'-k)(p-1)}cv_0^iv_1^jy_l| = (2n'-1, r, n') + |c|\] 
for some $n' \in \mathbb{Z}$. So we need $|c| = (2a-1,0,a)$ for some $a \in \mathbb{Z}$. No such monomial exists in any $\mathbb{M}_p^F$, so indeed $d_r(\Sigma^{2(m-k)(p-1), (m-k)(p-1)}y_\ell) = 0$.

Now we will show $d_r(\Sigma^{2(m-k)(p-1), (m-k)(p-1)}x) = 0$. First, we will show that no $v_1$-torsion-free class is a potential target, for degree reasons. This is exactly the same argument as we just gave above: $\Sigma^{2(m-k)(p-1), (m-k)(p-1)} x$ has degree $|x| = (2n,0,n)$ for some $n \in \mathbb{Z}$. So any potential target must have degree $(2n-1,r,n)$. But a potential target $\Sigma^{2(m-k)(p-1), (m-k)(p-1)} c v_0^m v_1^nx$ or $\Sigma^{2(m-k)(p-1), (m-k)(p-1)} c v_0^m v_1^nx_\ell$ has degree $(2n', r, n') + |c|$ for some $n' \in \mathbb{Z}$. So we need $|c| = (2a-1,0,a)$ for some $a \in \mathbb{Z}$. No such monomial exists in any $\mathbb{M}_p^F$. So there are no non-trivial differentials from $\Sigma^{2(m-k)(p-1), (m-k)(p-1)}x$ to any $v_1$-torsion free class. 

By the exact same argument, there are no non-trivial differentials from $\Sigma^{2(m-k)(p-1), (m-k)(p-1)}x_\ell$ to any $v_1$-torsion free class. 

However, we still need to show that the $v_1$-torsion classes cannot be potential targets of a differential exiting a generator $\Sigma^{2(m-k)(p-1), (m-k)(p-1)} x$ or $\Sigma^{2(m-k)(p-1), (m-k)(p-1)} x_\ell$. 

The $v_1$-torsion classes in filtration $f > 0$ are of the form $\Sigma^{2(m-k)(p-1), (m-k)(p-1)}cv_0^iv_1^jy_{\ell}$, which only occur in stem $s \le -1 - 2(p-1)$. Since 
\[
\bigoplus\limits_{0 \le k \le m} \Sigma^{2(m-k)(p-1), (m-k)(p-1)}\Ext^{s,f,w}_{\mathcal{E}(1)_p^\vee}( L_p (\nu_{p}(k!) ), L_p (\nu_{p}(m!) ) )
\]
is contained entirely in stem $s \ge 0$, it only remains to show that there are no differentials originating in $v_1$-torsion free generators of
\[
\bigoplus\limits_{k > m}\Sigma^{2(m-k)(p-1), (m-k)(p-1)}\Ext^{s,f,w}_{\mathcal{E}(1)_p^\vee}( L_p(\nu_{p}(k!) ), L_p(\nu_{p}(m!) ) )
,\]
which are of the form $\Sigma^{2(m-k)(p-1),(m-k)(p-1)}x$.
Note that these generators are all in stem $-2n$ for some $n \in \mathbb{N}$, and so any potential target would be in stem $-2n-1$. Fix $n \ge 0$. We will put bounds on the Adams filtration of the classes in stem $-2n$, and bounds on the Adams filtration of classes in stem $-2n-1$. We will use these bounds to conclude that no nontrivial differentials are possible. 

The only $v_{1}$-torsion free generator of $\Sigma^{2(m-k)(p-1), (m-k)(p-1)} \Ext^{s,f,w}_{\mathcal{E}(1)_p^\vee}\left(L_p(\nu_{p}(k!)), L_p(\nu_{2}(m!))\right)$ is $\Sigma^{2(m-k)(p-1), (m-k)(p-1)}x$, in Adams filtration $\nu_p(k!)-\nu_p(m!).$ It follows that in \[
\bigoplus\limits_{k > m}\Sigma^{2(m-k)(p-1), (m-k)(p-1)}\Ext^{s,f,w}_{\mathcal{E}(1)_p^\vee}( L_p^F (\nu_{p}(k!) ), L_p^F (\nu_{p}(m!) ) )
,\]
the only generator in stem $-2n$ is $\Sigma^{-2n,-n}x$, which has Adams filtration \[\nu_{p}(k!) - \nu_{p}((k-n)!).\] Now we will show that any $v_1$-torsion class in stem $-2n-1$ will have lower Adams filtration, meaning that no differential from $\Sigma^{-2n,-n}x$ to such a class are possible. 

Observe that in $ \Ext^{s,f,w}_{\mathcal{E}(1)_p^\vee}\left(L_p(\nu_{p}(k)), L_p(\nu_{p}(m'!))\right)$, the highest filtration of any $v_{1}$-torsion class in stem $-1-2(p-1)(1+i)$ is $\nu_{p}(k!)-\nu_{p}(m'!)-i-1$. So in \[\Sigma^{2(m'-k)(p-1), (m'-k)(p-1)}\Ext^{s,f,w}_{\cE(1)_p^\vee} \left(L_p (\nu_{p}(k!)), L_p (\nu_{p}(m'!))\right),\] 
the highest filtration of any $v_{1}$-torsion class in stem $-1-2(p-1)(1+i+k-m)$ is $\nu_{p}(k'!)-\nu_{p}(m'!)-i-1$. Thus, at any stem $-2n-1$, the highest filtration class in stem $-2n-1$ in 
\[\bigoplus\limits_{m=0}^{\infty}\Sigma^{2(m'-k)(p-1), (m'-k)(p-1)}\Ext^{s,f,w}_{\cE(1)_p^\vee} \left(L_p (\nu_{p}(k!)), L_p (\nu_{p}(m'!))\right)\]
 will be in filtration $\nu_{p}(k!) - \nu_{p}((k-n+1)!) - 1.$
So no $v_1$-torsion classes can be the target of a differential exiting $\Sigma^{-2n,-n}x$, and indeed $d_r(\Sigma^{2(m-k)(p-1), (m-k)(p-1)}x) = 0$ and $d_r(\Sigma^{2(m-k)(p-1), (m-k)(p-1)}y_i\ell) = 0$ for all $\ell$. 
\end{proof}

We may now prove our main theorem.

\begin{theorem}
\label{thm:BPGL1SplittingAlpha}
    For any prime $p$ and for any $F \in \{\mathbb{C},\mathbb{R}, \mathbb{F}_q\}$ where $\textup{char}(\mathbb{F}_q) \neq p$, there is a splitting of $p$-complete motivic spectra:
    \[BPGL \langle 1 \rangle \wedge BPGL \langle 1 \rangle \simeq \bigvee_{k \geq 0}\Sigma^{2k(p-1), k(p-1)}BPGL \langle 1 \rangle^{\langle\nu_p(k!)\rangle} \vee V,\]
    where $BPGL \langle 1 \rangle^{\langle \nu_p(k!) \rangle}$ is the $\nu_p(k!)^{th}$ Adams cover of $BPGL \langle 1 \rangle$ and $V$ is a wedge of suspensions of $H$.
\end{theorem}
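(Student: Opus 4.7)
The plan is to follow the relative Adams spectral sequence strategy outlined in \Cref{subsec:BPGL1relASS}. First, I would apply \Cref{prop:BPGL1SplittingBeta} to split off the wedge of suspensions of $H$, producing a $BPGL\langle 1 \rangle$-module equivalence $BPGL \langle 1 \rangle \wedge BPGL \langle 1 \rangle \simeq C \vee V$ with $V$ a wedge of suspensions of $H$ and $C$ containing no $H$-summands. It then remains to prove that $C \simeq \bigvee_{k \geq 0}\Sigma^{2k(p-1), k(p-1)}BPGL \langle 1 \rangle^{\langle\nu_p(k!)\rangle}$ after $p$-completion.

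Next, I would combine \Cref{prop:BPGLnHomology}, \Cref{prop:BGdecomposition}, and \Cref{prop:LightningFlashBGiso} with \Cref{prop:RelativeHomologyBPGL1AdamsCoverLightning} to identify an $\cE(1)_p^\vee$-comodule isomorphism
\[
\bigoplus_{k \geq 0}\theta_k:\bigoplus_{k \geq 0}\Sigma^{2k(p-1),k(p-1)}H_{*,*}^{BPGL\langle 1 \rangle}BPGL\langle 1\rangle^{\langle \nu_p(k!)\rangle} \xrightarrow{\cong} H_{*,*}^{BPGL\langle 1\rangle} C.
\]
Each individual $\theta_k$ is a class in tridegree $(0,0,0)$ on the $E_2$-page of the relative Adams spectral sequence of \Cref{ss:RelativeAssBPGL1Splittings}. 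I would then invoke \Cref{prop:AdamsDiffernentialsAllCases} to argue that no differential can hit or emanate from each $\theta_k$: the proposition shows that all differentials in this relative Adams spectral sequence are controlled by those in the $\textup{\textbf{mASS}}_p(BPGL\langle 1 \rangle)$ and therefore can only occur on classes that are divisible by $\tau$, $\theta$, or $\zeta$ (depending on the base field $F$), none of which applies to the class $\theta_k$ sitting in tridegree $(0,0,0)$.

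The surviving classes lift to maps $\widetilde{\theta}_k:\Sigma^{2k(p-1), k(p-1)}BPGL \langle 1 \rangle^{\langle\nu_p(k!)\rangle} \to C$ of $BPGL\langle 1 \rangle$-modules. Wedging these together and smashing with $H$ produces a map whose effect on $H_{*,*}^{BPGL\langle 1 \rangle}(-)$ is the prescribed isomorphism, and therefore is a mod $p$ homology equivalence. Both source and target are bounded below and of finite type as $BPGL\langle 1 \rangle$-modules, so by the convergence of the $\textup{\textbf{mASS}}_p$ this is an equivalence after $p$-completion. Combining with the initial splitting $BPGL\langle 1 \rangle \wedge BPGL\langle 1 \rangle \simeq C \vee V$ yields the desired splitting.

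The main obstacle is verifying the triviality of every potential differential out of $\theta_k$, which is exactly the content of \Cref{prop:AdamsDiffernentialsAllCases}; in particular, controlling the wedge of targets across all summands indexed by $m \neq k$ requires the careful degree arguments (comparing Adams filtrations in stems $-2n$ versus $-2n-1$) used in the proof of that proposition, and this is where the $v_0$- and $v_1$-linearity of the spectral sequence combined with the detailed structure of $\Ext^{s,f,w}_{\cE(1)_p^\vee}(L_p(k), L_p(m))$ from \Cref{prop:ExtLightningflashKleqM}, \Cref{prop:lightningExtCase1}, \Cref{prop:RmotExtk>m}, and \Cref{ExtKLeM:p=2Different} is essential.
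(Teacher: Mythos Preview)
Your proposal is correct and follows essentially the same route as the paper: split off $V$ via \Cref{prop:BPGL1SplittingBeta}, identify the classes $\theta_k$ in the relative Adams spectral sequence \Cref{ss:RelativeAssBPGL1Splittings}, and invoke \Cref{prop:AdamsDiffernentialsAllCases} to see that each $\theta_k$ (being the identity class $x_0$ in tridegree $(0,0,0)$) is a permanent cycle, so the resulting maps $\widetilde{\theta}_k$ assemble to the desired equivalence. The paper's proof is essentially the two-line observation that $d_r(x_0)=0$ by \Cref{prop:AdamsDiffernentialsAllCases}, with the heavy lifting already done in that proposition and in the surrounding lemmas you cite.
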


\begin{proof}
Recall that by construction, $|\theta_k| = (0,0,0)$ is the class $x_0 \in \Ext^{0,0,0}_{\mathcal{E}(1)_p^{\vee}}(L_p(k), L_p(k))$. By \cref{prop:AdamsDiffernentialsAllCases}, $d_r(x_0) = 0$, and so $\theta_k$ survives the spectral sequence. Thus we have constructed maps 
\[ \widetilde{\theta_k}: BPGL\langle 1 \rangle ^{\langle\nu_p(k!)\rangle} \to C, \]
yielding a splitting
\[ \bigvee\limits_{k\geq 0}^{}\Sigma^{2k(p-1), k(p-1)}BPGL\langle 1 \rangle ^{\langle\nu_p(k!)\rangle} \vee V \xrightarrow{\simeq} C \vee V \xrightarrow{\simeq} BPGL\langle 1 \rangle \wedge BPGL\langle 1 \rangle.\]

\end{proof}

\begin{remark}
    \rm For $p=2$, this gives a splitting of the cooperations algebra for effective algebraic $K$-theory, recalling that a model for $BPGL\langle 1 \rangle$ is $kgl$:
    \[kgl \wedge kgl \simeq \bigvee_{k \geq 0}\Sigma^{2k, k}kgl^{\langle\nu_2(k!)\rangle} \vee V.\]
    This extends the splitting for $F=\mathbb{R}$ shown in \cite{LiPetTat25}.
\end{remark}

\begin{remark}
    \rm Classically, there is a spectrum-level decomposition of the $BP\langle 2 \rangle$-cooperations algebra \cite{culver-bp2coop,CulverOddp20}
    \[BP\langle 2 \rangle\wedge BP \langle 2 \rangle \simeq  C \vee V,\] 
    where $C$ is $v_2$-torsion-free and $V$ is a sum of suspensions of $H\mF_p$. One approach to understanding $BPGL \langle 2 \rangle \wedge BPGL \langle 2 \rangle$ is to lift Culver's result to the motivic context. There is certainly a splitting $BPGL\langle 2 \rangle \wedge BPGL\langle 2 \rangle \simeq C \vee V$ where $V$ is a sum of suspensions of the motivic mod-$p$ Eilenberg-Maclane spectrum. The summand $C$ should also consist of mostly $v_2$-torsion, but the presence of $\rho$, $u$, or $\gamma$-towers slightly complicates things. Culver also gives an inductive algorithm for computing the $E_{2}$-page of the Adams spectral sequence 
    \[\text{Ext}^{s,f}_{E(2)^\vee_p}(\mF_{p}, H_{*}(BP\langle 2 \rangle \wedge BP\langle 2 \rangle)) \Longrightarrow \pi_s(BP\langle 2 \rangle \wedge BP\langle 2 \rangle),\]
    and shows that this spectral sequence collapses at the $E_{2}$-page. We expect that this result readily adapts to the motivic setting as well, with the only modification being that the spectral sequence may not collapse, rather all differentials are determined by those in the Adams spectral sequence for $BPGL\langle 2 \rangle$, just as in the height one case.
\end{remark}

\subsection{Related Results}
One immediate consequence of our computations in the previous section is that these Adams covers are uniquely determined (up to homotopy) by their homology.
\begin{theorem}
\label{prop:UniquenessOfAdamsCoversBPGL1}
    If $X$ is a $BPGL \langle 1 \rangle$-module spectrum such that $H^{BPGL \langle 1 \rangle}_{*,*}(X) \cong L_p(k),$ then $X \simeq BPGL \langle 1 \rangle ^{\langle n \rangle}.$
\end{theorem}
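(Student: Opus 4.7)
The proof proceeds via a $BPGL\langle 1 \rangle$-relative motivic Adams spectral sequence in the spirit of \cref{subsec:BPGL1relASS}. Consider the spectral sequence
\[
E_2^{s,f,w} = \textup{Ext}^{s,f,w}_{\mathcal{E}(1)^\vee_p}(L_p(k), L_p(k)) \implies [BPGL\langle 1 \rangle^{\langle k \rangle}, X]^{BPGL\langle 1 \rangle}_{(s,w)}.
\]
The prescribed isomorphism $H^{BPGL \langle 1 \rangle}_{*,*}(X) \cong L_p(k)$, combined with \cref{prop:RelativeHomologyBPGL1AdamsCoverLightning}, defines an identity class $\iota \in E_2^{0,0,0}$. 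If $\iota$ is a permanent cycle then it lifts to a $BPGL\langle 1 \rangle$-module map $f \colon BPGL\langle 1 \rangle^{\langle k \rangle} \to X$ realizing this relative-homology isomorphism.

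To see that $\iota$ survives, apply \cref{prop:ExtLightningflashKleqM} with $m=k$ to obtain
\[
E_2^{*,*,*} \cong \textup{Ext}^{*,*,*}_{\mathcal{E}(1)^\vee_p}(\mathbb{M}_p^F, \mathbb{M}_p^F) \oplus B,
\]
where $B$ is concentrated in Adams filtration $f=0$. A differential $d_r(\iota)$ with $r \geq 2$ lands in tridegree $(-1,r,0)$, so $B$ can never be a target and the verification reduces to showing that $\textup{Ext}^{-1,r,0}_{\mathcal{E}(1)^\vee_p}(\mathbb{M}_p^F, \mathbb{M}_p^F) = 0$ for all $r \geq 1$. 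When $F = \mathbb{C}$, $F = \mathbb{R}$ with $p$ odd, or $F = \mathbb{F}_q$ with trivial Bockstein action, \cref{remark:ExtE(1)fromClassical} identifies this Ext group with $\mathbb{F}_p[v_0, v_1] \otimes \mathbb{M}_p^F$; here stem $-1$ is already empty. In the remaining cases, a direct inspection of the generator-relation descriptions of the Ext-algebra given in \cref{C and R BPGL1}, \cref{bockstein BPGL1 Mass E2}, and \cref{bpgl1 difs finite fields} shows that any monomial with tridegree $(-1, \geq 1, 0)$ is forced to be divisible by either $\rho v_0$, $u v_0$, or $\gamma v_0$, each of which vanishes. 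Crucially, the weight constraint $w=0$ (together with the negative weight of $\rho,\,u,\,\gamma$) is what forces such a divisibility. Hence $\iota$ is a permanent cycle and lifts to the desired map $f$.

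Finally, we verify that $f$ is a $p$-complete equivalence. By construction $f_*$ is an isomorphism on $H^{BPGL\langle 1 \rangle}_{*,*}$, so the cofiber $C_f$ satisfies $C_f \wedge_{BPGL\langle 1 \rangle} H \simeq 0$. Since $BPGL\langle 1 \rangle^{\langle k \rangle}$ and $X$ are bounded below with relative mod-$p$ homology of finite type, the same holds for $C_f$, and the standard convergence argument for the motivic $H$-Adams spectral sequence in $BPGL\langle 1 \rangle$-modules forces $C_f \simeq 0$ after $p$-completion, giving $f \colon BPGL\langle 1 \rangle^{\langle k \rangle} \xrightarrow{\simeq} X$.

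The main obstacle is the vanishing check in $\textup{Ext}^{-1,r,0}_{\mathcal{E}(1)^\vee_p}(\mathbb{M}_p^F, \mathbb{M}_p^F)$. This is where the failure of $\mathbb{M}_p^F$ to be concentrated in nonnegative stems over fields other than $\mathbb{C}$ threatens to produce exotic differentials; once one observes that the weight grading on $\mathbb{M}_p^F$ rules out any stem $-1$, weight $0$ monomial of positive filtration, the rest of the argument is essentially formal and mimics the uniqueness arguments underlying \cref{thm:BPGL1SplittingAlpha}.
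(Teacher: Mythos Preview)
Your overall strategy matches the paper's: run the $BPGL\langle 1\rangle$-relative Adams spectral sequence with $E_2 = \Ext^{*,*,*}_{\cE(1)^\vee_p}(L_p(k),L_p(k))$, locate the identity class in tridegree $(0,0,0)$, and argue it survives. The paper establishes survival by observing that this is the $m=k$ summand of the spectral sequence already analysed in \cref{prop:AdamsDiffernentialsAllCases}, where $x_0$ was shown to be a permanent cycle; you instead give a direct degree check on the target $(-1,r,0)$. Both are valid, and your route is more self-contained.

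There is, however, a slip in your case division. You claim that for $F = \mathbb{F}_q$ with trivial Bockstein action, stem $-1$ of $\mathbb{F}_p[v_0,v_1]\otimes\mathbb{M}_p^F$ is already empty. This is false: $u$ (resp.\ the odd-primary analogue) lies in stem $-1$, so $u\,v_0^r$ is a nonzero class in $E_2^{-1,r,*}$. Moreover, $u v_0$ does \emph{not} vanish in this case, so your fallback clause ``each of which vanishes'' does not cover it either. The correct fix is exactly the weight observation you emphasise at the end: $u\,v_0^r\tau^a$ (resp.\ $u\,v_0^r\zeta^a$) has weight $-1-a<0$ (resp.\ $-i(1+a)<0$), so there is nothing in weight $0$. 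If you simply move the trivial-Bockstein $\mathbb{F}_q$ case into the ``remaining cases'' paragraph and argue via the weight constraint rather than a vanishing relation, the argument goes through without change.
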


\begin{proof}
    Consider the $BPGL \langle 1 \rangle$-relative Adams spectral sequence
    \[E_2^{s,f,w} = \text{Ext}^{s,f,w}_{\mathcal{E}(1)^\vee_p}(H_{*,*}^{BPGL \langle 1 \rangle}X, H_{*,*}^{BPGL \langle 1 \rangle}BPGL \langle 1 \rangle^{\langle n \rangle}) \implies [X, BPGL \langle 1 \rangle^{\langle n \rangle}]^{BPGL \langle 1 \rangle}_{(s,w)}.\] The $E_2$-page can be rewritten as $\Ext_{\cE(1)_p^\vee}^{s,f,w}(L_p(k), L_p(k))$, and we can view the identity map $L_p(k) \to L_p(k)$ as a class in $E_2^{0,0,0}$. This is a summand of the spectral sequence we analyzed in \cref{prop:AdamsDiffernentialsAllCases} and \cref{thm:BPGL1SplittingAlpha}, and by the analysis we gave there the class representing the identity map must survive the spectral sequence. So indeed this map lifts to an equivalence of $BP\langle 1 \rangle$-module spectra $X \to BPGL\langle 1 \rangle^{\langle\nu_p(k)\rangle}$.
\end{proof}

Our computations also yield a description of the $BPGL\langle 1  \rangle$ operations algebra. 

\begin{theorem}\label{thm: operations}
    The $BPGL\langle 1 \rangle$-operations algebra $[BPGL\langle 1 \rangle, BPGL\langle 1 \rangle]$ splits as
    \[ [BPGL\langle 1 \rangle, BPGL\langle 1 \rangle] \simeq \bigoplus\limits_{k=0}^{\infty} [\Sigma^{2k(p-1), k(p-1)}BPGL\langle 1 \rangle^{\langle \nu_p(k!) \rangle}, BPGL\langle 1 \rangle]^{BPGL\langle 1 \rangle}.\]

    Furthermore, the Adams spectral sequence computing $[BPGL\langle 1 \rangle, BPGL\langle 1 \rangle]$ has signature 
    \[E_2^{s,f,w} \cong \bigoplus\limits_{k=0}^{\infty}\Ext_{\cE(1)^\vee_p}(\Sigma^{2k(p-1), k(p-1)}L_p(k), \mM_p^F)\Longrightarrow [BPGL\langle 1 \rangle, BPGL\langle 1 \rangle] .\]
    All differentials in this Adams spectral sequence are determined by those in the Adams spectral sequence computing $\pi_{*,*}BPGL\langle 1 \rangle$, and \cref{prop:lightningExtCase1}, \cref{prop:RmotExtk>m}, and \cref{ExtKLeM:p=2Different} compute the relevant $\Ext$ groups.
\end{theorem}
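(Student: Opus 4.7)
The plan is to deduce both claims of the theorem from the spectrum-level splitting of \Cref{thm:BPGL1SplittingAlpha} combined with the relative Adams spectral sequence machinery already developed in the paper. First, I would apply the free--forgetful adjunction between motivic spectra and $BPGL\langle 1 \rangle$-modules to rewrite
\[[BPGL\langle 1 \rangle, BPGL\langle 1 \rangle] \cong [BPGL\langle 1 \rangle \wedge BPGL\langle 1 \rangle, BPGL\langle 1 \rangle]^{BPGL\langle 1 \rangle},\]
and then substitute \Cref{thm:BPGL1SplittingAlpha} to decompose the right hand side as a direct sum of the summands $[\Sigma^{2k(p-1), k(p-1)}BPGL\langle 1 \rangle^{\langle \nu_p(k!)\rangle}, BPGL\langle 1 \rangle]^{BPGL\langle 1 \rangle}$ together with a contribution from the Eilenberg--MacLane summand $V$. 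The indexed product collapses to a direct sum in each motivic bidegree since only finitely many shifts $2k(p-1)$ land in any fixed bidegree.

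To close out the splitting statement, I would argue that the $V$-contribution vanishes. Since $V$ is a wedge of suspensions of $H$, it would suffice to show $[\Sigma^{a,b} H, BPGL\langle 1 \rangle]^{BPGL\langle 1 \rangle} = 0$ for each $(a,b)$. By \Cref{prop:relHomology}, the relative Adams spectral sequence for this mapping group has $E_2$-page $\Ext^{s,f,w}_{\cE(1)^\vee_p}(\Sigma^{a,b}\cE(1)^\vee_p, \mM_p^F)$. Since $\cE(1)^\vee_p$ is projective over itself, this collapses to $\Hom$ in filtration zero, and a direct check of the coaction compatibilities (using that $\bar\tau_0$ and $\bar\tau_1$ are primitive in $\cE(1)^\vee_p$) would confirm that these $\Hom$ groups vanish, handing us the stated splitting.

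For the signature of the $E_2$-page of the operations Adams spectral sequence, I would observe that the $k$-th summand's relative Adams spectral sequence has $E_2$-page $\Ext^{s,f,w}_{\cE(1)^\vee_p}(\Sigma^{2k(p-1), k(p-1)}L_p(\nu_p(k!)), \mM_p^F)$, where the lightning-flash identification would use \Cref{prop:RelativeHomologyBPGL1AdamsCoverLightning}. These Ext groups have already been computed in \Cref{prop:lightningExtCase1}, \Cref{prop:RmotExtk>m}, and \Cref{ExtKLeM:p=2Different} (specializing to $m = 0$), depending on the base field. Taking the direct sum over $k$ would yield the claimed $E_2$-page.

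Finally, the differential analysis would mimic \Cref{prop:AdamsDiffernentialsAllCases}: the $BPGL\langle 1 \rangle$-module structure on $BPGL\langle 1 \rangle$ makes the operations relative Adams spectral sequence into a module over the $BPGL\langle 1 \rangle$-Adams spectral sequence for $\pi_{*,*}BPGL\langle 1 \rangle$, forcing every differential to be inherited from those in \Cref{bpgl1 difs finite fields}. The hard part will be the same stem-and-filtration bookkeeping needed in \Cref{prop:AdamsDiffernentialsAllCases}, namely ruling out extraneous differentials among the $v_1$-torsion generators $y_\ell$, the $v_1$-torsion-free generator $x$, and the torsion summand $B$. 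These are all confined to odd stems with no admissible target in $\mM_p^F$, so the same parity-and-degree-counting argument should carry through verbatim.
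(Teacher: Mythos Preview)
Your overall strategy matches the paper's proof exactly: use the free--forgetful adjunction to rewrite $[BPGL\langle 1 \rangle, BPGL\langle 1 \rangle]$ as $[BPGL\langle 1 \rangle \wedge BPGL\langle 1 \rangle, BPGL\langle 1 \rangle]^{BPGL\langle 1 \rangle}$, plug in \Cref{thm:BPGL1SplittingAlpha}, and then observe that the resulting relative Adams spectral sequence for each summand is precisely the $m=0$ piece of the spectral sequence already analyzed in \Cref{prop:AdamsDiffernentialsAllCases}. The paper's proof says no more than this.

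There is, however, a genuine error in the extra step you take. You claim that the $V$-contribution vanishes because $\Hom_{\cE(1)^\vee_p}(\Sigma^{a,b}\cE(1)^\vee_p, \mM_p^F) = 0$, but this is false. A comodule map $\phi:\cE(1)^\vee_p \to \mM_p^F$ must indeed kill $1$, $\bar\tau_0$, and $\bar\tau_1$ (precisely because $\bar\tau_0,\bar\tau_1$ are primitive, as you note), but it may send the top class $\bar\tau_0\bar\tau_1$ to any element of $\mM_p^F$. Equivalently, under the module--comodule equivalence of \Cref{prop:EquivofCats}, $\cE(1)^\vee_p$ is a free rank-one $\cE(1)_p$-module on $\bar\tau_0\bar\tau_1$ (in degree $(2p,p-1)$), so
\[
\Ext^{*}_{\cE(1)^\vee_p}(\cE(1)^\vee_p,\mM_p^F)\cong \Hom_{\cE(1)_p}(\Sigma^{2p,p-1}\cE(1)_p,\mM_p^F)\cong \Sigma^{-2p,-(p-1)}\mM_p^F
\]
is nonzero and concentrated in filtration zero. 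Consequently $[\Sigma^{a,b}H,BPGL\langle 1\rangle]^{BPGL\langle 1\rangle}$ is nonzero, and the $V$-summand does contribute to the operations algebra. The paper's own proof simply does not address this point; the theorem as stated suppresses the $V$-contribution (and writes $L_p(k)$ where $L_p(\nu_p(k!))$ is meant), so your instinct to worry about it is correct even though your proposed resolution is not.
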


\begin{proof}
    Observe that \[ [BPGL\langle 1 \rangle, BPGL\langle 1 \rangle] \simeq [BPGL\langle 1 \rangle \wedge BPGL\langle 1 \rangle, BPGL\langle 1 \rangle]^{BPGL\langle 1 \rangle},\]
    so applying \cref{thm:BPGL1SplittingAlpha} yields the splitting. For each $k$, the relative Adams spectral sequence \[E_2^{s,f,w} \cong \Ext_{\cE(1)_p^\vee}^{s,f,w}(\Sigma^{2k(p-1), k(p-1)}L_p(k), \mM_p^F)\Longrightarrow [\Sigma^{2k(p-1), k(p-1)}BPGL\langle 1 \rangle^{\langle \nu_p(k!) \rangle}, BPGL\langle 1 \rangle] \] is just the $m=0$ summand of the spectral sequence discussed in \cref{prop:AdamsDiffernentialsAllCases}, and so by the same reasoning as in that proof, all differentials are determined by those in the Adams spectral sequence computing $\pi_{**}BPGL\langle 1 \rangle$.
\end{proof}

\subsection{Application to the $BPGL \langle 1 \rangle$-motivic Adams spectral sequence} 
In this final section, we apply our results to describe the $E_1$-page of the $BPGL \langle 1 \rangle$-based motivic Adams spectral sequence. Recall from \Cref{subsec:adams spectral sequence} that the $BPGL \langle 1 \rangle$-based motivic Adams spectral sequence takes the form
\[E_1=\pi^F_{s+f, w}(BPGL \langle n \rangle \wedge \overline{BPGL \langle n \rangle}^{\wedge f}) \implies \pi^F_{s,w}\mathbb{S}_{(p)}.\]
We can compute each filtration $f=n$-piece of the $E_1$-page by a motivic Adams spectral sequence taking the form
\[E_2 = \text{Ext}_{\mathcal{A}^\vee_p}^{s,f,w}\left(\mathbb{M}_p, H_{*,*}(BPGL \langle 1 \rangle \wedge \overline{BPGL \langle 1 \rangle}^{\wedge n})\right) \implies \pi_{s,w}^{F}(BPGL \langle 1 \rangle \wedge \overline{BPGL \langle 1 \rangle} ^{\wedge n}).\]
We begin by describing the $E_2$-page of this spectral sequence. First, we make some more general observations.
\begin{lemma}
\label{lemma:BpglnBarHomology}
    There is an isomorphism of $\mathcal{E}(n)^\vee$-comodules
    \[H_{*,*}(\overline{BPGL \langle n \rangle}) \cong \bigoplus_{k\geq 1}\Sigma^{2k(p-1), k(p-1)}B_{n-1}(k).\]
\end{lemma}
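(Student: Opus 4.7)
The plan is to extract the result directly from the cofiber sequence defining $\overline{BPGL\langle n \rangle}$ together with the Brown--Gitler decomposition of \Cref{prop:BGdecomposition}.

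First I would apply $H_{*,*}(-)$ to the cofiber sequence
\[
\mathbb{S} \to BPGL\langle n \rangle \to \overline{BPGL\langle n \rangle}.
\]
The unit map $\mathbb{S} \to BPGL\langle n \rangle$ induces the inclusion $\mathbb{M}_p \hookrightarrow H_{*,*}BPGL\langle n \rangle$ sending $1 \mapsto 1$, which is split injective as an $\mathcal{A}_p^\vee$-comodule map (the augmentation provides a retraction). Thus the long exact sequence in homology collapses to a split short exact sequence of $\mathcal{A}_p^\vee$-comodules (and in particular of $\mathcal{E}(n)_p^\vee$-comodules):
\[
0 \to \mathbb{M}_p \to H_{*,*}BPGL\langle n \rangle \to H_{*,*}\overline{BPGL\langle n \rangle} \to 0.
\]

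Next, I would invoke \Cref{prop:BGdecomposition}, which gives the $\mathcal{E}(n)_p^\vee$-comodule isomorphism
\[
H_{*,*}BPGL\langle n \rangle \cong \bigoplus_{k \geq 0} \Sigma^{2k(p-1), k(p-1)} B_{n-1}(k).
\]
The $k=0$ summand $B_{n-1}(0)$ is, by construction, the $\mathbb{M}_p$-subspace of $\mathcal{A}_p^\vee \Box_{\mathcal{E}(n)_p^\vee} \mathbb{M}_p$ spanned by monomials of weight zero, i.e.\ $\mathbb{M}_p\{1\}$. This is precisely the image of the unit, so identifying the $k=0$ piece with the copy of $\mathbb{M}_p$ coming from $\mathbb{S}$ and taking the quotient yields
\[
H_{*,*}\overline{BPGL\langle n \rangle} \cong \bigoplus_{k \geq 1} \Sigma^{2k(p-1), k(p-1)} B_{n-1}(k)
\]
as $\mathcal{E}(n)_p^\vee$-comodules.

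The only subtlety is verifying that the $k=0$ summand in the Brown--Gitler decomposition coincides with the image of the unit; but this is automatic from the weight filtration definition of the $B_{n-1}(k)$ together with the fact that the unit map hits the algebra generator $1$. There is no serious obstacle here: everything follows by combining a cofiber sequence argument with the already-established decomposition.
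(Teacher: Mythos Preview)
Your proposal is correct and follows essentially the same approach as the paper: both use the long exact sequence in homology associated to the cofiber sequence $\mathbb{S} \to BPGL\langle n\rangle \to \overline{BPGL\langle n\rangle}$, identify $H_{*,*}(\mathbb{S}) = \mathbb{M}_p$ with the $k=0$ summand $B_{n-1}(0)$ of the Brown--Gitler decomposition of $H_{*,*}BPGL\langle n\rangle$, and conclude. Your version is more explicit about the splitting and the identification of $B_{n-1}(0)$ with the image of the unit, which the paper leaves implicit.
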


\begin{proof}
    The cofiber sequence $\mathbb{S} \to BPGL \langle n \rangle \to \overline{BPGL \langle n \rangle}$ gives a long exact sequence in homology. Since $H_{*,*}(BPGL \langle n \rangle) \cong \bigoplus_{k \geq 0}\Sigma^{2k(p-1), k(p-1)}B_{n-1}(k)$ by \Cref{prop:BPGLnHomology} and $H_{*,*}(\mathbb{S}) = \mathbb{M}_p = B_{n-1}(0)$, the result follows.
\end{proof}

\begin{lemma}
\label{lemma:BigKunneth}
    There is an isomorphism of $\mathcal{A}^\vee_p$-comodules
    \[H_{*,*}(BPGL \langle n \rangle \wedge \overline{BPGL \langle n \rangle}^{\wedge f}) \cong H_{*,*}(BPGL \langle n \rangle) \otimes H_{*,*}(\overline{BPGL\langle n \rangle})^{\otimes f}.\]
\end{lemma}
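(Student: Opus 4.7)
The plan is to prove the statement by induction on $f$, smashing on one factor of $\overline{BPGL\langle n\rangle}$ at each stage and invoking the motivic K\"unneth spectral sequence (the same tool used in \Cref{prop:BPGLnKunneth}). The base case $f=0$ is trivial, and the base case $f=1$ is handled by applying the K\"unneth spectral sequence to the smash product $BPGL\langle n\rangle \wedge \overline{BPGL\langle n\rangle}$, which will collapse for flatness reasons (see below). For the inductive step, assuming the statement holds for $f$, we smash with one additional copy of $\overline{BPGL\langle n\rangle}$ and apply the K\"unneth spectral sequence to the pair $(BPGL\langle n\rangle\wedge\overline{BPGL\langle n\rangle}^{\wedge f},\ \overline{BPGL\langle n\rangle})$.

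The key ingredient is to show that $H_{*,*}(\overline{BPGL\langle n\rangle})$ is flat (in fact free) as an $\mathbb{M}_p$-module; granting this, every Tor term in the K\"unneth spectral sequence
\[E_2 = \mathrm{Tor}^{\mathbb{M}_p}_{*,*,*}\!\left(H_{*,*}(BPGL\langle n\rangle\wedge\overline{BPGL\langle n\rangle}^{\wedge f}),\ H_{*,*}(\overline{BPGL\langle n\rangle})\right)\]
in positive homological degree vanishes, the spectral sequence collapses at $E_2$, and no extension problems can occur since the $E_2$-page is concentrated in the $0$-line. Combining this with the inductive hypothesis immediately yields the claimed isomorphism.

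To verify the flatness claim, I use \Cref{lemma:BpglnBarHomology}, which identifies
\[H_{*,*}(\overline{BPGL\langle n\rangle}) \cong \bigoplus_{k\geq 1}\Sigma^{2k(p-1), k(p-1)} B_{n-1}(k).\]
Each Brown--Gitler comodule $B_{n-1}(k)$ sits inside $\mathcal{A}_p^\vee\!/\!/\mathcal{E}(n)_p^\vee$ as the $\mathbb{M}_p$-submodule spanned by monomials of weight at most $pk$. Since the monomial basis of $\mathcal{A}_p^\vee\!/\!/\mathcal{E}(n)_p^\vee$ exhibits it as a free $\mathbb{M}_p$-module and the weight filtration is by $\mathbb{M}_p$-submodules with free graded pieces, each $B_{n-1}(k)$ is free over $\mathbb{M}_p$; hence so is the direct sum. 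The comodule structure is then compatible because the K\"unneth isomorphism of spectra is an $\mathcal{A}_p^\vee$-comodule map.

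The only potential obstacle is confirming that the weight-truncation defining $B_{n-1}(k)$ really does produce an $\mathbb{M}_p$-free module rather than just an $\mathbb{M}_p$-submodule with torsion. This is immediate from the definition because the weight function is defined on monomials and extended multiplicatively, so the submodule of weight $\leq pk$ has an $\mathbb{M}_p$-basis given by such monomials. With this in hand, the induction goes through cleanly and gives the full statement.
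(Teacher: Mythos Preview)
Your proposal is correct and follows essentially the same approach as the paper: induct on $f$, apply the K\"unneth spectral sequence at each step, and use freeness over $\mathbb{M}_p$ (via \Cref{lemma:BpglnBarHomology}) to collapse it. The only cosmetic difference is that for the base case $f=1$ the paper invokes freeness of $H_{*,*}(BPGL\langle n\rangle)$ rather than of $H_{*,*}(\overline{BPGL\langle n\rangle})$, but either suffices.
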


\begin{proof}
    This follows by a similar analysis as was performed in \Cref{prop:BPGLnKunneth}. We induct on $f$. For $f=1$, consider the K\"unneth spectral sequence
    \[E_2 = \text{Tor}^{\mathbb{M}_p}(H_{*,*}(BPGL \langle n \rangle), H_{*,*}(\overline{BPGL \langle n \rangle})) \implies H_{*,*}(BPGL \langle n \rangle \wedge BPGL \langle n \rangle).\]
    Since $H_{*,*}(BPGL \langle n \rangle)$ is free over $\mathbb{M}_p$, the spectral sequence collapses and the result follow. Suppose now this is true for all $f$ up to $k-1$, and consider the K\"unneth spectral sequence
    \begin{align*}
    \begin{split}
    E_2 = \text{Tor}^{\mathbb{M}_p}(H_{*,*}(BPGL \langle n \rangle \wedge \overline{BPGL \langle n \rangle}^{\wedge {k-1}}), H_{*,*}(\overline{BPGL \langle n \rangle})) \\
    \implies H_{*,*}(BPGL \langle n \rangle \wedge \overline{BPGL \langle n \rangle}^{\wedge k}).
    \end{split}
    \end{align*}
    By induction, we have a K\"unneth isomorphism on the left hand factor in Tor. By \Cref{lemma:BpglnBarHomology}, $H_{*,*}(\overline{BPGL\langle n \rangle})$ is free over $\mathbb{M}_p$, so the spectral sequence collapses. The result follows.
\end{proof}

We now specialize to $BPGL\langle 1 \rangle$.
\begin{proposition}
\label{prop:E2BPGL1Resn-line}
    The $E_2$-page of the $\textup{\textbf{mASS}}_p^F(BPGL \langle 1 \rangle \wedge \overline{BPGL \langle 1 \rangle}^{\wedge f})$ may be rewritten as
    \[E_2^{s,f,w} = \bigoplus_{I \in \mathcal{I}_n}\Sigma^{2|I|(p-1), |I|(p-1)}\textup{Ext}^{s,f,w}_{\mathcal{E}(1)^\vee_p}(\mathbb{M}_p, L_p(\nu_p(I!))) \oplus W,\]
    where $\mathcal{I}_n = \{I=(k_1, \dots, k_n): k_j \geq 1 \text{ for all } 1 \leq j \leq n\}$, $|I| = k_1 + \cdots+ k_n$, and $L_p(\nu_p(I!)) = L_p(\nu_p(k_1!)) \otimes \cdots \otimes L_p(\nu_p(k_n!)),$ and $W$ is a sum of suspensions of $\mathbb{M}_p$ in filtration 0.    
\end{proposition}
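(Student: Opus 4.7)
The plan is to combine the K\"unneth isomorphism with a change of rings, then expand the resulting tensor power using the Brown--Gitler and lightning flash decompositions established in \Cref{sec:motivic homology}.

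First, I would apply \Cref{lemma:BigKunneth} to obtain an $\mathcal{A}^\vee_p$-comodule isomorphism
\[
H_{*,*}\bigl(BPGL\langle 1 \rangle \wedge \overline{BPGL\langle 1 \rangle}^{\wedge f}\bigr) \cong H_{*,*}BPGL\langle 1 \rangle \otimes H_{*,*}(\overline{BPGL\langle 1 \rangle})^{\otimes f}.
\]
Combining this with the identification $H_{*,*}BPGL\langle 1 \rangle \cong \mathcal{A}^\vee_p \square_{\mathcal{E}(1)^\vee_p}\mathbb{M}_p$ from \Cref{prop:BPGLnHomology} and invoking the standard change of rings isomorphism for extended comodules converts the $E_2$-page into
\[
E_2^{s,f,w} \cong \textup{Ext}^{s,f,w}_{\mathcal{E}(1)^\vee_p}\bigl(\mathbb{M}_p,\, H_{*,*}(\overline{BPGL\langle 1 \rangle})^{\otimes f}\bigr).
\]

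Next, I would use \Cref{lemma:BpglnBarHomology} to write $H_{*,*}(\overline{BPGL\langle 1 \rangle}) \cong \bigoplus_{k \geq 1}\Sigma^{2k(p-1),k(p-1)}B_0(k)$ and distribute the $f$-fold tensor product. Indexing summands by tuples $I = (k_1, \ldots, k_f)$ with each $k_j \geq 1$ yields
\[
H_{*,*}(\overline{BPGL\langle 1 \rangle})^{\otimes f} \cong \bigoplus_{I \in \mathcal{I}_n} \Sigma^{2|I|(p-1),|I|(p-1)}\, B_0(k_1)\otimes \cdots \otimes B_0(k_f).
\]
Then applying \Cref{prop:LightningFlashBGiso} replaces each factor $B_0(k_j) \cong L_p(\nu_p(k_j!))\oplus W_{k_j}$, where $W_{k_j}$ is a sum of suspensions of $\mathcal{E}(1)^\vee_p$. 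Expanding this tensor decomposition produces the ``pure lightning flash'' summand $L_p(\nu_p(I!)) = L_p(\nu_p(k_1!)) \otimes \cdots \otimes L_p(\nu_p(k_f!))$ together with a collection of cross terms, each containing at least one $W_{k_j}$.

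The remaining step — and the main technical point — is to show that every cross term contributes only to the $\mathbb{M}_p$-free, filtration-zero summand $W$. For this I would invoke the shearing isomorphism: whenever $W \cong \mathcal{E}(1)^\vee_p \otimes V$ is an extended (hence free) comodule and $N$ is any $\mathcal{E}(1)^\vee_p$-comodule, there is a comodule isomorphism $W \otimes N \cong \mathcal{E}(1)^\vee_p \otimes (V \otimes N)$, so that $W \otimes N$ is again free and therefore injective by \Cref{prop:freeImpliesInjective}. Consequently
\[
\textup{Ext}^{s,f,w}_{\mathcal{E}(1)^\vee_p}(\mathbb{M}_p, W \otimes N) \cong \mathbb{M}_p \square_{\mathcal{E}(1)^\vee_p}(W \otimes N)
\]
is concentrated in Adams filtration $0$ and is a sum of suspensions of $\mathbb{M}_p$. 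Packaging all such contributions across the cross terms and all $I \in \mathcal{I}_n$ assembles the summand $W$ of the statement.

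The chief obstacle is verifying the shearing isomorphism in the cases where $\eta_R$ is nontrivial on $\mathbb{M}_p$ (namely $F = \mathbb{R}$, or $F = \mathbb{F}_q$ with $q \equiv 3 \, (4)$ at the prime $2$, and the analogous finite field cases at odd primes). In those settings, the diagonal coaction on $W \otimes N$ must be carefully reconciled with the left unit structure, but this is a routine diagram chase with the coproduct and unit axioms for $\mathcal{E}(1)^\vee_p$ once the relevant left/right module structures are tracked. Once this is in place, the decomposition of the $E_2$-page follows formally.
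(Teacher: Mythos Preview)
Your proposal is correct and follows essentially the same route as the paper: K\"unneth plus change of rings reduces to $\textup{Ext}_{\mathcal{E}(1)^\vee_p}(\mathbb{M}_p, H_{*,*}(\overline{BPGL\langle 1\rangle})^{\otimes f})$, then one expands each tensor factor via \Cref{lemma:BpglnBarHomology} and \Cref{prop:LightningFlashBGiso}. The only cosmetic difference is that the paper peels off one tensor factor at a time (so at each stage the free $W_{k}$ part, tensored with whatever remains, is visibly free and lands in filtration~$0$), whereas you expand all factors at once and then invoke the shearing isomorphism to handle the cross terms; these are the same argument, and your version in fact makes the ``cross terms are free'' step more explicit than the paper does.
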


\begin{proof}
    By the K\"unneth isomorphism of \Cref{lemma:BigKunneth} and the change-of-rings isomorphism, we may rewrite the $E_2$-page as
    \begin{align}
    \label{ss:mASS-E1-page-BPGL1-nline}
    \text{Ext}^{s,f,w}_{\mathcal{E}(1)^\vee_p}(\mathbb{M}_p, H_{*,*}(\overline{BPGL \langle 1 \rangle}) ^{\otimes n}).
    \end{align}
    \Cref{lemma:BpglnBarHomology} and \cref{prop:LightningFlashBGiso} supply us with isomorphisms of $\mathcal{E}(1)^\vee_p$-comodules 
    \[H_{*,*}(\overline{BPGL \langle 1 \rangle}) \cong\bigoplus_{k \geq 1}\Sigma^{2k(p-1), k(p-1)}B_0(k) \cong \bigoplus_{k \geq 1} \Sigma^{2k(p-1), k(p-1)}L_p(\nu_p(k!)) \oplus W_k,\]
    where $W_k$ is a sum of suspensions of $\mathcal{E}(1)^\vee_p$.
    Using this to replace the first factor of $H_{*,*}(\overline{BPGL \langle 1 \rangle})$ in \Cref{ss:mASS-E1-page-BPGL1-nline}, we can rewrite the $E_2$-page again as
    \[\bigoplus_{k_1 \geq 1}\Sigma^{2k_1(p-1), k_1(p-1)}\text{Ext}_{\mathcal{E}(1)^\vee_p}^{s,f,w}(\mathbb{M}_p, L_p(\nu_p(k_1!)) \otimes H_{*,*}(\overline{BPGL \langle 1 \rangle})^{\otimes n-1}) \oplus W_1,\]
    where $W_1$ is a sum of suspensions of $\mathbb{M}_p$ in filtration 0.
    By repeating this process, each time rewriting the tensor factors of $H_{*,*}(\overline{BPGL \langle 1 \rangle})$, we arrive at
    \[\bigoplus_{I \in \mathcal{I}_n}\Sigma^{2|I|(p-1), |I|(p-1)}\text{Ext}^{s,f,w}_{\mathcal{E}(1)^\vee_p}(\mathbb{M}_p, L_p(\nu_p(I!))) \oplus W,\]
    where $\mathcal{I}_n = \{I=(k_1, \dots, k_n): k_j \geq 1 \text{ for all } 1 \leq j \leq n\}$, $|I| = k_1 + \cdots +k_n$, and $L_p(\nu_p(I!)) = L_p(\nu_p(k_1!)) \otimes \cdots \otimes L_p(\nu_p(k_n!)),$ and $W$ is a sum of suspensions of $\mathbb{M}_p$ in filtration 0.
\end{proof}

The proof of \Cref{thm:BPGL1homotopyCooperations} then allows us to compute the homotopy of the $n$-line of the $BPGL \langle 1 \rangle$-motivic Adams spectral sequence.
\begin{cor}
\label{cor:n-line}
    Let $F \in \{\mathbb{C}, \mathbb{R}, \mathbb{F}_q\}$, where $\textup{char}(\mathbb{F}_q) \neq p$. Then there is an isomorphism for all primes $p$:
    \begin{align*}
    \begin{split}
    \pi_{*,*}^F(BPGL \langle 1 \rangle \wedge \overline{BPGL \langle 1 \rangle}^{\wedge n}) \cong \bigoplus_{I \in \mathcal{I}_n}((\pi_{*,*}^FBPGL \langle 0 )\rangle\{x_0, \dots, x_{m-1}\} \\
    \oplus (\pi_{*,*}^FBPGL \langle 1 \rangle)\{x_m\} \oplus W_I),
    \end{split}
    \end{align*}
    where $m = \nu_p(k_1!) + \cdots \nu_p(k_n!)$ for $I= (k_1, \dots, k_n)$, and $W_I$ is a sum of suspensions of $\mathbb{M}_p$.
\end{cor}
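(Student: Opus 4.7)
The plan is to analyze the $H\mathbb{F}_p$-based motivic Adams spectral sequence
$$E_2^{s,f,w} = \Ext_{\mathcal{A}^\vee_p}^{s,f,w}\bigl(\mathbb{M}_p, H_{*,*}(BPGL\langle 1 \rangle \wedge \overline{BPGL\langle 1 \rangle}^{\wedge n})\bigr) \Longrightarrow \pi_{s,w}^F(BPGL\langle 1 \rangle \wedge \overline{BPGL\langle 1 \rangle}^{\wedge n}),$$
whose $E_2$-page is already decomposed in Proposition \ref{prop:E2BPGL1Resn-line} as
$$\bigoplus_{I \in \mathcal{I}_n}\Sigma^{2|I|(p-1), |I|(p-1)}\Ext^{s,f,w}_{\mathcal{E}(1)^\vee_p}\bigl(\mathbb{M}_p, L_p(\nu_p(I!))\bigr) \oplus W,$$
with $W$ a sum of suspensions of $\mathbb{M}_p$ in filtration $f=0$. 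The goal is to identify each summand indexed by $I = (k_1, \dots, k_n)$ with the shape predicted by Lemma \ref{lem:lightning flash ext 1}, and to transport this through the spectral sequence via $v_1$-linearity, in direct analogy with the proof of Theorem \ref{thm:BPGL1homotopyCooperations}.

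The key technical step is to establish an $\mathcal{E}(1)^\vee_p$-comodule isomorphism
$$L_p(a_1) \otimes \cdots \otimes L_p(a_n) \cong L_p(a_1 + \cdots + a_n) \oplus U,$$
where $U$ is free (and hence injective by Proposition \ref{prop:freeImpliesInjective}) over $\mathcal{E}(1)^\vee_p$. I would prove this by induction on $n$, reducing to the case $n=2$. That base case is handled by the Motivic Whitehead Theorem for Margolis homology (Proposition \ref{prop:MotivicWhitehead}): the K\"unneth formula for Margolis homology together with the Margolis homology computations for $L_p(k)$ in the proof of Proposition \ref{prop:LightningFlashBGiso} shows that $(L_p(a) \otimes L_p(b))/(x,y)$ and $L_p(a+b)/(x,y)$ have matching one-dimensional $Q_0$- and $Q_1$-Margolis homologies. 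An explicit $\mathcal{E}(1)_p$-module inclusion $L_p(a+b) \hookrightarrow L_p(a) \otimes L_p(b)$ realizing these isomorphisms then gives the desired decomposition.

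Granting this, the injectivity of $U$ causes its contribution to the Ext group to collapse into filtration $f=0$, and Lemma \ref{lem:lightning flash ext 1} identifies
$$\Ext^{s,f,w}_{\mathcal{E}(1)^\vee_p}\bigl(\mathbb{M}_p, L_p(\nu_p(I!))\bigr) \cong \bigoplus_{i=0}^{m-1}\Ext^{s,f,w}_{\mathcal{E}(0)^\vee_p}(\mathbb{M}_p)\{x_i\} \oplus \Ext^{s,f,w}_{\mathcal{E}(1)^\vee_p}(\mathbb{M}_p)\{x_m\} \oplus W_I',$$
with $m = \nu_p(k_1!) + \cdots + \nu_p(k_n!)$, with hidden extensions $v_1 x_{i-1} = v_0 x_i$, and with $W_I'$ a sum of suspensions of $\mathbb{M}_p$ in filtration $f=0$. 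The differentials on each summand are then determined exactly as in the proof of Theorem \ref{thm:BPGL1homotopyCooperations}: the spectral sequence is a module over $\Ext^{s,f,w}_{\mathcal{E}(1)^\vee_p}(\mathbb{M}_p)$ with each summand generated by $x_0, \dots, x_m$, so every differential is $v_0$- and $v_1$-linear; the $W_I'$-summand (in filtration zero and $v_0$-torsion) neither supports nor receives a differential; and the remaining differentials are forced by $v_1$-linearity from those in the $\textbf{mASS}^F_p(BPGL\langle 0 \rangle)$. Passing to $E_\infty$ gives the claimed description for each $I \in \mathcal{I}_n$. The main obstacle is the tensor-product decomposition of lightning flash modules: producing the explicit inclusion needed to invoke the motivic Whitehead theorem requires careful bookkeeping of the $Q_0$, $Q_1$-relations defining $L_p(k)$.
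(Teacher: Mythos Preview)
Your proposal is correct and follows the same route as the paper, which simply asserts that the proof of Theorem~\ref{thm:BPGL1homotopyCooperations} carries over once Proposition~\ref{prop:E2BPGL1Resn-line} has decomposed the $E_2$-page. You are in fact more explicit than the paper on the one nontrivial point: the paper writes $L_p(\nu_p(I!))$ for the \emph{tensor product} $L_p(\nu_p(k_1!)) \otimes \cdots \otimes L_p(\nu_p(k_n!))$ and then silently invokes Lemma~\ref{lem:lightning flash ext 1} with $k = m = \sum_j \nu_p(k_j!)$, which requires exactly the stable isomorphism $L_p(a)\otimes L_p(b) \cong L_p(a+b) \oplus (\text{free})$ that you isolate and prove via Proposition~\ref{prop:MotivicWhitehead}. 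Your Margolis-homology argument for this step is the natural one and works as stated.
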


This determines the $E_1$-page of the $BPGL\langle 1 \rangle$-based motivic Adams spectral sequence as a module over $\pi_{**}BPGL\langle 1 \rangle$. To conclude, we observe that our spectrum-level splitting of the cooperations algebra lifts to describe the entire $n$-line.

\begin{proposition}
    There is an equivalence of $p$-complete spectra:
    \[BPGL \langle 1 \rangle \wedge \overline{BPGL \langle 1 \rangle}^{\wedge n} \simeq\bigvee_{I \in \mathcal{I}_n}\Sigma^{2|I|(p-1), |I|(p-1)}BPGL \langle 1 \rangle^{\langle m \rangle} \vee V,\]
    where $m = \nu_p(k_1!) + \dots +\nu_p(k_n!)$ for $I = (k_1, \dots, k_n)$, and $V$ is a wedge of suspensions of $H$.
\end{proposition}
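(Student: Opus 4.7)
The plan is to proceed by induction on $n$. The base case $n = 0$ is immediate, since $\mathcal{I}_0$ consists only of the empty tuple with $|I| = m = 0$ and $BPGL\langle 1 \rangle^{\langle 0 \rangle} = BPGL\langle 1 \rangle$. The case $n = 1$ follows from \Cref{thm:BPGL1SplittingAlpha} by passing to the cofiber of the unit $BPGL\langle 1 \rangle \to BPGL\langle 1 \rangle \wedge BPGL\langle 1 \rangle$, which deletes the $k = 0$ summand of the splitting.

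For the inductive step, I would factor
\[
BPGL\langle 1 \rangle \wedge \overline{BPGL\langle 1 \rangle}^{\wedge n} \simeq \left(BPGL\langle 1 \rangle \wedge \overline{BPGL\langle 1 \rangle}\right) \wedge_{BPGL\langle 1 \rangle} \left(BPGL\langle 1 \rangle \wedge \overline{BPGL\langle 1 \rangle}^{\wedge (n-1)}\right),
\]
substitute the $n = 1$ splitting in the first factor and the inductive hypothesis in the second, and distribute the smash product over $BPGL\langle 1 \rangle$. Terms that involve a wedge of $H$'s from either side remain wedges of suspensions of $H$ after smashing, since $H \wedge X$ is a wedge of suspensions of $H$ whenever $H_{*,*}(X)$ is free over $\mathbb{M}_p^F$. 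What remains is a wedge indexed by pairs $(k, I')$ with $k \geq 1$ and $I' \in \mathcal{I}_{n-1}$; reindexing via $(k, k_1', \dots, k_{n-1}') \in \mathcal{I}_n$ accounts for the claimed suspension $\Sigma^{2|I|(p-1),\, |I|(p-1)}$ and exponent $m = \sum_i \nu_p(k_i!)$. This reduces the inductive step to the key lemma
\[
BPGL\langle 1 \rangle^{\langle a \rangle} \wedge_{BPGL\langle 1 \rangle} BPGL\langle 1 \rangle^{\langle b \rangle} \simeq BPGL\langle 1 \rangle^{\langle a+b \rangle} \vee V_{a,b},
\]
with $V_{a,b}$ a wedge of suspensions of $H$.

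To prove the key lemma, I would compute the relative $H$-homology via a Kunneth spectral sequence over $H$: since $L_p(a)$ is free over $\mathbb{M}_p^F$, this collapses to give
\[
H_{*,*}^{BPGL\langle 1 \rangle}\bigl(BPGL\langle 1 \rangle^{\langle a \rangle} \wedge_{BPGL\langle 1 \rangle} BPGL\langle 1 \rangle^{\langle b \rangle}\bigr) \cong L_p(a) \otimes_{\mathbb{M}_p^F} L_p(b)
\]
as $\mathcal{E}(1)_p^\vee$-comodules. Because the coproduct on each $Q_i$ is primitive, a Kunneth-style computation on Margolis homology shows that $M_*(L_p(a) \otimes L_p(b), Q_0)$ and $M_*(L_p(a) \otimes L_p(b), Q_1)$ are each one-dimensional over $\mathbb{M}_p^F$, matching those of $L_p(a+b)$. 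An explicit zigzag embedding $L_p(a+b) \hookrightarrow L_p(a) \otimes L_p(b)$ realizes this isomorphism on Margolis homologies, and the motivic Whitehead theorem (\Cref{prop:MotivicWhitehead}) then yields an $\mathcal{E}(1)_p^\vee$-comodule splitting $L_p(a) \otimes L_p(b) \cong L_p(a+b) \oplus F_{a,b}$ with $F_{a,b}$ free. A relative $BPGL\langle 1 \rangle$-Adams spectral sequence argument analogous to \Cref{subsec:BPGL1relASS}, combined with the uniqueness of Adams covers (\Cref{prop:UniquenessOfAdamsCoversBPGL1}), lifts this algebraic splitting to the claimed $BPGL\langle 1 \rangle$-module equivalence. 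The main obstacle is the key lemma, specifically the two-variable Margolis Kunneth computation and the adaptation of the convergence analysis of the relative Adams spectral sequence to the present Hom setting between sums of Adams covers.
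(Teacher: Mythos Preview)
Your proposal is correct but organized differently from the paper's argument. The paper proceeds in one shot rather than by induction: it first runs the same argument as \Cref{prop:BPGL1SplittingBeta} to split off all free $\mathcal{E}(1)_p^\vee$-summands at once, obtaining $BPGL\langle 1\rangle \wedge \overline{BPGL\langle 1\rangle}^{\wedge n} \simeq C \vee V$ with $H_{*,*}^{BPGL\langle 1\rangle}C \cong \bigoplus_{I \in \mathcal{I}_n}\Sigma^{2|I|(p-1),|I|(p-1)}L_p(m)$, and then invokes the uniqueness of Adams covers (\Cref{prop:UniquenessOfAdamsCoversBPGL1}) together with the relative Adams spectral sequence analysis of \Cref{subsec:BPGL1relASS} to identify $C$. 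The algebraic identification $L_p(a)\otimes_{\mathbb{M}_p^F} L_p(b) \cong L_p(a+b)\oplus(\text{free})$ is used implicitly in the paper (it is what underlies \Cref{cor:n-line} and the homology computation of $C$), whereas you make it explicit via Margolis homology and \Cref{prop:MotivicWhitehead}; your route therefore supplies a proof of a step the paper treats as routine, and packages it as a clean reusable key lemma about smash products of Adams covers. One small correction: the Margolis homologies you compute should be stated as one-dimensional over $\mathbb{F}_p$ after reducing modulo $(x)$ and $(y)$, not over $\mathbb{M}_p^F$, since that is the setting of \Cref{prop:MotivicWhitehead}; this is cosmetic and does not affect the argument.
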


\begin{proof}
    By the same proof as in \Cref{prop:BPGL1SplittingBeta}, we can produce a splitting
    \[BPGL \langle 1 \rangle \wedge \overline{BPGL \langle 1 \rangle}^{\wedge n} \simeq C \vee V,\]
    where $V$ is a wedge of suspensions of $H$ and 
    \[H_{*,*}^{BPGL \langle 1 \rangle}C \cong \bigoplus_{I \in \mathcal{I}_n}\Sigma^{2|I|(p-1), |I|(p-1)}L_p(m),\]
    where $m = \nu_p(k_1!) + \cdots +\nu_p(k_n!)$. By uniqueness of Adams covers shown in \Cref{prop:UniquenessOfAdamsCoversBPGL1}, it follows that $C \simeq BPGL \langle 1 \rangle^{\langle m \rangle}$.
\end{proof}

We expect this spectrum-level decomposition will be useful in determining differentials in the $BPGL \langle 1 \rangle$-motivic Adams spectral sequence. In particular, this decomposition is used in the analysis of the classical $ko$-and $BP\langle 1 \rangle$-Adams spectral sequences \cite{BBBCX, Gonzalez00}, and we plan to adapt those strategies to the motivic setting.

\printbibliography

@inproceedings {Voevosdsky98,
    AUTHOR = {Voevodsky, Vladimir},
     TITLE = {{$\mathbf{A}^1$}-homotopy theory},
 BOOKTITLE = {Proceedings of the {I}nternational {C}ongress of
              {M}athematicians, {V}ol. {I} ({B}erlin, 1998)},
   JOURNAL = {Doc. Math.},
  FJOURNAL = {Documenta Mathematica},
      YEAR = {1998},
     PAGES = {579--604},
      ISSN = {1431-0635,1431-0643},
   MRCLASS = {14F35 (14A15 55U35)},
  MRNUMBER = {1648048},
MRREVIEWER = {Mark\ Hovey},
}

@article {Jardine00,
    AUTHOR = {Jardine, J. F.},
     TITLE = {Motivic symmetric spectra},
   JOURNAL = {Doc. Math.},
  FJOURNAL = {Documenta Mathematica},
    VOLUME = {5},
      YEAR = {2000},
     PAGES = {445--552},
      ISSN = {1431-0635,1431-0643},
   MRCLASS = {55P42 (14F42 55U35)},
  MRNUMBER = {1787949},
MRREVIEWER = {Jianqiang\ Zhao},
}

@article {Hoyois15,
    AUTHOR = {Hoyois, Marc},
     TITLE = {From algebraic cobordism to motivic cohomology},
   JOURNAL = {J. Reine Angew. Math.},
  FJOURNAL = {Journal f\"ur die Reine und Angewandte Mathematik. [Crelle's
              Journal]},
    VOLUME = {702},
      YEAR = {2015},
     PAGES = {173--226},
      ISSN = {0075-4102,1435-5345},
   MRCLASS = {14F43 (14F42 55N22 55U35)},
  MRNUMBER = {3341470},
MRREVIEWER = {Christophe\ Cazanave},
       DOI = {10.1515/crelle-2013-0038},
       URL = {https://doi.org/10.1515/crelle-2013-0038},
}

@article {Spitzweck18,
    AUTHOR = {Spitzweck, Markus},
     TITLE = {A commutative {$\mathbb{P}^1$}-spectrum representing motivic
              cohomology over {D}edekind domains},
   JOURNAL = {M\'em. Soc. Math. Fr. (N.S.)},
  FJOURNAL = {M\'emoires de la Soci\'et\'e{} Math\'ematique de France.
              Nouvelle S\'erie},
    NUMBER = {157},
      YEAR = {2018},
     PAGES = {110},
      ISSN = {0249-633X,2275-3230},
      ISBN = {978-2-85629-890-9},
   MRCLASS = {14F42 (55P43)},
  MRNUMBER = {3865569},
MRREVIEWER = {Daniel\ C.\ Isaksen},
       DOI = {10.24033/msmf.465},
       URL = {https://doi.org/10.24033/msmf.465},
}

@article {Hill11,
    AUTHOR = {Hill, Michael A.},
     TITLE = {Ext and the motivic {S}teenrod algebra over {$\mathbb{R}$}},
   JOURNAL = {J. Pure Appl. Algebra},
  FJOURNAL = {Journal of Pure and Applied Algebra},
    VOLUME = {215},
      YEAR = {2011},
    NUMBER = {5},
     PAGES = {715--727},
      ISSN = {0022-4049,1873-1376},
   MRCLASS = {55T15 (14F42 19D45 55P43 55S10)},
  MRNUMBER = {2747214},
MRREVIEWER = {Kyle\ M.\ Ormsby},
       DOI = {10.1016/j.jpaa.2010.06.017},
       URL = {https://doi.org/10.1016/j.jpaa.2010.06.017},
}

@book {Wilson16,
    AUTHOR = {Wilson, Glen M.},
     TITLE = {Motivic stable stems over finite fields},
      NOTE = {Thesis (Ph.D.)--Rutgers The State University of New Jersey -
              New Brunswick},
 PUBLISHER = {ProQuest LLC, Ann Arbor, MI},
      YEAR = {2016},
     PAGES = {81},
      ISBN = {978-1369-35059-3},
   MRCLASS = {99-05},
  MRNUMBER = {3597836},
       URL =
              {http://gateway.proquest.com/openurl?url_ver=Z39.88-2004&rft_val_fmt=info:ofi/fmt:kev:mtx:dissertation&res_dat=xri:pqm&rft_dat=xri:pqdiss:10291896},
}

@mastersthesis{Kylingkqfinite,
    author = {Jonas Irgens Kylling} ,
    title = {Hermitian {K}-theory of finite fields via the motivic {A}dams spectral sequence},
    school = {University of Oslo},
    year = {2015}
}

@misc{LiPetTat25,
      title={A spectrum-level splitting of the $ku_\mathbb{R}$-cooperations algebra}, 
      author={Guchuan Li and Sarah Petersen and Elizabeth Tatum},
      year={2025},
      eprint={2503.17149},
      archivePrefix={arXiv},
      primaryClass={math.AT},
      url={https://arxiv.org/abs/2503.17149}, 
}

@article {Soule79,
    AUTHOR = {Soul\'e, C.},
     TITLE = {{$K$}-th\'eorie des anneaux d'entiers de corps de nombres et
              cohomologie \'etale},
   JOURNAL = {Invent. Math.},
  FJOURNAL = {Inventiones Mathematicae},
    VOLUME = {55},
      YEAR = {1979},
    NUMBER = {3},
     PAGES = {251--295},
      ISSN = {0020-9910,1432-1297},
   MRCLASS = {12A70 (18F25)},
  MRNUMBER = {553999},
MRREVIEWER = {Daniel\ R.\ Grayson},
       DOI = {10.1007/BF01406843},
       URL = {https://doi.org/10.1007/BF01406843},
}

@article {Voemotiviccohomology,
    AUTHOR = {Voevodsky, Vladimir},
     TITLE = {Motivic cohomology with {${\bf Z}/2$}-coefficients},
   JOURNAL = {Publ. Math. Inst. Hautes \'Etudes Sci.},
  FJOURNAL = {Publications Math\'ematiques. Institut de Hautes \'Etudes
              Scientifiques},
    NUMBER = {98},
      YEAR = {2003},
     PAGES = {59--104},
      ISSN = {0073-8301,1618-1913},
   MRCLASS = {14F42 (12G05 19D45 19E15)},
  MRNUMBER = {2031199},
MRREVIEWER = {Eric\ M.\ Friedlander},
       DOI = {10.1007/s10240-003-0010-6},
       URL = {https://doi.org/10.1007/s10240-003-0010-6},
}

@article {GreRog-Segal,
    AUTHOR = {Gregersen, Thomas and Rognes, John},
     TITLE = {On the motivic {S}egal conjecture},
   JOURNAL = {J. Topol.},
  FJOURNAL = {Journal of Topology},
    VOLUME = {16},
      YEAR = {2023},
    NUMBER = {3},
     PAGES = {1258--1313},
      ISSN = {1753-8416,1753-8424},
   MRCLASS = {14F42 (55P42 55Q45 55S10 55T15)},
  MRNUMBER = {4638005},
MRREVIEWER = {Daniel\ Dugger},
       DOI = {10.1112/topo.12311},
       URL = {https://doi.org/10.1112/topo.12311},
}

@article {Voereduced,
    AUTHOR = {Voevodsky, Vladimir},
     TITLE = {Reduced power operations in motivic cohomology},
   JOURNAL = {Publ. Math. Inst. Hautes \'Etudes Sci.},
  FJOURNAL = {Publications Math\'ematiques. Institut de Hautes \'Etudes
              Scientifiques},
    NUMBER = {98},
      YEAR = {2003},
     PAGES = {1--57},
      ISSN = {0073-8301,1618-1913},
   MRCLASS = {14F42 (12G05 19D45 19E15)},
  MRNUMBER = {2031198},
MRREVIEWER = {Eric\ M.\ Friedlander},
       DOI = {10.1007/s10240-003-0009-z},
       URL = {https://doi.org/10.1007/s10240-003-0009-z},
}

@article {NauSpiOst15,
    AUTHOR = {Naumann, Niko and Spitzweck, Markus and \O stv\ae r, Paul
              Arne},
     TITLE = {Existence and uniqueness of {$E_\infty$} structures on motivic
              {$K$}-theory spectra},
   JOURNAL = {J. Homotopy Relat. Struct.},
  FJOURNAL = {Journal of Homotopy and Related Structures},
    VOLUME = {10},
      YEAR = {2015},
    NUMBER = {3},
     PAGES = {333--346},
      ISSN = {2193-8407,1512-2891},
   MRCLASS = {19E15 (14F42 55P43)},
  MRNUMBER = {3385689},
MRREVIEWER = {Andrei\ D.\ Halanay},
       DOI = {10.1007/s40062-013-0062-3},
       URL = {https://doi.org/10.1007/s40062-013-0062-3},
}

@book {Rav86,
    AUTHOR = {Ravenel, Douglas C.},
     TITLE = {Complex cobordism and stable homotopy groups of spheres},
    SERIES = {Pure and Applied Mathematics},
    VOLUME = {121},
 PUBLISHER = {Academic Press, Inc., Orlando, FL},
      YEAR = {1986},
     PAGES = {xx+413},
      ISBN = {0-12-583430-6; 0-12-583431-4},
   MRCLASS = {55-02 (55Qxx 57-02)},
  MRNUMBER = {860042},
MRREVIEWER = {Joseph\ Neisendorfer},
}

@article {culver-bp2coop,
    AUTHOR = {Culver, Dominic Leon},
     TITLE = {On {$\rm BP\langle 2\rangle$}-cooperations},
   JOURNAL = {Algebr. Geom. Topol.},
  FJOURNAL = {Algebraic \& Geometric Topology},
    VOLUME = {19},
      YEAR = {2019},
    NUMBER = {2},
     PAGES = {807--862},
      ISSN = {1472-2747,1472-2739},
   MRCLASS = {55S10 (55T15)},
  MRNUMBER = {3924178},
MRREVIEWER = {Nguyen Sum},
       DOI = {10.2140/agt.2019.19.807},
       URL = {https://doi.org/10.2140/agt.2019.19.807},
}

@article {CulKonQui21,
    AUTHOR = {Culver, Dominic Leon and Kong, Hana Jia and Quigley, J. D.},
     TITLE = {Algebraic slice spectral sequences},
   JOURNAL = {Doc. Math.},
  FJOURNAL = {Documenta Mathematica},
    VOLUME = {26},
      YEAR = {2021},
     PAGES = {1085--1119},
      ISSN = {1431-0635,1431-0643},
   MRCLASS = {14F42 (55P42 55P91 55T05 55T15)},
  MRNUMBER = {4324461},
MRREVIEWER = {Matthias\ Wendt},
}

@article {BakerLazarev01,
    AUTHOR = {Baker, Andrew and Lazarev, Andrej},
     TITLE = {On the {A}dams spectral sequence for {$R$}-modules},
   JOURNAL = {Algebr. Geom. Topol.},
  FJOURNAL = {Algebraic \& Geometric Topology},
    VOLUME = {1},
      YEAR = {2001},
     PAGES = {173--199},
      ISSN = {1472-2747,1472-2739},
   MRCLASS = {55P42 (55N20 55P43 55T15)},
  MRNUMBER = {1823498},
MRREVIEWER = {Mark\ Hovey},
       DOI = {10.2140/agt.2001.1.173},
       URL = {https://doi.org/10.2140/agt.2001.1.173},
}

@article {HuKriz01,
    AUTHOR = {Hu, Po and Kriz, Igor},
     TITLE = {Some remarks on {R}eal and algebraic cobordism},
   JOURNAL = {$K$-Theory},
  FJOURNAL = {$K$-Theory. An Interdisciplinary Journal for the Development,
              Application, and Influence of $K$-Theory in the Mathematical
              Sciences},
    VOLUME = {22},
      YEAR = {2001},
    NUMBER = {4},
     PAGES = {335--366},
      ISSN = {0920-3036,1573-0514},
   MRCLASS = {19F99 (11E81 14F42 19L41 55N22)},
  MRNUMBER = {1847399},
MRREVIEWER = {J.\ P. C. Greenlees},
       DOI = {10.1023/A:1011196901303},
       URL = {https://doi.org/10.1023/A:1011196901303},
}

@book {BrzezinskiWisbauer03,
    AUTHOR = {Brzezinski, Tomasz and Wisbauer, Robert},
     TITLE = {Corings and comodules},
    SERIES = {London Mathematical Society Lecture Note Series},
    VOLUME = {309},
 PUBLISHER = {Cambridge University Press, Cambridge},
      YEAR = {2003},
     PAGES = {xii+476},
      ISBN = {0-521-53931-5},
   MRCLASS = {16W30},
  MRNUMBER = {2012570},
MRREVIEWER = {Fabio\ Gavarini},
       DOI = {10.1017/CBO9780511546495},
       URL = {https://doi.org/10.1017/CBO9780511546495},
}

@article {Clover20,
    AUTHOR = {May, Clover},
     TITLE = {A structure theorem for {$RO(C_2)$}-graded {B}redon
              cohomology},
   JOURNAL = {Algebr. Geom. Topol.},
  FJOURNAL = {Algebraic \& Geometric Topology},
    VOLUME = {20},
      YEAR = {2020},
    NUMBER = {4},
     PAGES = {1691--1728},
      ISSN = {1472-2747,1472-2739},
   MRCLASS = {55N91},
  MRNUMBER = {4127082},
MRREVIEWER = {Nansen\ Petrosyan},
       DOI = {10.2140/agt.2020.20.1691},
       URL = {https://doi.org/10.2140/agt.2020.20.1691},
}

@article {Adams66,
    AUTHOR = {Adams, J. F.},
     TITLE = {On the groups {$J(X)$}. {IV}},
   JOURNAL = {Topology},
  FJOURNAL = {Topology. An International Journal of Mathematics},
    VOLUME = {5},
      YEAR = {1966},
     PAGES = {21--71},
      ISSN = {0040-9383},
   MRCLASS = {55.40},
  MRNUMBER = {198470},
MRREVIEWER = {E.\ Dyer},
       DOI = {10.1016/0040-9383(66)90004-8},
       URL = {https://doi.org/10.1016/0040-9383(66)90004-8},
}

@article {IsaWanXu23,
    AUTHOR = {Isaksen, Daniel C. and Wang, Guozhen and Xu, Zhouli},
     TITLE = {Stable homotopy groups of spheres: from dimension 0 to 90},
   JOURNAL = {Publ. Math. Inst. Hautes \'Etudes Sci.},
  FJOURNAL = {Publications Math\'ematiques. Institut de Hautes \'Etudes
              Scientifiques},
    VOLUME = {137},
      YEAR = {2023},
     PAGES = {107--243},
      ISSN = {0073-8301,1618-1913},
   MRCLASS = {55T15 (14F42)},
  MRNUMBER = {4588596},
MRREVIEWER = {Guchuan\ Li},
       DOI = {10.1007/s10240-023-00139-1},
       URL = {https://doi.org/10.1007/s10240-023-00139-1},
}

@article {DevHopSmi88,
    AUTHOR = {Devinatz, Ethan S. and Hopkins, Michael J. and Smith, Jeffrey
              H.},
     TITLE = {Nilpotence and stable homotopy theory. {I}},
   JOURNAL = {Ann. of Math. (2)},
  FJOURNAL = {Annals of Mathematics. Second Series},
    VOLUME = {128},
      YEAR = {1988},
    NUMBER = {2},
     PAGES = {207--241},
      ISSN = {0003-486X,1939-8980},
   MRCLASS = {55Q10 (55P42 55Q52)},
  MRNUMBER = {960945},
MRREVIEWER = {Frederick\ Cohen},
       DOI = {10.2307/1971440},
       URL = {https://doi.org/10.2307/1971440},
}

@misc{linWanXu25,
      title={On the Last Kervaire Invariant Problem}, 
      author={Weinan Lin and Guozhen Wang and Zhouli Xu},
      year={2025},
      eprint={2412.10879},
      archivePrefix={arXiv},
      primaryClass={math.AT},
      url={https://arxiv.org/abs/2412.10879}, 
}

@article {Mahowald81,
    AUTHOR = {Mahowald, Mark},
     TITLE = {{$b{\rm o}$}-resolutions},
   JOURNAL = {Pacific J. Math.},
  FJOURNAL = {Pacific Journal of Mathematics},
    VOLUME = {92},
      YEAR = {1981},
    NUMBER = {2},
     PAGES = {365--383},
      ISSN = {0030-8730,1945-5844},
   MRCLASS = {55Q45 (55P42)},
  MRNUMBER = {618072},
MRREVIEWER = {R.\ J.\ Milgram},
       URL = {http://projecteuclid.org/euclid.pjm/1102736799},
}

@article {BBBCX,
    AUTHOR = {Beaudry, A. and Behrens, M. and Bhattacharya, P. and Culver,
              D. and Xu, Z.},
     TITLE = {On the {$E_2$}-term of the bo-{A}dams spectral sequence},
   JOURNAL = {J. Topol.},
  FJOURNAL = {Journal of Topology},
    VOLUME = {13},
      YEAR = {2020},
    NUMBER = {1},
     PAGES = {356--415},
      ISSN = {1753-8416,1753-8424},
   MRCLASS = {55Q45 (55T15)},
  MRNUMBER = {4138742},
MRREVIEWER = {Donald\ M.\ Davis},
       DOI = {10.1112/topo.12136},
       URL = {https://doi.org/10.1112/topo.12136},
}

@article {Gonzalez00,
    AUTHOR = {Gonz\'alez, Jes\'us},
     TITLE = {A vanishing line in the {${\rm BP}\langle 1\rangle$}-{A}dams
              spectral sequence},
   JOURNAL = {Topology},
  FJOURNAL = {Topology. An International Journal of Mathematics},
    VOLUME = {39},
      YEAR = {2000},
    NUMBER = {6},
     PAGES = {1137--1153},
      ISSN = {0040-9383},
   MRCLASS = {55T15 (18G25 19L41 55N20 55P42)},
  MRNUMBER = {1783851},
MRREVIEWER = {J.\ P. C. Greenlees},
       DOI = {10.1016/S0040-9383(99)00002-6},
       URL = {https://doi.org/10.1016/S0040-9383(99)00002-6},
}

@book {Adams74,
    AUTHOR = {Adams, J. F.},
     TITLE = {Stable homotopy and generalised homology},
    SERIES = {Chicago Lectures in Mathematics},
 PUBLISHER = {University of Chicago Press, Chicago, Ill.-London},
      YEAR = {1974},
     PAGES = {x+373},
   MRCLASS = {55B20 (55E10)},
  MRNUMBER = {402720},
MRREVIEWER = {P.\ S.\ Landweber},
}

@article {kane1981operations,
    AUTHOR = {Kane, Richard M.},
     TITLE = {Operations in connective {$K$}-theory},
   JOURNAL = {Mem. Amer. Math. Soc.},
  FJOURNAL = {Memoirs of the American Mathematical Society},
    VOLUME = {34},
      YEAR = {1981},
    NUMBER = {254},
     PAGES = {vi+102},
      ISSN = {0065-9266,1947-6221},
   MRCLASS = {55S25 (55S10)},
  MRNUMBER = {634210},
MRREVIEWER = {Frederick\ Cohen},
       DOI = {10.1090/memo/0254},
       URL = {https://doi.org/10.1090/memo/0254},
}

@article{Cartan1954-1955,
author = {Cartan, H.},
journal = {Séminaire Henri Cartan},
language = {fre},
number = {1},
pages = {1-9},
publisher = {Secrétariat mathématique},
title = {DGA-algèbres et DGA-modules},
url = {http://eudml.org/doc/112301},
volume = {7},
year = {1954-1955},
}

@incollection {Kochman82,
    AUTHOR = {Kochman, Stanley O.},
     TITLE = {Integral cohomology operations},
 BOOKTITLE = {Current trends in algebraic topology, {P}art 1 ({L}ondon,
              {O}nt., 1981)},
    SERIES = {CMS Conf. Proc.},
    VOLUME = {2},
     PAGES = {437--478},
 PUBLISHER = {Amer. Math. Soc., Providence, RI},
      YEAR = {1982},
      ISBN = {0-8218-6001-1},
   MRCLASS = {55S05 (55M10 55N45)},
  MRNUMBER = {686130},
}

@article {HoyKelOst17,
    AUTHOR = {Hoyois, Marc and Kelly, Shane and \O stv\ae r, Paul Arne},
     TITLE = {The motivic {S}teenrod algebra in positive characteristic},
   JOURNAL = {J. Eur. Math. Soc. (JEMS)},
  FJOURNAL = {Journal of the European Mathematical Society (JEMS)},
    VOLUME = {19},
      YEAR = {2017},
    NUMBER = {12},
     PAGES = {3813--3849},
      ISSN = {1435-9855,1435-9863},
   MRCLASS = {14F42 (19E15)},
  MRNUMBER = {3730515},
MRREVIEWER = {Ramdorai\ Sujatha},
       DOI = {10.4171/JEMS/754},
       URL = {https://doi.org/10.4171/JEMS/754},
}

@article {CulverOddp20,
    AUTHOR = {Culver, D.},
     TITLE = {The {$BP\langle2\rangle$}-cooperations algebra at odd primes},
   JOURNAL = {J. Pure Appl. Algebra},
  FJOURNAL = {Journal of Pure and Applied Algebra},
    VOLUME = {224},
      YEAR = {2020},
    NUMBER = {5},
     PAGES = {106239, 28},
      ISSN = {0022-4049,1873-1376},
   MRCLASS = {55T15 (55Q10 55S10 55S25)},
  MRNUMBER = {4046239},
MRREVIEWER = {Nguyen Sum},
       DOI = {10.1016/j.jpaa.2019.106239},
       URL = {https://doi.org/10.1016/j.jpaa.2019.106239},
}

@misc{LPTSplittingkuR,
      title={A spectrum-level splitting of the $ku_\mathbb{R}$-cooperations algebra}, 
      author={Guchuan Li and Sarah Petersen and Elizabeth Tatum},
      year={2025},
      eprint={2503.17149},
      archivePrefix={arXiv},
      primaryClass={math.AT},
      url={https://arxiv.org/abs/2503.17149}, 
}

@article {goerss1986some,
    AUTHOR = {Goerss, Paul G. and Jones, John D. S. and Mahowald, Mark E.},
     TITLE = {Some generalized {B}rown-{G}itler spectra},
   JOURNAL = {Trans. Amer. Math. Soc.},
  FJOURNAL = {Transactions of the American Mathematical Society},
    VOLUME = {294},
      YEAR = {1986},
    NUMBER = {1},
     PAGES = {113--132},
      ISSN = {0002-9947,1088-6850},
   MRCLASS = {55N20 (55S10 55T15)},
  MRNUMBER = {819938},
MRREVIEWER = {Edgar\ H.\ Brown, Jr.},
       DOI = {10.2307/2000121},
       URL = {https://doi.org/10.2307/2000121},
}

@inproceedings {VoeA1,
    AUTHOR = {Voevodsky, Vladimir},
     TITLE = {{$\mathbf{A}^1$}-homotopy theory},
 BOOKTITLE = {Proceedings of the {I}nternational {C}ongress of
              {M}athematicians, {V}ol. {I} ({B}erlin, 1998)},
   JOURNAL = {Doc. Math.},
  FJOURNAL = {Documenta Mathematica},
      YEAR = {1998},
     PAGES = {579--604},
      ISSN = {1431-0635,1431-0643},
   MRCLASS = {14F35 (14A15 55U35)},
  MRNUMBER = {1648048},
MRREVIEWER = {Mark\ Hovey},
}

@book {Mor12,
    AUTHOR = {Morel, Fabien},
     TITLE = {{$mathbb{A}^1$}-algebraic topology over a field},
    SERIES = {Lecture Notes in Mathematics},
    VOLUME = {2052},
 PUBLISHER = {Springer, Heidelberg},
      YEAR = {2012},
     PAGES = {x+259},
      ISBN = {978-3-642-29513-3},
   MRCLASS = {14F35 (14F05)},
  MRNUMBER = {2934577},
MRREVIEWER = {Matthias\ Wendt},
       DOI = {10.1007/978-3-642-29514-0},
       URL = {https://doi.org/10.1007/978-3-642-29514-0},
}

@misc{BacBurXu25,
      title={Motivic stable stems and Galois approximations of cellular motivic categories}, 
      author={Tom Bachmann and Robert Burklund and Zhouli Xu},
      year={2025},
      eprint={2503.12060},
      archivePrefix={arXiv},
      primaryClass={math.AG},
      url={https://arxiv.org/abs/2503.12060}, 
}

@article {Pst23,
    AUTHOR = {Pstragowski, Piotr},
     TITLE = {Synthetic spectra and the cellular motivic category},
   JOURNAL = {Invent. Math.},
  FJOURNAL = {Inventiones Mathematicae},
    VOLUME = {232},
      YEAR = {2023},
    NUMBER = {2},
     PAGES = {553--681},
      ISSN = {0020-9910,1432-1297},
   MRCLASS = {55P43 (14F42 18N60 55T15)},
  MRNUMBER = {4574661},
MRREVIEWER = {Geoffrey\ M. L. Powell},
       DOI = {10.1007/s00222-022-01173-2},
       URL = {https://doi.org/10.1007/s00222-022-01173-2},
}

@book {LM07,
    AUTHOR = {Levine, M. and Morel, F.},
     TITLE = {Algebraic cobordism},
    SERIES = {Springer Monographs in Mathematics},
 PUBLISHER = {Springer, Berlin},
      YEAR = {2007},
     PAGES = {xii+244},
      ISBN = {978-3-540-36822-9; 3-540-36822-1},
   MRCLASS = {14F43 (14C15 14L05)},
  MRNUMBER = {2286826},
MRREVIEWER = {Claudio\ Pedrini},
}

@article {CQ21,
    AUTHOR = {Culver, Dominic Leon and Quigley, J. D.},
     TITLE = {{$kq$}-resolutions {I}},
   JOURNAL = {Trans. Amer. Math. Soc.},
  FJOURNAL = {Transactions of the American Mathematical Society},
    VOLUME = {374},
      YEAR = {2021},
    NUMBER = {7},
     PAGES = {4655--4710},
      ISSN = {0002-9947,1088-6850},
   MRCLASS = {14F42 (55Q10 55Q45 55Q51 55T15)},
  MRNUMBER = {4273174},
MRREVIEWER = {Gabriel\ Angelini-Knoll},
       DOI = {10.1090/tran/8367},
       URL = {https://doi.org/10.1090/tran/8367},
}

@article {belmontisaksenkong-v1R,
    AUTHOR = {Belmont, Eva and Isaksen, Daniel C. and Kong, Hana Jia},
     TITLE = {{$\mathbb{R}$}-motivic {$v_1$}-periodic homotopy},
   JOURNAL = {Pacific J. Math.},
  FJOURNAL = {Pacific Journal of Mathematics},
    VOLUME = {330},
      YEAR = {2024},
    NUMBER = {1},
     PAGES = {43--84},
      ISSN = {0030-8730,1945-5844},
   MRCLASS = {55Q50 (14F42 55T99)},
  MRNUMBER = {4777709},
       DOI = {10.2140/pjm.2024.330.43},
       URL = {https://doi.org/10.2140/pjm.2024.330.43},
}

@misc{Realkqcoop,
      title={On the ring of cooperations for real Hermitian K-theory}, 
      author={Jackson Morris},
      year={2025},
      eprint={2506.16672},
      archivePrefix={arXiv},
      primaryClass={math.AT},
      url={https://arxiv.org/abs/2506.16672}, 
}

@misc{finitekqcoop,
  title        = {Rings of cooperations for Hermitian K-theory over finite fields},
  author       = {Jackson Morris},
    year ={2025},
    eprint={2509.02786},
    archivePrefix={arXiv},
    primaryClass={math.KT},
    url= {https://arxiv.org/abs/2509.02786},
}

@incollection {Boardman82,
    AUTHOR = {Boardman, J. M.},
     TITLE = {The eightfold way to {BP}-operations or {$E\sb\ast E$}\ and
              all that},
 BOOKTITLE = {Current trends in algebraic topology, {P}art 1 ({L}ondon,
              {O}nt., 1981)},
    SERIES = {CMS Conf. Proc.},
    VOLUME = {2},
     PAGES = {187--226},
 PUBLISHER = {Amer. Math. Soc., Providence, RI},
      YEAR = {1982},
      ISBN = {0-8218-6001-1},
   MRCLASS = {55N20 (55S05)},
  MRNUMBER = {686116},
MRREVIEWER = {J.\ F.\ Adams},
}

@article {BhattacharyaGuillouLi22,
    AUTHOR = {Bhattacharya, Prasit and Guillou, Bertrand and Li, Ang},
     TITLE = {An {$\mathbb{R}$}-motivic {$\upsilon_1$}-self-map of periodicity
              1},
   JOURNAL = {Homology Homotopy Appl.},
  FJOURNAL = {Homology, Homotopy and Applications},
    VOLUME = {24},
      YEAR = {2022},
    NUMBER = {1},
     PAGES = {299--324},
      ISSN = {1532-0073,1532-0081},
   MRCLASS = {14F42 (55Q51 55Q91)},
  MRNUMBER = {4410466},
MRREVIEWER = {Geoffrey\ M. L. Powell},
       DOI = {10.4310/hha.2022.v24.n1.a15},
       URL = {https://doi.org/10.4310/hha.2022.v24.n1.a15},
}

@article {HeardKrause18,
    AUTHOR = {Heard, Drew and Krause, Achim},
     TITLE = {Vanishing lines for modules over the motivic {S}teenrod
              algebra},
   JOURNAL = {New York J. Math.},
  FJOURNAL = {New York Journal of Mathematics},
    VOLUME = {24},
      YEAR = {2018},
     PAGES = {183--199},
      ISSN = {1076-9803},
   MRCLASS = {55S10 (14F42)},
  MRNUMBER = {3778499},
MRREVIEWER = {Piotr\ Kraso\'n},
       URL = {http://nyjm.albany.edu:8000/j/2018/24_183.html},
}

@book {Margolis83,
    AUTHOR = {Margolis, H. R.},
     TITLE = {Spectra and the {S}teenrod algebra},
    SERIES = {North-Holland Mathematical Library},
    VOLUME = {29},
      NOTE = {Modules over the Steenrod algebra and the stable homotopy
              category},
 PUBLISHER = {North-Holland Publishing Co., Amsterdam},
      YEAR = {1983},
     PAGES = {xix+489},
      ISBN = {0-444-86516-0},
   MRCLASS = {55-02 (55Sxx)},
  MRNUMBER = {738973},
MRREVIEWER = {Andrzej\ Kozlowski},
}

@article {GheorgheIsaksenRicka18,
    AUTHOR = {Gheorghe, Bogdan and Isaksen, Daniel C. and Ricka, Nicolas},
     TITLE = {The {P}icard group of motivic {$\mathcal{A}_{\mathbb{C}}(1)$}},
   JOURNAL = {J. Homotopy Relat. Struct.},
  FJOURNAL = {Journal of Homotopy and Related Structures},
    VOLUME = {13},
      YEAR = {2018},
    NUMBER = {4},
     PAGES = {847--865},
      ISSN = {2193-8407,1512-2891},
   MRCLASS = {14F42 (14C22 20C05)},
  MRNUMBER = {3870774},
MRREVIEWER = {Benjamin\ Collas},
       DOI = {10.1007/s40062-018-0200-z},
       URL = {https://doi.org/10.1007/s40062-018-0200-z},
}

@incollection {Quillen73,
    AUTHOR = {Quillen, Daniel},
     TITLE = {Higher algebraic {$K$}-theory. {I}},
 BOOKTITLE = {Algebraic {$K$}-theory, {I}: {H}igher {$K$}-theories ({P}roc.
              {C}onf., {B}attelle {M}emorial {I}nst., {S}eattle, {W}ash.,
              1972)},
    SERIES = {Lecture Notes in Math.},
    VOLUME = {Vol. 341},
     PAGES = {85--147},
 PUBLISHER = {Springer, Berlin-New York},
      YEAR = {1973},
   MRCLASS = {18F25},
  MRNUMBER = {338129},
MRREVIEWER = {Stephen\ M.\ Gersten},
}

@article {Bousfield79,
    AUTHOR = {Bousfield, A. K.},
     TITLE = {The localization of spectra with respect to homology},
   JOURNAL = {Topology},
  FJOURNAL = {Topology. An International Journal of Mathematics},
    VOLUME = {18},
      YEAR = {1979},
    NUMBER = {4},
     PAGES = {257--281},
      ISSN = {0040-9383},
   MRCLASS = {55N20 (55N15 55P60)},
  MRNUMBER = {551009},
MRREVIEWER = {Willi\ Meier},
       DOI = {10.1016/0040-9383(79)90018-1},
       URL = {https://doi.org/10.1016/0040-9383(79)90018-1},
}

@article {HKO-convergencemASS,
    AUTHOR = {Hu, P. and Kriz, I. and Ormsby, K.},
     TITLE = {Convergence of the motivic {A}dams spectral sequence},
   JOURNAL = {J. K-Theory},
  FJOURNAL = {Journal of K-Theory. K-Theory and its Applications in Algebra,
              Geometry, Analysis \& Topology},
    VOLUME = {7},
      YEAR = {2011},
    NUMBER = {3},
     PAGES = {573--596},
      ISSN = {1865-2433,1865-5394},
   MRCLASS = {14F42 (55T15)},
  MRNUMBER = {2811716},
MRREVIEWER = {Matthias\ Wendt},
       DOI = {10.1017/is011003012jkt150},
       URL = {https://doi.org/10.1017/is011003012jkt150},
}

@article {DImASS,
    AUTHOR = {Dugger, Daniel and Isaksen, Daniel C.},
     TITLE = {The motivic {A}dams spectral sequence},
   JOURNAL = {Geom. Topol.},
  FJOURNAL = {Geometry \& Topology},
    VOLUME = {14},
      YEAR = {2010},
    NUMBER = {2},
     PAGES = {967--1014},
      ISSN = {1465-3060,1364-0380},
   MRCLASS = {55T15 (14F42 55P42)},
  MRNUMBER = {2629898},
MRREVIEWER = {Markus\ Szymik},
       DOI = {10.2140/gt.2010.14.967},
       URL = {https://doi.org/10.2140/gt.2010.14.967},
}

@article {OrmOst13,
    AUTHOR = {Ormsby, Kyle M. and \O stv\ae r, Paul Arne},
     TITLE = {Motivic {B}rown-{P}eterson invariants of the rationals},
   JOURNAL = {Geom. Topol.},
  FJOURNAL = {Geometry \& Topology},
    VOLUME = {17},
      YEAR = {2013},
    NUMBER = {3},
     PAGES = {1671--1706},
      ISSN = {1465-3060,1364-0380},
   MRCLASS = {14F42 (19D50 19E15 55P43)},
  MRNUMBER = {3073932},
MRREVIEWER = {Andrei\ D.\ Halanay},
       DOI = {10.2140/gt.2013.17.1671},
       URL = {https://doi.org/10.2140/gt.2013.17.1671},
}

@article {Ormsby11,
    AUTHOR = {Ormsby, Kyle M.},
     TITLE = {Motivic invariants of {$p$}-adic fields},
   JOURNAL = {J. K-Theory},
  FJOURNAL = {Journal of K-Theory. K-Theory and its Applications in Algebra,
              Geometry, Analysis \& Topology},
    VOLUME = {7},
      YEAR = {2011},
    NUMBER = {3},
     PAGES = {597--618},
      ISSN = {1865-2433,1865-5394},
   MRCLASS = {14F42 (19D50)},
  MRNUMBER = {2811717},
MRREVIEWER = {Matthias\ Wendt},
       DOI = {10.1017/is011004017jkt153},
       URL = {https://doi.org/10.1017/is011004017jkt153},
}

@article {kongquigley,
    AUTHOR = {Kong, Hana Jia and Quigley, J. D.},
     TITLE = {The slice spectral sequence for a motivic analogue of the
              connective {$K(1)$}-local sphere},
   JOURNAL = {Proc. Lond. Math. Soc. (3)},
  FJOURNAL = {Proceedings of the London Mathematical Society. Third Series},
    VOLUME = {129},
      YEAR = {2024},
    NUMBER = {5},
     PAGES = {Paper No. e70006, 38},
      ISSN = {0024-6115,1460-244X},
   MRCLASS = {14F42 (55P42 55Q45 55Q51)},
  MRNUMBER = {4818771},
       DOI = {10.1112/plms.70006},
       URL = {https://doi.org/10.1112/plms.70006},
}

@article {BurPst25,
    AUTHOR = {Burklund, Robert and Pstragowski, Piotr},
     TITLE = {Quivers and the {A}dams spectral sequence},
   JOURNAL = {Adv. Math.},
  FJOURNAL = {Advances in Mathematics},
    VOLUME = {471},
      YEAR = {2025},
     PAGES = {Paper No. 110270, 76},
      ISSN = {0001-8708,1090-2082},
   MRCLASS = {55T15 (16G20 18N99 19L20)},
  MRNUMBER = {4890707},
       DOI = {10.1016/j.aim.2025.110270},
       URL = {https://doi.org/10.1016/j.aim.2025.110270},
}

@article {Margolis74,
    AUTHOR = {Margolis, H. R.},
     TITLE = {Eilenberg-{M}ac {L}ane spectra},
   JOURNAL = {Proc. Amer. Math. Soc.},
  FJOURNAL = {Proceedings of the American Mathematical Society},
    VOLUME = {43},
      YEAR = {1974},
     PAGES = {409--415},
      ISSN = {0002-9939,1088-6826},
   MRCLASS = {55G10},
  MRNUMBER = {341488},
MRREVIEWER = {J.\ P.\ May},
       DOI = {10.2307/2038905},
       URL = {https://doi.org/10.2307/2038905},
}

@article {BO22,
    AUTHOR = {Bachmann, Tom and {\O}stv{\ae}r, Paul Arne},
     TITLE = {Topological models for stable motivic invariants of regular
              number rings},
   JOURNAL = {Forum Math. Sigma},
  FJOURNAL = {Forum of Mathematics. Sigma},
    VOLUME = {10},
      YEAR = {2022},
     PAGES = {Paper No. e1, 27},
      ISSN = {2050-5094},
   MRCLASS = {14F42 (14F35 19E15 55P42)},
  MRNUMBER = {4365078},
MRREVIEWER = {Oliver\ R{\"o}ndigs},
       DOI = {10.1017/fms.2021.76},
       URL = {https://doi.org/10.1017/fms.2021.76},
}

@article {WO-finite,
    AUTHOR = {Wilson, Glen Matthew and {\O}stv{\ae}r, Paul Arne},
     TITLE = {Two-complete stable motivic stems over finite fields},
   JOURNAL = {Algebr. Geom. Topol.},
  FJOURNAL = {Algebraic \& Geometric Topology},
    VOLUME = {17},
      YEAR = {2017},
    NUMBER = {2},
     PAGES = {1059--1104},
      ISSN = {1472-2747,1472-2739},
   MRCLASS = {14F42 (18G15 55T15)},
  MRNUMBER = {3623682},
MRREVIEWER = {Daniel\ C.\ Isaksen},
       DOI = {10.2140/agt.2017.17.1059},
       URL = {https://doi.org/10.2140/agt.2017.17.1059},
}

@misc{BH21,
      title={$\eta$-periodic motivic stable homotopy theory over fields}, 
      author={Tom Bachmann and Michael J. Hopkins},
      year={2021},
      eprint={2005.06778},
      archivePrefix={arXiv},
      primaryClass={math.KT},
      url={https://arxiv.org/abs/2005.06778}, 
}

@article {BruRog22,
    AUTHOR = {Bruner, Robert R. and Rognes, John},
     TITLE = {The {A}dams spectral sequence for the image-of-{$J$} spectrum},
   JOURNAL = {Trans. Amer. Math. Soc.},
  FJOURNAL = {Transactions of the American Mathematical Society},
    VOLUME = {375},
      YEAR = {2022},
    NUMBER = {8},
     PAGES = {5803--5827},
      ISSN = {0002-9947,1088-6850},
   MRCLASS = {55Q50 (55T15)},
  MRNUMBER = {4469237},
MRREVIEWER = {Steven\ R.\ Costenoble},
       DOI = {10.1090/tran/8680},
       URL = {https://doi.org/10.1090/tran/8680},
}

@misc{BHLS23,
      title={$K$-theoretic counterexamples to Ravenel's telescope conjecture}, 
      author={Robert Burklund and Jeremy Hahn and Ishan Levy and Tomer M. Schlank},
      year={2023},
      eprint={2310.17459},
      archivePrefix={arXiv},
      primaryClass={math.AT},
      url={https://arxiv.org/abs/2310.17459}, 
}

@article {Miller81,
    AUTHOR = {Miller, Haynes R.},
     TITLE = {On relations between {A}dams spectral sequences, with an
              application to the stable homotopy of a {M}oore space},
   JOURNAL = {J. Pure Appl. Algebra},
  FJOURNAL = {Journal of Pure and Applied Algebra},
    VOLUME = {20},
      YEAR = {1981},
    NUMBER = {3},
     PAGES = {287--312},
      ISSN = {0022-4049,1873-1376},
   MRCLASS = {55T15 (55P42)},
  MRNUMBER = {604321},
MRREVIEWER = {J.\ F.\ Adams},
       DOI = {10.1016/0022-4049(81)90064-5},
       URL = {https://doi.org/10.1016/0022-4049(81)90064-5},
}

@misc{ThisIsLorelei,
    title={Box for Buddy, Box for Star},
    author = {This is Lorelei},
    year ={2024},
    }

@misc{IsaKonLiYuaZhu25,
      title={Exotic periodic phenomena in the cohomology of the moduli stack of $1$-dimensional formal group laws}, 
      author={Daniel C. Isaksen and Hana Jia Kong and Guchuan Li and Yangyang Ruan and Heyi Zhu},
      year={2025},
      eprint={2504.14383},
      archivePrefix={arXiv},
      primaryClass={math.AT},
      url={https://arxiv.org/abs/2504.14383}, 
}

@article {BelIsa22,
    AUTHOR = {Belmont, Eva and Isaksen, Daniel C.},
     TITLE = {{$\mathbb{R}$}-motivic stable stems},
   JOURNAL = {J. Topol.},
  FJOURNAL = {Journal of Topology},
    VOLUME = {15},
      YEAR = {2022},
    NUMBER = {4},
     PAGES = {1755--1793},
      ISSN = {1753-8416,1753-8424},
   MRCLASS = {14F42 (55Q45 55S10 55T15)},
  MRNUMBER = {4461846},
MRREVIEWER = {Anand\ Sawant},
       DOI = {10.1112/topo.12256},
       URL = {https://doi.org/10.1112/topo.12256},
}

@incollection {MorelKMW,
    AUTHOR = {Morel, Fabien},
     TITLE = {On the motivic {$\pi_0$} of the sphere spectrum},
 BOOKTITLE = {Axiomatic, enriched and motivic homotopy theory},
    SERIES = {NATO Sci. Ser. II Math. Phys. Chem.},
    VOLUME = {131},
     PAGES = {219--260},
 PUBLISHER = {Kluwer Acad. Publ., Dordrecht},
      YEAR = {2004},
      ISBN = {1-4020-1834-7},
   MRCLASS = {19D45 (11E81 14F42 19G12 55P99)},
  MRNUMBER = {2061856},
MRREVIEWER = {Daniel\ C.\ Isaksen},
       DOI = {10.1007/978-94-007-0948-5\_7},
       URL = {https://doi.org/10.1007/978-94-007-0948-5_7},
}

@article {Isaksen19,
    AUTHOR = {Isaksen, Daniel C.},
     TITLE = {Stable stems},
   JOURNAL = {Mem. Amer. Math. Soc.},
  FJOURNAL = {Memoirs of the American Mathematical Society},
    VOLUME = {262},
      YEAR = {2019},
    NUMBER = {1269},
     PAGES = {viii+159},
      ISSN = {0065-9266,1947-6221},
      ISBN = {978-1-4704-3788-6; 978-1-4704-5511-8},
   MRCLASS = {14F42 (16T05 55P42 55Q10 55Q45 55S10 55S30 55T15)},
  MRNUMBER = {4046815},
MRREVIEWER = {Masaki\ Kameko},
       DOI = {10.1090/memo/1269},
       URL = {https://doi.org/10.1090/memo/1269},
}

@article {Gheorghe18,
    AUTHOR = {Gheorghe, Bogdan},
     TITLE = {The motivic cofiber of {$\tau$}},
   JOURNAL = {Doc. Math.},
  FJOURNAL = {Documenta Mathematica},
    VOLUME = {23},
      YEAR = {2018},
     PAGES = {1077--1127},
      ISSN = {1431-0635,1431-0643},
   MRCLASS = {14F42 (55P43 55S10 55S35)},
  MRNUMBER = {3874951},
MRREVIEWER = {Drew\ Heard},
       DOI = {10.4153/cjm-1971-110-9},
       URL = {https://doi.org/10.4153/cjm-1971-110-9},
}

@article {MilRavWil77,
    AUTHOR = {Miller, Haynes R. and Ravenel, Douglas C. and Wilson, W.
              Stephen},
     TITLE = {Periodic phenomena in the {A}dams-{N}ovikov spectral sequence},
   JOURNAL = {Ann. of Math. (2)},
  FJOURNAL = {Annals of Mathematics. Second Series},
    VOLUME = {106},
      YEAR = {1977},
    NUMBER = {3},
     PAGES = {469--516},
      ISSN = {0003-486X},
   MRCLASS = {55E45 (55H15)},
  MRNUMBER = {458423},
MRREVIEWER = {J.\ F.\ Adams},
       DOI = {10.2307/1971064},
       URL = {https://doi.org/10.2307/1971064},
}

@article {LelMah87,
    AUTHOR = {Lellmann, Wolfgang and Mahowald, Mark},
     TITLE = {The {$b{\rm o}$}-{A}dams spectral sequence},
   JOURNAL = {Trans. Amer. Math. Soc.},
  FJOURNAL = {Transactions of the American Mathematical Society},
    VOLUME = {300},
      YEAR = {1987},
    NUMBER = {2},
     PAGES = {593--623},
      ISSN = {0002-9947,1088-6850},
   MRCLASS = {55T15 (55N15 55Q45 55S25)},
  MRNUMBER = {876468},
MRREVIEWER = {Haynes\ R.\ Miller},
       DOI = {10.2307/2000359},
       URL = {https://doi.org/10.2307/2000359},
}

@article {OrmOst14,
    AUTHOR = {Ormsby, Kyle M. and \O stv\ae r, Paul Arne},
     TITLE = {Stable motivic {$\pi_1$} of low-dimensional fields},
   JOURNAL = {Adv. Math.},
  FJOURNAL = {Advances in Mathematics},
    VOLUME = {265},
      YEAR = {2014},
     PAGES = {97--131},
      ISSN = {0001-8708,1090-2082},
   MRCLASS = {55P42 (19E15 55T15)},
  MRNUMBER = {3255457},
MRREVIEWER = {Markus\ Szymik},
       DOI = {10.1016/j.aim.2014.07.024},
       URL = {https://doi.org/10.1016/j.aim.2014.07.024},
}

@article {Andrews18,
    AUTHOR = {Andrews, Michael J.},
     TITLE = {New families in the homotopy of the motivic sphere spectrum},
   JOURNAL = {Proc. Amer. Math. Soc.},
  FJOURNAL = {Proceedings of the American Mathematical Society},
    VOLUME = {146},
      YEAR = {2018},
    NUMBER = {6},
     PAGES = {2711--2722},
      ISSN = {0002-9939,1088-6826},
   MRCLASS = {55Q45 (14F42 55Q51 55T15)},
  MRNUMBER = {3778171},
MRREVIEWER = {Kyle\ M.\ Ormsby},
       DOI = {10.1090/proc/13940},
       URL = {https://doi.org/10.1090/proc/13940},
}

@article {Salch23,
    AUTHOR = {Salch, Andrew},
     TITLE = {Graded comodule categories with enough projectives},
   JOURNAL = {Glasg. Math. J.},
  FJOURNAL = {Glasgow Mathematical Journal},
    VOLUME = {65},
      YEAR = {2023},
    NUMBER = {2},
     PAGES = {257--271},
      ISSN = {0017-0895,1469-509X},
   MRCLASS = {18G05 (16T99 16W50)},
  MRNUMBER = {4625981},
       DOI = {10.1017/S0017089522000234},
       URL = {https://doi.org/10.1017/S0017089522000234},
}

@article{lellmann1984operations,
  title={Operations and Cooperations in Odd-Primary Connective K-Theory},
  author={Lellmann, Wolfgang},
  journal={Journal of the London Mathematical Society},
  volume={2},
  number={3},
  pages={562--576},
  year={1984},
  publisher={Wiley Online Library}
}

@article{davis1981stable,
  title={The stable geometric dimension of vector bundles over real projective spaces},
  author={Davis, Donald M and Gitler, Sam and Mahowald, Mark},
  journal={Transactions of the American Mathematical Society},
  volume={268},
  number={1},
  pages={39--61},
  year={1981}
}

\end{document}